\newtheorem{thm}{Theorem}[section]
\newtheorem{lemma}[thm]{Lemma}
\newtheorem{prop}[thm]{Proposition}
\newtheorem{cor}[thm]{Corollary}
\newtheorem{fact}[thm]{Fact}
\newtheorem{rmk}[thm]{Remark}
\newtheorem{exercise}[thm]{Exercise}
\theoremstyle{definition}
\newtheorem{df}[thm]{Definition}
\newtheorem{ex}[thm]{Example}
\newtheorem{question}[thm]{Question}
\newcommand{\met}{\operatorname{met}}
\newcommand{\HF}{\operatorname{HF}}
\newcommand{\BN}{\mathbb{N}}
\newcommand{\cu}[1]{\mathcal{#1}}
\def\dotminussym#1#2{%
  \setbox0=\hbox{$\m@th#1-$}%
  \kern.5\wd0%
  \hbox to 0pt{\hss\hbox{$\m@th#1-$}\hss}%
  \raise.6\ht0\hbox to 0pt{\hss$\m@th#1.$\hss}%
  \kern.5\wd0}
\newcommand{\dotminus}{\mathbin{\mathpalette\dotminussym{}}}
\def\dotlesym#1#2{%
  \setbox0=\hbox{$\m@th#1-$}%
  \kern.5\wd0%
  \hbox to 0pt{\hss\hbox{$\m@th#1\le$}\hss}%
  \raise 1\ht0\hbox to 0pt{\hss$\m@th#1.$\hss}%
  \kern.5\wd0}
\newcommand{\dotle}{\mathbin{\mathpalette\dotlesym{}}}
\begin{document}

\title{Continuous Sentences Preserved Under Reduced Products}

\author{Isaac Goldbring}

\address{Department of Mathematics, Statistics, and Computer Science\\
Department of Mathematics\\University of California, Irvine, 340 Rowland Hall (Bldg.\# 400),
Irvine, CA 92697-3875}
\email{isaac@math.uci.edu}
\thanks{Goldbring's work was partially supported by NSF CAREER grant DMS-1349399.}

\author{H. Jerome Keisler}

\address{Department of Mathematics\\
University of Wisconsin, 480 Lincoln Drive, Madison WI 53706, USA}
\email{keisler@math.wisc.edu}

\date{\today}

\begin{abstract}
Answering a question of Cif\'u Lopes, we give a syntactic characterization of those continuous sentences that are preserved under reduced products of metric structures.  In fact, we settle this question in the wider context of general structures as introduced by the second author.
\end{abstract}

\maketitle

\section{Introduction}

Reduced products are a generalization of ultraproducts where one works with an arbitrary proper filter instead of an ultrafilter.  Since the \L os' theorem fails at this level of generality, it becomes an interesting question to understand when the truth of a sentence in a reduced product follows from the truth of the sentence in each of the factor structures; when this happens for a given sentence in all reduced products, we say that that the sentence is \emph{preserved under reduced products} or is a \emph{reduced product sentence}.
In recent years, there has been an increasing interest in reduced products in continuous model theory, especially in the model theory of operator algebras (see [FS] or [Gh]).
This suggests that a syntactical characterization of reduced product sentences in continuous model theory may be useful.

In [Ke65], the second author showed, assuming the continuum hypothesis, that a sentence is preserved under reduced products if and only if the sentence is equivalent to a \emph{Horn sentence}.  Here, we recall that a \emph{basic Horn formula} is a first order formula that is either a finite disjunction of negated atomic formulas, or a
disjunction of an atomic formula and finitely many negated atomic formulas, while a \emph{Horn formula} is a first order formula that is built from basic Horn formulas
using the connective $\wedge$ and the quantifiers $\forall, \exists$.  In his thesis (see also [Ga]), Galvin showed that statement ``$\sigma$ is a reduced product sentence'' is actually arithmetical, whence an absoluteness argument shows that the use of the continuum hypothesis above is unnecessary.

At the end of Section 3 of [Lo], Cif\'u Lopes defines reduced products of metric structures and asks if a metric analogue of the classical theorem holds, to wit:  is there a syntactic characterization of those continuous sentences that are preserved under reduced products.  To be clear, a continuous sentence $\sigma$ is said to be preserved under reduced products if its value in a reduced product is $0$ whenever its value in each of the factor structures is $0$.  In Corollary 3.10 of [Lo],  Lopes gives one family of sentences preserved under reduced products.

The continuous analogue of the class of Horn sentences is the syntactically defined class of conditional sentences. In  this article, we settle  Lopes' question by showing that a continuous sentence is preserved under reduced products if and only if it is equivalent to a countable set of conditional sentences.  In fact, we show that this result holds in the wider context of \emph{general} structures, a generalization of metric structures investigated by the second author in [Ke].  General structures have no uniform continuity requirement on the predicate and function symbols and, in fact, do not even require a distinguished metric on the universe at all.

The plan of our proof is as follows:  we first show, as in the classical case, that the result holds when assuming the continuum hypothesis.  Incidentally, this result already appeared in the monograph [CK1966] on an earlier version of continuous logic.  We take the opportunity here to supply that proof in the modern incarnation of continuous logic.

In order to eliminate the use of the continuum hypothesis, rather than try to mimic Galvin's absoluteness argument, we instead introduce a universal procedure that converts all general structures into (classical) first-order structures and vice-versa; this conversion process takes advantage of the more flexible framework provided by general structures.  We use this conversion process (as well as a similar process on theories), and the absoluteness of being a classical reduced product sentence, to prove the absoluteness of being a continuous reduced product sentence.

The paper is organized as follows.  After this introduction, Section 2 contains all the preliminary information on general structures.  Section 3 describes the reduced product construction for general structures and defines the key notion of conditional sentences; here we prove that conditional sentences are preserved under reduced products.  In Section 4, we use the continuum hypothesis to show that reduced product sentences are equivalent to a set of conditional sentences and finally, in Section 5, we use the aforementioned conversion procedure to remove the use of the continuum hypothesis.  We also prove a result characterizing reduced product sentences as those satisfying a ``semi-continuity'' condition in all reduced products.

The paper also includes three appendices:  Appendix A proves a version of the Feferman-Vaught theorem for general structures (which was proven for metric structures by Ghasemi in [Gh]); Appendix B discusses some easy preservation theorems for general structures; Appendix C uses the techniques from Section \ref{s-elim-CH} to prove the Keisler-Shelah theorem for general structures.

We thank James Hanson for allowing us to include his observation on the Keisler-Shelah theorem for general structures.

\section{Preliminaries}

We will assume that the reader is familiar with the model theory of metric structures as developed in the paper [BBHU], and we will freely use notation from that paper.  In this section we review the basic model theory of general $[0,1]$-valued structures as developed in [Ke], and the relationship between general structures, pre-metric structures, and metric structures; unless stated otherwise, all Facts appearing below refer to the paper [Ke].

The syntax for general structures is the same as for metric structures.
The space of truth values is the real interval $[0,1]$, with $0$ representing true and $1$ representing false.  The connectives are the continuous functions from $[0,1]^n$ into $[0,1]$ for $n\in\mathbb{N}$, and the quantifiers are $\sup$ and $\inf$.
A \emph{vocabulary} $V$ is a set of predicate, function, and constant symbols.  Terms and atomic formulas are as in first order
logic.  Formulas are built in the usual way from terms using connectives and quantifiers, and sentences are formulas with no free variables.

A \emph{general structure} $\cu{M}$ consist of a  vocabulary  $V$, a non-empty universe set $M$, a function $P^\cu{M}\colon M^n\to[0,1]$ for each predicate symbol $P$ of arity $n$ in $V$, a function $F^\cu{M}\colon M^n\to M$ for each function symbol $F$ of arity $n$ in $V$, and an element $c^\cu{\cu{M}}\in M$ for each constant symbol $c$ in $V$.  In a general structure, there is no uniform continuity requirement for the function and predicate symbols.

If $V_0\subseteq V$, and $\cu{M}_0$ is obtained from $\cu{M}$ by forgetting every symbol of $V\setminus V_0$, we call $\cu{M}$ an \emph{expansion} of $\cu{M}_0$ to $V$, and call $\cu{M}_0$ the \emph{$V_0$-part} of $\cu{M}$.  If $\cu M$ is a general structure and $A\subseteq M$,  the expansion of $\cu M$ obtained by adding a constant symbol for each element of $A$ is denoted by $\cu M_A$, and the constant symbols for the elements of $A$ are called \emph{parameters from} $A$.

The  \emph{truth value} of a formula $\varphi(\vec{x})$ at a tuple $\vec{a}$ in a general structure $\cu{M}$ is denoted by $\varphi^\cu{M}(\vec{a})$.  We follow [Ke] rather than [BBHU] by defining a theory with vocabulary $V$ to be a set of sentences in
 vocabulary $V$.  We often use the connective $r\dotminus s$, which is defined by $r\dotminus s=\max(r-s,0).$

We say that $\cu M$ is a \emph{(general) model} of a theory $T$, in symbols $\cu M\models T$, if $\cu M$ is a general structure in which each sentence in $T$ has truth value $0$.
Note that $\cu M\models(\varphi \dotminus \psi)$ if and only if $\varphi^{\cu M}\le \psi^{\cu M}$,
 so to improve readability we also denote $r\dotminus s$ by $r\dotle s$. Two theories
$S,T$ are said to be equivalent if they have the same general models, but two sentences $\varphi,\psi$ are said to be equivalent if $\varphi^{\cu{M}}=\psi^{\cu{M}}$ for every general structure $\cu{M}$.  Thus the singletons $\{\varphi\}$ and $\{\psi\}$ are equivalent if and only if
$$\{\cu{M}\colon \varphi^{\cu{M}}=0\}=\{\cu{M}\colon \psi^{\cu{M}}=0\},$$
while the sentences $\varphi, \psi$ are equivalent if only if the singletons $\{\varphi\dotminus\varepsilon\}, \{\psi\dotminus\varepsilon\}$ are equivalent for all $\varepsilon\in[0,1]$.

Hereafter, $\cu{M}, \cu{N}$, sometimes with subscripts, will denote general structures with universe sets $M,N$ and vocabulary $V$, and $S, T, U$ will denote sets of sentences with the vocabulary $V$.

The notions of substructure (denoted by $\subseteq$), union of chain, elementary equivalence (denoted by $\equiv$), elementary substructure and extension (denoted by $\prec$ and $\succ$), elementary chain,  and $\kappa$-saturation are defined as in [BBHU], but applied to general structures as well as metric structures.

\begin{fact}  \label{f-Skolem}  (Downward Lowenheim-Skolem)  For every general structure $\cu{M}$, there is a general structure $\cu{M}'\equiv\cu{M}$ such that $|M'|\le\aleph_0+|V|$.
\end{fact}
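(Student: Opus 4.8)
The plan is to establish the stronger statement that there is a general structure $\cu{M}'\prec\cu{M}$ with $|M'|\le\aleph_0+|V|$, via the usual Tarski--Vaught closing-off argument, now with the quantifiers $\sup$ and $\inf$ in place of $\forall,\exists$. One preliminary issue must be dealt with first: since the connectives are allowed to be \emph{all} continuous functions $[0,1]^n\to[0,1]$, there are in general $2^{\aleph_0}+|V|$ formulas, which is too many to control directly. To circumvent this, fix once and for all a countable set $\cu{F}_0$ of connectives such that every connective is a uniform limit of connectives built from $\cu{F}_0$ (e.g.\ one may take $\cu{F}_0=\{0,1,x/2,x\dotminus y\}$; see [BBHU]). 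Call a formula \emph{restricted} if it is built using only connectives from $\cu{F}_0$; there are exactly $\aleph_0+|V|$ restricted formulas. Since connectives are uniformly continuous on the compact cube $[0,1]^n$ and since $\sup$ and $\inf$ are non-expansive for the supremum norm, an induction on the construction of formulas shows: for every formula $\varphi(\vec x)$ and every $\varepsilon>0$ there is a restricted formula $\varphi'(\vec x)$ with $|\varphi^{\cu{N}}(\vec a)-{\varphi'}^{\cu{N}}(\vec a)|\le\varepsilon$ for \emph{every} general structure $\cu{N}$ and every tuple $\vec a$ from $N$. In particular, if two general structures on the same universe (one a substructure of the other) agree at every tuple on every restricted formula, then they agree at every tuple on every formula.

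Next I would carry out the construction. Let $A_0\subseteq M$ be the set consisting of the interpretations of the constant symbols of $V$, together with one arbitrary point of $M$ if $V$ has no constant symbols; this is nonempty since $M\ne\emptyset$. Given $A_n$, let $A_{n+1}\supseteq A_n$ be obtained by adjoining: (i) $F^{\cu{M}}(\vec b)$ for every function symbol $F\in V$ and every tuple $\vec b$ from $A_n$ of the right length; and (ii) for every restricted formula $\varphi(x,\vec y)$, every tuple $\vec a$ from $A_n$ of the right length, and every integer $k\ge1$, an element $b^-\in M$ with $\varphi^{\cu{M}}(b^-,\vec a)\le\inf_{c\in M}\varphi^{\cu{M}}(c,\vec a)+1/k$ and an element $b^+\in M$ with $\varphi^{\cu{M}}(b^+,\vec a)\ge\sup_{c\in M}\varphi^{\cu{M}}(c,\vec a)-1/k$. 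As there are only $\aleph_0+|V|$ restricted formulas, one checks by induction that $|A_n|\le\aleph_0+|V|$ for all $n$, hence $A:=\bigcup_n A_n$ satisfies $|A|\le\aleph_0+|V|$. By (i) and the choice of $A_0$, $A$ is closed under the functions $F^{\cu{M}}$ and contains the constants $c^{\cu{M}}$, so it is the universe of a substructure $\cu{M}'\subseteq\cu{M}$.

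Finally I would show by induction on restricted formulas $\varphi(\vec x)$ that $\varphi^{\cu{M}'}(\vec a)=\varphi^{\cu{M}}(\vec a)$ for every tuple $\vec a$ from $A$. The atomic case holds because $\cu{M}'\subseteq\cu{M}$; the connective case is immediate, as a connective is interpreted by the same function in every structure; and for $\varphi=\inf_x\psi$, by definition and the inductive hypothesis $\varphi^{\cu{M}'}(\vec a)=\inf_{b\in A}\psi^{\cu{M}'}(b,\vec a)=\inf_{b\in A}\psi^{\cu{M}}(b,\vec a)$, which is $\ge\inf_{b\in M}\psi^{\cu{M}}(b,\vec a)=\varphi^{\cu{M}}(\vec a)$ trivially and $\le$ it because $\vec a$ lies in some $A_n$ and stage $n+1$ placed a $1/k$-approximate infimizer in $A$ for every $k$; the case $\varphi=\sup_x\psi$ is symmetric. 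Thus $\cu{M}'$ and $\cu{M}$ agree on all restricted formulas at all tuples from $A$, hence on all formulas there by the first paragraph; specializing to sentences yields $\cu{M}'\prec\cu{M}$, and in particular $\cu{M}'\equiv\cu{M}$.

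The one step I expect to require genuine care is the reduction to $\cu{F}_0$ in the first paragraph, and within it the observation that uniform approximation of connectives is preserved under the quantifiers — this is precisely where non-expansiveness of $\sup$ and $\inf$ enters. The rest is routine bookkeeping, and is in fact somewhat lighter than in the metric setting of [BBHU]: general structures carry no metric and impose no uniform-continuity requirement on their symbols, so there is no density character to keep track of and no metric-completion step needed at the end.
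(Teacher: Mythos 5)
Your proof is correct. The paper does not actually prove this statement --- it is imported as a Fact from [Ke] --- so there is no in-paper argument to compare against; your Tarski--Vaught closing-off with $1/k$-approximate witnesses for $\sup$ and $\inf$, preceded by the reduction to the at most $\aleph_0+|V|$ restricted formulas, is the expected route and in fact yields the stronger conclusion $\cu{M}'\prec\cu{M}$. The one step you single out as delicate --- uniform approximation of arbitrary formulas by restricted ones, uniformly over all general structures, using that the quantifiers are non-expansive and that no metric or uniform-continuity hypothesis is needed --- is precisely the tool the paper itself invokes in Lemma \ref{l-restricted-full}, citing Theorem 6.3 of [BBHU] and noting that its proof works for general structures, so your argument rests on the same ingredient the paper already takes for granted.
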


\begin{fact}  \label{f-elementary-chain}  The union of an elementary chain of general structures $\langle \cu{M}_\alpha\colon\alpha<\beta\rangle$ is an elementary extension of each $\cu{M}_\alpha$.
\end{fact}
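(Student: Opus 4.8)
The plan is to show directly that each $\cu{M}_\alpha$ is an elementary substructure of the union $\cu{M}:=\bigcup_{\alpha<\beta}\cu{M}_\alpha$. Recall first that, since we are dealing in particular with a chain of substructures, $\cu{M}$ is a well-defined general structure with $\cu{M}_\alpha\subseteq\cu{M}$ for every $\alpha<\beta$: its universe is $\bigcup_{\alpha<\beta}M_\alpha$, and every predicate (resp.\ function, constant) symbol is interpreted as the common extension of its interpretations in the $\cu{M}_\alpha$, which is well defined because along a chain of substructures these interpretations agree on common arguments. Note that no continuity or metric hypotheses enter here, which is one respect in which general structures are more forgiving than metric structures. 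It therefore suffices to prove, by induction on the formula $\varphi(\vec{x})$, the statement
\[ (\ast)\qquad \varphi^{\cu{M}}(\vec{a})=\varphi^{\cu{M}_\alpha}(\vec{a})\quad\text{for all }\alpha<\beta\text{ and all tuples }\vec{a}\text{ from }M_\alpha, \]
since $(\ast)$ holding for all $\varphi$ is exactly the assertion that $\cu{M}_\alpha\prec\cu{M}$ for each $\alpha$.

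The atomic case of $(\ast)$ is immediate from $\cu{M}_\alpha\subseteq\cu{M}$: terms are evaluated using only the function and constant symbols, whose interpretations agree, and likewise the predicate symbols agree on tuples from $M_\alpha$. If $\varphi=u(\varphi_1,\dots,\varphi_n)$ for a continuous connective $u$, then $(\ast)$ for $\varphi$ follows by applying $u$ to the equalities $(\ast)$ for $\varphi_1,\dots,\varphi_n$.

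The quantifier case is the only step with real content. Consider $\varphi(\vec{x})=\inf_y\psi(\vec{x},y)$ (the case of $\sup_y$ is dual), and fix $\alpha<\beta$ and a tuple $\vec{a}$ from $M_\alpha$. On one hand, restricting the infimum defining $\varphi^{\cu{M}}(\vec{a})$ to witnesses $b\in M_\alpha$ and using $(\ast)$ for $\psi$ at stage $\alpha$ gives $\varphi^{\cu{M}}(\vec{a})\le\inf_{b\in M_\alpha}\psi^{\cu{M}}(\vec{a},b)=\inf_{b\in M_\alpha}\psi^{\cu{M}_\alpha}(\vec{a},b)=\varphi^{\cu{M}_\alpha}(\vec{a})$. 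On the other hand, every $b\in M$ lies in $M_{\alpha'}$ for some $\alpha'$ with $\alpha\le\alpha'<\beta$; since then $\vec{a},b\in M_{\alpha'}$, the induction hypothesis $(\ast)$ for $\psi$ at stage $\alpha'$ gives $\psi^{\cu{M}}(\vec{a},b)=\psi^{\cu{M}_{\alpha'}}(\vec{a},b)\ge\varphi^{\cu{M}_{\alpha'}}(\vec{a})$, and because the chain is \emph{elementary} we have $\cu{M}_\alpha\prec\cu{M}_{\alpha'}$, whence $\varphi^{\cu{M}_{\alpha'}}(\vec{a})=\varphi^{\cu{M}_\alpha}(\vec{a})$. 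Thus every value $\psi^{\cu{M}}(\vec{a},b)$, for $b\in M$, is at least $\varphi^{\cu{M}_\alpha}(\vec{a})$, so $\varphi^{\cu{M}}(\vec{a})\ge\varphi^{\cu{M}_\alpha}(\vec{a})$, and the two inequalities give $(\ast)$ for $\varphi$.

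The main (and only mild) obstacle is precisely this quantifier step: unlike in classical logic one cannot simply ``choose a witness'', since the infimum need not be attained, so one argues with the defining infima directly and keeps track of which stage $M_{\alpha'}$ a given element of $M$ first enters — the point being that the relevant stages are cofinal above $\alpha$ and that elementarity of the chain (not merely that it is a chain of substructures) is what allows $\varphi^{\cu{M}_{\alpha'}}(\vec{a})$ to be transferred back to $\varphi^{\cu{M}_\alpha}(\vec{a})$.
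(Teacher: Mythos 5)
Your proof is correct: it is the standard Tarski--Vaught elementary chain argument transposed to $[0,1]$-valued logic, with the quantifier step handled via inequalities on the defining infima rather than choosing witnesses, and it uses elementarity of the chain exactly where it is needed. The paper itself gives no argument here (the Fact is imported from [Ke]), and your proof is essentially the one that reference supplies, so there is nothing to add.
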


By an \emph{embedding} $h\colon\cu{M}\to\cu{N}$ we mean a function $h\colon M\to N$  such that $h(c^\cu{M})=c^\cu{N}$ for each constant symbol $c\in V$, and for every $n$ and $\vec{a}\in M^n$, $h(F^\cu{M}(\vec{a}))=F^\cu{N}(h(\vec{a}))$ for every function symbol $F\in V$ of arity $n$, and $P^\cu{M}(\vec{a})=P^\cu{N}(h(\vec{a}))$ for every predicate symbol $P\in V$ of arity $n$.  We say that $\cu{M}$ is \emph{embeddable in} $\cu{N}$ if there is an embedding $h\colon\cu{M}\to\cu{N}$.  Note that the image of an embedding $h\colon\cu{M}\to\cu{N}$ is a substructure of $\cu{N}$.

We now review the notions from [Ke] of a reduced general structure, and an ultraproduct of general structures.

\begin{df}  \label{d-reduced}
For $a,b\in M$, we write $a\doteq^{\cu{M}} b$ if for every atomic formula $\varphi(x,\vec{z})$ and tuple $\vec{c}\in M^{|\vec z|}$, $\varphi^{\cu{M}}(a,\vec{c})=\varphi^{\cu{M}}(b,\vec{c}).$  The relation $\doteq^{\cu{M}}$ is called Leibniz equality.
$\cu{M}$ is \emph{reduced} if whenever $a\doteq^\cu{M} b$ we have $a=b$.
\end{df}

The \emph{reduction} of the general structure $\cu{M}$ is the reduced structure $\cu{N}$ such that $N$ is the set of equivalence classes of elements of
$M$ under $\doteq^{\cu{M}}$, and the mapping that sends each element of $M$ to its equivalence class is an embedding of $\cu{M}$ onto $\cu{N}$.
We say that $\cu{M},\cu{M}'$ are \emph{isomorphic}, in symbols $\cu {M}\cong\cu{M}'$, if there is an embedding from the reduction of $\cu{M}$ onto the reduction of $\cu{M}'$.

\begin{rmk}  \label{r-reduction}
\noindent\begin{itemize}
\item $\cong$ is an equivalence relation on general structures.
\item Every general structure is isomorphic to its reduction.
\item  If there is an embedding of $\cu{M}$ \emph{onto} $\cu{N}$, then $\cu{M}\cong\cu{N}$.
\item $\cu{M}\cong\cu{N}$ implies $\cu{M}\equiv\cu{N}$.
\end{itemize}
\end{rmk}

\begin{rmk}  \label{r-reduction-part}  Let $V^0\subseteq V$.

\begin{itemize}
\item[(i)] If two general structures are isomorphic, then their $V^0$-parts are isomorphic.
\item[(ii)] For every general structure $\cu{M}$, the $V^0$-part of $\cu{M}$, the $V^0$-part of the reduction of $\cu{M}$, the
reduction of the $V^0$-part of $\cu{M}$, and the reduction of the $V^0$-part of the reduction of $\cu{M}$, are all isomorphic to each other.
\end{itemize}
\end{rmk}

Recall that for any ultrafilter $\cu{D}$ over a set $I$ and function $g\colon I\to[0,1]$, there is a unique value $r=\lim_{\cu{D}} g$ in $[0,1]$ such that
for each neighborhood $Y$ of $r$, the set of $i\in I$ such that $g(i)\in Y$ belongs to $\cu{D}$.

\begin{df} \label{d-ultraproduct} Let $\cu{D}$ be an ultrafilter  over a set $I$ and $\cu{M}_i$ be a general structure for each $i\in I$.
The \emph{pre-ultraproduct} $\prod^{\cu{D}}\cu{M}_i$ is the  general structure $\cu{M}'=\prod^{\cu{D}}\cu{M}_i$  such that:
\begin{itemize}
\item $M'=\prod_{i\in I} M_i$, the cartesian product.
\item For each constant symbol $c\in V$, $c^{\cu{M}'}=\langle c^{\cu{M}_i}\rangle_{i\in I}$.
\item For each $n$-ary function symbol $G\in V$ and $n$-tuple $\vec{a}$ in $M'$,
$$G^{\cu{M}'}(\vec{a})=\langle G^{\cu{M}_i}(\vec{a}(i))\rangle_{i\in I}.$$
\item For each $n$-ary predicate symbol $P\in V$ and $n$-tuple $\vec{a}$ in $M'$,
$$P^{\cu{M}'}(\vec{a})=\lim_{\cu{D}}\langle P^{\cu{M}_i}(\vec{a}(i))\rangle_{i\in I}.$$
\end{itemize}

The \emph{ultraproduct} $\prod_{\cu{D}}\cu{M}_i$ is the reduction of the pre-ultraproduct $\prod^{\cu{D}}\cu{M}_i$.
For each $a\in M'$ we also let $a_{\cu{D}}$ denote the equivalence class of $a$ under $\doteq^{\cu{M}'}$.

When $\cu M_i=\cu M$ for all $i\in I$, the reduced product $\prod_{\cu D}\cu M$ is called an \emph{ultrapower} of $\cu M$ and is denoted by $\cu{M}^I/{\cu D}$.
\end{df}

\begin{fact} \label{f-Los}  Let $\cu{M}_i$ be a general structure for each $i\in I$, let $\cu{D}$ be an ultrafilter over $I$, and let
$\cu{M}=\prod_{\cu{D}}\cu{M}_i$ be the ultraproduct.  Then for each formula $\varphi$ and tuple $\vec{b}$ in the cartesian product $\prod_{i\in I} M_i$,
$$\varphi^{\cu{M}}(\vec{b}_{\cu{D}})=\lim_{\cu{D}}\langle\varphi^{\cu{M}_i}(\vec{b}_i)\rangle_{i\in I}.$$
\end{fact}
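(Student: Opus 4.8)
The plan is a straightforward induction on the complexity of $\varphi$, carried out first for the pre-ultraproduct $\cu{M}'=\prod^{\cu D}\cu{M}_i$ and then transferred to its reduction $\cu{M}$. Two preliminary observations are needed. First, a term lemma: by induction on the term $t$, using the definitions of $c^{\cu{M}'}$ and $G^{\cu{M}'}$ in Definition \ref{d-ultraproduct}, one shows $t^{\cu{M}'}(\vec b)=\langle t^{\cu{M}_i}(\vec b_i)\rangle_{i\in I}$ for every tuple $\vec b$ in the cartesian product. Second, two elementary facts about $\cu{D}$-limits that will be used repeatedly: (a) if $g(i)\le g'(i)$ for every $i$ (or merely for a set in $\cu{D}$) then $\lim_{\cu D}g\le\lim_{\cu D}g'$, and a constant function has $\lim_{\cu D}$ equal to its value; and (b) if $u\colon[0,1]^n\to[0,1]$ is continuous then $\lim_{\cu D}u(g_1(i),\dots,g_n(i))=u(\lim_{\cu D}g_1,\dots,\lim_{\cu D}g_n)$, which follows at once from the defining property of $\lim_{\cu D}$.

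With these in hand, the atomic case $\varphi=P(t_1,\dots,t_n)$ is immediate from the term lemma together with the definition of $P^{\cu{M}'}$ as a $\cu{D}$-limit. For a connective, $\varphi=u(\varphi_1,\dots,\varphi_n)$ with $u$ continuous, the claim reduces to the inductive hypotheses for $\varphi_1,\dots,\varphi_n$ via fact (b). The quantifier case is the one requiring care; say $\varphi(\vec x)=\inf_y\psi(y,\vec x)$, the case of $\sup$ being dual. Put $r=\lim_{\cu D}\inf_{a_i\in M_i}\psi^{\cu{M}_i}(a_i,\vec b_i)$. For the inequality $\inf_{a\in M'}\psi^{\cu{M}'}(a,\vec b)\ge r$: given any $a\in M'$, the inductive hypothesis gives $\psi^{\cu{M}'}(a,\vec b)=\lim_{\cu D}\psi^{\cu{M}_i}(a_i,\vec b_i)$, and since $\psi^{\cu{M}_i}(a_i,\vec b_i)\ge\inf_{a_i'}\psi^{\cu{M}_i}(a_i',\vec b_i)$ for every $i$, fact (a) yields $\psi^{\cu{M}'}(a,\vec b)\ge r$; taking the infimum over $a$ gives the claim. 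For the reverse inequality, fix $\varepsilon>0$ and, for each $i$, choose $a_i\in M_i$ with $\psi^{\cu{M}_i}(a_i,\vec b_i)<\inf_{a_i'}\psi^{\cu{M}_i}(a_i',\vec b_i)+\varepsilon$ (the infimum need not be attained, which is exactly why an approximate choice is made coordinatewise). Setting $a=\langle a_i\rangle_{i\in I}\in M'$, the inductive hypothesis and fact (a) give $\psi^{\cu{M}'}(a,\vec b)=\lim_{\cu D}\psi^{\cu{M}_i}(a_i,\vec b_i)\le r+\varepsilon$, so $\inf_{a\in M'}\psi^{\cu{M}'}(a,\vec b)\le r+\varepsilon$; letting $\varepsilon\to 0$ completes the induction for $\cu{M}'$.

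Finally, to pass from $\cu{M}'$ to the ultraproduct $\cu{M}$, recall that $\cu{M}$ is the reduction of $\cu{M}'$ and that the quotient map $\pi\colon\cu{M}'\to\cu{M}$, $a\mapsto a_{\cu D}$, is a surjective embedding. Since an embedding onto preserves the value of every formula at every tuple (a routine induction, the surjectivity being used precisely in the quantifier step), we obtain $\varphi^{\cu{M}}(\vec b_{\cu D})=\varphi^{\cu{M}'}(\vec b)=\lim_{\cu D}\langle\varphi^{\cu{M}_i}(\vec b_i)\rangle_{i\in I}$, as desired. The only genuine obstacle is the second inequality of the quantifier case: unlike in classical logic one cannot select an exact witness in each coordinate, so one instead makes an $\varepsilon$-approximate choice coordinatewise and then lets $\varepsilon$ tend to $0$, using monotonicity of $\cu{D}$-limits to control the outcome; everything else is bookkeeping.
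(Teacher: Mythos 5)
Your proof is correct, and it is the standard argument: the paper states this as a Fact imported from [Ke] without reproducing a proof, and the proof there (as in the metric case in [BBHU]) is exactly your induction — atomic case from the definition of the pre-ultraproduct, connectives via continuity of $\lim_{\cu D}$, quantifiers via coordinatewise $\varepsilon$-approximate witnesses, and then transfer from $\prod^{\cu D}\cu{M}_i$ to its reduction along the surjective embedding $a\mapsto a_{\cu D}$. No gaps; your handling of the non-attained infimum and the use of surjectivity in the quantifier step of the transfer are precisely the points that need care.
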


As usual, it follows that:

\begin{fact}  \label{f-compactness}  (Compactness)  If every finite subset of  $T$ has a general model, then $T$ has a general model.
\end{fact}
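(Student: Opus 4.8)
The plan is to run the standard ultraproduct argument, which is now available since \L os' theorem (Fact \ref{f-Los}) holds for general structures. First I would let $I$ be the collection of all finite subsets of $T$. By hypothesis, for each $i\in I$ we may fix a general structure $\cu{M}_i$ with $\cu{M}_i\models i$; that is, $\sigma^{\cu{M}_i}=0$ for every $\sigma\in i$.

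Next I would manufacture an ultrafilter that ``captures all of $T$''. For each sentence $\sigma\in T$, set $X_\sigma=\{i\in I\colon \sigma\in i\}$. Given finitely many sentences $\sigma_1,\dots,\sigma_n\in T$, the finite set $\{\sigma_1,\dots,\sigma_n\}$ itself belongs to $X_{\sigma_1}\cap\dots\cap X_{\sigma_n}$, so the family $\{X_\sigma\colon \sigma\in T\}$ has the finite intersection property; extend it to an ultrafilter $\cu{D}$ over $I$.

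Finally, form the ultraproduct $\cu{M}=\prod_{\cu{D}}\cu{M}_i$, which is again a general structure by Definition \ref{d-ultraproduct}. Fix $\sigma\in T$. For every $i\in X_\sigma$ we have $\sigma\in i$, hence $\sigma^{\cu{M}_i}=0$; since $X_\sigma\in\cu{D}$, the set of indices $i$ with $\sigma^{\cu{M}_i}=0$ lies in $\cu{D}$, and therefore $\lim_{\cu{D}}\langle\sigma^{\cu{M}_i}\rangle_{i\in I}=0$. By Fact \ref{f-Los}, $\sigma^{\cu{M}}=\lim_{\cu{D}}\langle\sigma^{\cu{M}_i}\rangle_{i\in I}=0$. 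As $\sigma\in T$ was arbitrary, $\cu{M}\models T$, as desired.

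There is essentially no obstacle beyond what has already been set up: the only inputs are that an ultraproduct of general structures is a general structure and that \L os' theorem holds, both in hand. The only points that require a moment of care are purely bookkeeping --- that ``$\cu{M}_i\models i$'' for a finite $i\subseteq T$ unwinds to each sentence of $i$ having value $0$, and that $X_\sigma\in\cu{D}$ suffices to force the $\cu{D}$-limit (rather than merely some limit inferior or averaged value) to be exactly $0$.
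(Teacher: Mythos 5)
Your proof is correct and is precisely the standard ultraproduct argument that the paper has in mind when it states that compactness follows ``as usual'' from Fact \ref{f-Los}: index by finite subsets of $T$, extend the sets $X_\sigma$ to an ultrafilter, and apply \L os' theorem to the resulting ultraproduct. Nothing further is needed.
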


\begin{cor}  \label{c-compactness-approx}
Suppose $\varphi$ is a sentence in the vocabulary $V$, and  $T\models\{\varphi\}$.

\noindent \begin{itemize}
\item[(i)]  For every $r \in (0,1]$ there is a finite $T_0\subseteq T$ such that $T_0\models \varphi\dotle r$.
\item[(ii)] There is a countable $T_1\subseteq T$ with $T_1\models \{\varphi\}$.
\end{itemize}
\end{cor}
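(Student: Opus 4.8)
The plan is to obtain both parts as routine consequences of compactness (Fact~\ref{f-compactness}), using the constant sentence $r$ together with the connective $\dotminus$ to express ``the value of $\varphi$ is at least $r$'' inside a theory.

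For part~(i), I would argue by contradiction: suppose $r\in(0,1]$ is such that no finite $T_0\subseteq T$ has $T_0\models\varphi\dotle r$. Then for each finite $T_0\subseteq T$ there is a general model $\cu{M}_0\models T_0$ with $\varphi^{\cu{M}_0}>r$, and for any such model $(r\dotminus\varphi)^{\cu{M}_0}=\max(r-\varphi^{\cu{M}_0},0)=0$, that is, $\cu{M}_0\models r\dotminus\varphi$. Consequently every finite subset of $T\cup\{r\dotminus\varphi\}$ has a general model: given such a finite set, let $T_0$ be its intersection with $T$, and the corresponding $\cu{M}_0$ satisfies both $T_0$ and, if present, $r\dotminus\varphi$. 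By Fact~\ref{f-compactness}, $T\cup\{r\dotminus\varphi\}$ has a general model $\cu{M}$; then $\cu{M}\models T$ forces $\varphi^{\cu{M}}=0$ since $T\models\{\varphi\}$, while $\cu{M}\models r\dotminus\varphi$ forces $\varphi^{\cu{M}}\ge r>0$, a contradiction. Hence some finite $T_0\subseteq T$ has $T_0\models\varphi\dotle r$.

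For part~(ii), I would apply part~(i) with $r=1/n$ for each integer $n\ge 1$, obtaining a finite $T_n\subseteq T$ with $T_n\models\varphi\dotle 1/n$, and then set $T_1=\bigcup_{n\ge 1}T_n$; this is a countable subset of $T$, being a countable union of finite sets. Any general model $\cu{M}$ of $T_1$ is a model of every $T_n$, so $\varphi^{\cu{M}}\le 1/n$ for all $n\ge 1$ and therefore $\varphi^{\cu{M}}=0$, giving $T_1\models\{\varphi\}$. No step here presents a genuine obstacle; the only point to keep in mind, as the proof of part~(ii) makes explicit, is that in the $[0,1]$-valued setting a theory can pin down only the exact value $0$, so one must first capture the weaker bounds $\varphi\dotle 1/n$ and then recombine them.
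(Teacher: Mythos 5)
Your proof is correct and follows essentially the same route as the paper: both parts rest on applying compactness to $T$ together with a sentence asserting $\varphi$ is bounded below by a positive constant (the paper uses $r/2\dotle\varphi$, you use $r\dotminus\varphi$, an immaterial difference), and part (ii) is the same union-of-finite-sets argument with $r=1/n$.
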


\begin{proof}  (i):  Suppose not.  Then for some $r\in(0,1]$, $T\cup\{r/2\dotle\varphi\}$ is finitely satisfiable.  By the Compactness Theorem,
$T\cup\{r/2\dotle\varphi\}$ has a general model, so $T\not\models\{\varphi\}$.

(ii):  By (i), for each positive $n\in\BN$ there is a finite subset $T_n$ of $T$ such that $T_n\models \{\varphi\dotle 1/n\}$.
Then $T_\infty=\bigcup_n T_n$ is a countable subset of $T$, and $T_\infty\models \{\varphi\}$.
\end{proof}

For an infinite cardinal $\kappa$, we say that a general structure $\cu{M}$ is \emph{$\kappa$-saturated} if for every set $A\subseteq M$ of
cardinality $|A|<\kappa$, every set of formulas in the vocabulary of $\cu M$ with one free variable and parameters from $A$
that is finitely satisfiable in $\cu{M}_A$ is satisfiable in $\cu{M}_A$.

\begin{rmk}  \label{r-reduction-sat}
$\cu M$ is $\kappa$-saturated if and only if the reduction of $\cu M$ is $\kappa$-saturated.
\end{rmk}

\begin{df}
By a \emph{special cardinal} we mean a cardinal $\kappa$ such that $2^\lambda\le\kappa$ for all $\lambda<\kappa$.
We say that $\cu{M}$ is \emph{special} if $|M|$ is an uncountable special cardinal and  $\cu{M}$ is the union of an elementary chain of structures
$\langle \cu{M}_\lambda\colon \lambda<|M|\rangle$ such
that each $\cu{M}_\lambda$ is $\lambda^+$-saturated.  $\cu{M}$ is \emph{$\kappa$-special} if $\kappa$ is special and $\cu{M}$ is the reduction of a special
structure of cardinality $\kappa$.
\end{df}

Note that every strong limit cardinal is special, and if $2^\lambda=\lambda^+$ then $2^\lambda$ is special.
Note also that every $\kappa$-special structure is reduced and has cardinality $\le\kappa$.

\begin{rmk}  \label{r-special-expansion}  If $\cu{M}$ is $\kappa$-special and  $V^0\subseteq V$, then the reduction of the $V^0$-part of $\cu{M}$ is $\kappa$-special.
\end{rmk}

\begin{fact}  \label{f-special-unique}  (Uniqueness Theorem for Special Models) If $T$ is complete and $\cu{M}, \cu{N}$ are $\kappa$-special models of $T$,
then $\cu{M}$ and $\cu{N}$ are isomorphic.
\end{fact}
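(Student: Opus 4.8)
The plan is to carry out the classical back-and-forth proof of the uniqueness theorem for special models, adapted to $[0,1]$-valued structures. Since $T$ is complete and $\cu{M},\cu{N}\models T$, we have $\cu{M}\equiv\cu{N}$. By definition, $\cu{M}$ is the reduction of a special structure $\cu{M}^{*}$ with $|M^{*}|=\kappa$, and $\cu{N}$ the reduction of a special $\cu{N}^{*}$ with $|N^{*}|=\kappa$; by Remark \ref{r-reduction}, each of $\cu{M}^{*},\cu{N}^{*}$ is elementarily equivalent to its reduction, so $\cu{M}^{*}\equiv\cu{N}^{*}$. Note that once we produce an embedding of $\cu{M}^{*}$ \emph{onto} $\cu{N}^{*}$, Remark \ref{r-reduction} gives $\cu{M}^{*}\cong\cu{N}^{*}$, and since $\cu{M}$ and $\cu{N}$ are isomorphic to $\cu{M}^{*}$ and $\cu{N}^{*}$ respectively (being their reductions), transitivity of $\cong$ yields $\cu{M}\cong\cu{N}$. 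So fix elementary chains $\cu{M}^{*}=\bigcup_{\lambda<\kappa}\cu{M}^{*}_{\lambda}$ and $\cu{N}^{*}=\bigcup_{\lambda<\kappa}\cu{N}^{*}_{\lambda}$ with each $\cu{M}^{*}_{\lambda},\cu{N}^{*}_{\lambda}$ being $\lambda^{+}$-saturated, and enumerate $M^{*}=\{a_{\xi}\colon\xi<\kappa\}$, $N^{*}=\{b_{\xi}\colon\xi<\kappa\}$.

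Starting from the empty map (which is partial elementary since $\cu{M}^{*}\equiv\cu{N}^{*}$), I would build an increasing chain of partial elementary maps $f_{\xi}\colon A_{\xi}\to B_{\xi}$ for $\xi<\kappa$ — that is, $A_{\xi}\subseteq M^{*}$, $B_{\xi}\subseteq N^{*}$, $|A_{\xi}|<\kappa$, and $\varphi^{\cu{M}^{*}}(\vec{a})=\varphi^{\cu{N}^{*}}(f_{\xi}(\vec{a}))$ for every formula $\varphi$ and every tuple $\vec{a}$ from $A_{\xi}$ — arranged so that $a_{\xi}\in A_{\xi+1}$, $b_{\xi}\in B_{\xi+1}$, and unions are taken at limit stages (a union of a chain of partial elementary maps is partial elementary). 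For the extension step putting $a_{\xi}$ into the domain: let $p$ be the complete $1$-type of $a_{\xi}$ over $A_{\xi}$ in $\cu{M}^{*}$, let $q$ be the set of conditions over $B_{\xi}$ in $\cu{N}^{*}$ obtained from $p$ by applying $f_{\xi}$ to the parameters, and pass to the ``loosened'' type $q'$ obtained by replacing each condition ``$\psi=r$'' of $q$ by the conditions $\{\,|\psi-r|\dotminus\tfrac{1}{n}\colon n\ge 1\,\}$. Applying partial elementarity of $f_{\xi}$ to formulas of the form $\inf_{x}(\cdots)$ shows that $q'$ is finitely satisfiable in $\cu{N}^{*}_{B_{\xi}}$. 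Now choose $\lambda<\kappa$ with $B_{\xi}\subseteq N^{*}_{\lambda}$ and $|\lambda|\ge|B_{\xi}|$; then $\cu{N}^{*}_{\lambda}$ is $|B_{\xi}|^{+}$-saturated and $\cu{N}^{*}_{\lambda}\prec\cu{N}^{*}$ by Fact \ref{f-elementary-chain}, so $q'$ is realized in $\cu{N}^{*}_{\lambda}$ by some $b$, and any such $b$ realizes $p$ via $f_{\xi}$, so $f_{\xi}\cup\{(a_{\xi},b)\}$ is again partial elementary. The symmetric move puts $b_{\xi}$ into the range.

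Then $f:=\bigcup_{\xi<\kappa}f_{\xi}$ is a bijection $M^{*}\to N^{*}$ preserving the value of every formula on every tuple, in particular every atomic formula, hence is an embedding of $\cu{M}^{*}$ onto $\cu{N}^{*}$, and $\cu{M}\cong\cu{N}$ follows as above. I expect the main obstacle to be the cardinal bookkeeping hidden in the phrase ``choose $\lambda<\kappa$ with $B_{\xi}\subseteq N^{*}_{\lambda}$ and $|\lambda|\ge|B_{\xi}|$.'' When $\kappa$ is regular this is automatic, because $A_{\xi}$ and $B_{\xi}$ each have size $<\kappa$ and therefore meet only boundedly many links of the chains. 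When $\kappa$ is singular, one first notes that a singular special cardinal must be a strong limit — otherwise $2^{\mu}=\kappa$ for all sufficiently large $\mu<\kappa$, which by K\"onig's theorem forces $\operatorname{cf}(\kappa)=\kappa$, a contradiction — and then interleaves the choice of the enumerations of $M^{*}$ and $N^{*}$ with the choice of the links so that the domain and range built by any stage $\xi$ always lie inside a single sufficiently saturated link. This is exactly the delicate recursion in the classical Chang--Keisler uniqueness theorem for special models, and it is the only place where specialness of $\cu{M}$ and $\cu{N}$, rather than mere saturation, is used.
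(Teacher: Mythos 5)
The paper itself states this result without proof (it is a Fact imported from [Ke]), so the comparison is with the standard argument, namely the continuous version of the Chang--Keisler uniqueness theorem for special models; your proposal is exactly that argument, correctly adapted: pass to the special pre-reductions $\cu{M}^{*},\cu{N}^{*}$, transfer the complete type of the next element through the partial elementary map, loosen the exact conditions to approximate ones so that finite satisfiability can be read off from an $\inf$-formula and transferred by elementarity, and realize the loosened (hence the exact) type inside a link of the specializing chain that contains the parameters and is sufficiently saturated. Your treatment of the regular case is right, your observation that a singular special cardinal is a strong limit is correct (König gives $\operatorname{cf}(\kappa)>\mu$ for every $\mu<\kappa$ with $2^{\mu}=\kappa$), and leaving the singular-case bookkeeping at the level of ``the delicate recursion as in Chang--Keisler'' is a sketch rather than a complete argument, but it names the right device: arrange the enumerations so that the domain and range present at stage $\xi$ always sit inside a link whose saturation exceeds their cardinality, with the link indices kept bounded at limit stages.

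One step as written is not quite justified: from the fact that $f=\bigcup_{\xi<\kappa}f_{\xi}$ is a bijection of $M^{*}$ onto $N^{*}$ preserving the value of every formula, you conclude that $f$ is an embedding of $\cu{M}^{*}$ onto $\cu{N}^{*}$. In this paper an embedding must commute with the function symbols exactly, and $\cu{M}^{*},\cu{N}^{*}$ are not assumed reduced; preservation of all atomic values by a surjection only yields $f(F^{\cu{M}^{*}}(\vec{a}))\doteq^{\cu{N}^{*}}F^{\cu{N}^{*}}(f(\vec{a}))$, i.e.\ the two elements satisfy the same atomic formulas with parameters from $N^{*}$, and nothing in the construction forces your chosen witnesses to respect the operations on the nose. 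The repair is immediate and is what you need anyway: since $f$ preserves atomic values and is onto, it respects Leibniz equality on both sides and therefore induces a bijection $\bar{f}$ between the reductions $\cu{M}$ and $\cu{N}$; this $\bar{f}$ preserves constants and atomic values and commutes with the functions exactly, because Leibniz-equal elements are identified in the reduction. Hence $\bar{f}$ is an embedding of $\cu{M}$ onto $\cu{N}$, and by Remark \ref{r-reduction} this gives $\cu{M}\cong\cu{N}$, which is the desired conclusion.
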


\begin{fact} \label{f-special-exist}  (Existence Theorem for Special Models)
If $\kappa$ is a special cardinal and $\aleph_0+|V|<\kappa$, then every reduced structure $\cu{M}$ such that
$|M|\le\kappa$ has a $\kappa$-special elementary extension.
\end{fact}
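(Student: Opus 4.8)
The plan is to carry the classical construction of special models (in the continuous setting, [CK1966]) over to general structures. The engine is a lemma manufacturing highly saturated elementary extensions of controlled cardinality.

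\emph{Key Lemma.} If $\mu$ is an infinite cardinal with $|V|\le\mu$, then every general structure $\cu{N}$ with $|N|\le 2^{\mu}$ has a $\mu^{+}$-saturated elementary extension $\cu{N}^{*}$ with $|N^{*}|\le 2^{\mu}$.

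I would prove this by the usual transfinite iteration of length $\mu^{+}$: put $\cu{N}_{0}=\cu{N}$, and given $\cu{N}_{\alpha}$ let $\cu{N}_{\alpha+1}\succ\cu{N}_{\alpha}$ realize, for every $B\subseteq N_{\alpha}$ with $|B|\le\mu$, every set of formulas over $B$ that is finitely satisfiable in $(\cu{N}_{\alpha})_{B}$; such an extension exists by the Compactness Theorem (Fact~\ref{f-compactness}), followed by a downward Lowenheim-Skolem trimming of which Fact~\ref{f-Skolem} is the crude form. Taking unions at limits via Fact~\ref{f-elementary-chain} and setting $\cu{N}^{*}=\cu{N}_{\mu^{+}}$, one gets $\mu^{+}$-saturation because every parameter set of size $\le\mu$ lies inside some $\cu{N}_{\alpha}$ with $\alpha<\mu^{+}$, by regularity of $\mu^{+}$. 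The cardinality bound survives the iteration because, for general structures as for metric ones, the number of complete types over a set of size $\le\mu$ is still at most $2^{\mu}$: although all continuous functions serve as connectives, a complete type is determined by its restriction to the formulas built from a fixed countable, uniformly dense family of connectives --- replacing a connective by a uniformly close one perturbs truth values uniformly, and this survives the $1$-Lipschitz operations $\sup$ and $\inf$ --- and there are only $|B|+|V|+\aleph_{0}\le\mu$ such formulas up to equivalence.

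Granting the Key Lemma, I would produce the $\kappa$-special extension thus. Since $\cu{M}$ is reduced, since a union of a chain of reduced structures is reduced, and since a reduced structure embeds elementarily into the reduction of each of its elementary extensions, it suffices (invoking Remark~\ref{r-reduction-sat}) to build a special structure of cardinality exactly $\kappa$ having $\cu{M}$ as an elementary substructure; its reduction is then the desired $\kappa$-special elementary extension. When $\kappa=\lambda^{+}$ is a successor cardinal --- the case needed in the applications, where $2^{\lambda}=\lambda^{+}=\kappa$ --- one application of the Key Lemma with $\mu=\lambda$ (legitimate since $|V|\le\lambda$ and $|M|\le 2^{\lambda}$) gives a $\kappa$-saturated elementary extension $\cu{K}$ of $\cu{M}$ with $|K|\le\kappa$; this $\cu{K}$ is already $\kappa$-special, being the reduction of the cardinality-$\kappa$ structure obtained from it by replacing each point with $\kappa$ mutually Leibniz-equivalent copies --- a structure that is special via the evident increasing chain, every member of which reduces to $\cu{K}$. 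For an arbitrary special $\kappa$ one instead builds, by recursion on the infinite cardinals $\nu<\kappa$, an increasing elementary chain of $\nu^{+}$-saturated structures $\cu{N}_{\nu}$ of cardinality $\le\kappa$ by applying the Key Lemma at stage $\nu$ with $\mu$ chosen least with $\mu\ge\nu+|V|+\aleph_{0}$ and $2^{\mu}\ge|\bigcup_{\nu'<\nu}\cu{N}_{\nu'}|$; when $|M|=\kappa$ one threads $\cu{M}$ through the construction by interleaving an exhaustion of $\cu{M}$ by small elementary substructures and amalgamating it stage by stage with the current $\cu{N}_{\nu}$ over their common part (the amalgam of two elementary extensions of a fixed structure being available by Compactness). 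In the end $\bigcup_{\nu}\cu{N}_{\nu}$, possibly after padding it to cardinality $\kappa$ by Leibniz-equivalent clones as above, is special with $\cu{M}$ as an elementary substructure; re-indexing the cardinal-indexed chain by ordinals (the $\lambda$-th member being the $|\lambda|$-th one) puts it in the form required by the definition of ``special''.

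The main obstacle --- and the sole place where the hypothesis that $\kappa$ is \emph{special} enters --- is the cardinal arithmetic of this recursion: one must verify that the $\mu$ chosen at each stage $\nu<\kappa$ has $2^{\mu}\le\kappa$. When $\kappa$ is a strong limit (in particular, when $\kappa$ is singular) this follows from a routine transfinite induction showing that the $\cu{N}_{\nu}$ and their partial unions all stay of cardinality $<\kappa$; the remaining case of a weakly inaccessible $\kappa$ with $2^{<\kappa}=\kappa$ goes through using the regularity of $\kappa$. It is precisely this constraint that blocks the naive choice of making $\cu{M}$ itself $\kappa$-saturated --- which overshoots cardinality $\kappa$ whenever $|M|=\kappa$ --- and dictates keeping every chain member small. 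The Tarski-Vaught check that $\cu{M}$ is elementary in $\bigcup_{\nu}\cu{N}_{\nu}$, the re-indexing, and the reduction step at the end should all be routine.
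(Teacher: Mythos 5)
The paper itself gives no proof of this statement (it is imported as a Fact from [Ke]), and your argument is essentially the intended, standard existence proof for special models (as in [CK1966]/[CK2012]) adapted to general $[0,1]$-valued structures: a $\mu^{+}$-saturated extension lemma with the $2^{\mu}$ cardinality bound (type-counting via a countable dense family of connectives/restricted formulas), a chain along the cardinals below $\kappa$ whose cardinal arithmetic is exactly where specialness enters, threading $\cu M$ through by an exhaustion plus elementary amalgamation when $|M|=\kappa$, and padding by Leibniz-equivalent copies followed by a reduction to meet the letter of the definition of $\kappa$-special. This is correct and matches the cited approach; the only points a full write-up must polish are routine --- at each step of the saturation lemma one should realize all (approximately finitely satisfiable) complete types rather than only sets exactly finitely satisfiable in the current stage, and in the regular non-strong-limit case chain members may themselves have cardinality $\kappa$, though your least-$\mu$ choice still keeps $2^{\mu}\le\kappa$ by specialness.
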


Define $r\dotplus s = \min(r+s,1)$.  A connective $C\colon [0,1]^n\to[0,1]$ is called \emph{increasing} if $C(s_1,\ldots,s_n)\le C(t_1,\ldots,t_n)$ whenever $s_i\le t_i$ for all $i\le n$. Important examples of increasing connectives are $\max, \min,\dotplus$ and the unary connectives $s \dotminus \varepsilon$ for a fixed $\varepsilon\in(0,1)$.  We say that $T, U$ are $S$-\emph{equivalent}  if $S\cup T$ and $S\cup U$ have the same general models.  The following lemma is the analogue for continuous model theory of Lemma 3.2.1 in [CK2012].

We say that a continuous formula $\varphi$ is \emph{restricted} if $\varphi$ is built from atomic formulas using only the quantifiers $\sup, \inf$ and
the connectives $0$, $1$, $\min$, $\max$, $\dotminus$, $\dotplus$, $\cdot/2$.  Note that any dyadic rational number $r\in[0,1]$ can be built
in finitely many steps using the connectives $0, 1, \cdot/2$.  Note that there at most $|V|+\aleph_0$ restricted continuous sentences in the vocabulary $V$.

\begin{lemma}  \label{l-restricted-full}
Every set $T$ of continuous sentences is equivalent to a set of restricted continuous sentences.
\end{lemma}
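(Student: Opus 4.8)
The plan is to replace arbitrary continuous sentences by restricted ones in three steps: (1) show the restricted connectives are uniformly dense among all connectives; (2) lift this by induction to show every continuous formula is a uniform limit of restricted formulas; (3) convert a uniform approximation of the sentences of $T$ into an honest equivalence of theories by subtracting small dyadic constants. The hard part will be step (1), since the arithmetic available among the restricted connectives is meagre; everything after that is routine.

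For step (1), fix $n$ and let $L_n$ be the family of functions $[0,1]^n\to[0,1]$ represented by restricted terms in variables $x_1,\dots,x_n$, i.e.\ terms built from the coordinate projections using $0,1,\min,\max,\dotminus,\dotplus,\cdot/2$. Then $L_n$ is a sublattice of $C([0,1]^n,\mathbb{R})$ containing the dyadic rational constants (these are obtained from $0,1,\cdot/2,\dotplus$, and are dense in $[0,1]$), so by the lattice form of the Stone--Weierstrass theorem it suffices to check that for all $\vec a\neq\vec b$ in $[0,1]^n$, all $\alpha,\beta\in[0,1]$, and all $\varepsilon>0$ there is $g\in L_n$ with $|g(\vec a)-\alpha|<\varepsilon$ and $|g(\vec b)-\beta|<\varepsilon$. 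To produce such a $g$, pick dyadic rationals $\alpha',\beta'$ within $\varepsilon$ of $\alpha,\beta$, pick a coordinate $i$ with $a_i\neq b_i$ and (swapping $\vec a,\vec b$ if needed) assume $a_i<b_i$, pick a dyadic $c\in(a_i,b_i)$ and $k\in\BN$ with $c+2^{-k}<b_i$, and let $R$ be the restricted term $\min(2^k(x_i\dotminus c),1)$, where the multiplier $2^k$ is produced by iterating $\dotplus$; thus $R$ takes value $0$ when $x_i\le c$ and value $1$ when $x_i\ge c+2^{-k}$. Then $g:=\min(\beta',\alpha'\dotplus R)$ works when $\alpha'\le\beta'$ and $g:=\max(\beta',\alpha'\dotminus R)$ works when $\alpha'>\beta'$: in both cases $g\in L_n$, $g(\vec a)=\alpha'$, and $g(\vec b)=\beta'$.

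For step (2), I would prove by induction on the continuous formula $\varphi(\vec x)$ that for every $\varepsilon>0$ there is a restricted formula $\psi(\vec x)$ with $|\varphi^{\cu M}(\vec a)-\psi^{\cu M}(\vec a)|\le\varepsilon$ for every general structure $\cu M$ and every tuple $\vec a$ in $M$. Atomic formulas are already restricted. If $\varphi=C(\varphi_1,\dots,\varphi_m)$ for a connective $C$, use the uniform continuity of $C$ on $[0,1]^m$ to choose how closely the pieces must be approximated, then apply step (1) to replace $C$ by a nearby restricted connective and the induction hypothesis to replace each $\varphi_j$ by a close enough restricted $\psi_j$, and conclude by the triangle inequality. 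If $\varphi$ is $\sup_y\chi$ or $\inf_y\chi$, the induction hypothesis for $\chi$ suffices, since taking a supremum or infimum moves values by at most the amount $\chi$ was moved.

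For step (3), given a set $T$ of continuous sentences, for each $\sigma\in T$ and each $k\ge1$ use step (2) to choose a restricted sentence $\sigma_k$ with $|\sigma^{\cu M}-\sigma_k^{\cu M}|\le 2^{-k-1}$ for every general structure $\cu M$, and put $T'=\{\sigma_k\dotminus 2^{-k}:\sigma\in T,\ k\ge1\}$; each $\sigma_k\dotminus 2^{-k}$ is restricted since $2^{-k}$ is a dyadic rational. If $\cu M\models T$ then $\sigma^{\cu M}=0$, so $\sigma_k^{\cu M}\le 2^{-k-1}$ and hence $(\sigma_k\dotminus 2^{-k})^{\cu M}=0$, so $\cu M\models T'$. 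Conversely, if $\cu M\models T'$ then $\sigma_k^{\cu M}\le 2^{-k}$ for every $k$, so $\sigma^{\cu M}\le 2^{-k}+2^{-k-1}$ for every $k$, whence $\sigma^{\cu M}=0$ and $\cu M\models T$. Therefore $T$ is equivalent to the set $T'$, which consists of restricted continuous sentences, as required.
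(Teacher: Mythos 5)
Your proof is correct, and it differs from the paper's mainly in how much it takes on itself. The paper's proof has exactly one nontrivial ingredient: the uniform density of restricted formulas among all continuous formulas, which it imports wholesale as Theorem 6.3 of [BBHU] (noting the proof goes through for general structures); it then lets $U$ be the set of \emph{all} restricted consequences of $T$ and derives a contradiction from a hypothetical $\cu N\models U$, $\cu N\not\models T$, using an approximating restricted $\theta$ and the sentence $\theta\dotle r/4$. You instead re-prove that density result from scratch: your lattice Stone--Weierstrass argument, with the explicit two-point interpolants $\min(\beta',\alpha'\dotplus R)$ and $\max(\beta',\alpha'\dotminus R)$ built from the ramp $R=\min(2^k(x_i\dotminus c),1)$, is a correct and self-contained substitute for the citation, and your induction on formulas (uniform continuity of the connective plus the fact that $\sup$/$\inf$ are $1$-Lipschitz with respect to the uniform norm) is the standard lifting. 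Your step (3) then produces an explicit equivalent set $\{\sigma_k\dotminus 2^{-k}\}$ rather than the paper's ``all restricted consequences''; both exploit the same trick of subtracting a small dyadic constant to turn a uniform approximation into an equivalence of theories, and your verification of both inclusions is sound. What your route buys is independence from [BBHU] (useful since the paper must anyway remark that the cited proof adapts to general structures, where no uniform continuity is assumed --- and indeed your argument never uses any); what the paper's route buys is brevity, and its choice of $U$ as the full set of restricted consequences is what gets reused later in the proofs of Theorem \ref{t-main} and Lemma \ref{l-preserved}.
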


\begin{proof}  Let $V$ be the vocabulary of $T$.  Then $|V|\le |T|+\aleph_0$.  Let $U$ be the set of restricted continuous sentences $\theta$ in the vocabulary
$V$ such that $T\models\{\theta\}$.
 Then $T\models U$.  Suppose $\cu N\models U$ but not $\cu N\models T$.  Then for some $\varphi\in T$ and some dyadic rational $r>0$
we have $\varphi^{\cu N}\ge r$.  By Theorem 6.3 of [BBHU] (whose proof works for general structures as well as metric structures), there is a restricted continuous
sentence $\theta$ in the vocabulary $V$ such that $|\theta^\cu M-\varphi^\cu M|\le r/4$ for every general structure $\cu M$.  $(\theta\dotle r/4)$ is also a
restricted continuous sentence. Since $T\models\{\theta\}$, we have $T\models\{\theta\dotle r/4\}$, so $(\theta\dotle r/4)\in U$.
But since $\varphi^{\cu N}\ge r$, we have $\theta^{\cu N}\ge r - r/4$, so $(\theta\dotle r/4)$ is not true in $\cu N$, contradicting the assumption that $\cu N\models U$.  We conclude that $U\models T$, so $T$ is equivalent to $U$.
\end{proof}

 A \emph{metric signature} $L$ over $V$ specifies a distinguished binary predicate symbol $d\in V$ for distance, and equips each predicate
 or function symbol $S\in V$ with a modulus of uniform continuity  $\triangle_S\colon(0,1]\to(0,1]$ with respect to $d$.

A \emph{pre-metric structure} $\cu M_+=(\cu M,L)$ for $L$ consists of a general structure $\cu{M}$ with  vocabulary $V$,
and a metric signature $L$ over $V$,  such that $d^{\cu{M}}$ is a pseudo-metric
on $M$, and for each predicate symbol $P$ and function symbol $F$ of arity $n$, $P^{\cu M}$ and $F^{\cu M}$
are uniformly continuous with the bounds specified by $L$.
A \emph{metric structure} for $L$ is a pre-metric structure for $L$ such that $(M,d^\cu{M})$ is a complete metric space.

Given a pre-metric structure $\cu M_+=(\cu M,L)$, we will call $\cu M$  the \emph{downgrade of} $\cu M_+$,
and call $\cu M_+$  the \emph{upgrade of} $\cu M$ to $L$.
Note that two different pre-metric structures can have the same downgrade,
because they may have different metric signatures.
A pre-metric structure $(\cu M,L)$ is said to be reduced if and only if its downgrade $\cu M$ is reduced.
Similarly for $\kappa$-saturated, etc.  Given a family $\langle(\cu M_i,L)\colon i\in I\rangle$ of pre-metric structures with the same signature $L$,
the ultraproduct is defined as the pre-metric structure $\prod_{\cu D}(\cu M_i,L):=(\prod_{\cu D}\cu M_i,L)$.

 We remind the reader that every pre-metric structure for $L$ has a unique completion up to isomorphism,
that this completion is a metric structure for $L$, and that every pre-metric structure is elementarily embeddable in its completion.

\begin{fact}  \label{f-reduced-premetric}  Let $(\cu M,L)$ be a  pre-metric structure with distinguished distance $d$.
$\cu M$ is reduced if and only if for all $x,y\in M$, if  $d^{\cu M}(x,y)=0$ then $x=y$.
\end{fact}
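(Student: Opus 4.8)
The plan is to unwind the definition of Leibniz equality $\doteq^{\cu M}$ from Definition~\ref{d-reduced} and prove the two implications separately, using the pseudo-metric axioms for one direction and the uniform continuity moduli supplied by the metric signature $L$ for the other.

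For the direction from right to left, I would suppose that $d^{\cu M}(x,y)=0$ implies $x=y$ for all $x,y\in M$, and let $a\doteq^{\cu M}b$. Since $d$ is a binary predicate symbol of $V$, the expression $d(x,z)$ is an atomic formula; applying the definition of $\doteq^{\cu M}$ to it with the parameter tuple $\vec c=(b)$ gives $d^{\cu M}(a,b)=d^{\cu M}(b,b)$. As $d^{\cu M}$ is a pseudo-metric on $M$, the right-hand side is $0$, so $d^{\cu M}(a,b)=0$ and hence $a=b$. Thus $\cu M$ is reduced.

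For the converse, assume $\cu M$ is reduced and fix $x,y\in M$ with $d^{\cu M}(x,y)=0$. Since $\cu M$ is reduced it suffices to show $x\doteq^{\cu M}y$, that is, $\varphi^{\cu M}(x,\vec c)=\varphi^{\cu M}(y,\vec c)$ for every atomic formula $\varphi(z,\vec w)$ and every tuple $\vec c$ from $M$. The key step is an auxiliary claim, proved by induction on the term $t$: for every term $t(z,\vec w)$ and every $\vec c$ one has $d^{\cu M}(t^{\cu M}(x,\vec c),t^{\cu M}(y,\vec c))=0$. The base case, where $t$ is a variable or a constant symbol, is immediate (for the variable $z$ it is the hypothesis, and otherwise the two values literally coincide). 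For $t=F(t_1,\dots,t_n)$, the inductive hypothesis gives $d^{\cu M}(t_i^{\cu M}(x,\vec c),t_i^{\cu M}(y,\vec c))=0<\triangle_F(\varepsilon)$ for every $\varepsilon\in(0,1]$ and every $i$, so the uniform continuity bound for $F$ forces $d^{\cu M}(t^{\cu M}(x,\vec c),t^{\cu M}(y,\vec c))\le\varepsilon$ for all such $\varepsilon$, hence it equals $0$. Given the claim, if $\varphi=P(t_1,\dots,t_n)$ then feeding the values $d^{\cu M}(t_i^{\cu M}(x,\vec c),t_i^{\cu M}(y,\vec c))=0$ into the uniform continuity bound for the predicate $P$ yields $|\varphi^{\cu M}(x,\vec c)-\varphi^{\cu M}(y,\vec c)|\le\varepsilon$ for every $\varepsilon\in(0,1]$, so $\varphi^{\cu M}(x,\vec c)=\varphi^{\cu M}(y,\vec c)$. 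Therefore $x\doteq^{\cu M}y$, and so $x=y$.

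The only place demanding any care is the term induction, where one must check that the uniform continuity moduli of the nested function symbols compose properly; but since at every stage the relevant $d^{\cu M}$-distance is exactly $0$ rather than merely small, each modulus $\triangle_S$ is always evaluated against an input that lies below $\triangle_S(\varepsilon)$ for every $\varepsilon\in(0,1]$, so no quantitative estimate is actually needed and I anticipate no genuine obstacle.
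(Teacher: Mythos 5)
Your proof is correct, and it is the standard argument: the right-to-left direction uses the atomic formula $d(x,z)$ with parameter $b$ together with $d^{\cu M}(b,b)=0$, and the left-to-right direction propagates $d^{\cu M}(x,y)=0$ through terms and atomic formulas via the moduli of uniform continuity, exactly as one expects. The paper itself states this as a Fact imported from [Ke] without giving a proof, so there is no divergence to report; your write-up supplies precisely the argument the citation leaves implicit.
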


In particular, every metric structure is reduced.

\begin{rmk} \label{r-saturated-metric}  Every pre-metric structure for $L$ that is reduced and $\aleph_1$-saturated
 is a metric structure for $L$.
\end{rmk}

The preservation theorems in this paper will apply to general structures.   Using  Fact \ref{f-metric-theory} below, we will immediately get analogous results for pre-metric and metric structures.

\begin{fact}  \label{f-metric-theory}  (See [Ca], page 112.)  For each metric signature $L$ over $V$, there is a theory $\met(L)$ whose general models are exactly
the downgrades of pre-metric structures for $L$.  Thus every pre-metric structure for $L$ satisfies $\met(L)$.  Each sentence in $\met(L)$ consists of finitely many $\sup$ quantifiers followed by a quantifier-free formula.
\end{fact}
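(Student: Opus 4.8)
The plan is to exhibit $\met(L)$ explicitly as the union of a finite block of \emph{pseudometric axioms} for the distinguished symbol $d$ and an infinite block of \emph{uniform continuity axioms} encoding the moduli $\triangle_S$, and then to check directly that its general models are precisely the general structures $\cu M$ for which $(\cu M,L)$ is a pre-metric structure (a general structure $\cu M$ arises as the downgrade of a pre-metric structure for $L$ exactly when $(\cu M,L)$ is itself a pre-metric structure, so this is the same class). Every sentence written down will manifestly be of the asserted shape -- finitely many $\sup$ quantifiers over a quantifier-free matrix -- so the work is entirely in the equivalence.

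For the pseudometric block I take the three sentences $\sup_x d(x,x)$ (reflexivity), $\sup_{x,y}|d(x,y)-d(y,x)|$ (symmetry), and $\sup_{x,y,z}\big(d(x,z)\dotminus(d(x,y)\dotplus d(y,z))\big)$ (the triangle inequality), using that $(r,s)\mapsto|r-s|$, $\dotminus$ and $\dotplus$ are continuous connectives. Since $d^{\cu M}$ is automatically $[0,1]$-valued, unwinding the definitions shows that $\cu M$ gives all three value $0$ exactly when $d^{\cu M}$ is a pseudometric on $M$; for the triangle sentence one uses $d^{\cu M}(x,z)\le 1$ to see that the truncation inside $\dotplus$ does no harm.

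For the continuity block, fix a predicate symbol $P\in V$ (including the symbol $d$) of arity $n$. For each $\varepsilon\in(0,1]$ and each rational $\delta$ with $0<\delta<\triangle_P(\varepsilon)$, let $\rho_\delta\colon[0,1]\to[0,1]$ be the continuous connective equal to $1$ on $[0,\delta/2]$, equal to $0$ on $[\delta,1]$, and affine on the interval in between, and include in $\met(L)$ the sentence
$$\sup_{\vec x,\vec y}\ \big(|P(\vec x)-P(\vec y)|\dotminus\varepsilon\big)\cdot\rho_\delta\!\Big(\max_{i\le n} d(x_i,y_i)\Big),$$
where $\sup_{\vec x,\vec y}$ abbreviates the $2n$ quantifiers over $x_1,\dots,x_n,y_1,\dots,y_n$ and the constant $\varepsilon$ is used as a $0$-ary connective; for a function symbol $F$ one takes the same sentence with the atomic formula $d(F(\vec x),F(\vec y))$ in place of $|P(\vec x)-P(\vec y)|$. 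The role of the soft cutoff $\rho_\delta$ is that $\rho_\delta(t)>0$ iff $t<\delta$, so that this sentence has value $0$ in $\cu M$ precisely when, for all $\vec a,\vec b\in M^n$, $\max_{i\le n}d^{\cu M}(a_i,b_i)<\delta$ implies $|P^{\cu M}(\vec a)-P^{\cu M}(\vec b)|\le\varepsilon$. Since ``$\max_i d^{\cu M}(a_i,b_i)<\triangle_P(\varepsilon)$'' is equivalent to ``$\max_i d^{\cu M}(a_i,b_i)<\delta$ for some rational $\delta<\triangle_P(\varepsilon)$'', one reads off that the family of these sentences, over all admissible $(\varepsilon,\delta)$, holds in $\cu M$ iff $P^{\cu M}$ is uniformly continuous with modulus $\triangle_P$; likewise for function symbols. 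Combining the two blocks, $\cu M\models\met(L)$ iff $d^{\cu M}$ is a pseudometric and every symbol of $V$ is uniformly continuous in $\cu M$ with the bound prescribed by $L$, i.e.\ iff $(\cu M,L)$ is a pre-metric structure; the last clause of the Fact then follows at once.

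The step I would be most careful about is the continuity block, specifically reconciling the strict inequality $d<\triangle_P(\varepsilon)$ built into the notion of a modulus of uniform continuity with the requirement that connectives be continuous: a sharp indicator of $[0,\delta]$ is not continuous, which is why I split the condition into a family indexed by rational thresholds $\delta$ and replace the indicator by the soft cutoff $\rho_\delta$. A minor bookkeeping point is that, to handle irrational $\varepsilon$, I let $\varepsilon$ range over all of $(0,1]$, so $\met(L)$ need not be countable; this is harmless for our purposes, but if desired one can first replace each $\triangle_S$ by a non-decreasing, right-continuous modulus and then restrict to rational $\varepsilon$, giving $|\met(L)|\le\aleph_0+|V|$.
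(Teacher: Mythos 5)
The paper states this only as a Fact with a citation to [Ca] and gives no proof, so there is nothing internal to compare against; your explicit axiomatization (pseudometric sentences for $d$ plus the family of soft-cutoff uniform-continuity sentences indexed by $\varepsilon$ and rational $\delta<\triangle_S(\varepsilon)$) is correct and is exactly the standard construction the Fact refers to, with every axiom of the required $\sup$-quantifier--free form and consistent with how $\met(L)$ is described later in the paper (e.g.\ in the proof of Lemma \ref{l-TL-conditional}).
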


A \emph{metric theory} $(T,L)$ consists of a metric signature $L$ and a set $T$ of sentences in the vocabulary of $L$ such that $T\models \met(L)$.  We say that $T$ is a \emph{metric theory with signature} $L$ if $(T,L)$ is a metric theory.  A pre-metric (or metric) \emph{model} of a metric theory $(T,L)$ is a pre-metric (or metric) structure $\cu{M}_+=(\cu M,L)$  such that $\varphi^{\cu{M}}=0$ for all $\varphi\in T$.
Thus Fact \ref{f-metric-theory} shows that for any metric theory $(T,L)$ and general structure $\cu M$, $(\cu M,L)$ is a pre-metric model of $T$ if and only if $\cu M$ is a general model of $T$.

We say that a sequence $\langle\varphi_m(\vec x,\vec y)\rangle_{m\in\BN}$ of formulas is \emph{Cauchy} in $T$
if for each $\varepsilon >0$ there exists $m$ such that for all $k\ge m$,
$$ T\models \sup_{\vec x}\sup_{\vec y}|\varphi_m(\vec x,\vec y)-\varphi_k(\vec x,\vec y)|\dotle\varepsilon.$$

\begin{df} \label{d-metric-expansion}
Let $T$ be a theory in a vocabulary $V$, and let $D$ be a new binary predicate symbol.
We say that $T_e$ is a \emph{pre-metric expansion} of $T$ (with signature $L_e$) if:
\begin{itemize}
\item[(i)]  $(T_e,L_e)$ is a metric theory whose signature $L_e$ has vocabulary $V_D:=V\cup\{D\}$ and distance predicate $D$.
\item[(ii)] There is a Cauchy sequence $\langle d\rangle=\langle d_m\rangle$ of formulas in $T$ such that
the general models of $T_e$ are exactly the structures
of the form $\cu M_e=(\cu M,[\lim d_m]^{\cu M})$, where $\cu M$ is a general model of $T$.
\end{itemize}
\end{df}

\begin{fact}  \label{f-premetric-expansion}  Suppose $V$ has countably many predicate symbols and $T$ is a $V$-theory.

\noindent\begin{itemize}
\item[(i)] (Theorem 3.3.4 of [Ke].) $T$ has a pre-metric expansion.

\item[(ii)](Proposition 4.4.1 of [Ke].)  If $T_e$ a pre-metric expansion of $T$, $\cu M, \cu N\models T$, and $\cu M\equiv\cu N$, then $\cu M_e\equiv\cu N_e$.
\end{itemize}
\end{fact}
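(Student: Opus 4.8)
The plan is to prove the two parts separately: part (i) by building a pseudometric directly out of the language, and part (ii) by a ``continuity of a formula value in its atomic subformulas'' argument.

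For part (i), I would first use the hypothesis on the predicate symbols to fix an enumeration $\langle\varphi_k(x,\vec z_k)\rangle_{k\in\BN}$ of all $V$-formulas with one distinguished free variable $x$ (a countable set), where the variables of $\vec z_k$ are always chosen distinct from $x$ and from a fixed fresh variable $y$, and then set
\[
d_m(x,y):=\max_{k\le m}\;2^{-k}\,\sup_{\vec z_k}|\varphi_k(x,\vec z_k)-\varphi_k(y,\vec z_k)|,
\]
a $V$-formula with free variables $x,y$. Since the $d_m$ increase in $m$ and $d_k\dotminus d_m$ only involves indices $>m$, one has $\sup_{x,y}|d_k(x,y)-d_m(x,y)|\le 2^{-m}$ in every general structure, so $\langle d_m\rangle$ is Cauchy (in the sense of the excerpt, uniformly over all general structures) and $[\lim d_m]^{\cu M}(a,b)=\sup_k 2^{-k}\sup_{\vec z}|\varphi_k^{\cu M}(a,\vec z)-\varphi_k^{\cu M}(b,\vec z)|$. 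Introducing a new binary predicate symbol $D$ and, for each general $\cu M\models T$, the expansion $\cu M_e=(\cu M,[\lim d_m]^{\cu M})$, I would check: (a) $D^{\cu M_e}$ is a pseudo-metric on $M$ bounded by $1$ — symmetry, the triangle inequality, and $D^{\cu M_e}(a,a)=0$ all pass coordinatewise from the corresponding facts in $[0,1]$; (b) every symbol is uniformly continuous with respect to $D^{\cu M_e}$, with moduli depending only on the enumeration and hence uniform over all $\cu M$. For an $n$-ary predicate $P$, each formula $P(z_1,\dots,z_{i-1},x,z_{i+1},\dots,z_n)$ appears as some $\varphi_{k_i}$, so $|P^{\cu M}(\dots,a,\dots)-P^{\cu M}(\dots,b,\dots)|\le 2^{k_i}D^{\cu M_e}(a,b)$, and chaining over coordinates yields an explicit modulus $\triangle_P$. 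For a function symbol $F$ the estimate is coarser: given $\varepsilon>0$, pick $K$ with $2^{-K}<\varepsilon$; for $k\le K$ the formula $\varphi_k(F(x_1,\dots,x_n),\vec z)$ with distinguished variable $x_i$ is again one of the $\varphi_{j(k,i)}$, and since only finitely many such indices occur there is $N=N(\varepsilon)$ with $D^{\cu M_e}(F^{\cu M}(\vec a),F^{\cu M}(\vec b))\le\max(\varepsilon,2^{N}\max_i D^{\cu M_e}(a_i,b_i))$, giving a (non-canonical) modulus $\triangle_F$. Letting $L_e$ be the metric signature over $V_D=V\cup\{D\}$ with distance predicate $D$ and these moduli, every $\cu M_e$ is then a pre-metric structure for $L_e$, and I would take
\[
T_e:=T\;\cup\;\bigl\{\,\sup_x\sup_y|D(x,y)-d_m(x,y)|\dotle 2^{-m}\;:\;m\in\BN\,\bigr\}\;\cup\;\met(L_e).
\]
Because $\langle d_m\rangle$ is Cauchy at the explicit rate $2^{-m}$, any $V_D$-structure satisfying the displayed sentences for all $m$ must interpret $D$ as $[\lim d_m]$; hence the general models of $T_e$ are exactly the $\cu M_e$ with $\cu M\models T$, so $T_e\models\met(L_e)$, $(T_e,L_e)$ is a metric theory, and condition (ii) of Definition \ref{d-metric-expansion} holds with $\langle d\rangle=\langle d_m\rangle$.

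For part (ii), let $T_e$ be any pre-metric expansion of $T$ witnessed as in Definition \ref{d-metric-expansion} by a Cauchy-in-$T$ sequence $\langle d_m\rangle$, so $D^{\cu K_e}=[\lim d_m]^{\cu K}$ for all $\cu K\models T$. The key lemma, proved by a routine induction on a $V_D$-formula $\theta$ (using that each connective appearing in $\theta$ is uniformly continuous on the compact cube $[0,1]^n$ and that $\sup$ and $\inf$ are nonexpansive), is: there is a modulus $\omega_\theta$ with $\omega_\theta(\delta)\to 0$ as $\delta\to 0$ such that, if $\theta^{[m]}$ is the $V$-formula obtained from $\theta$ by replacing each atomic subformula $D(t_1,t_2)$ (necessarily with $t_1,t_2$ $V$-terms, as $D$ is a predicate) by $d_m(t_1,t_2)$, then $\sup_{\vec a}|\theta^{\cu K_e}(\vec a)-(\theta^{[m]})^{\cu K}(\vec a)|\le\omega_\theta(\delta)$ for every $\cu K\models T$ with $\sup_{x,y}|D^{\cu K_e}(x,y)-d_m^{\cu K}(x,y)|\le\delta$. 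Given a $V_D$-sentence $\theta$ and $\varepsilon>0$, I would choose $\delta$ with $\omega_\theta(\delta)<\varepsilon$ and then, by Cauchiness, $m$ with $T\models\sup_{x,y}|d_m(x,y)-d_k(x,y)|\dotle\delta$ for $k\ge m$, whence $\sup_{x,y}|D^{\cu K_e}(x,y)-d_m^{\cu K}(x,y)|\le\delta$ in every $\cu K\models T$. Applying the lemma to $\cu M$ and to $\cu N$ and using $\cu M\equiv\cu N$ (so $(\theta^{[m]})^{\cu M}=(\theta^{[m]})^{\cu N}$) gives $|\theta^{\cu M_e}-\theta^{\cu N_e}|<2\varepsilon$; letting $\varepsilon\to0$ and then varying $\theta$ yields $\cu M_e\equiv\cu N_e$.

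I expect the only real obstacle to be the uniform-continuity bookkeeping in part (i): the enumeration and the weights $2^{-k}$ must be arranged so that the $d_m$ are genuine formulas forming a Cauchy sequence at a controlled rate \emph{and} so that composition with the function symbols preserves uniform continuity of $D^{\cu M_e}$ — it is the latter that forces $\triangle_F$ to be defined by the ``fix $\varepsilon$ first, then swallow the tail of the supremum'' argument rather than by a clean Lipschitz-type bound, since the naive estimate carries an unwanted factor $2^{j(k,i)}$. Part (ii) is comparatively routine; the only subtlety is that one substitutes a quantified formula for an atomic subformula, so the inductive continuity estimate must tolerate the substituted piece sitting under arbitrarily many nested quantifiers and connectives, which it does since at each step the modulus is either unchanged (under a quantifier) or post-composed with the modulus of one connective.
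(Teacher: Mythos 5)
The paper itself offers no proof of this Fact: it is imported verbatim from [Ke] (Theorem 3.3.4 and Proposition 4.4.1), so there is no internal argument to compare yours against. That said, your reconstruction is the expected one: part (i) via a weighted maximum $d_m(x,y)=\max_{k\le m}2^{-k}\sup_{\vec z}\,|\varphi_k(x,\vec z)-\varphi_k(y,\vec z)|$ over an enumeration of formulas with a distinguished variable, with the modulus for a function symbol obtained by cutting off the tail at $2^{-K}<\varepsilon$ and absorbing the finitely many substituted indices; and part (ii) via replacing each occurrence of $D(t_1,t_2)$ by $d_m(t_1,t_2)$ and an induction producing a modulus $\omega_\theta$, using that connectives are uniformly continuous on $[0,1]^n$ and that $\sup,\inf$ are nonexpansive. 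Part (ii) is correct as written, and the verification that your $T_e$ has exactly the models $\cu M_e$ with $\cu M\models T$ is also fine.

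There is one genuine error in part (i): the set of all $V$-formulas with a distinguished free variable is never countable in this logic, because the connectives are \emph{all} continuous functions from $[0,1]^n$ to $[0,1]$ (already the constant connectives give continuum many formulas), and moreover the stated hypothesis bounds only the predicate symbols, so uncountably many function or constant symbols would destroy countability a second time. Since the weights $2^{-k}$, the Cauchy rate, and the moduli $\triangle_P,\triangle_F$ all depend on having a countable enumeration, this must be repaired; fortunately the repair is local. Run the construction over the countable family of restricted formulas (indeed the atomic formulas suffice) with one distinguished free variable: this family still contains each $P(z_1,\ldots,x,\ldots,z_n)$ and is still closed under substituting a term $F(\vec w)$ for the distinguished variable, which is all your uniform-continuity arguments use. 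Note that this forces you to read the hypothesis as ``countably many predicate \emph{and} function symbols'' (as in the paper's only application, Proposition \ref{p-Gh-general}, where $V$ is countable); this is not an artifact of your method, since with uncountably many unary function symbols and a single nonconstant unary predicate, any candidate Cauchy sequence $\langle d_m\rangle$ mentions only countably many symbols, and an unmentioned function symbol can then be interpreted as a step function on $[0,1]$ that is not uniformly continuous with respect to $[\lim d_m]$, so no pre-metric expansion exists at all in that generality.
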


\section{Reduced Products}

As in the classical case, the reduced product construction is a generalization of the ultraproduct construction
with a proper filter $\cu F$ instead of an ultrafilter $\cu D$.  We refer to Sections 4.1 and 6.2 of [CK2012] for a treatment of reduced products in classical model theory.  In Section 7.4 of [CK1966], reduced products of general structures with a predicate symbol $\circeq$
for the discrete metric
were defined, and here we will modify this to define reduced products of general structures  without $\circeq$.  For the special case of metric structures, this will be exactly the definition of reduced product in the paper [Lo].

We say that a sentence $\psi$ is \emph{preserved under reduced products} if every reduced product of models of $\psi$  is also a model of $\psi$.  As mentioned in the introduction, [Lo] proposed the problem of characterizing the sentences that are preserved under reduced products of metric structures.
Theorem 7.4.23 of [CK1966] implies that, assuming the continuum hypothesis, a sentence $\psi$ is preserved under reduced products of general structures  if and only if $\{\psi\}$ is equivalent to  some countable set of conditional sentences (see Definition \ref{d-conditional} below).  In this paper we will present that theorem and its proof in the modern framework. It will follow (in Corollary \ref{c-main-metric}), still assuming the continuum hypothesis, that a sentence $\psi$ is preserved under reduced products of metric structures if and only if $\{\psi\}$ is $\met(L)$-equivalent to some countable set of conditional sentences.  In  Theorem \ref{t-conditional-preserve}, we will prove in ZFC
that every conditional sentence is preserved under reduced products.  In the next section, in Theorem \ref{t-main}, we will prove, assuming the continuum hypothesis, that every
sentence that is preserved under reduced products is equivalent to a countable set of conditional sentences.
Thus, to solve the problem posed in [Lo], all that has to be done is to eliminate the continuum hypothesis from the result in [CK1966].  We will carry out that elimination in  Section \ref{s-elim-CH}.

In this section, the letter $I$ will always denote a non-empty set, to be used as an index set, and $\cu{F}$ will denote a proper filter over $I$.  Let $\beta I$ be the set of all ultrafilters over $I$.  Recall that when $\cu{D}\in\beta I$ and $g\colon I\to[0,1]$, there is a unique value $r=\lim_\cu{D} g$ in $[0,1]$ such that for each neighborhood $Y$ of $r$, the set of $i\in I$ such that $g(i)\in Y$ belongs to $\cu{D}$.  The following topological lemma partly motivated the definition of reduced product that comes next.

\begin{fact}  \label{f-F-limit}  (Lemma 7.4.16 in [CK1966], with the order reversed.) For every  $g\colon I\to[0,1]$,
\begin{equation}  \label{eq1}
\sup\{\lim_\cu{D}g\colon \cu{F}\subseteq\cu{D}\in\beta I\}=\inf_{J\in\cu{F}}\sup_{i\in J}g(i).
\end{equation}
\end{fact}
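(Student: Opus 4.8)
The plan is to establish the two inequalities between $s:=\sup\{\lim_{\cu D}g\colon \cu F\subseteq\cu D\in\beta I\}$ and $r:=\inf_{J\in\cu F}\sup_{i\in J}g(i)$ separately, using nothing beyond the ultrafilter lemma and the description of the $\cu D$-limit recalled just before the statement.

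For $s\le r$, I would fix an ultrafilter $\cu D\supseteq\cu F$ and argue by contradiction: if $\lim_{\cu D}g>r$, then by the definition of $r$ there is $J\in\cu F$ with $\sup_{i\in J}g(i)<\lim_{\cu D}g$, so choosing a real $t$ strictly between these two numbers, the set $\{i:g(i)<t\}$ contains $J$ and hence lies in $\cu D$, while $\{i:g(i)>t\}$ is the $g$-preimage of the neighborhood $(t,1]$ of $\lim_{\cu D}g$ and hence also lies in $\cu D$; these two sets are disjoint, contradicting that $\cu D$ is a proper filter. Taking the supremum over all such $\cu D$ then gives $s\le r$.

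For $r\le s$ (the only nontrivial case being $r>0$), the key step is to check that the family $\cu E:=\cu F\cup\{\,\{i:g(i)\ge t\}\colon 0\le t<r\,\}$ has the finite intersection property: a finite subfamily intersects to a set of the form $J\cap\{i:g(i)\ge t\}$ with $J\in\cu F$ and $t<r$ (using that $\cu F$ is a filter and that the sets $\{i:g(i)\ge t\}$ are nested), and since $\sup_{i\in J}g(i)\ge r>t$ this intersection is nonempty. Extending $\cu E$ to an ultrafilter $\cu D$, we have $\cu F\subseteq\cu D$ and $\{i:g(i)\ge t\}\in\cu D$ for every $t<r$; a short unwinding of the definition of $\lim_{\cu D}g$ (if $\lim_{\cu D}g<t<r$ then $\{i:g(i)<t\}\in\cu D$ would be disjoint from $\{i:g(i)\ge t\}$) then forces $\lim_{\cu D}g\ge t$ for all $t<r$, hence $\lim_{\cu D}g\ge r$, and therefore $s\ge r$. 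In particular the supremum defining $s$ is attained.

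I do not expect a genuine obstacle here: this is the standard filter/compactness argument, and the only points requiring care are keeping the strict and non-strict inequalities consistent throughout and correctly translating between membership in $\cu D$ and neighborhoods of the $\cu D$-limit in the compact space $[0,1]$.
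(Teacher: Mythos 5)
Your proof is correct and takes essentially the same route as the paper: one inequality by exhibiting two disjoint sets that would both have to belong to $\cu{D}$, the other by extending $\cu{F}$ together with the sets $\{i\colon g(i)\ge t\}$ for $t<r$ to an ultrafilter. The only difference is presentational: the paper argues by contradiction from the midpoint $z=(x+y)/2$ and leaves the ultrafilter construction in the second case largely implicit, whereas you spell out the finite intersection property explicitly and note in passing that the supremum is attained.
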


\begin{proof}  Let $x$ denote the left side of (\ref{eq1}), $y$ denote the right side of (\ref{eq1}), and $z=(x+y)/2$.
Suppose first that $x>y$, so $x>z>y$.  Then there exists $\cu F\subseteq\cu D\in\beta I$ and $J\in\cu F$ such that
$$\lim_{\cu D} g > z > \sup_{i\in J} g(i).$$
But then $\{i\in I\colon g(i)>z\}\cap J \in\cu D$, so there exists $i\in J$ such that $g(i) > z$, a contradiction.

Now suppose $x < y$, so $x < z < y$. Then whenever $\cu F\subseteq \cu D\in\beta I$ and $J\in\cu F$ we have $J\in \cu D$ but
$$\lim_{\cu D} g < z < \sup_{i\in J}g(i),$$
which is again a contradiction.
\end{proof}

In [CK1966], the left side of equation (\ref{eq1}) is denoted by $\cu{F}$-$\sup x$, while in [Lo] the right side of (\ref{eq1}) is denoted by $\limsup_{\cu{F}} x$.  We will use the latter notation here. Thus
$$ \limsup_{\cu{F}}x := \inf_{J\in\cu{F}}\sup_{i\in J}x(i).$$

The left side of equation (\ref{eq1}) makes it clear that, for $g:I\to [0,1]$, we have $\limsup_{\cu{D}} g=\lim_{\cu{D}} g$ when $\cu{D}$ is an ultrafilter on $I$.

 We now define the reduced product $\prod_{\cu{F}}\cu{M}_i$ of an indexed family $\langle\cu{M}_i\colon i\in I\rangle$ modulo $\cu{F}$.  The reduced product, like the ultraproduct, will be constructed in two steps: first construct the pre-reduced product $\prod^{\cu{F}}\cu{M}_i$, whose universe is the cartesian product  $\prod_{i\in I}\cu{M}_i$, and then take the reduction.

\begin{df} \label{d-reduced-product} Let $\cu{M}_i$ be a general structure for each $i\in I$.  The \emph{pre-reduced product} $\prod^{\cu{F}}\cu{M}_i$ is the  general structure $\cu{M}'$ such that:
\begin{itemize}
\item $M'=\prod_{i\in I} M_i$, the cartesian product;
\item For each constant symbol $c\in V$, $c^{\cu{M}'}=\langle c^{\cu{M}_i}\colon i\in I\rangle$;
\item For each $n$-ary function symbol $G\in V$ and $n$-tuple $\vec{a}$ in $M'$,
$$G^{\cu{M}'}(\vec{a})=\langle G^{\cu{M}_i}(\vec{a}(i))\colon i\in I\rangle;$$
\item For each $n$-ary predicate symbol $P\in V$ and $n$-tuple $\vec{a}$ in $M'$,
$$P^{\cu{M}'}(\vec{a})=\limsup_{\cu{F}}\langle P^{\cu{M}_i}(\vec{a}(i))\colon i\in I\rangle.$$
\end{itemize}
The \emph{reduced product} $\prod_{\cu{F}}\cu{M}_i$ is the reduction of $\prod^{\cu{F}}\cu{M}_i$.  We also let $a_{\cu{F}}$ denote the equivalence class of $a$ under $\doteq^{\cu{M}'}$.
\end{df}

Thus an ultraproduct is  a reduced product modulo an ultrafilter.

\begin{rmk}  \label{r-atomic-reduced-product} Let $\cu{M}'=\prod^{\cu{F}}\cu{M}_i$  and $\cu{N}=\prod_{\cu{F}}\cu{M}_i$. The mapping $a\mapsto a_{\cu{F}}$ is an embedding of $\cu{M}'$ onto  $\cu{N}$.  Therefore:
\begin{itemize}
\item $N=\{a_{\cu{F}}\colon a\in M'\}$.
\item For each term $t(\vec{x})$ and tuple $\vec{a}$ in $M'$, $t^{\cu{N}}(\vec{a}_{\cu{F}})=(t^{\cu{M}'}(\vec{a}))_{\cu{F}}.$
\item For each atomic formula $\alpha(\vec{x})$ and tuple $\vec{a}$ in $M'$,
$$\alpha^{\cu{N}}(\vec{a}_{\cu{F}})=\alpha^{\cu{M}'}(\vec{a})=\limsup_{\cu{F}}\langle \alpha^{\cu{M}_i}(\vec{a}(i))\colon i\in I\rangle.$$
\item If $\cu{M}_i\cong\cu{N}_i$ for each $i\in I$, then $\prod_{\cu{F}}\cu{M}_i\cong \prod_{\cu{F}}\cu{N}_i.$
\item If $\cu{N}_i$ is the reduction of $\cu{M}_i$ for each $i\in I$, then $\prod_{\cu{F}}\cu{M}_i\cong \prod_{\cu{F}}\cu{N}_i.$
\end{itemize}
\end{rmk}

\begin{rmk}  \label{r-part-reduced-product}  Let $V^0\subseteq V$, and
let $\cu{M}^0_i$ be the $V^0$-part of $\cu{M}_i$ for each $i\in I$.  Then $\prod^{\cu{F}}\cu{M}^0_i$ is the $V^0$-part of $\prod^{\cu{F}}\cu{M}_i$, and $\prod_{\cu{F}}\cu{M}^0_i$ is isomorphic to the $V^0$-part of $\prod_{\cu{F}}\cu{M}_i$.
\end{rmk}

\begin{proof}  Let $\cu{M}=\prod^{\cu{F}}\cu{M}_i$, and $\cu{M}^0=\prod^{\cu{F}}\cu{M}^0_i$.  By definition, $\prod_{\cu{F}}\cu{M}_i$ is the reduction of $\cu{M}$, and $\prod_{\cu{F}}\cu{M}^0_i$ is the reduction of $\cu{M}^0$.  It is clear that $\cu{M}^0$ is the $V^0$-part of $\cu{M}$.
Therefore, by Remark \ref{r-reduction-part} (ii), $\prod_{\cu{F}}\cu{M}^0_i$ is isomorphic to the $V^0$-part of $\prod_{\cu{F}}\cu{M}_i$.
\end{proof}

We next define the conditional sentences, which are the continuous analogues of  Horn sentences.

\begin{df} \label{d-conditional} A formula $\varphi$ is \emph{primitive conditional} if there are atomic formulas $\alpha_0,\ldots,\alpha_n$ and unary increasing connectives $C_0,\ldots,C_n$ such that
\begin{equation}  \label{eq2}
\varphi=\min(C_0(\alpha_0),C_1(1-\alpha_1),\ldots,C_n(1-\alpha_n)).
\end{equation}
The set of \emph{conditional formulas} is the least set of formulas that contains the primitive conditional formulas and is closed under the application of the $\max$ connective and the quantifiers $\sup,\inf$.
\end{df}

By taking $C_0=1$ in (\ref{eq2}), one can see that $\min(C_1(1-\alpha_1),\ldots,C_n(1-\alpha_n))$ is also a primitive conditional formula.

\begin{lemma} \label{l-TL-conditional}  The set $\met(L)$ of axioms for pre-metric structures with signature $L$ is equivalent to a set of  conditional sentences.
\end{lemma}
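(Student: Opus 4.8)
The plan is to unwind $\met(L)$ via Fact~\ref{f-metric-theory}: a general structure $\cu M$ is a model of $\met(L)$ precisely when it is the downgrade of a pre-metric structure for $L$, that is, precisely when $d^{\cu M}$ is a pseudo-metric on $M$ and, for every predicate symbol $P$ and every function symbol $F$ of $V$ of arity $n$, the maps $P^{\cu M}$ and $F^{\cu M}$ are uniformly continuous with respect to $d^{\cu M}$ with the moduli $\triangle_P,\triangle_F$ prescribed by $L$. It therefore suffices to produce, for each of these requirements separately, a \emph{set} of conditional sentences with the same general models; the union of all of them then has the same general models as $\met(L)$. The one point needing care is that a comparison between two predicate values --- such as $d(x,y)\le d(y,x)$, or $d(x,z)\le d(x,y)\dotplus d(y,z)$, or $|P(\vec x)-P(\vec y)|\le\varepsilon$ --- cannot be captured by a single primitive conditional formula: the zero set of a primitive conditional, read off in the coordinates recording the values of its atomic subformulas, is a finite union of sets each cut out by one coordinate being $\le$ or $\ge$ a constant, hence never a ``diagonal'' region. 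But each such comparison becomes an \emph{intersection} of sets of that kind once one interposes rational thresholds, and such an intersection is cut out by a suitable set of conditional sentences (no use of the $\max$ connective is even needed).

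First I would record the pseudo-metric axioms. Reflexivity $d(x,x)=0$ is the conditional sentence $\sup_x d(x,x)$, a primitive conditional consisting of the single positive atomic literal $d(x,x)$ with the identity connective, prefixed by a $\sup$. For symmetry, note that $d^{\cu M}(a,b)\le d^{\cu M}(b,a)$ holds for all $a,b$ if and only if for all $a,b$ and every rational $q\in[0,1]$ one has $d^{\cu M}(a,b)\le q$ or $d^{\cu M}(b,a)\ge q$; the instance for fixed $q$ is the primitive conditional $\min\bigl(d(x,y)\dotminus q,\ (1-d(y,x))\dotminus(1-q)\bigr)$, so symmetry is equivalent to the set of sentences $\sup_x\sup_y\min\bigl(d(x,y)\dotminus q,\ (1-d(y,x))\dotminus(1-q)\bigr)$, one for each rational $q\in[0,1]$. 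Likewise, using density of $\mathbb Q$, $d^{\cu M}(a,c)\le d^{\cu M}(a,b)\dotplus d^{\cu M}(b,c)$ holds for all $a,b,c$ if and only if for all $a,b,c$ and all rationals $q,q_1\in[0,1]$ one has $d^{\cu M}(a,c)\le q$ or $d^{\cu M}(a,b)\ge q_1$ or $d^{\cu M}(b,c)\ge q\dotminus q_1$; the instance for fixed $q,q_1$ is $\min\bigl(d(x,z)\dotminus q,\ (1-d(x,y))\dotminus(1-q_1),\ (1-d(y,z))\dotminus(1-(q\dotminus q_1))\bigr)$, so the triangle inequality is equivalent to the set of sentences obtained by prefixing $\sup_x\sup_y\sup_z$ to this, over all rationals $q,q_1\in[0,1]$. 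In each case the unary connectives used, namely $r\mapsto r\dotminus c$ for a constant $c\in[0,1]$, are increasing, so all of these are conditional sentences.

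Next I would treat uniform continuity. For a function symbol $F$ of arity $n$ with modulus $\triangle_F$, the map $F^{\cu M}$ is uniformly continuous with this modulus if and only if, for all rationals $\varepsilon\in(0,1)$ and $\delta\in(0,\triangle_F(\varepsilon))$, $\cu M$ is a model of the conditional sentence
\[
\sup_{\vec x}\sup_{\vec y}\,\min\bigl(d(F(\vec x),F(\vec y))\dotminus\varepsilon,\ (1-d(x_1,y_1))\dotminus(1-\delta),\ \ldots,\ (1-d(x_n,y_n))\dotminus(1-\delta)\bigr),
\]
which asserts that $d(F(\vec x),F(\vec y))\le\varepsilon$ whenever $d(x_j,y_j)<\delta$ for all $j$; ranging over $\delta$ and using density of $\mathbb Q$ recovers the modulus condition for $\triangle_F$. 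For a predicate symbol $P$ of arity $n$ with modulus $\triangle_P$ the only difference is that the conclusion $|P(\vec x)-P(\vec y)|\le\varepsilon$ compares two atomic formulas, so, exactly as in the symmetry case, one interposes an extra rational parameter $q$: for rationals $\varepsilon\in(0,1)$, $\delta\in(0,\triangle_P(\varepsilon))$ and $q\in[0,1]$ one takes the conditional sentence
\[
\sup_{\vec x}\sup_{\vec y}\,\min\bigl(P(\vec x)\dotminus q,\ (1-P(\vec y))\dotminus(1-(q\dotminus\varepsilon)),\ (1-d(x_1,y_1))\dotminus(1-\delta),\ \ldots,\ (1-d(x_n,y_n))\dotminus(1-\delta)\bigr)
\]
together with the sentence obtained from it by interchanging the atomic formulas $P(\vec x)$ and $P(\vec y)$ (leaving the literals $d(x_j,y_j)$ unchanged); jointly these say that $|P(\vec x)-P(\vec y)|\le\varepsilon$ whenever $d(x_j,y_j)<\delta$ for all $j$, and ranging over $\delta$ and $q$ recovers uniform continuity of $P$ with modulus $\triangle_P$. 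Since $d$ is itself one of the predicate symbols of $V$, its prescribed modulus $\triangle_d$ is handled by this same scheme. The union of all the sets of conditional sentences produced in this paragraph and the previous one then has the same general models as $\met(L)$, which gives the lemma.

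I expect the bulk of the work to be bookkeeping rather than any genuine obstacle: pinning down the precise form of the uniform-continuity requirement underlying $\met(L)$ (in particular whether the inequalities in the modulus condition are strict or non-strict --- the usual variants are interprovably equivalent, and the rational-threshold interpositions above can be arranged to match whichever is used) and checking that the chosen ranges of $(\varepsilon,\delta)$ and $q$ reproduce it exactly, together with the routine verifications of the real-analysis equivalences ``$s\le t$ iff, for every rational $q$, $s\le q$ or $t\ge q$'' and its analogues for $s\le t\dotplus u$ and $s\le t+\varepsilon$. Conceptually nothing deep is involved: conditional formulas are closed under $\sup$, $\inf$ and $\max$, and a set of conditional sentences cuts out an arbitrary intersection of zero sets of primitive conditionals --- exactly the flexibility needed to carve out the pseudo-metric and uniform-continuity conditions.
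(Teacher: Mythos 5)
Your proposal is correct and follows essentially the same route as the paper's (much terser) proof: encode each comparison between predicate values by a countable family of primitive conditional formulas obtained by interposing rational thresholds, and then prefix $\sup$ quantifiers, doing this for symmetry, the triangle inequality, and the uniform-continuity axioms. You have simply filled in explicitly the details that the paper dismisses with ``using a similar trick.''
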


\begin{proof}  The property $d(x,y)\dotle d(y,x)$ is equivalent to the countable set of primitive conditional formulas
$$\{ (r\dotle d(x,y))\dotle (r\dotle d(y,x))\colon r \mbox{ dyadic rational} \}.$$
Using a similar trick, the property
$$d(x,z)\dotle d(x,y)\dotplus d(y,z)$$
 and the uniform continuity property
$$\max_{k\le n} d(x_k,y_k)< \delta \Rightarrow |P(\vec x) - P(\vec y)|\le \varepsilon$$
can be expressed by countable sets of primitive conditional formulas.  The sentences in $\met(L)$ can then be expressed by
countable sets of sentences obtained by putting $\sup$ quantifiers in front of primitive conditional formulas.
\end{proof}

\begin{lemma}  \label{l-conditional-minus-epsilon}
For every (primitive) conditional formula $\varphi$ and every increasing unary connective $B$,  $B\varphi$ is equivalent to a (primitive) conditional formula.
\end{lemma}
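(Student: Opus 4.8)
The plan is a straightforward induction on the construction of $\varphi$ as a conditional formula, resting on one elementary fact about increasing \emph{continuous} unary connectives: such a $B$ commutes with $\min$, $\max$, $\sup$, and $\inf$. Concretely, for a nonempty $S\subseteq[0,1]$ one has $B(\sup S)=\sup B(S)$ and $B(\inf S)=\inf B(S)$: monotonicity of $B$ gives one inequality in each case, while choosing a sequence in $S$ converging to $\sup S$ (resp.\ $\inf S$) and invoking continuity of $B$ gives the reverse inequality. In particular $B(\min(r_1,\ldots,r_n))=\min(B(r_1),\ldots,B(r_n))$, and likewise for $\max$. I will also use that a composition of increasing unary connectives is again an increasing unary connective, since both continuity and monotonicity pass to compositions.

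For the base case, suppose $\varphi=\min(C_0(\alpha_0),C_1(1-\alpha_1),\ldots,C_n(1-\alpha_n))$ is primitive conditional, with each $C_i$ an increasing unary connective, each $\alpha_i$ atomic, and $1-\alpha_i$ read as a subformula. Applying $B$ and distributing it across the $\min$ yields
$$B\varphi=\min\bigl((B\circ C_0)(\alpha_0),\,(B\circ C_1)(1-\alpha_1),\,\ldots,\,(B\circ C_n)(1-\alpha_n)\bigr),$$
which is again primitive conditional because each $B\circ C_i$ is an increasing unary connective. This already settles the parenthetical ``primitive'' version of the lemma.

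For the inductive step I would use that the conditional formulas are generated from the primitive ones by applying $\max$ and the quantifiers $\sup,\inf$. If $\varphi=\max(\psi_1,\psi_2)$, then $B\varphi$ is equivalent to $\max(B\psi_1,B\psi_2)$ since $B$ commutes with $\max$; applying the induction hypothesis to $\psi_1$ and $\psi_2$ and then closing under $\max$ produces a conditional formula equivalent to $B\varphi$. If $\varphi=\sup_x\psi$ (the case $\varphi=\inf_x\psi$ being identical), then evaluating in an arbitrary general structure and using that $B$ commutes with $\sup$ shows that $B\varphi$ is equivalent to $\sup_x(B\psi)$; applying the induction hypothesis to $\psi$ and then closing under $\sup$ again yields a conditional formula equivalent to $B\varphi$. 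This completes the induction.

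The only step that is more than bookkeeping is the commutation of $B$ with $\sup$ and $\inf$, and even that is routine; the one point worth flagging is that it genuinely relies on the continuity of the connective $B$, not merely on its monotonicity.
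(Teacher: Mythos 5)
Your proof is correct and follows essentially the same route as the paper: the base case is handled by replacing each $C_k$ with $B\circ C_k$, and the paper then dispatches the general case as ``an easy induction on the complexity of $\varphi$,'' which is exactly the induction you spell out (using that an increasing continuous $B$ distributes over $\max$, $\sup$, and $\inf$). Your remark that the commutation with $\sup$ and $\inf$ genuinely uses continuity of $B$, not just monotonicity, is a worthwhile point the paper leaves implicit.
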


\begin{proof}  If $\varphi$ is primitive conditional, then $B\varphi$ is equivalent to the primitive conditional formula obtained from equation (\ref{eq2}) by replacing each connective $C_k$ by $B\circ C_k$.  The result for arbitrary conditional formulas $\varphi$ follows  by an easy induction on the complexity of $\varphi$.
\end{proof}

The paper [Lo] considered other classes of formulas that are obtained by closing the class of atomic formulas under $\inf, \sup$, and certain connectives.  These classes are quite different from the class of conditional formulas, since they do not contain the primitive conditional formulas.

The following topological fact is motivated by Definition \ref{d-conditional}.

\begin{fact}  \label{f-basic}  (Lemma 7.4.20 in [CK1966].) Let $y_0,\ldots,y_n\colon I\to[0,1]$, and $C_0,\ldots,C_n$ be increasing unary connectives.  Suppose
$$J':= \{i\in I\colon \min(C_0(y_0(i)),C_1(1-y_1(i)),\ldots,C_n(1-y_n(i)))=0\}\in\cu{F}.$$
Then
$$\min(C_0(\limsup_{\cu{F}}y_0),C_1(1-\limsup_{\cu{F}}y_1),\ldots,C_n(1-\limsup_{\cu{F}}y_n))=0.$$
\end{fact}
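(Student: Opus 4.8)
The plan is to reduce the statement to the ultrafilter case via Fact~\ref{f-F-limit} and then exploit the monotonicity built into the connectives $C_0,\ldots,C_n$.

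Set $a_0 = \limsup_{\cu F} y_0$ and $a_k = \limsup_{\cu F} y_k$ for $1\le k\le n$, and write $m = \min(C_0(a_0), C_1(1-a_1),\ldots,C_n(1-a_n))$ for the quantity to be shown to equal $0$; since $m\in[0,1]$, it is enough to prove $m\le\delta$ for every $\delta>0$. Also let $b\colon I\to[0,1]$ be the pointwise minimum, $b(i) = \min(C_0(y_0(i)), C_1(1-y_1(i)),\ldots,C_n(1-y_n(i)))$. The hypothesis says that $b$ vanishes on $J'\in\cu F$, so $\limsup_{\cu F} b = \inf_{J\in\cu F}\sup_{i\in J} b(i) \le \sup_{i\in J'} b(i) = 0$; by Fact~\ref{f-F-limit} this forces $\lim_{\cu D} b = 0$ for every ultrafilter $\cu D\supseteq\cu F$.

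Now fix $\delta>0$. By continuity of $C_0$ at the point $a_0$, choose $\varepsilon>0$ with $|x-a_0|<\varepsilon \Rightarrow C_0(x) > C_0(a_0) - \delta$. Since Fact~\ref{f-F-limit} also gives $a_0 = \sup\{\lim_{\cu D} y_0 \colon \cu F\subseteq\cu D\in\beta I\}$, pick an ultrafilter $\cu D\supseteq\cu F$ with $a_0 - \varepsilon < \lim_{\cu D} y_0 \le a_0$, so that $C_0(\lim_{\cu D} y_0) > C_0(a_0) - \delta$. As $\lim_{\cu D}$ commutes with the continuous connectives and with $\min$ (the standard fact underlying Fact~\ref{f-Los}), $0 = \lim_{\cu D} b = \min(C_0(\lim_{\cu D} y_0), C_1(1-\lim_{\cu D} y_1),\ldots,C_n(1-\lim_{\cu D} y_n))$. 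For $1\le k\le n$ we have $\lim_{\cu D} y_k \le a_k$, hence $1-\lim_{\cu D} y_k \ge 1-a_k$, hence $C_k(1-\lim_{\cu D} y_k) \ge C_k(1-a_k)$ because $C_k$ is increasing. Feeding these inequalities and the bound on $C_0(\lim_{\cu D} y_0)$ into the displayed minimum, together with the elementary estimate $\min(s_0-\delta, s_1,\ldots,s_n) \ge \min(s_0,s_1,\ldots,s_n) - \delta$, yields $0 \ge m - \delta$, i.e.\ $m\le\delta$. Since $\delta>0$ was arbitrary, $m=0$, which is the conclusion.

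The one point requiring care --- and the only real obstacle --- is the asymmetry between the single term $C_0(\alpha_0)$ and the negated terms $C_k(1-\alpha_k)$ under $\limsup_{\cu F}$: each negated term automatically moves in the favorable direction by monotonicity (since $\lim_{\cu D} y_k \le \limsup_{\cu F} y_k$ gives $C_k(1-\lim_{\cu D} y_k) \ge C_k(1-a_k)$), whereas $C_0(\limsup_{\cu F} y_0)$ sits \emph{above} $C_0(\lim_{\cu D} y_0)$, so one must choose $\cu D$ to push $y_0$ arbitrarily close to its $\cu F$-limsup and invoke continuity of $C_0$ at that point to control the loss. Everything else is routine manipulation of $\lim_{\cu D}$ and Fact~\ref{f-F-limit}.
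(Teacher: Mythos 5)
Your proof is correct, but it takes a genuinely different route from the paper's. The paper argues directly from the structure of the zero sets of increasing continuous connectives: after discarding the terms with $C_k(0)>0$, it sets $z_k=\sup\{z\colon C_k(z)=0\}$, so that $C_k(z)=0$ iff $z\le z_k$, and then, assuming $C_k(1-\limsup_{\cu F}y_k)>0$ for all $1\le k\le n$, finds a single $J\in\cu F$ on which every $C_k(1-y_k(i))>0$, whence $C_0(y_0(i))=0$ on $J'\cap J\in\cu F$, giving $\limsup_{\cu F}y_0\le z_0$ and thus $C_0(\limsup_{\cu F}y_0)=0$ exactly. You instead pass to ultrafilter limits: Fact \ref{f-F-limit} identifies $\limsup_{\cu F}$ with the supremum of $\lim_{\cu D}$ over ultrafilters $\cu D\supseteq\cu F$, the hypothesis gives $\lim_{\cu D}b=0$ for every such $\cu D$ (where $b$ is the pointwise minimum), and the commutation of $\lim_{\cu D}$ with continuous functions --- which is the standard fact underlying Fact \ref{f-Los} rather than Fact \ref{f-Los} itself, but is easily verified --- turns this into $\min\bigl(C_0(\lim_{\cu D}y_0),C_1(1-\lim_{\cu D}y_1),\ldots,C_n(1-\lim_{\cu D}y_n)\bigr)=0$; monotonicity handles the negated terms since $\lim_{\cu D}y_k\le\limsup_{\cu F}y_k$, and choosing $\cu D$ so that $\lim_{\cu D}y_0$ nearly attains $\limsup_{\cu F}y_0$ (possible because $\cu F$ is proper) together with continuity of $C_0$ controls the unnegated term up to an arbitrary $\delta>0$. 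The paper's argument is self-contained and exact, with no $\varepsilon$--$\delta$ bookkeeping, and isolates the role of monotonicity and continuity through the thresholds $z_k$; yours is more conceptual, deducing the reduced-product fact from the ultrafilter-limit picture and making the asymmetry between $C_0$ and the negated terms explicit, at the modest cost of the approximation argument and the unstated (but routine) limit-commutation lemma.
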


\begin{proof}  We may assume that $C_k(0)=0$ for $1\le k\le n$, because otherwise we may remove $C_\ell$ when $\ell$ is the least $\ell\ge 1$ such
that $C_\ell(0)>0$.
For $1\le k\le n$, let $z_k=\sup\{z\colon C_k(z)=0\}$.  Since each $C_k$ is increasing and continuous, for $1\le k\le n$ we have $C_k(z)=0$ if and only if $z\le z_k$.

Suppose that  $C_k(1-\limsup_{\cu F} y_k)>0$ for each $1\le k\le n$.  We prove that $C_0(\limsup_{\cu F}y_0)=0$.
Fix $1\le k\le n$.  Then $1-\limsup_{\cu F}y_k > z_k$, so
$$\inf_{J\in\cu F}\sup_{i\in J}y_k(i)=\limsup_{\cu F} y_k< 1-z_k.$$
Hence there exists $J\in\cu F$ such that for every $i\in J$ and $1\le k\le n$, $y_k(i)<1-z_k$, so $1-y_k(i) > z_k$ and $C_k(1-y_k(i))>0$.
Then $J'\cap J\in\cu F$, and therefore
$$J'\cap J\subseteq \{i\in I\colon C_0(y_0(i))=0\}\in\cu F.$$
Hence there is a set $J''\in\cu F$ such that $C_0(y_0(i))=0$ for all $i\in J''$.
Then there exists $z$ such that  $C_0(z)=0$.  Since $C_0$ is increasing and continuous, we have
$\{z\colon C_0(z)=0\}=[0,z_0]$ for some $z_0\in[0,1]$.  So $y_0(i)\le z_0$ for all $i\in J''$, and thus $\limsup_{\cu F} y_0\le z_0$.  It follows that $C_0(\limsup_{\cu F} y_0)=0$,  as required.
\end{proof}

\begin{thm}  \label{t-conditional-preserve}  (Exercise 7K in [CK1966].)  If $T$ is equivalent to a set of conditional sentences, then every reduced product of general models of $T$ is a general model of $T$ (that is, $T$ is preserved under reduced products of general structures).
\end{thm}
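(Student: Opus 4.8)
The plan is to reduce immediately to a single conditional sentence and then prove, by induction on the construction of conditional formulas, a statement flexible enough to pass through the quantifiers. Fix a proper filter $\cu F$ over an index set $I$, general structures $\cu M_i$ for $i\in I$, write $M'=\prod_{i\in I}M_i$ for the cartesian product, and let $\cu N=\prod_{\cu F}\cu M_i$; by Remark \ref{r-atomic-reduced-product} every element of $N$ has the form $a_{\cu F}$ with $a\in M'$, and an atomic formula is evaluated in $\cu N$ as the $\limsup_{\cu F}$ of its values in the factors. I would establish the following claim: for every conditional formula $\varphi(\vec x)$ and every $\vec a\in M'$, if $\{i\in I:\varphi^{\cu M_i}(\vec a(i))=0\}\in\cu F$, then $\varphi^{\cu N}(\vec a_{\cu F})=0$. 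Granting this, the theorem follows: if $T$ is equivalent to a set $\Sigma$ of conditional sentences and every $\cu M_i$ is a general model of $T$, hence of $\Sigma$, then for each $\sigma\in\Sigma$ the relevant set is all of $I\in\cu F$, so $\sigma^{\cu N}=0$; thus $\cu N\models\Sigma$, hence $\cu N\models T$.

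I would prove the claim by induction on the number of applications of $\max$, $\sup$, and $\inf$ used to build $\varphi$ as a conditional formula (Definition \ref{d-conditional}). The base case, $\varphi=\min(C_0(\alpha_0),C_1(1-\alpha_1),\ldots,C_n(1-\alpha_n))$ primitive conditional, is exactly Fact \ref{f-basic} applied to the functions $y_k(i):=\alpha_k^{\cu M_i}(\vec a(i))$: the hypothesis says the set $J'$ there lies in $\cu F$, the conclusion gives $\min(C_0(\limsup_{\cu F}y_0),C_1(1-\limsup_{\cu F}y_1),\ldots)=0$, and $\limsup_{\cu F}y_k=\alpha_k^{\cu N}(\vec a_{\cu F})$ by Remark \ref{r-atomic-reduced-product}, so, evaluating the connectives of $\varphi$ at the atomic values, $\varphi^{\cu N}(\vec a_{\cu F})=0$. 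For $\varphi=\max(\psi_1,\psi_2)$: on the set assumed to be in $\cu F$ both $\psi_j^{\cu M_i}(\vec a(i))$ vanish, so by upward closure of $\cu F$ each set $\{i:\psi_j^{\cu M_i}(\vec a(i))=0\}$ lies in $\cu F$, and the inductive hypothesis applies to $\psi_1$ and $\psi_2$. For $\varphi(\vec x)=\sup_y\psi(\vec x,y)$: given any $b\in M'$, for every $i$ in the set in $\cu F$ we have $\psi^{\cu M_i}(\vec a(i),b(i))\le\sup_y\psi^{\cu M_i}(\vec a(i),y)=0$, so $\{i:\psi^{\cu M_i}(\vec a(i),b(i))=0\}\in\cu F$ and the inductive hypothesis gives $\psi^{\cu N}(\vec a_{\cu F},b_{\cu F})=0$; since every element of $N$ is some $b_{\cu F}$, $\varphi^{\cu N}(\vec a_{\cu F})=0$.

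I expect the $\inf$ step to be the only real obstacle, since an infimum need not be attained, so for $i$ with $\inf_y\psi^{\cu M_i}(\vec a(i),y)=0$ there may be no exact witness in $\cu M_i$. The remedy is Lemma \ref{l-conditional-minus-epsilon}. For $\varphi(\vec x)=\inf_y\psi(\vec x,y)$, fix $\varepsilon>0$; since $s\mapsto s\dotminus\varepsilon$ is an increasing unary connective, Lemma \ref{l-conditional-minus-epsilon} yields a conditional formula $\psi'$ equivalent to $\psi\dotminus\varepsilon$, and the construction there only modifies the connectives at the primitive-conditional leaves, leaving the pattern of $\max$, $\sup$, $\inf$ unchanged, so $\psi'$ falls under the inductive hypothesis. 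For each $i$ in the set in $\cu F$, pick $b(i)\in M_i$ with $\psi^{\cu M_i}(\vec a(i),b(i))\le\varepsilon$, i.e.\ $\psi'^{\cu M_i}(\vec a(i),b(i))=0$; let $b(i)$ be arbitrary otherwise, and set $b=\langle b(i)\rangle_{i\in I}\in M'$. Then $\{i:\psi'^{\cu M_i}(\vec a(i),b(i))=0\}$ contains a set in $\cu F$, so the inductive hypothesis gives $\psi'^{\cu N}(\vec a_{\cu F},b_{\cu F})=0$, that is $\psi^{\cu N}(\vec a_{\cu F},b_{\cu F})\le\varepsilon$, whence $\varphi^{\cu N}(\vec a_{\cu F})\le\varepsilon$. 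Letting $\varepsilon\to 0$ gives $\varphi^{\cu N}(\vec a_{\cu F})=0$, completing the induction and the proof.
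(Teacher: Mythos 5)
Your proof is correct and follows essentially the same route as the paper: reduce to a single sentence, use Fact \ref{f-basic} for the primitive conditional base case, and induct over $\max$, $\sup$, $\inf$, with Lemma \ref{l-conditional-minus-epsilon} supplying the $\dotminus\,\varepsilon$ device needed because infima need not be attained. The only (cosmetic) difference is that the paper strengthens the inductive statement once and for all to the $\varepsilon$-version $\{i:\psi^{\cu M_i}(\vec a(i))\le\varepsilon\}\in\cu F\Rightarrow\psi^{\cu N}(\vec a_{\cu F})\le\varepsilon$, whereas you keep the $\varepsilon=0$ statement and introduce $\varepsilon$ locally inside the $\inf$ step by applying the inductive hypothesis to a conditional formula equivalent to $\psi\dotminus\varepsilon$ (correctly noting that this replacement does not change the $\max/\sup/\inf$ complexity on which the induction runs).
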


\begin{proof} It suffices to prove the result when $T=\{\psi\}$ for a single conditional sentence $\psi$.  Consider an indexed family $\langle\cu{M}_i\colon i\in I\rangle$ of general structures, and let $\cu{N}=\prod_{\cu{F}}\cu{M}_i$ be the reduced product.  First suppose that $\varphi$ is a primitive conditional formula as in (\ref{eq2}) above, and let $\vec{a}$ be a tuple of elements of the cartesian product $\prod_{i\in I} \cu{M}_i$.  For $k\le n$ put $y_k(i)=\alpha_k^{\cu{M}_k}(\vec{a}(i))$.  Then Fact \ref{f-basic} shows that
$$\{i\in I\colon \varphi^{\cu{M}_i}(\vec{a}(i))=0\}\in\cu{F}\Rightarrow\varphi^{\cu{N}}(\vec{a}_{\cu{F}})=0.$$
By Lemma \ref{l-conditional-minus-epsilon}, for each $\varepsilon\in [0,1]$, $\varphi\dotminus\varepsilon$ is equivalent to a primitive conditional formula.  Moreover, for every $\cu{M}$, tuple $\vec{a}$ in $M$, and $\varepsilon$, $\varphi^{\cu{M}}(\vec{a})\le\varepsilon$ if and only if $\varphi^{\cu{M}}(\vec{a})\dotminus\varepsilon=0.$
Therefore for each $\varepsilon\in[0,1]$,
\begin{equation}  \label{eq3}
\{i\in I\colon \varphi^{\cu{M}_i}(\vec{a}(i))\le\varepsilon\}\in\cu{F}\Rightarrow\varphi^{\cu{N}}(\vec{a}_{\cu{F}})\le\varepsilon.
\end{equation}
We now complete the proof of the theorem by proving the following stronger statement: for every conditional formula $\psi$ and $\varepsilon\in[0,1]$,
\begin{equation}  \label{eq4}
\{i\in I\colon \psi^{\cu{M}_i}(\vec{a}(i))\le\varepsilon\}\in\cu{F}\Rightarrow\psi^{\cu{N}}(\vec{a}_{\cu{F}})\le\varepsilon.
\end{equation}  The base case of the induction is taken care of by equation (\ref{eq3}).  For the inductive step, we only treat the $\inf$ case, the remaining cases being similar.  Suppose  (\ref{eq4}) holds for $\theta(\vec{x},y)$, and
$\psi(\vec{x})=\inf_y\theta(\vec{x},y)$.  Assume that
$$\{i\in I\colon\psi^{\cu{M}_i}(\vec{a}(i))\le\varepsilon\}\in\cu{F}.$$
Then for each $\delta>0$ there exists $b\in\prod_{i\in I}M_i$ such that
$$\{i\in I\colon\theta^{\cu{M}_i}(\vec{a}(i),b(i))\le\varepsilon+\delta\}\in\cu{F}.$$
By inductive hypothesis, $\theta^{\cu {N}}(\vec{a}_{\cu{F}},b_{\cu{F}})\le\varepsilon+\delta.$  $\psi^{\cu{N}}(\vec{a}_{\cu{F}})\le\epsilon+\delta$.
Since this holds for all $\delta>0$, $\psi^{\cu{N}}(\vec{a}_{\cu{F}})\le\varepsilon$, as required.
\end{proof}

  It is shown in  [Lo], pages 221-222, that every reduced product of metric structures is a metric structure.
We have analogous results for pre-metric structures, and for $V^0$-parts of pre-metric structures.

\begin{cor}  \label{c-metric-product}  Suppose $L$ is a metric signature over $V$ with distinguished distance $d$, $d\in V^0\subseteq V$, and
$L^0$ is the restriction of $L$ to $V^0$.

\begin{itemize}
\item[(i)]  Any reduced product of pre-metric structures (with signature $L$)  is a pre-metric structure.
\item[(ii)] If the $V^0$-part of $\cu{M}_i$ is a pre-metric structure (with signature $L^0$) for each $i\in I$ , then the $V^0$-part of $\prod_{\cu{F}}\cu{M}_i$ is a pre-metric structure.
\item[(iii)]   If the reduction of the $V^0$-part of  $\cu{M}_i$ is a metric structure (with signature $L^0$) for each $i\in I$, then the reduction of the $V^0$-part of $\prod_{\cu{F}}\cu{M}_i$ is a metric structure.
\end{itemize}
\end{cor}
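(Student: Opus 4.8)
The plan is to get (i) and (ii) for free from the machinery already assembled --- the axiomatization of pre-metric structures by $\met(L)$ (Fact \ref{f-metric-theory}), the equivalence of $\met(L)$ with a set of conditional sentences (Lemma \ref{l-TL-conditional}), and the preservation of conditional sentences under reduced products (Theorem \ref{t-conditional-preserve}) --- and then to reduce (iii) to the single remaining point, completeness of the metric. For (i): each $\cu M_i$ is a general model of $\met(L)$ by Fact \ref{f-metric-theory}; by Lemma \ref{l-TL-conditional} and Theorem \ref{t-conditional-preserve} the reduced product $\prod_{\cu F}\cu M_i$ is again a general model of $\met(L)$, so by Fact \ref{f-metric-theory} once more $(\prod_{\cu F}\cu M_i, L)$ is a pre-metric structure, which is exactly the reduced product $\prod_{\cu F}(\cu M_i, L)$. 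For (ii): apply (i) with $L^0$ to the family $\langle\cu M^0_i\rangle$ of $V^0$-parts, obtaining $\prod_{\cu F}\cu M^0_i\models\met(L^0)$; by Remark \ref{r-part-reduced-product} this structure is isomorphic to the $V^0$-part of $\prod_{\cu F}\cu M_i$, hence (Remark \ref{r-reduction}) elementarily equivalent to it, so the $V^0$-part of $\prod_{\cu F}\cu M_i$ also models $\met(L^0)$ and is a pre-metric structure for $L^0$.

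For (iii) I would first dispose of the bookkeeping. Let $\cu B_i$ denote the reduction of $\cu M^0_i$, which is a metric structure for $L^0$ by hypothesis. By Remark \ref{r-atomic-reduced-product}, $\prod_{\cu F}\cu M^0_i\cong\prod_{\cu F}\cu B_i$; combining this with Remark \ref{r-part-reduced-product} and Remark \ref{r-reduction-part}, and using that the reduced product $\cu N:=\prod_{\cu F}\cu B_i$ is already reduced, one finds that the reduction of the $V^0$-part of $\prod_{\cu F}\cu M_i$ is isomorphic --- via an embedding onto --- to $\cu N$. Since $d$ is a predicate symbol, embeddings preserve $d$, so this isomorphism is a surjective isometry and completeness transfers across it; hence it suffices to prove that $(\cu N, L^0)$ is a metric structure. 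By (i), $(\cu N, L^0)$ is already a pre-metric structure, and $\cu N$ is reduced, so by Fact \ref{f-reduced-premetric} $d^{\cu N}$ is a genuine metric on $N$ and the only remaining task is to show $(\cu N, d^{\cu N})$ is complete.

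Completeness is the main obstacle: it is the one claim here not expressible by conditional sentences, so it must be proved by hand from completeness of the fibres $\cu B_i$ (alternatively one can cite the corresponding statement for metric structures in [Lo], pages 221--222). Given a Cauchy sequence in $\cu N$, I would pass to a subsequence $\langle a^{(n)}_{\cu F}\rangle$ with $d^{\cu N}(a^{(n)}_{\cu F}, a^{(n+1)}_{\cu F})\le 2^{-n}$. Since $d^{\cu N}(a^{(n)}_{\cu F}, a^{(n+1)}_{\cu F})=\limsup_{\cu F}\langle d^{\cu B_i}(a^{(n)}(i), a^{(n+1)}(i))\colon i\in I\rangle$ by Remark \ref{r-atomic-reduced-product}, and $\limsup_{\cu F} x=\inf_{J\in\cu F}\sup_{i\in J}x(i)$, one can intersect finitely many members of $\cu F$ at each stage to obtain a decreasing chain $J_1\supseteq J_2\supseteq\cdots$ in $\cu F$ with $d^{\cu B_i}(a^{(m)}(i), a^{(m+1)}(i))<2^{-m+1}$ for all $i\in J_n$ and all $m\le n$. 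Next define $b\in\prod_{i\in I}B_i$ by: if $i\in\bigcap_n J_n$, let $b(i)$ be the limit in the complete space $\cu B_i$ of the Cauchy sequence $\langle a^{(m)}(i)\rangle_m$; otherwise let $n_0(i)$ be largest with $i\in J_{n_0(i)}$ and set $b(i)=a^{(n_0(i))}(i)$. A routine telescoping estimate then shows $\sup_{i\in J_n}d^{\cu B_i}(a^{(n)}(i), b(i))\le 2^{-n+2}$ for each $n$, so $d^{\cu N}(a^{(n)}_{\cu F}, b_{\cu F})\le 2^{-n+2}\to 0$, and thus $b_{\cu F}$ is a limit of the subsequence and hence of the original Cauchy sequence. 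This proves $(\cu N, d^{\cu N})$ complete, completing (iii).
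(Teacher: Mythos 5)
Your proposal is correct and follows essentially the same route as the paper: (i) and (ii) from Fact \ref{f-metric-theory}, Lemma \ref{l-TL-conditional}, and Theorem \ref{t-conditional-preserve}, and (iii) by the same reduction/isomorphism bookkeeping via Remarks \ref{r-atomic-reduced-product} and \ref{r-part-reduced-product}. The only difference is that where the paper outsources completeness to [Lo] (via a modification of Proposition 5.3 of [BBHU]), you write out that telescoping argument explicitly, and it is fine (just define $b(i)$ arbitrarily when $i\notin J_1$).
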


\begin{proof} (i) and (ii):  By Fact \ref{f-metric-theory}, Lemma \ref{l-TL-conditional}, and Theorem \ref{t-conditional-preserve}.

(iii):   For each $i\in I$, let $\cu{M}^0_i$ be the reduction of the $V^0$-part of $\cu{M}_i$, and let $\cu M^0$ be the reduction of $\prod_{\cu F} \cu M^0_i$.
By Remarks \ref{r-atomic-reduced-product} and  \ref{r-part-reduced-product}, $\cu M^0$ is isomorphic to the $V^0$-part of $\prod_{\cu{F}} \cu{M}_i$.
By hypothesis, each $\cu{M}^0_i$ is a metric structure.  By (ii), $\cu M^0$ is a pre-metric structure, and its distinguished distance $d^{\cu M}$ is a metric.
As in [Lo], one can show that $d^{\cu M}$ is a complete metric by modifying the proof of Proposition 5.3 in [BBHU].
\end{proof}

The following corollary will be used later to apply results about general structures to metric structures.

\begin{cor}  \label{c-product-completion}
Suppose $\cu F$ is a proper filter over a set $I$, $L$ is a metric signature, and for each $i\in I$, $(\cu N_i,L)$ is a pre-metric structure and $(\cu M_i,L)$ is its completion.
Then $(\prod_{\cu F}\cu M_i,L)$ is the completion of $(\prod_{\cu F} \cu N_i,L)$.
\end{cor}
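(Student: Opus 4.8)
The plan is to realize $(\prod_{\cu F}\cu M_i,L)$ as a complete metric structure in which $(\prod_{\cu F}\cu N_i,L)$ sits densely via a canonical isometric embedding, and then to quote the uniqueness of metric completions. First I would observe that $(\prod_{\cu F}\cu M_i,L)$ is a metric structure: each $(\cu M_i,L)$ is a metric structure, hence reduced, so the reduction of its $V$-part is $\cu M_i$ itself, and Corollary~\ref{c-metric-product}(iii), applied with $V^0=V$, then yields that $(\prod_{\cu F}\cu M_i,L)$ is a metric structure (note that $\prod_{\cu F}\cu M_i$ equals its own reduction, reduced products being reduced); in particular $d^{\prod_{\cu F}\cu M_i}$ is a complete metric.

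Next I would build the embedding. For each $i$ fix an embedding $h_i\colon\cu N_i\to\cu M_i$ of $\cu N_i$ into its completion (one exists, even elementary, but only the embedding property will be used), and define $h'\colon\prod_{i\in I}N_i\to\prod_{i\in I}M_i$ coordinatewise by $h'(a)(i)=h_i(a(i))$. From Definition~\ref{d-reduced-product} and the fact that each $h_i$ is an embedding, $h'$ is an embedding $\prod^{\cu F}\cu N_i\to\prod^{\cu F}\cu M_i$: it is compatible coordinatewise with the constant and function symbols, and for atomic $\alpha$,
$$\alpha^{\prod^{\cu F}\cu M_i}(h'(\vec a))=\limsup_{\cu F}\langle\alpha^{\cu M_i}(h_i(\vec a(i)))\rangle_i=\limsup_{\cu F}\langle\alpha^{\cu N_i}(\vec a(i))\rangle_i=\alpha^{\prod^{\cu F}\cu N_i}(\vec a).$$
Let $g$ be the composite of $h'$ with the reduction embedding $\prod^{\cu F}\cu M_i\to\prod_{\cu F}\cu M_i$ of Remark~\ref{r-atomic-reduced-product}. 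If $a\doteq^{\prod^{\cu F}\cu N_i}b$, then applying the definition of Leibniz equality to the atomic formula $d(x,y)$ — and recalling $d^{\cu N_i}(b(i),b(i))=0$ for each $i$ — yields $d^{\prod^{\cu F}\cu N_i}(a,b)=d^{\prod^{\cu F}\cu N_i}(b,b)=0$, hence $d^{\prod_{\cu F}\cu M_i}(g(a),g(b))=0$ since $g$ is an embedding, hence $g(a)=g(b)$ since $(\prod_{\cu F}\cu M_i,L)$ is a metric structure and Fact~\ref{f-reduced-premetric} applies. So $g$ descends through the surjective reduction embedding $\prod^{\cu F}\cu N_i\to\prod_{\cu F}\cu N_i$ to a map $h\colon\prod_{\cu F}\cu N_i\to\prod_{\cu F}\cu M_i$ with $h(a_{\cu F})=(h'(a))_{\cu F}$, and this $h$ is itself an embedding (Remark~\ref{r-atomic-reduced-product}); in particular it is an isometry for the distinguished distances.

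Then I would check that $h$ has dense image: given $a\in\prod_{i\in I}M_i$ and $\varepsilon>0$, density of $h_i(N_i)$ in $(M_i,d^{\cu M_i})$ (which holds because $\cu M_i$ is the completion of $\cu N_i$) lets one choose $b(i)\in N_i$ with $d^{\cu M_i}(a(i),h_i(b(i)))<\varepsilon$ for every $i$, and then, since $I\in\cu F$, Remark~\ref{r-atomic-reduced-product} and the definition of $\limsup_{\cu F}$ give
$$d^{\prod_{\cu F}\cu M_i}\bigl(a_{\cu F},h(b_{\cu F})\bigr)=\limsup_{\cu F}\bigl\langle d^{\cu M_i}(a(i),h_i(b(i)))\bigr\rangle_i\le\sup_{i\in I}d^{\cu M_i}(a(i),h_i(b(i)))\le\varepsilon.$$
Thus $(\prod_{\cu F}\cu M_i,L)$ is a complete metric structure containing $(\prod_{\cu F}\cu N_i,L)$ as a dense substructure via $h$, and by the uniqueness of the metric completion — one extends the identity on $\prod_{\cu F}\cu N_i$ to a map out of its abstract completion, using completeness of $d^{\prod_{\cu F}\cu M_i}$ to define it at limit points and the uniform continuity of terms and atomic formulas to see it preserves all of the structure, and it is onto because its image is complete, hence closed, and dense — it follows that $(\prod_{\cu F}\cu M_i,L)$ is the completion of $(\prod_{\cu F}\cu N_i,L)$.

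The one point that needs care is the passage between pre-reduced products $\prod^{\cu F}(\cdot)$ and reduced products $\prod_{\cu F}(\cdot)$ in the construction of $h$: an embedding into a reduced structure need not in general factor through a reduction of its domain, and here it does so precisely because vanishing of the distinguished distance forces equality in the target metric structure (Fact~\ref{f-reduced-premetric}). The rest is routine bookkeeping with machinery already in hand — $\limsup_{\cu F}$ computes atomic values in reduced products (Remark~\ref{r-atomic-reduced-product}), Corollary~\ref{c-metric-product}(iii), and the definition of $\limsup_{\cu F}$.
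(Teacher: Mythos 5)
Your proposal is correct and follows essentially the same route as the paper: establish via Corollary~\ref{c-metric-product} that $(\prod_{\cu F}\cu M_i,L)$ is a metric structure, show that $(\prod_{\cu F}\cu N_i,L)$ embeds densely into it, and conclude by uniqueness of completions. The only differences are cosmetic: you verify density by computing $\limsup_{\cu F}$ of the coordinatewise distances directly (where the paper instead notes that $d(x,y)\dotle\varepsilon$ is a conditional formula and invokes Theorem~\ref{t-conditional-preserve}), and you spell out the bookkeeping of factoring the coordinatewise embedding through the reductions, which the paper leaves implicit by treating $\cu N_i$ as a substructure of $\cu M_i$.
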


\begin{proof}
For each $i$,  $(\cu M_i,L)$ is a metric structure.  By Corollary \ref{c-metric-product}, $(\prod_{\cu F}\cu N_i,L)$ is a pre-metric structure, and
$(\prod_{\cu F}\cu M_i,L)$ is a metric structure.  For each $i\in I$, $\cu N_i$ is a dense substructure of $\cu M_i$.  Therefore, for each $a\in\prod_{i\in I} M_i$ and
$\varepsilon>0$ there exists $b\in\prod_{i\in I} N_i$ such that $\cu M_i\models d(a_i,b_i)\dotle\varepsilon$ for all $i\in I$.  The formula $d(x,y)\dotle\varepsilon$
is conditional, so by Theorem \ref{t-conditional-preserve} we have $\prod_{\cu F}\cu M_i\models d(a_{\cu F},b_{\cu F})\dotle\varepsilon$.  Therefore $\prod_{\cu F}\cu N_i$
is dense in  $\prod_{\cu F}\cu M_i$.  By definition, all reduced products are reduced structures.  It follows that $(\prod_{\cu F}\cu M_i,L)$ is the completion of  $(\prod_{\cu F}\cu N_i,L)$.
\end{proof}

Recall from the introduction that a \emph{basic Horn formula} is a first order formula that is either a finite disjunction of negated atomic formulas, or a
disjunction of an atomic formula and finitely many negated atomic formulas while a \emph{Horn formula} is a first order formula that is built from basic Horn formulas
using the connective $\wedge$ and the quantifiers $\forall, \exists$.
We will use the following result of Galvin, which is a result in ZFC that characterizes the first order sentences preserved under reduced products

\begin{fact}  \label{f-Horn}  ([Ga], Theorems 3.4, 5.2, and Section 6).

(i) A first order sentence $\theta$ is preserved under reduced products if and only if
$\theta$ is equivalent to a Horn sentence.

(ii)  A first order theory  is preserved under reduced products if and only if it is equivalent to a set of Horn sentences.

\end{fact}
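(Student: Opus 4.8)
The plan is to establish both parts of the Fact by the classical route of Keisler [Ke65] and Galvin [Ga]: the ``if'' directions by a preservation argument, and the ``only if'' directions first under the continuum hypothesis and then in ZFC by an absoluteness argument. The ``if'' direction is the exact classical counterpart of Theorem \ref{t-conditional-preserve}: reduced products obey a restricted \L os theorem for Horn formulas. Concretely, if $\cu{N}=\prod_{\cu{F}}\cu{M}_i$ and $\varphi(\vec x)$ is a basic Horn formula with atomic disjuncts $\alpha_0,\neg\alpha_1,\ldots,\neg\alpha_n$ (the $\alpha_0$ possibly absent), then $\{i\colon\cu{M}_i\models\varphi(\vec a(i))\}\in\cu{F}$ forces $\cu{N}\models\varphi(\vec a_{\cu{F}})$: if $\cu{N}$ fails every negated disjunct, i.e.\ $\cu{N}\models\alpha_j(\vec a_{\cu{F}})$ for $1\le j\le n$, intersect the large sets witnessing the $\alpha_j$ with the given large set; on that (still large, hence nonempty) intersection each $\cu{M}_i$ satisfies $\varphi$ and all the $\alpha_j$, so it satisfies $\alpha_0$, putting $\alpha_0$ on a large set and hence true in $\cu{N}$. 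This base case passes through $\wedge$ (intersect two large sets), $\forall$ (a large set of coordinates on which $\forall x\,\theta$ holds witnesses $\theta$ at every element), and $\exists$ (choose coordinatewise witnesses on the large set), so every Horn sentence, and hence every Horn theory, is preserved.

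For the ``only if'' direction, suppose the theory $T$ is preserved under reduced products and let $T_H$ be the set of all Horn sentences implied by $T$; clearly $T\models T_H$, and the task is $T_H\models T$. Fix $\cu{B}\models T_H$. The crux is the amalgamation lemma: \emph{if $\cu{B}$ satisfies every Horn consequence of $T$, then $\cu{B}$ is elementarily equivalent to a reduced product $\prod_{\cu{F}}\cu{A}_i$ of models of $T$.} Granting this, the ``if'' direction gives $\prod_{\cu{F}}\cu{A}_i\models T$, whence $\cu{B}\models T$ since a set of sentences is preserved under $\equiv$; that is part (ii). Part (i) follows: applying (ii) to $T=\{\theta\}$ shows $\{\theta\}$ is equivalent to a set of Horn sentences, so $T_H\models\theta$, and by the Compactness Theorem some finite $T_0\subseteq T_H$ satisfies $T_0\models\theta$; then $\bigwedge T_0$ is a Horn sentence (a finite conjunction of Horn sentences is Horn) logically equivalent to $\theta$, since also $\theta\models\sigma$ for each $\sigma\in T_0$.

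I would prove the amalgamation lemma under the continuum hypothesis by a special-model construction, the classical analogue of the argument in the next section of this paper. Replace $\cu{B}$ by a special model of $\Th(\cu{B})$ of a suitable cardinality (available under CH, as in the facts on special models in Section 2). Then build an index set $I$, models $\cu{A}_i\models T$, and a proper filter $\cu{F}$ on $I$ by recursion, using the reformulation ``every Horn sentence false in $\cu{B}$ is consistent with $T$'': at each stage add a coordinate $\cu{A}_i\models T$ realizing the next requirement coming from $\Th(\cu{B})$ and from saturation, while enlarging $\cu{F}$ cautiously so that no Horn sentence outside $\Th(\cu{B})$ is ever forced true in the growing product. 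The resulting $\prod_{\cu{F}}\cu{A}_i$ is a special model of $\Th(\cu{B})$, hence isomorphic to the chosen special model by the Uniqueness Theorem for special models. The bookkeeping that keeps the expanding filter from validating unwanted Horn sentences — the precise link between being a reduced-product sentence and the Horn syntax — is the main obstacle.

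Finally, to remove the continuum hypothesis one argues as Galvin did. Both ``$\theta$ is preserved under reduced products'' and ``$\theta$ is equivalent to a Horn sentence'' are arithmetical predicates of a code for $\theta$: the second because one searches for a Horn $\sigma$ and verifies the validity of $\theta\leftrightarrow\sigma$, a $\Pi^0_1$ condition by the completeness theorem, and the first by Galvin's analysis bounding the index sets and filters that need to be considered (the point the present paper does not reprove, preferring the conversion procedure of Section \ref{s-elim-CH} in the continuous setting). Since the equivalence of these two arithmetical predicates is provable under CH by the preceding steps, it is absolute between $V$ and a forcing extension satisfying CH (Shoenfield), hence provable in ZFC; the same applies to the theory version (ii).
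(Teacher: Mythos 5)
This statement is quoted in the paper as a known result of Galvin ([Ga], Theorems 3.4, 5.2, and Section 6); the paper gives no proof of it, so there is nothing internal to compare your argument against line by line. Judged on its own, your proposal correctly reproduces the easy half: the restricted \L os argument for basic Horn formulas and its propagation through $\wedge$, $\forall$, $\exists$ is the standard proof that Horn theories are preserved, and your derivation of (i) from (ii) via compactness (a finite conjunction of Horn sentences is Horn) is fine. But the two steps that carry the actual weight of the theorem are left as acknowledged gaps, so the proposal is an outline rather than a proof. First, the CH step: your ``amalgamation lemma'' is the classical counterpart of Lemma \ref{l-conditional-most}, but the mechanism you describe --- building the index set, the factors, and the filter simultaneously, ``enlarging $\cu F$ cautiously so that no Horn sentence outside $\Th(\cu B)$ is ever forced true'' --- is not how that argument works and, as stated, has no visible way to succeed (you yourself flag the bookkeeping as the main obstacle). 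In the workable argument one fixes the family $\langle\cu A_i\rangle_{i\in I}$ in advance (one factor refuting each Horn sentence false in $\cu B$, so that every Horn sentence true in most $\cu A_i$ is true in $\cu B$), replaces $\cu B$ by a saturated/special model, constructs by a $\lambda^+$-stage back-and-forth a surjection $h$ from the full cartesian product onto $\cu B$ preserving Horn formulas in the ``most coordinates'' sense, and only \emph{afterwards} defines $\cu F$ as the filter generated by the sets on which atomic formulas take the values prescribed by $h$; one then verifies this filter is proper and that $h$ induces an isomorphism with $\prod_{\cu F}\cu A_i$. That is exactly the shape of the paper's continuous analogues (Lemma \ref{l-conditional-most} and Theorem \ref{t-main}), and your sketch should be reorganized along those lines.

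Second, the elimination of CH: you assert that ``$\theta$ is preserved under reduced products'' is arithmetical ``by Galvin's analysis bounding the index sets and filters,'' but this bounding is precisely the nontrivial content of Galvin's contribution and you do not supply it; without it the Shoenfield argument has nothing to act on. (Minor point: validity is $\Sigma^0_1$ by completeness, not $\Pi^0_1$, though any arithmetical bound suffices.) Note also that for part (ii) the absoluteness argument does not apply directly to an arbitrary, possibly uncountable, theory: one must first reduce to countable vocabularies and countable fragments, and showing that the restriction of a preserved theory to a countable sublanguage is still preserved requires a Feferman--Vaught-type argument --- this is exactly the role of Proposition \ref{p-Gh-general} and Claim 1 in the proof of Theorem \ref{t-main-simpler} in the continuous setting, and its classical counterpart is missing from your sketch. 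Since the paper's strategy in Section \ref{s-elim-CH} is to \emph{import} Fact \ref{f-Horn} as a black box rather than reprove it, filling these gaps is not needed for the paper; but as a standalone proof of the Fact, both are genuine holes.
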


Each first order formula $\theta$ can be converted to a continuous formula $\theta^c$ by replacing the first order connectives $\wedge, \vee,  \neg$ by the continuous connectives $\min, \max, 1\dotminus\cdot$, and replacing the first order quantifiers $\forall,\exists$ by the continuous quantifiers $\sup, \inf$.
The following gives a connection between preservation under reduced products in first order and continuous model theory.

\begin{cor} \label{c-FO-vs-cont}
Let $\theta$ be a first order sentence.  Each of the following conditions implies the next.

\begin{itemize}
\item[(i)] $\theta$ is a Horn sentence.
\item[(ii)] $\theta^c$ is a conditional sentence.
\item[(iii)]  $\theta^c$ is preserved under continuous reduced products.
\item[(iv)]  $\theta$ is equivalent to a Horn sentence.
\end{itemize}
\end{cor}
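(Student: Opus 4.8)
The plan is to prove the three implications separately; the first two are routine, and essentially all the work is in (iii)$\Rightarrow$(iv).

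For (i)$\Rightarrow$(ii), argue by induction on the construction of the Horn sentence $\theta$ from basic Horn formulas. A basic Horn formula is $\alpha_0\vee\neg\alpha_1\vee\cdots\vee\neg\alpha_n$ or $\neg\alpha_1\vee\cdots\vee\neg\alpha_n$ with the $\alpha_k$ atomic, and its translation under $\cdot^{c}$ is a primitive conditional formula in the sense of Definition~\ref{d-conditional} — one in which every connective $C_k$ is the identity, the purely negative case being covered by the remark following that definition. The remaining constructors used to build a Horn formula, conjunction and the quantifiers $\forall,\exists$, are carried by $\cdot^{c}$ to the connective $\max$ and the quantifiers $\sup,\inf$ respectively, which are exactly the operations under which the class of conditional formulas is closed by Definition~\ref{d-conditional}. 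Hence $\theta^{c}$ is a conditional sentence. For (ii)$\Rightarrow$(iii), apply Theorem~\ref{t-conditional-preserve} to the one-element theory $\{\theta^{c}\}$, which is trivially equivalent to a set of conditional sentences.

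The substance is (iii)$\Rightarrow$(iv). By Galvin's theorem (Fact~\ref{f-Horn}(i)) it is enough to show that $\theta$ is preserved under \emph{classical} reduced products. To this end, associate to each classical $V$-structure $\cu{A}$ (with equality $=$ among the symbols of $V$) the general structure $\cu{A}^{c}$ with the same universe and the same interpretations of the function and constant symbols, and with $P^{\cu{A}^{c}}(\vec a)=0$ if $\cu{A}\models P(\vec a)$ and $P^{\cu{A}^{c}}(\vec a)=1$ otherwise, for every predicate symbol $P$ (including $=$). A routine induction, using that all values that arise lie in $\{0,1\}$, shows $\cu{A}\models\theta$ if and only if $(\theta^{c})^{\cu{A}^{c}}=0$, and that $\cu{A}^{c}$ is reduced. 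The key point is that, for classical structures $\langle\cu{A}_i\colon i\in I\rangle$ and a proper filter $\cu{F}$ over $I$, the continuous reduced product $\prod_{\cu{F}}\cu{A}_i^{c}$ is isomorphic to $\bigl(\prod_{\cu{F}}^{\mathrm{cl}}\cu{A}_i\bigr)^{c}$, where $\prod_{\cu{F}}^{\mathrm{cl}}$ denotes the classical reduced product. This follows because, for $g\colon I\to\{0,1\}$, one has $\limsup_{\cu{F}}g=0$ exactly when $\{i\colon g(i)=0\}\in\cu{F}$ (and $\limsup_{\cu{F}}g=1$ otherwise): taking $g(i)=(=)^{\cu{A}_i^{c}}(a(i),b(i))$ identifies the quotient by Leibniz equality that forms the reduction in $\prod_{\cu{F}}\cu{A}_i^{c}$ with the filter-quotient that forms $\prod_{\cu{F}}^{\mathrm{cl}}\cu{A}_i$, and taking $g(i)=P^{\cu{A}_i^{c}}(\vec a(i))$ matches the predicate interpretations. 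Granting this, if each $\cu{A}_i\models\theta$ then $(\theta^{c})^{\cu{A}_i^{c}}=0$ for all $i$; by hypothesis (iii), applied to the general structures $\cu{A}_i^{c}$, $(\theta^{c})^{\prod_{\cu{F}}\cu{A}_i^{c}}=0$; hence $(\theta^{c})^{(\prod_{\cu{F}}^{\mathrm{cl}}\cu{A}_i)^{c}}=0$, i.e.\ $\prod_{\cu{F}}^{\mathrm{cl}}\cu{A}_i\models\theta$. So $\theta$ is preserved under classical reduced products, and Fact~\ref{f-Horn}(i) yields (iv).

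The main obstacle I anticipate is the bookkeeping behind that key point — verifying carefully that the reduction step (quotient by the Leibniz equality $\doteq$ of Definition~\ref{d-reduced}) in the continuous reduced product collapses exactly the pairs identified by the filter in the classical reduced product. This is precisely where one uses that $=$ is present as a predicate symbol and that every interpretation is $\{0,1\}$-valued. A minor additional point: if one wishes to read "continuous reduced product" in (iii) as referring to reduced products of metric structures, one should observe that each $\cu{A}_i^{c}$ upgrades to a complete metric structure via the discrete metric with trivial moduli of uniform continuity, and that passing to this upgrade does not change the reduced product.
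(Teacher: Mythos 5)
Your proposal is correct and follows essentially the same route as the paper: (i)$\Rightarrow$(ii) directly from the definition of the translation $\theta\mapsto\theta^c$, (ii)$\Rightarrow$(iii) from Theorem \ref{t-conditional-preserve}, and (iii)$\Rightarrow$(iv) by viewing classical structures as $\{0,1\}$-valued general structures, identifying classical and continuous reduced products, and then invoking Galvin's theorem (Fact \ref{f-Horn}). The only difference is bookkeeping in that identification: the paper works with first order structures without identity and cites Remark 2.3 of [Lo], whereas you keep $=$ as a predicate symbol and verify directly that the Leibniz-equality reduction coincides with the classical filter quotient, which indeed works because $=$ is among the atomic formulas and all truth values are in $\{0,1\}$.
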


\begin{proof}
(i) $\Rightarrow$ (ii) is clear from the definition.  By Theorem \ref{t-conditional-preserve},  (ii) $\rightarrow$ (iii).

Assume (iii).  Each first order structure $\cu M$ without identity may be considered as a general structure in which every formula has truth values in $\{0,1\}$.
By Remark 2.3 in [Lo], if each $\cu M_i$ is a first order structure, then the reduced product $\prod_{\cu F} \cu M_i$ considered as a first order structure
is the same as the reduced product $\prod_{\cu F} \cu M_i$ considered as a general structure.  Therefore $\theta$ is preserved under first order reduced products,
so by Fact \ref{f-Horn}, (iv) holds.
\end{proof}

\begin{ex}  \label{e-horn-counterexample}
In Corollary \ref{c-FO-vs-cont}, Condition (iv) does not imply Condition (iii).
\end{ex}

\begin{proof}  For example, let $P, Q$ be $0$-ary predicates, and let $\theta$ be
the first order sentence $P \vee Q\vee\neg(P\vee Q)$.  Then $\theta$ is equivalent to the Horn sentence $P\vee\neg P$,  but
we will show that $\theta^c$ is not preserved under continuous reduced products.
$\theta^c$ is the continuous sentence
$$\min(P, Q, 1\dotminus(\min(P,Q))).$$
 Let $I=\{1,2\}$ and let $\cu F$ be the filter $\cu F=\{I\}$ (so reduced products modulo $\cu F$ are direct products).  Let $0<r\le1/2$.
Let $\cu M_1$ be a general structure such that $\cu M_1\models P=r$ and $\cu M_1\models Q=0$.  Let $\cu M_2$  be a general structure such that $\cu M_2\models P=0$ and
$\cu M_2\models Q=r$.  We leave it to the reader to check that $\cu M_1\models\theta=0$ and $\cu M_2\models\theta=0$, but $\prod_{\cu F}\cu M_i\models\theta=r$.
\end{proof}

We will need the following result of Ghasemi [Gh], which follows from a metric analogue of the Feferman-Vaught theorem in first order model theory.

\begin{fact} \label{f-Gh}  (Proposition 3.6 in  [Gh].)
If $\cu F$ is a proper filter over $I$, and $\cu M_i, \cu N_i$ are metric structures with $\cu M_i\equiv \cu N_i$ for each $i\in I$, then
$\prod_{\cu F}\cu M_i\equiv \prod_{\cu F}\cu N_i$.
\end{fact}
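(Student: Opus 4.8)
The plan is to argue exactly as Ghasemi does in [Gh]: deduce this from a Feferman--Vaught-type theorem for reduced products. For metric structures that theorem is Ghasemi's metric Feferman--Vaught theorem (of which the present Fact is the stated consequence), and in the general-structure setting it is the subject of Appendix A. The form I would use is: for every continuous sentence $\varphi$ and every $\varepsilon>0$ there are finitely many continuous sentences $\psi_1,\dots,\psi_n$, depending only on $\varphi$ and $\varepsilon$ (not on the factors), such that $\varphi^{\prod_{\cu F}\cu M_i}$ is within $\varepsilon$ of a value computed from the map $i\mapsto(\psi_1^{\cu M_i},\dots,\psi_n^{\cu M_i})$ together with the filter $\cu F$ alone --- for instance from $\limsup_{\cu F}$ of continuous combinations of the $\psi_j^{\cu M_i}$, or more precisely from the interpretation of a fixed auxiliary formula in the Boolean algebra $\mathcal{P}(I)/\cu F$ decorated with those value-maps. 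The essential feature is that this data sees each factor $\cu M_i$ only through the truth values of sentences in $\cu M_i$, i.e.\ through $\Th(\cu M_i)$.

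Granting that theorem, the rest is purely formal. Fix a continuous sentence $\varphi$ and $\varepsilon>0$, and let $\psi_1,\dots,\psi_n$ be as above. Since $\cu M_i\equiv\cu N_i$ for every $i\in I$, we have $\psi_j^{\cu M_i}=\psi_j^{\cu N_i}$ for all $i$ and all $j\le n$; hence the value-maps $i\mapsto(\psi_1^{\cu M_i},\dots,\psi_n^{\cu M_i})$ and $i\mapsto(\psi_1^{\cu N_i},\dots,\psi_n^{\cu N_i})$ are literally identical, so the Feferman--Vaught recipe returns the same real number for both families. Therefore $|\varphi^{\prod_{\cu F}\cu M_i}-\varphi^{\prod_{\cu F}\cu N_i}|\le 2\varepsilon$, and letting $\varepsilon\to 0$ gives $\varphi^{\prod_{\cu F}\cu M_i}=\varphi^{\prod_{\cu F}\cu N_i}$. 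As $\varphi$ was arbitrary, $\prod_{\cu F}\cu M_i\equiv\prod_{\cu F}\cu N_i$. (By Corollary \ref{c-metric-product} both reduced products are again metric structures, matching the statement for metric structures, and elementary equivalence transfers between a pre-metric structure and its downgrade by Fact \ref{f-metric-theory}.)

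The real work, and the main obstacle, is the Feferman--Vaught theorem itself; the step above is a one-line consequence once it is in hand. I would prove it by induction on the complexity of $\varphi$: the atomic case uses that $P^{\prod^{\cu F}\cu M_i}(\vec a)=\limsup_{\cu F}\langle P^{\cu M_i}(\vec a(i))\rangle$; the connective case uses uniform continuity of the (restricted) connectives to propagate $\varepsilon$-approximations; and the quantifier case is the delicate one. As in the classical proof, one handles $\sup_y$ and $\inf_y$ by partitioning $I$ into finitely many pieces according to the approximate values that the inductively-produced sentences take at a putative witness, choosing a witness piece by piece, and recombining --- with careful $\varepsilon$-bookkeeping, since in continuous logic these decompositions are necessarily approximate rather than exact. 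An alternative route would avoid Feferman--Vaught and instead combine the Keisler--Shelah theorem for general structures with the commutation of ultrapowers and reduced products; but arranging a single ultrafilter that works simultaneously for all $i\in I$ and verifying that commutation makes this heavier than the argument above, so I would take the Feferman--Vaught route.
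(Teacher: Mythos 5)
Your route is exactly the one the paper relies on: Fact \ref{f-Gh} is quoted as Proposition 3.6 of [Gh], whose proof is precisely the deduction from Ghasemi's metric Feferman--Vaught theorem that you sketch (the determining data $(\sigma_\ell;\psi_j)$ sees each factor only through sentence values, so elementarily equivalent factors give the same Boolean conditions over $\mathcal{P}(I)/\cu F$ and hence the same value of $\varphi$ up to the approximation error), and the paper's Appendix A extends that same Feferman--Vaught machinery to general structures. So the proposal is correct and follows essentially the same approach as the cited source; the paper itself does not reprove the Fact.
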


\begin{rmk}\label{pre-Gh}
Fact \ref{f-Gh} was stated in [Gh] only for metric structures, but the proof shows that it holds for pre-metric structures as well.
\end{rmk}

Here is the analogous result for general structures.

\begin{prop}  \label{p-Gh-general}
Suppose $V$ is countable, $\cu F$ is a proper filter over $I$, and $\cu M_i, \cu N_i$ are general structures with $\cu M_i\equiv \cu N_i$ for each $i\in I$. Then
$\prod_{\cu F}\cu M_i\equiv \prod_{\cu F}\cu N_i$.
\end{prop}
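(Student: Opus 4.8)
The plan is to reduce to the pre-metric case, Remark \ref{pre-Gh}, by passing to a uniform pre-metric expansion and then descending back to the $V$-part. Since $V$ is countable it has only countably many predicate symbols, so Fact \ref{f-premetric-expansion}(i) applies to the empty $V$-theory $T=\emptyset$: there is a pre-metric expansion $T_e$ of $T$ with signature $L_e$, vocabulary $V_D=V\cup\{D\}$, and a Cauchy sequence $\langle d_m\rangle$ of $V$-formulas such that every general structure $\cu M$ with vocabulary $V$ has a canonical expansion $\cu M_e:=(\cu M,[\lim d_m]^{\cu M})$, and the $\cu M_e$ are exactly the general models of $T_e$. By Fact \ref{f-metric-theory}, each such $\cu M_e$ is a pre-metric structure with signature $L_e$ whose $V$-part is $\cu M$.

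The argument then proceeds in three moves. First, from $\cu M_i\equiv\cu N_i$ and Fact \ref{f-premetric-expansion}(ii) one gets $(\cu M_i)_e\equiv(\cu N_i)_e$ for every $i\in I$. Second, by Corollary \ref{c-metric-product}(i) the reduced products $\prod_{\cu F}(\cu M_i)_e$ and $\prod_{\cu F}(\cu N_i)_e$ are again pre-metric structures with signature $L_e$, and since $V_D$ is countable Remark \ref{pre-Gh} (the pre-metric version of Fact \ref{f-Gh}) yields
$$\prod_{\cu F}(\cu M_i)_e\equiv\prod_{\cu F}(\cu N_i)_e,$$
equivalently as general structures in the vocabulary $V_D$. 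Third, applying Remark \ref{r-part-reduced-product} to the families $\langle(\cu M_i)_e\rangle$ and $\langle(\cu N_i)_e\rangle$ with the subvocabulary $V\subseteq V_D$, and using that the $V$-part of $(\cu M_i)_e$ is $\cu M_i$, one sees that $\prod_{\cu F}\cu M_i$ is isomorphic to the $V$-part of $\prod_{\cu F}(\cu M_i)_e$, and likewise for $\cu N$. Elementary equivalence in $V_D$ restricts to elementary equivalence of $V$-parts (every $V$-sentence is a $V_D$-sentence), and isomorphic general structures are elementarily equivalent (Remark \ref{r-reduction}); chaining these gives $\prod_{\cu F}\cu M_i\equiv\prod_{\cu F}\cu N_i$.

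I do not anticipate a real obstacle here; the proof is essentially bookkeeping that threads together Fact \ref{f-premetric-expansion}, Remark \ref{pre-Gh}, and Remark \ref{r-part-reduced-product}. The one point that must be handled with care is the uniformity of the expansion $\cu M\mapsto\cu M_e$: that each $(\cu M_i)_e$, and the reduced product of them, is a genuine pre-metric structure for the single fixed signature $L_e$ (so Remark \ref{pre-Gh} applies verbatim), and that the $V$-part of the reduced product of the $(\cu M_i)_e$ recovers, up to isomorphism, the reduced product of the $\cu M_i$. These are exactly the properties guaranteed by Fact \ref{f-metric-theory}, Corollary \ref{c-metric-product}, and Remark \ref{r-part-reduced-product}.
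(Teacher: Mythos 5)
Your proposal is correct and follows essentially the same route as the paper's proof: pass to the pre-metric expansion of the empty theory via Fact \ref{f-premetric-expansion}, apply Remark \ref{pre-Gh} (with Corollary \ref{c-metric-product} ensuring the reduced products of the expansions are pre-metric), and descend to $V$-parts using Remark \ref{r-part-reduced-product}. The extra care you take about uniformity of the expansion and about isomorphism preserving elementary equivalence is exactly the bookkeeping implicit in the paper's argument.
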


\begin{proof}
By Fact \ref{f-premetric-expansion} (i), the empty theory $T$ has a pre-metric expansion $T_e$.  For each $i\in I$, $\cu M_{ie}$ and $\cu N_{ie}$ are pre-metric structures
with signature $L_e$.  By Corollary \ref{c-metric-product}, $\prod_{\cu F}(\cu M_{ie})$ and $\prod_{\cu F}(\cu N_{ie})$ are also   pre-metric structure with signature $L_e$.
By Fact \ref{f-premetric-expansion} (ii), $\cu M_{ie}\equiv\cu N_{ie}$ for each $i$.  Then by Remark \ref{pre-Gh},
$\prod_{\cu F}(\cu M_{ie})\equiv\prod_{\cu F}(\cu N_{ie})$.  Finally, taking the $V$-parts and using Remark \ref{r-part-reduced-product}, we have
$\prod_{\cu F}\cu M_{i}\equiv\prod_{\cu F}\cu N_{i}$.
\end{proof}

\section{Consequences of the Continuum Hypothesis}

In this section we will show  that,  assuming  $2^\lambda=\lambda^+$  for some $\lambda\ge |V|+\aleph_0$,
the converse of Theorem \ref{t-conditional-preserve} holds.
In the next section we will eliminate the assumption $2^\lambda=\lambda^+$ from that result.
The hypothesis $2^\lambda=\lambda^+$ is called the generalized continuum hypothesis (GCH) at $\lambda$.

In the following, given an infinite set $I$, we say that a statement holds for \emph{most} $i\in I$ if it holds for all but fewer than $|I|$ elements $i\in I$.

\begin{lemma} \label{l-conditional-most}  (Lemma 7.4.22 in [CK1966].)  Assume that:
\begin{itemize}
\item $\lambda$ is infinite and the GCH at $\lambda$ holds, $2^\lambda=\lambda^+$.
\item $|I|=\lambda$, $|V|\le\lambda$, and $|M_i|\le 2^\lambda$ for each $i\in I$.
\item $\cu{N}'$ is special and  $|N'|$ is either finite or $2^\lambda$.
\item Every restricted conditional sentence that is true in $\cu{M}_i$ for most $i\in I$ is true in $\cu{N}'$.
\end{itemize}
Then there exists a filter $\cu{F}$ over $I$ such that $\cu{N}'\cong\prod_{\cu{F}}\cu{M}_i$.
\end{lemma}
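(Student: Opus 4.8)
The plan is to follow the proof of Lemma 7.4.22 of [CK1966] in the present framework. By Lemma \ref{l-restricted-full} (and its evident analogue for conditional formulas) it suffices to work with restricted sentences throughout. The case in which $\cu{N}'$ is finite I would dispose of by a direct construction: the finitely many atomic facts about an enumeration $\vec{b}$ of $N'$ are recorded, to within any prescribed dyadic $\varepsilon>0$, by restricted conditional sentences formed by prefixing $\sup$ and $\inf$ quantifiers to $\max$'s of primitive conditional formulas --- using that both an upper bound $\alpha\dotle r$ and a lower bound $r\dotle\alpha$ on an atomic formula $\alpha$ are primitive conditional, together with Lemma \ref{l-conditional-minus-epsilon}; since these sentences hold in most $\cu{M}_i$, one reads off from them a co-small $J\subseteq I$ and tuples $\vec{a}(i)$ from $M_i$ ($i\in J$) with which to collapse a suitable reduced product onto a copy of $\cu{N}'$. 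So assume $|N'|=2^\lambda$. By Remark \ref{r-atomic-reduced-product} we may assume each $\cu{M}_i$ is reduced; note that $|\prod_{i\in I}M_i|\le(2^\lambda)^\lambda=2^\lambda=\lambda^+$, which is regular, and that $2^\lambda$ is a special cardinal by the GCH at $\lambda$.

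For the main case I would build $\cu{F}$ together with an isomorphism $h\colon\prod_{\cu{F}}\cu{M}_i\to\cu{N}'$ simultaneously, by a recursion of length $\lambda^+$. Fix enumerations of $N'$ and of $\prod_{i\in I}M_i$ (each of length $\le\lambda^+$) and interleave them. At stage $\alpha$ one maintains an increasing proper filter $\cu{F}_\alpha$ on $I$ generated by $\le\lambda+|\alpha|$ sets --- with $\cu{F}_0$ the (proper) filter of co-small subsets of $I$ and $\cu{F}=\bigcup_{\alpha<\lambda^+}\cu{F}_\alpha$ --- and a set of fewer than $\lambda^+$ "committed pairs" $(a,b)$, $a\in\prod_{i\in I}M_i$, $b\in N'$, arranged so that, for the final filter $\cu{F}$, the assignment $a_{\cu{F}}\mapsto b$ is a well-defined map preserving all atomic formulas (where on the product side the value of an atomic formula at $a_{\cu{F}}$ is $\limsup_{\cu{F}}$ of the corresponding sequence, by Remark \ref{r-atomic-reduced-product}); injectivity of $h$ is then forced by atomic-formula-preservation and reducedness of $\cu{N}'$. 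To keep commitments valid as the filter grows, a pair $(a,b)$ is committed only after the atomic values of $a$ over the previously committed elements have been driven to fixed values by adjoining enough sets to the filter; the sets one adjoins assert $|\alpha^{\cu{M}_i}(\cdots)-v|\le\delta$, and once they lie in $\cu{F}_\alpha$ they survive all later refinements and, by Theorem \ref{t-conditional-preserve} (both $\alpha\dotle v$ and $v\dotle\alpha$ being conditional), they pin the value of $\alpha$ in $\prod_{\cu{F}}\cu{M}_i$ to $v$. The admissible values $v$ form, for each atomic formula, the interval between $\liminf_{\cu{F}_\alpha}$ and $\limsup_{\cu{F}_\alpha}$ of the relevant sequence, by Fact \ref{f-F-limit} and its dual.

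The recursion then alternates two kinds of step. At a \emph{forth} step one processes a new $a^\ast\in\prod_{i\in I}M_i$: refine the filter to freeze the atomic values of $a^\ast$ over the committed elements, and realize the resulting atomic type in $\cu{N}'$ --- possible because $\cu{N}'$ is $2^\lambda$-special, hence $\lambda^+$-saturated, and the type has $<\lambda^+$ parameters --- committing $a^\ast$ to a realizing element. At a \emph{back} step one processes a new $b^\ast\in N'$: here one must build a new $a^\ast\in\prod_{i\in I}M_i$ coordinatewise and refine $\cu{F}$ so that $a^\ast_{\cu{F}}$ may be committed to $b^\ast$; this is where the hypothesis enters, via its contrapositive (a restricted conditional sentence failing in $\cu{N}'$ fails in $\lambda$-many $\cu{M}_i$) and Fact \ref{f-basic}, which together show that the atomic type of $b^\ast$ over the committed elements can be sourced from the $\cu{M}_i$ on a set of indices that the filter can be refined to see. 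The cardinal arithmetic of the first paragraph, with the regularity of $\lambda^+$, guarantees that every $\cu{F}_\alpha$ --- and hence $\cu{F}$ --- stays proper, that every element of $N'$ and of $\prod_{\cu{F}}\cu{M}_i$ is eventually committed, and that the limit map $h$ is an isomorphism $\prod_{\cu{F}}\cu{M}_i\cong\cu{N}'$, which is what is required.

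The step I expect to be the main obstacle is the back step: one must simultaneously choose the coordinates $a^\ast(i)\in M_i$ \emph{and} refine $\cu{F}$ so that every atomic formula takes, in $\prod_{\cu{F}}\cu{M}_i$, exactly the value $b^\ast$ gives it over the committed elements, for all atomic formulas at once and without destroying earlier commitments. Fact \ref{f-basic} supplies the mechanism, but its asymmetry between the single ``positive'' literal $C_0(\alpha_0)$ and the ``negated'' literals $C_k(1-\alpha_k)$ of a primitive conditional --- reflecting the fact that a Horn clause carries only one positive literal --- means the atomic constraints cannot all be discharged in one blow; they must be organized with care (in effect, one positive constraint at a time against the accumulated negated ones), and this, together with checking that the filter remains proper, is the technical heart of the argument. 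The bookkeeping, the cardinal arithmetic, and the finite case are routine by comparison.
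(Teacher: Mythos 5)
There is a genuine gap, and it sits exactly where you locate it: the back step and the properness of the growing filter are not carried out, and in your scheme these are not routine bookkeeping but the whole content of the lemma. Your inductive invariant (exact preservation of atomic values for committed pairs, with values pinned by adjoining sets to $\cu F_\alpha$) is simultaneously too strong and too weak. Too strong, because in the forth step you must realize in $\cu{N}'$ the \emph{exact} atomic type (upper and lower bounds) of the frozen element over all committed parameters, and $\lambda^+$-saturation only realizes types that are finitely satisfiable in $\cu{N}'$; finite satisfiability does not follow from atomic preservation of the committed pairs --- it needs a transfer principle for \emph{conditional formulas with committed parameters} from the $\cu{M}_i$'s to $\cu{N}'$, and the lemma's hypothesis only supplies such transfer for the ``most $i$'' (co-small) filter, not for the larger filter $\cu F_\alpha$ you have been building. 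Too weak, because properness of $\cu F_\alpha$ at each stage is a compatibility statement among all the value-pinning sets added for different committed elements; it is not a consequence of cardinal arithmetic or the regularity of $\lambda^+$, and proving it requires essentially the same use of the hypothesis (via a conditional formula of the form $\min(\alpha\dotminus\delta,1-(\alpha_0\dotplus\varepsilon_0),\dots)$, exploiting the single-positive-literal shape) that you defer as ``the technical heart.'' Finally, your finite case is also shaky as sketched: recording the full atomic diagram requires conjoining several \emph{positive} constraints $\alpha\dotle r$, and a $\min$ of more than one positive constraint is not a conditional formula; the paper needs no separate finite case at all.

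For contrast, the paper avoids all of this by a two-phase argument in which the filter plays no role during the recursion. One first builds, by a back-and-forth of length $\lambda^+$, a map $h$ from all of $\prod_{i\in I}M_i$ onto $N'$ with the one-directional property $\mathbf{P}$: if a restricted conditional formula is $0$ at $\vec a(i)$ for \emph{most} $i$, it is $0$ at $h(\vec a)$ in $\cu{N}'$. The forth step uses $\lambda^+$-saturation applied to a type of conditional conditions $\varphi(h'(\vec a),y)\dotle r$, whose finite satisfiability follows because $\inf_y\max(\Gamma_0)\dotle r$ is again restricted conditional and $\mathbf{P}$ applies; the back step uses the disjoint refinement lemma (Lemma 6.1.6 of [CK2012]) to build the preimage $d$ coordinatewise so that $\mathbf{P}$ survives, with no exact atomic matching attempted. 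Only afterwards is the filter defined, in one stroke, as the filter generated by the sets $\{i\colon\alpha^{\cu{M}_i}(\vec a(i))<1-\varepsilon\}$ for which $\alpha^{\cu{N}'}(h(\vec a))<1-\varepsilon$; properness (Claim~1) and the exact identity $\alpha^{\cu{N}'}(h(\vec a))=\limsup_{\cu F}\alpha^{\cu{M}_i}(\vec a(i))$ (Claim~2) are then deduced from $\mathbf{P}$. If you want to salvage your simultaneous construction, you would at minimum have to replace your committed-pairs invariant by a $\mathbf{P}$-like invariant relative to the current filter and prove it is preserved at back steps and limits --- which is precisely the difficulty the paper's decomposition is designed to sidestep.
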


\begin{proof}  The analogue of this lemma for classical model theory is Lemma 6.2.4 in [CK2012], and the reader can look at the proof in [CK2012] to fill in the details in this proof.  We first construct a mapping $h$ from $\prod_{i\in I} M_i$ onto $N'$ with the following property:
\medskip

 $\mathbf{P}$:
For every restricted conditional formula $\varphi(\vec{x})$ and tuple $\vec{a}$ in the domain of $h$, if $\varphi^{\cu{M}_i}(\vec{a}(i))=0$ for most $i\in I$, then $\varphi^{\cu{N}'}(h(\vec{a}))=0$.
\medskip

The hypothesis $2^\lambda=\lambda^+$  implies that $|\prod_{i\in I}M_i|\le \lambda^+$ and $|N'|\le\lambda^+$.  The mapping $h$ is constructed by enumerating both $\prod_{i\in I}M_i$
and $N'$ by sequences of length $\lambda^+$, and then using a back-and-forth argument that is a transfinite induction with $\lambda^+$ steps.
To begin we note that the hypotheses of this lemma guarantee that the empty mapping has property $\textbf{P}$.  At each stage of the induction, we assume that we have already constructed a mapping $h'$ from a subset of $\prod_{i\in I} M_i$ into $N'$ that has the  property $\mathbf{P}$.
Let $b$ be the next element in the enumeration of $\prod_{i\in I} M_i$, and let $e$ be the next element in the enumeration of $N'$.  We will use
the fact that the set of restricted conditional formulas has cardinality $\le\lambda$ and is closed under  $\max, \sup,\inf$, and that there are at most $\lambda$ previous stages.

We first use the fact that $\cu N'$ is $\lambda^+$-saturated to find an element $c\in N'$ such that $h'\cup\{(b,c)\}$ has the  property $\mathbf{P}$.
Let $\Gamma(y)$ be the set of all formulas of the form $\varphi(h'(\vec a),y)\dotle r$ such that $\varphi(\vec x,y)$ is restricted conditional,
$\vec a$ is a tuple in the domain of $h'$, $r$ is a positive dyadic rational, and $\varphi^{\cu M_i}(\vec a(i),b(i))=0$ for most $i\in I$.  For each finite subset $\Gamma_0\subseteq\Gamma$, the formula
$$\psi(\vec x):=(\inf_y\max(\Gamma_0))(\vec x,y)\dotle r$$
 is restricted conditional.  Moreover, $\psi^{\cu M_i}(\vec a(i))=0$ for most $i\in I$, so by property $\mathbf{P}$ for $h'$, $\psi^{\cu N'}(h'(\vec a))=0$.
It follows that $\Gamma(y)$ is finitely satisfiable in $\cu N'$.  The set $\Gamma(y)$ has cardinality $\le \lambda$, and $\cu N'$ is $\lambda^+$-saturated,
so $\Gamma(y)$ is satisfied by some element $c$ in $\cu N'$.  Then the mapping $h'\cup\{(b,c)\}$ has property $\mathbf{P}$.

We next use Lemma 6.1.6 in [CK2012] to find an element $d$ of $\prod_{i\in I} M_i$ such that $h'\cup\{(b,c),(d,e)\}$ has property $\mathbf{P}$.  That lemma says that for any set
$Z$ of cardinality $|Z|\le\lambda$ and family $X=\langle X_\zeta\colon \zeta\in Z\rangle$ of  sets of cardinality $\lambda$,
there is a family $Y=\langle Y_\zeta\colon \zeta\in Z\rangle$ of  pairwise disjoint sets of cardinality $\lambda$, called a \emph{refinement} of $X$, such that
$Y_\zeta\subseteq X_\zeta$ for each $\zeta\in Z$.

If $e$ is already in the range of $h'$, we take $d$ to be the first element such that $h'(d)=e$.  Otherwise, we argue as follows.  Let $Z$ be the set of  $\varphi(\vec a)$ such that $\varphi(\vec x)=\sup_y\psi(\vec x,y)$, where $\psi$ is restricted conditional,
$\vec a$ is a tuple of elements of
the domain of $h'\cup\{(b,c)\}$, and $\varphi^{\cu N'}(h(\vec a))>0$.  Then $\varphi(\vec x)$ is also restricted conditional.
For each $\varphi(\vec a)\in Z$, let $X_{\varphi(\vec a)}$
be the set of $i\in I$ such that $\varphi^{\cu M_i}(\vec a(i))>0$.  Since $h'\cup\{(b,c)\}$ has property $\mathbf{P}$, it follows that $X_\zeta$ has cardinality $\lambda$ for
each $\zeta\in Z$.  Therefore $X$ has a refinement $Y$.  Since each $Y_\zeta\subseteq X_\zeta$, we may  build an element $d\in\prod_{i\in I} M_i$ such that for each $\varphi(\vec a)\in Z$
and $i\in Y_{\varphi(\vec a)}$ we have $\psi^{\cu M_i}(\vec a(i),d(i))>0$.  Since each $Y_\zeta$ has cardinality $\lambda$, the function $h'\cup\{(b,c),(d,e)\}$ has property $\mathbf{P}$.
This completes the induction.

Now let $h$ be a mapping from $\prod_{i\in I} M_i$ onto $N'$ with property $\mathbf{P}$.
Let $\cu{E}$ be the set of all $J\subseteq I$ such that for some atomic formula $\alpha(\vec{x})$, some tuple $\vec{a}$ in $\prod_{i\in I} M_i$, and some $\varepsilon\in[0,1)$,
$$  \alpha^{\cu{N}'}(h(\vec{a}))<1-\varepsilon\mbox{ and } J=\{i\in I\colon \alpha^{\cu{M}_i}(\vec{a}(i))<1-\varepsilon\}.$$

\emph{Claim 1:}  $\cu{E}$ has the finite intersection property, and in fact, the intersection of any finite subset of $\cu{E}$ has cardinality $\lambda$.

\emph{Proof of Claim 1.}  Let $J_0,\ldots,J_n\in \cu{E}$ and $K=J_0\cap\cdots\cap J_n$.  For each $k\le n$, take an atomic formula $\alpha_k$, tuple
$\vec{a}_k$ in $\prod_{i\in I} M_i$, and a dyadic rational $\varepsilon_k\in[0,1)$ such that
$$ \alpha_k^{\cu{N}'}(h(\vec{a}_k))<1-\varepsilon_k\mbox{ and } J_k=\{i\in I\colon \alpha_k^{\cu{M}_i}(\vec{a}_k(i))<1-\varepsilon_k\}.$$
Let
$$\psi=\max(\alpha_0\dotplus \varepsilon_0,\ldots,\alpha_n\dotplus \varepsilon_n)$$
and $\vec{b}=(\vec{a}_0,\ldots,\vec{a}_n)$.  Then
\begin{equation}  \label{eq-E}
 \psi^{\cu{N}'}(h(\vec{b}))<1\mbox{ and }K=\{i\in I\colon\psi^{\cu{M}_i}(\vec{b}(i))<1\}.
 \end{equation}
Assume, towards a contradiction, that $|K|<\lambda$ and let $\varphi=1-\psi$.  $\varphi$ is equivalent to the restricted primitive conditional formula
$$\min(1-(\alpha_0\dotplus\varepsilon_0),\ldots,1-(\alpha_n\dotplus\varepsilon_n)).$$
For most $i\in I$ we have $i\notin K$, so $\psi^{\cu{M}_i}(\vec{b}(i))=1$ and hence $\varphi^{\cu{M}_i}(\vec{b}(i))=0.$  Then by property $\mathbf{P}$,
we have $\varphi^{\cu{N}'}(h(\vec{b}))=0.$  But this means that $\psi^{\cu{N}'}(h(\vec{b}))=1,$ contradicting (\ref{eq-E}).  Therefore $|K|=\lambda$ and Claim 1 is proved.

By Claim 1, $\cu{E}$ generates a proper filter $\cu{F}$ over $I$.

\emph{Claim 2:} For each atomic formula $\alpha(\vec{x})$ and tuple $\vec{a}$ in $\prod_{i\in I}M_i$,
$$\alpha^{\cu{N}'}(h(\vec{a}))=\limsup_{\cu{F}}\alpha^{\cu{M}_i}(\vec{a}(i)).$$

\emph{Proof of Claim 2.}  Let $r_N=\alpha^{\cu{N}'}(h(\vec{a}))$, and
$$r_M=\limsup_{\cu{F}}\alpha^{\cu{M}_i}(\vec{a}(i))=\inf_{J\in\cu{F}}\sup_{i\in J} \alpha^{\cu{M}_i}(\vec{a}(i)).$$
 For every $\varepsilon\in[0,1]$ such that $r_N<1-\varepsilon$, we have
$$J_\varepsilon :=\{i\in I\colon\alpha^{\cu{M}_i}(\vec{a}(i))<1-\varepsilon\}\in\cu{E}\subseteq\cu{F},$$
so
$$r_M\le\sup_{i\in J_\varepsilon} \alpha^{\cu{M}_i}(\vec{a}(i))\le 1-\varepsilon.$$
Since this holds for all $\varepsilon>0$, we have $r_M\le r_N$.

Now suppose $r_M<r_N$, and take $\delta\in(r_M,r_N)$.  Since $r_M<\delta$, there is a set $J\in\cu{F}$ such that $\sup_{i\in J}\alpha^{\cu{M}_i}(\vec{a}(i))\le \delta$. Hence
\begin{equation}  \label{eq-J}
(\forall i\in J)\alpha^{\cu{M}_i}(\vec{a}(i))\dotminus\delta=0.
\end{equation}
  There are sets $J_0,\ldots,J_n\in\cu{E}$ such that $J_0\cap\cdots\cap J_n\subseteq J$.  Take $\alpha_k, \vec{a}_k,$ and $\varepsilon_k$ such that
\begin{equation}  \label{eq-alpha-k}
 \alpha_k^{\cu{N}'}(h(\vec{a}_k))<1-\varepsilon_k\mbox{ and } J_k=\{i\in I\colon \alpha_k^{\cu{M}_i}(\vec{a}_k(i))<1-\varepsilon_k\}.
 \end{equation}
Since $J_0\cap\cdots\cap J_n\subseteq J$ we have
$$I=J\cup(I\setminus J_0)\cup\cdots\cup(I\setminus J_n).$$
For each $k\le n$,
$$J_k=\{i\in I\colon 1-(\alpha^{\cu{M}_i}(\vec{a}_k(i))\dotplus\varepsilon_k)>0\},$$
 so
\begin{equation}  \label{eq-Jk}
(\forall i\in I\setminus J_k)\ 1-(\alpha^{\cu{M}_i}(\vec{a}_k(i))\dotplus\varepsilon_k)=0.
\end{equation}

Let $\varphi$ be the restricted primitive conditional formula
$$\varphi=\min(\alpha\dotminus \delta,1-(\alpha_0\dotplus\varepsilon_0),\ldots,1-(\alpha_n\dotplus\varepsilon_n)),$$
and $\vec{b}=(\vec{a},\vec{a}_0,\ldots,\vec{a}_n)$.  By (\ref{eq-J}) and (\ref{eq-Jk}),
$$(\forall i\in I) \varphi^{\cu{M}_i}(\vec{b}(i))=0,$$
so by property $\mathbf{P}$,
$$\varphi^{\cu{N}'}(h(\vec{b}))=0.$$
Therefore either
$$r_N=\alpha^{\cu{N}'}(h(\vec{a}))\le\delta,$$
or
$$ \alpha^{\cu{N}'}_k(h(\vec{a}_k))\ge 1-\varepsilon_k \mbox{ for some } k\le n.$$
But this contradicts (\ref{eq-alpha-k}) and the fact that $\delta\in (r_M,r_N)$.  Therefore $r_M=r_N$, and Claim 2 is proved.

Now let $\cu{M}'$ be the pre-reduced product of $\langle\cu{M}_i\colon i\in I\rangle$ modulo $\cu{F}$.  Then  $\prod_{\cu{F}}\cu{M}_i$ is the reduction of $\cu{M}'$, and $h$ is a mapping from $M'$ onto $N'$.  By Claim 2 and Remark \ref{r-atomic-reduced-product}, $h$ is an embedding from $\cu{M}'$ onto $\cu{N}'$, so $\cu{M}'\cong \cu{N}'$.  Therefore $\cu{N}'\cong\prod_{\cu{F}}\cu{M}_i$, and the proof is complete.
\end{proof}

\begin{thm}  \label{t-main}  Assume that $\aleph_0+|V|\le\lambda$ and the GCH at $\lambda$ holds.  The following are equivalent.

\begin{itemize}
\item[(i)]  $T$ is $S$-equivalent to a set of conditional sentences.
\item[(ii)]  $T$ is $S$-equivalent to a set of restricted conditional sentences.
\item[(iii)] For any proper filter $\cu{F}$ over $I$, and indexed family $\langle\cu{M}_i\colon i\in I\rangle$ of general models of $S\cup T$, if the reduced product $\prod_{\cu{F}}\cu{M}_i$ is a general model of $S$ then it is a general model of $T$.
\end{itemize}

\end{thm}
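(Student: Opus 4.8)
The plan is to prove the cycle of implications (ii) $\Rightarrow$ (i) $\Rightarrow$ (iii) $\Rightarrow$ (ii). The first is immediate, since every restricted conditional sentence is a conditional sentence. For (i) $\Rightarrow$ (iii), suppose $T$ is $S$-equivalent to a set $\Sigma$ of conditional sentences. Let $\langle\cu{M}_i\colon i\in I\rangle$ be general models of $S\cup T$, let $\cu{F}$ be a proper filter over $I$, and suppose $\cu{N}:=\prod_{\cu{F}}\cu{M}_i\models S$. Each $\cu{M}_i$ is a model of $S\cup\Sigma$, hence of $\Sigma$, so by Theorem \ref{t-conditional-preserve} we get $\cu{N}\models\Sigma$; since also $\cu{N}\models S$, $\cu{N}$ is a model of $S\cup\Sigma$, hence of $S\cup T$, hence of $T$.

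The substance is (iii) $\Rightarrow$ (ii). Assume (iii), and that $S\cup T$ is consistent (otherwise the statement is trivial). Let $U$ be the set of all restricted conditional sentences $\varphi$ in the vocabulary $V$ with $S\cup T\models\{\varphi\}$. Then $S\cup T\models S\cup U$ trivially, so it remains only to show that every general model of $S\cup U$ is a general model of $T$. Fix such a model; by Fact \ref{f-Skolem} and Remark \ref{r-reduction} we may assume it is a reduced structure $\cu{N}$ with $|N|\le\aleph_0+|V|\le\lambda$. Let $\Psi$ be the set of restricted conditional sentences $\psi$ with $\psi^{\cu{N}}>0$. For each $\psi\in\Psi$ we have $\psi\notin U$, so $S\cup T\not\models\{\psi\}$, and by Fact \ref{f-Skolem} again there is a general model $\cu{M}_\psi$ of $S\cup T$ with $|M_\psi|\le\lambda$ and $\psi^{\cu{M}_\psi}>0$.

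Now build the index family. If $\Psi=\emptyset$, let $I=\lambda$ and let each $\cu{M}_i$ be a fixed model of $S\cup T$ of size $\le\lambda$; otherwise let $I=\Psi\times\lambda$ and set $\cu{M}_{(\psi,\xi)}=\cu{M}_\psi$. In either case $|I|=\lambda$, each $\cu{M}_i$ is a model of $S\cup T$ with $|M_i|\le\lambda\le 2^\lambda$, and for each $\psi\in\Psi$ the set $\{i\in I\colon\psi^{\cu{M}_i}>0\}$ has cardinality $\lambda$; equivalently, no restricted conditional sentence that is false in $\cu{N}$ is true in $\cu{M}_i$ for most $i\in I$. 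By Fact \ref{f-special-exist} (handling a finite $\cu{N}$ separately, taking $\cu{N}'=\cu{N}$, since finite structures are fully saturated) pass to a special structure $\cu{N}'$ with $\cu{N}'\equiv\cu{N}$ and $|N'|$ finite or $2^\lambda$. Since $\cu{N}'\equiv\cu{N}$, a restricted conditional sentence is true in $\cu{N}'$ iff it is true in $\cu{N}$, so every restricted conditional sentence true in $\cu{M}_i$ for most $i\in I$ is true in $\cu{N}'$. All hypotheses of Lemma \ref{l-conditional-most} are now met, so there is a proper filter $\cu{F}$ over $I$ with $\cu{N}'\cong\prod_{\cu{F}}\cu{M}_i$. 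Finally, $\prod_{\cu{F}}\cu{M}_i\equiv\cu{N}'\equiv\cu{N}\models S$, and $S$ is a set of sentences, so $\prod_{\cu{F}}\cu{M}_i\models S$; by (iii), $\prod_{\cu{F}}\cu{M}_i\models T$, hence $\cu{N}'\models T$, hence $\cu{N}\models T$. Thus $T$ is $S$-equivalent to the set $U$ of restricted conditional sentences.

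I expect the main obstacle to be the bookkeeping in passing to a special model of the correct cardinality and in verifying that the constructed family $\langle\cu{M}_i\rangle$ meets the ``most'' hypothesis of Lemma \ref{l-conditional-most}; a subtle point worth highlighting is that (iii) can only be invoked once one knows the reduced product satisfies $S$, which here is secured by the fact that the reduced product is isomorphic to $\cu{N}'$ and hence elementarily equivalent to $\cu{N}$.
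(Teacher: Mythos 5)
Your proof is correct and follows essentially the same route as the paper's: the trivial implication (ii) $\Rightarrow$ (i), the implication (i) $\Rightarrow$ (iii) via Theorem \ref{t-conditional-preserve}, and (iii) $\Rightarrow$ (ii) by taking $U$ to be the restricted conditional consequences of $S\cup T$, passing to a special model of size finite or $2^\lambda$, building a $\lambda$-indexed family of models of $S\cup T$ so that every restricted conditional sentence true for most indices is true in the special model, and invoking Lemma \ref{l-conditional-most} together with (iii). The only differences are expository: you spell out the indexing by $\Psi\times\lambda$ and the check that the reduced product models $S$, details the paper leaves implicit.
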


\begin{proof}  (ii) $\Rightarrow$ (i) is trivial.
Even without the hypothesis that $2^\lambda=\lambda^+$, the implication (i) $\Rightarrow$ (iii) follows from Theorem \ref{t-conditional-preserve}.

(iii) $\Rightarrow$ (ii):  Assume (iii). Let $U$ be the set of all restricted conditional sentences $\varphi$ such that every general model of $S\cup T$ is a general model of $U$. Then every general model of $S\cup T$ is a general model of $S\cup U$.  We will show that every general model $\cu{N}$ of $S\cup U$ is a general model of $S\cup T$.  It will then follow that $S\cup T$ and $S\cup U$ have the same general models, and (i) follows.

By the GCH at $\lambda$, $2^\lambda$ is a special cardinal. By Facts \ref{f-Skolem} and \ref{f-special-exist}, we may assume that $\cu{N}$ is a special model such that $N$ is either finite or of cardinality $2^\lambda$.  For each conditional sentence $\varphi$ that is not true in $\cu{N}$, we have $\varphi\notin U$, so there exists a model of $S\cup T$ in which $\varphi$ is not true.   Let $I$ be a set of cardinality $\lambda$.  Since there are  $\aleph_0+|V|\le\lambda$ restricted sentences,  there is an indexed family $\langle\cu{M}_i\colon i\in I\rangle$ of general models of $S\cup T$ with $|M_i|\leq 2^\lambda$ such that every restricted conditional sentence that is true in $\cu{M}_i$ for most $i\in I$ is true $\cu{N}$.
By Lemma \ref{l-conditional-most}, there exists a filter $\cu{F}$ over $I$ such that $\cu{N}\cong\prod_{\cu{F}}\cu{M}_i$. Then by (iii), $\cu{N}$ is a model of $T$, and (ii) follows.
\end{proof}

As a consequence, we obtain the analogue of Theorem \ref{t-main} for metric structures.

\begin{cor}  \label{c-main-metric}  Assume that $\aleph_0+|V|\le\lambda$, $L$ is a metric signature over $V$,
 $S, T$ are metric theories with signature $L$, and the GCH at $\lambda$ holds.   Then the following are equivalent.

\begin{itemize}
\item[(i)]  $T$ is $S$-equivalent to a set of conditional sentences.
\item[(ii)]  $T$ is $S$-equivalent to a set of restricted conditional sentences.
\item[(iii)] For any proper filter $\cu{F}$ over $I$, and indexed family $\langle\cu{M}_i\colon i\in I\rangle$ of metric models of $S\cup T$ with signature $L$, if the reduced product $\prod_{\cu{F}}\cu{M}_i$ is a metric model of $S$ with signature $L$, then it is a metric model of $T$ with signature $L$.
\end{itemize}

\end{cor}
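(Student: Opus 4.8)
The plan is to deduce this from Theorem~\ref{t-main} by moving back and forth between general structures and metric structures via completions. First I would observe that conditions (i) and (ii) of the corollary are \emph{verbatim} the same statements as conditions (i) and (ii) of Theorem~\ref{t-main}: the notions ``conditional sentence'', ``restricted conditional sentence'', and ``$S$-equivalent'' are defined purely in terms of general structures and make no reference to the metric signature $L$. Hence, applying Theorem~\ref{t-main} to the given $S$ and $T$ (which is legitimate since $\aleph_0+|V|\le\lambda$ and the GCH at $\lambda$ holds), conditions (i) and (ii) are each equivalent to the following condition~(iii$'$): for every proper filter $\cu F$ over a set $I$ and every indexed family $\langle\cu M_i\colon i\in I\rangle$ of general models of $S\cup T$, if $\prod_{\cu F}\cu M_i$ is a general model of $S$, then it is a general model of $T$. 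So everything reduces to proving, in ZFC, that (iii) and (iii$'$) are equivalent.

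The implication (iii$'$)$\Rightarrow$(iii) is straightforward: given metric models $\cu M_i$ of $S\cup T$ with signature $L$ whose reduced product $\cu N=\prod_{\cu F}\cu M_i$ is a metric model of $S$ with signature $L$, the $\cu M_i$ are in particular general models of $S\cup T$ and $\cu N$ is a general model of $S$; so (iii$'$) gives that $\cu N\models T$, and since $\cu N$ is already a metric structure with signature $L$ by hypothesis, it is a metric model of $T$ with signature $L$.

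The substantive direction is (iii)$\Rightarrow$(iii$'$). Here I would start from general models $\cu M_i$ of $S\cup T$ with $\cu N=\prod_{\cu F}\cu M_i$ a general model of $S$, and aim to show $\cu N\models T$. Since $S\models\met(L)$, each $\cu M_i\models\met(L)$, so by Fact~\ref{f-metric-theory} the pair $(\cu M_i,L)$ is a pre-metric structure; let $(\widehat{\cu M}_i,L)$ be its completion, a metric structure with signature $L$. Because every pre-metric structure is elementarily embeddable in its completion, $\cu M_i\equiv\widehat{\cu M}_i$, so each $\widehat{\cu M}_i$ is a metric model of $S\cup T$ with signature $L$. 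By Corollary~\ref{c-product-completion}, $(\prod_{\cu F}\widehat{\cu M}_i,L)$ is the completion of $(\cu N,L)$, and so again $\cu N\equiv\prod_{\cu F}\widehat{\cu M}_i$. In particular $\prod_{\cu F}\widehat{\cu M}_i\models S$; since it is a reduced product of metric structures with signature $L$, it is itself a metric structure with signature $L$ (as in [Lo], or by Corollary~\ref{c-metric-product}), hence a metric model of $S$ with signature $L$. Now (iii) applies and yields that $\prod_{\cu F}\widehat{\cu M}_i$ is a metric model of $T$ with signature $L$, so $\prod_{\cu F}\widehat{\cu M}_i\models T$; and since $\cu N\equiv\prod_{\cu F}\widehat{\cu M}_i$, we conclude $\cu N\models T$, as desired.

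The main obstacle is exactly this last direction: one must replace arbitrary general models of $S\cup T$ by genuine metric models without changing the truth value of any sentence, and must know that this replacement is compatible with the reduced-product construction. Both requirements are met by passing to completions and invoking Corollary~\ref{c-product-completion}, together with the fact that pre-metric structures embed elementarily into their completions.
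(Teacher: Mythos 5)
Your proposal is correct and follows essentially the same route as the paper: the substantive direction is handled exactly as in the paper's proof, by passing to completions, using Corollary~\ref{c-product-completion} together with elementary equivalence of a pre-metric structure with its completion to show that the metric preservation condition implies condition (iii) of Theorem~\ref{t-main}, and then invoking Theorem~\ref{t-main}. The only cosmetic difference is that you route the easy direction through Theorem~\ref{t-main}'s general-structure condition rather than citing Theorem~\ref{t-conditional-preserve} and Corollary~\ref{c-metric-product} directly, which changes nothing of substance.
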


\begin{proof}  Since $S$ is a metric theory with signature $L$, $S\models\met(L)$, and similarly for $T$.
As in the proof of Theorem \ref{t-main}, the implication (ii) $\Rightarrow$ (i) is trivial, ant
the implication (i) $\Rightarrow$ (iii) follows from Theorem \ref{t-conditional-preserve} (without the hypothesis $2^\lambda=\lambda^+$).

We next assume (iii) and prove Condition (iii) of Theorem \ref{t-main}.  To distinguish pre-metric structures from their downgrades, we will use the full notation $(\cu N,L)$ for a pre-metric structure.
Let $\langle\cu N_i\colon i\in I\rangle$ be an indexed family of general models of $S\cup T$
such that $\prod_{\cu F}\cu N_i$ is a general model of $S$.  Since $S\models\met(L)$, each $(\cu N_i,L)$ is a  pre-metric model of $S$.
By Corollary \ref{c-metric-product}, $(\prod_{\cu F}\cu N_i,L)$ is a pre-metric structure, and hence a pre-metric model of $S$.
For each $i$, let $(\cu M_i,L)$ be the completion of $(\cu N_i,L)$. By Corollary \ref{c-product-completion}, $(\prod_{\cu F}\cu M_i,L)$ is the completion of
$(\prod_{\cu F}\cu N_i,L)$.
Therefore $\prod_{\cu F}\cu M_i\equiv\prod_{\cu F}\cu N_i$, so  $(\prod_{\cu F}\cu M_i,L)$ is a metric model of $S$.
By (iii) above, $(\prod_{\cu F}\cu M_i,L)$ is a metric model of $T$.  It follows that $\prod_{\cu F}\cu N_i$ is a
general model of $T$.  This proves Condition (iii) of  Theorem \ref{t-main}.

Finally by Theorem \ref{t-main}, Condition (ii)  of Theorem \ref{t-main}, which is the same as Condition (ii) above, holds.
\end{proof}

We have not been able to answer the following question.

\begin{question}  \label{q-eliminate-ch}
Can the conclusions of Theorem \ref{t-main} and Corollary \ref{c-main-metric} be proved in ZFC (without the continuum hypothesis)?
\end{question}

However, in the next section we will give an affirmative answer in the special case  where $S$ is the empty theory.

\section{Eliminating the Continuum Hypothesis}  \label{s-elim-CH}

We will prove the following preservation result in ZFC.

\begin{thm}  \label{t-main-simpler}
  The following are equivalent.

\begin{itemize}
\item[(i)]  $T$ is equivalent to a set of conditional sentences.
\item[(ii)]  $T$ is equivalent to a set of restricted conditional sentences.
\item[(iii)] $T$ is preserved under reduced products.
\end{itemize}
\end{thm}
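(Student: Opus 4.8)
The plan is to prove the cycle $\mathrm{(i)}\Rightarrow\mathrm{(ii)}\Rightarrow\mathrm{(iii)}\Rightarrow\mathrm{(i)}$, where the first implication is immediate and the second is Theorem~\ref{t-conditional-preserve} (neither needs any set-theoretic hypothesis). All the work is in $\mathrm{(iii)}\Rightarrow\mathrm{(i)}$, and the idea is to \emph{not} mimic Galvin's arithmetical absoluteness argument but instead to transfer the problem to classical first-order logic, where Fact~\ref{f-Horn} already gives a ZFC characterization. To do this I would use the conversion procedure advertised in the introduction: a uniform way to turn a general structure into a (classical) two-sorted or relational first-order structure and back, together with a matching translation on sentences. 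The key design requirement is that this conversion commute with reduced products (up to the isomorphism relation $\cong$ for general structures) and preserve and reflect the relevant notions: ``$\varphi$ has value $0$'' should correspond to satisfaction of the translated formula, ``preserved under reduced products'' should correspond on the nose, and ``conditional sentence'' should correspond to ``Horn sentence.'' By Lemma~\ref{l-restricted-full} and the remark that there are only $|V|+\aleph_0$ restricted continuous sentences, one may first replace $T$ by an equivalent set of restricted continuous sentences, so the conversion only has to handle a countable (or $(|V|+\aleph_0)$-sized) syntactic fragment, which keeps the bookkeeping finite/countable.

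Concretely, the steps I would carry out are: (1) Fix the conversion on structures. The natural move is to encode the $[0,1]$-valued predicates of $\cu M$ by a family of classical relations, e.g.\ for each predicate symbol $P$ and each dyadic rational $r$ introduce a relation $R_{P,r}(\vec x)$ interpreted as ``$P^{\cu M}(\vec x)<r$'' (or $\le r$), and keep the function and constant symbols as they are; since general structures impose no continuity, nothing obstructs this. Check that this is a bijection (up to $\cong$ / up to Leibniz reduction on the classical side) between general $V$-structures and classical structures in the expanded vocabulary satisfying the obvious first-order axioms saying the $R_{P,r}$ are downward-coherent in $r$. (2) Check that $\limsup_{\cu F}$ in the definition of the reduced product matches the ``$J\in\cu F$'' semantics of classical reduced products under this encoding — this is exactly the content of Fact~\ref{f-F-limit} and the discussion around it, and it is the reason the encoding was chosen with strict inequalities and a limsup. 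Conclude that the conversion takes $\prod_{\cu F}\cu M_i$ to $\prod_{\cu F}(\text{converted }\cu M_i)$ up to isomorphism. (3) Define the induced translation on (restricted) continuous sentences $\varphi\mapsto\varphi^{*}$ so that $\cu M\models\varphi$ (value $0$) iff the converted structure satisfies $\varphi^{*}$ — this goes by induction on $\varphi$, using that restricted connectives and the quantifiers $\sup,\inf$ are handled by finitely/countably many first-order formulas expressing ``$\varphi<r$'' for dyadic $r$; here one invokes something like Corollary~\ref{c-compactness-approx}(i) to reduce ``value $0$'' to countably many strict-threshold statements. (4) Verify the translation sends conditional sentences to Horn sentences and conversely (so that primitive conditional formulas $\min(C_0(\alpha_0),C_1(1-\alpha_1),\dots)$, whose zero-set is a Boolean combination of threshold conditions on the $\alpha_k$, land inside basic Horn formulas), using Corollary~\ref{c-FO-vs-cont} as a sanity check in one direction. (5) Now run the argument: if $T$ is preserved under continuous reduced products, then (replacing $T$ by its restricted equivalent and translating) $T^{*}$ is a first-order theory preserved under classical reduced products; by Fact~\ref{f-Horn}(ii) it is equivalent to a set of Horn sentences; translate back to get that $T$ is equivalent to a set of conditional sentences, which is $\mathrm{(i)}$.

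The main obstacle I expect is making the conversion genuinely \emph{faithful in both directions at the level of ``preserved under reduced products,''} not merely at the level of single models. It is easy to get a translation for which $\cu M\models\varphi\iff\cu M^{*}\models\varphi^{*}$; the delicate points are (a) that every classical structure in the expanded relational vocabulary (satisfying the coherence axioms) actually arises from a general structure, so that a counterexample to preservation of $T^{*}$ can be pulled back to a counterexample for $T$ — this is where the extra flexibility of general structures, which have no metric and no uniform continuity, is essential and is presumably why the theorem is stated for general rather than metric structures; and (b) that reduced products are respected up to the correct notion of isomorphism (Leibniz-reduced structures on the classical side, reductions on the general side), which requires care with Remark~\ref{r-atomic-reduced-product} and Remark~\ref{r-part-reduced-product}. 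A secondary technical nuisance is handling the vocabulary size: the expanded classical vocabulary has size $|V|+\aleph_0$, which is fine, and one should double-check that the back-translation of a set of Horn sentences over that vocabulary really is a set of conditional sentences over $V$ and not something larger. Once faithfulness in both directions is nailed down, the rest is a formal transfer using Fact~\ref{f-Horn}.
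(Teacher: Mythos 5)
Your outline matches the paper up to a point: the conversion $\cu M\mapsto\cu M_\downarrow$ via threshold predicates $P_{\le r}$, the verification that it commutes with reduced products (this is exactly Lemma \ref{p-down-reduced-product}, resting on Fact \ref{f-F-limit}), and the appeal to Galvin's ZFC theorem (Fact \ref{f-Horn}(ii)) are all present in Section \ref{s-elim-CH}. But your steps (4)--(5) contain the genuine gap: there is no two-way syntactic dictionary under which conditional sentences correspond to Horn sentences and, crucially, under which a Horn axiomatization of $T_\downarrow$ can be ``translated back'' into a conditional axiomatization of $T$. Concretely: Horn sentences over $V_\downarrow$ use genuine equality and negated threshold atoms $\neg P_{\le r}$, i.e.\ strict (open) conditions, and neither of these is the zero-set of a conditional (or any) continuous formula; the map $\cu K\mapsto\cu K_\uparrow$ recovers predicate values only as infima of thresholds, so even the atomic-level correspondence is one-sided; and the sentence-level mismatch is real, as Example \ref{e-horn-counterexample} shows that equivalence to a Horn sentence does not transfer to preservation of the continuous counterpart. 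This is precisely why the paper does \emph{not} eliminate CH by pure transfer. Instead, the implication you want in step (5) is Lemma \ref{l-Horn-conditional-iff} (``if $T_\downarrow$ is Horn-axiomatizable then $T$ is axiomatizable by restricted conditional sentences''), and its proof first assumes CH and invokes the special-model back-and-forth machinery of Theorem \ref{t-main} (via Lemma \ref{l-conditional-most}), and then removes CH by showing the statement is $\Pi^1_2$ over $\HF$ and applying Shoenfield absoluteness (Fact \ref{f-Shoenfield}). So the conversion procedure replaces Galvin's arithmetical absoluteness argument by a $\Pi^1_2$ absoluteness argument; it does not make absoluteness (or the CH step) dispensable, which is what your plan asserts.

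A secondary omission: your reduction ``replace $T$ by restricted sentences, so the vocabulary is of size $|V|+\aleph_0$'' is not enough, since $V$ itself may be uncountable and the $\HF$-coding (and the pre-metric expansion, Fact \ref{f-premetric-expansion}) require countability. The paper handles this by its Claim 1 in the proof of Theorem \ref{t-main-simpler}: for every countable $V'\subseteq V$, the fragment $T\cap[V']$ is still preserved under reduced products, which is proved using compactness together with the fact that reduced products preserve elementary equivalence for countable vocabularies (Proposition \ref{p-Gh-general}, resting on Ghasemi's Feferman--Vaught theorem). Without some such argument your plan does not apply to uncountable vocabularies, and without the absoluteness step it does not go through even for countable ones.
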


\begin{rmk}  If we assume the GCH at some $\lambda\ge |T|+\aleph_0$, then
Theorem \ref{t-main-simpler} follows from Theorem \ref{t-main} when $S$ is the empty set of sentences.
\end{rmk}

We will now introduce a method that allows us to go back and forth between general $[0,1]$-valued structures and first-order structures.
This method will allow us to prove Theorem \ref{t-main-simpler} in ZFC by using the analogous preservation result in first order model theory
(Fact \ref{f-Horn} (ii) below).

\begin{df}  For a continuous vocabulary $V$,  let $V_\downarrow$ be the first order vocabulary with the same function and constant symbols as $V$ and with an $n$-ary predicate symbol $P_{\le r}$ for each $n$-ary predicate symbol $P\in V$ and rational $r\in[0,1)$.

For a reduced general structure $\cu M$ with vocabulary $V$, let $\cu M_\downarrow$ be the first order structure with vocabulary $V_\downarrow$, equality, and the same universe, functions, and constants as $\cu M$, and such that for each $P, r,$ and $\vec x\in M^n$, $\cu M_\downarrow\models P_{\le r}(\vec x)$ iff $\cu M\models P(\vec x)\dotle r$.

We say that a first-order structure $\cu K$ with vocabulary $V_\downarrow$ is \emph{increasing} if for every $P\in V$ and rational $r\le s$ in $[0,1)$, we have
$\cu K\models (\forall \vec x)[P_{\le r}(\vec x)\Rightarrow P_{\le s}(\vec x)]$.
\end{df}

Throughout this section, $\cu M, \cu M',\cu M_i$ will denote reduced general structures with vocabulary $V$, and $\cu K, \cu K', \cu K_i$ will denote increasing first-order structures
with vocabulary $V_\downarrow$.

\begin{df} Let $\cu K_\uparrow$ be the reduction of the general structure $\cu N$ with vocabulary $V$, and the same universe, functions, and constants as $\cu K$, such that for each $n$-ary $P\in V$ and $\vec x\in K^n$,
$P^{\cu N}(\vec x)=\inf\{s\colon \cu K\models P_{\le s}(\vec x)\}$.
\end{df}

\begin{lemma} \label{l-downup} $\cu M_\downarrow$ is increasing, and $\cu M=\cu M_{\downarrow\uparrow}$.
\end{lemma}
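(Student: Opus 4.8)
The statement to prove is Lemma~\ref{l-downup}: that $\cu M_\downarrow$ is increasing and that $\cu M = \cu M_{\downarrow\uparrow}$. The plan is to unwind the three definitions involved ($V_\downarrow$, the operation $\cu M \mapsto \cu M_\downarrow$, and the operation $\cu K \mapsto \cu K_\uparrow$) and check each assertion directly. For the first assertion, fix a predicate symbol $P \in V$ of arity $n$ and rationals $r \le s$ in $[0,1)$, together with a tuple $\vec x \in M^n$. By definition, $\cu M_\downarrow \models P_{\le r}(\vec x)$ means $P^{\cu M}(\vec x) \le r$; since $r \le s$, this gives $P^{\cu M}(\vec x) \le s$, i.e. $\cu M_\downarrow \models P_{\le s}(\vec x)$. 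Hence $\cu M_\downarrow \models (\forall \vec x)[P_{\le r}(\vec x) \Rightarrow P_{\le s}(\vec x)]$, so $\cu M_\downarrow$ is increasing, and in particular $\cu M_{\downarrow\uparrow}$ is defined.

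For the equality $\cu M = \cu M_{\downarrow\uparrow}$, I would first observe that $\cu M$ is reduced by hypothesis, and that $\cu M_{\downarrow\uparrow}$ is a reduction by definition, so it suffices to check that the underlying general structure $\cu N$ built from $\cu K := \cu M_\downarrow$ agrees with $\cu M$ on universe, constants, functions, and the interpretation of each predicate symbol; the fact that $\cu M$ is already reduced will then let us conclude that taking the reduction changes nothing and that $\cu M = \cu M_{\downarrow\uparrow}$ on the nose (using Remark~\ref{r-reduction}, a reduced structure is isomorphic — indeed equal — to its reduction). The universe, constants, and function symbols are literally unchanged by both operations, so the only content is the predicate interpretations. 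Fix $P \in V$ of arity $n$ and $\vec x \in M^n$, and set $t = P^{\cu M}(\vec x) \in [0,1]$. By definition of $\cu N = \cu K_\uparrow$'s underlying structure (before reduction),
\[
P^{\cu N}(\vec x) = \inf\{s : \cu K \models P_{\le s}(\vec x)\} = \inf\{s \text{ rational in } [0,1) : P^{\cu M}(\vec x) \le s\}.
\]
If $t < 1$, this infimum equals $t$ because the rationals in $[t,1)$ are nonempty with infimum $t$ and no rational $s < t$ satisfies $t \le s$. If $t = 1$, the set $\{s \text{ rational in } [0,1) : 1 \le s\}$ is empty, so the infimum over the empty set is $1$ (the sup of the value space $[0,1]$), matching $t = 1$. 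Either way $P^{\cu N}(\vec x) = t = P^{\cu M}(\vec x)$.

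Finally I would note that the only mildly delicate points are the $t=1$ boundary case (one must adopt the convention $\inf \emptyset = 1$ in $[0,1]$, which is forced by the definition making sense) and the bookkeeping that the reduction operation is harmless here: since $\cu M$ is reduced and $\cu N$ has exactly the same atomic data as $\cu M$, the Leibniz-equality relation $\doteq^{\cu N}$ coincides with $\doteq^{\cu M}$, which is trivial, so $\cu K_\uparrow = $ (reduction of $\cu N$) $= \cu N = \cu M$. I do not expect any real obstacle here; the lemma is essentially a definitional sanity check establishing that $\downarrow$ and $\uparrow$ are mutually inverse on reduced general structures (in one direction), with the companion direction $\cu K = \cu K_{\uparrow\downarrow}$ presumably handled separately, as the analogous computation runs into the genuine issue that a general first-order $\cu K$ with vocabulary $V_\downarrow$ need only be increasing, not "closed" under the infimum in a way that recovers the $P_{\le r}$ exactly.
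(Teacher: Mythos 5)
Your proof is correct and follows essentially the same route as the paper, which compresses the whole verification into the single observation that $P^{\cu M}(\vec x)\le r$ iff $P^{\cu M}(\vec x)\le s$ for all rational $s\in(r,1)$ — exactly the infimum computation you spell out. Your extra care about the $t=1$ boundary case (with $\inf\emptyset=1$) and about the reduction being harmless because $\cu M$ is reduced and $\cu N$ has the same atomic data is sound and just makes explicit what the paper leaves implicit.
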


\begin{proof}  The lemma follows from the observation that $P^{\cu M}(\vec x)\le r$ if and only if for all rational $s\in(r,1)$ we have $P^{\cu M}(\vec x)\le s$ .
\end{proof}

\begin{df}
For a continuous theory $T$ with vocabulary $V$, let $T_\downarrow$ be the theory of the class of all increasing $V_\downarrow$-structures $\cu K$
such that $\cu K_\uparrow\models T$, that is,
$$T_\downarrow=\{\theta\colon (\forall \cu K)[\cu K_\uparrow\models T\Rightarrow \cu K\models \theta ]\}.$$
\end{df}

\begin{lemma}  Let $T$ and $U$ be continuous theories.
\begin{itemize}
\item[(i)]  If $T\models U$ then $T_\downarrow\models U_\downarrow$.
\item[(ii)] If $\cu M\models T$, then $\cu M_\downarrow\models T_\downarrow$.
\item[(iii)]  Every model of $T_\downarrow$ is increasing.
\end{itemize}
\end{lemma}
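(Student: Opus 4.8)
The proof will be a straightforward unwinding of the definitions of $T_\downarrow$ and $(\cdot)_\uparrow$, using Lemma \ref{l-downup} only for part (ii).

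For (i), the plan is to establish the stronger \emph{set-theoretic} inclusion $U_\downarrow\subseteq T_\downarrow$, from which $T_\downarrow\models U_\downarrow$ is immediate. The key observation is that if $\cu K$ is an increasing $V_\downarrow$-structure with $\cu K_\uparrow\models T$, then, since $\cu K_\uparrow$ is a (reduced) general structure and $T\models U$, we also have $\cu K_\uparrow\models U$. Hence the class of increasing $\cu K$ with $\cu K_\uparrow\models T$ is contained in the class of increasing $\cu K$ with $\cu K_\uparrow\models U$. Since $U_\downarrow$ is by definition the set of $V_\downarrow$-sentences true in the latter class, every $\theta\in U_\downarrow$ is true throughout the former class, i.e.\ $\theta\in T_\downarrow$.

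For (ii), assume $\cu M\models T$. By Lemma \ref{l-downup}, $\cu M_\downarrow$ is increasing and $(\cu M_\downarrow)_\uparrow=\cu M_{\downarrow\uparrow}=\cu M$, so $(\cu M_\downarrow)_\uparrow\models T$. Thus $\cu M_\downarrow$ belongs to the class of increasing $V_\downarrow$-structures used to define $T_\downarrow$, so every sentence of $T_\downarrow$ holds in $\cu M_\downarrow$; that is, $\cu M_\downarrow\models T_\downarrow$. For (iii), note that for each predicate symbol $P\in V$ and rationals $r\le s$ in $[0,1)$, the sentence $(\forall\vec x)[P_{\le r}(\vec x)\Rightarrow P_{\le s}(\vec x)]$ holds in every increasing $V_\downarrow$-structure by definition, hence in particular in every increasing $\cu K$ with $\cu K_\uparrow\models T$; therefore each such sentence lies in $T_\downarrow$. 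Consequently any model of $T_\downarrow$ satisfies all of them and so is increasing. (If no increasing $\cu K$ satisfies $\cu K_\uparrow\models T$, then $T_\downarrow$ is the set of all $V_\downarrow$-sentences and has no models, so the claim is vacuous; but the argument above applies uniformly in any case.)

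There is no substantive obstacle here; the only points requiring a moment's care are the direction of the inclusion in (i) — one obtains $U_\downarrow\subseteq T_\downarrow$, not the reverse — and the remark that $\cu K_\uparrow$, being a reduced general structure, is a legitimate general model to which the hypothesis $T\models U$ may be applied.
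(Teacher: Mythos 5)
Your proposal is correct and follows essentially the same route as the paper: (i) is the direct definition-unwinding the paper dismisses as clear (your explicit inclusion $U_\downarrow\subseteq T_\downarrow$ is exactly the intended argument), (ii) uses Lemma \ref{l-downup} precisely as the paper does, and (iii) spells out the paper's one-line remark that increasingness is expressed by $V_\downarrow$-sentences, all of which belong to $T_\downarrow$.
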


\begin{proof}  (i) is clear from the definitions.

(ii) Suppose $\cu M\models T$ and $\theta\in T_\downarrow$.  By Lemma \ref{l-downup}, $\cu M=(\cu M_\downarrow)_\uparrow\models T$, so $\cu M_\downarrow\models\theta$.
Therefore $\cu M_\downarrow\models T_\downarrow$.

(iii) The property of being increasing is expressed by a set of $V_\downarrow$-sentences.
\end{proof}

The following key lemma shows that one can freely move the $\uparrow$ symbol in certain reduced product formulas.

\begin{lemma} \label{p-down-reduced-product}
Let $\cu F$ be a proper filter over $I$.  Then
$$\prod_{\cu F} (\cu K_{i\uparrow}) = \left(\prod_{\cu F}(\cu K_i)\right)_\uparrow.$$  In particular, $$\prod_{\cu F}\cu{M}_i=\left(\prod_{\cu F}(\cu{M}_i)_\downarrow\right)_\uparrow.$$

\end{lemma}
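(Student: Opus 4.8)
The plan is to deduce the second displayed equality from the first, concentrating the work on $\prod_{\cu F}(\cu K_{i\uparrow})=(\prod_{\cu F}\cu K_i)_\uparrow$. Indeed, by Lemma \ref{l-downup} each $\cu M_{i\downarrow}$ is increasing and $(\cu M_{i\downarrow})_\uparrow=\cu M_i$, so applying the first equality to the family $\langle\cu M_{i\downarrow}\colon i\in I\rangle$ gives $\prod_{\cu F}\cu M_i=\prod_{\cu F}\big((\cu M_{i\downarrow})_\uparrow\big)=\big(\prod_{\cu F}\cu M_{i\downarrow}\big)_\uparrow$.

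For the first equality I would first isolate the one analytic ingredient: for every $g\colon I\to[0,1]$,
$$\limsup_{\cu F}g=\inf\{r\in\mathbb{Q}\cap[0,1)\colon\{i\in I\colon g(i)\le r\}\in\cu F\},$$
with the convention $\inf\emptyset=1$. The set of rationals on the right is upward closed, so its infimum coincides with that of the corresponding set of all real thresholds, and for the latter both inequalities follow at once from $\limsup_{\cu F}g=\inf_{J\in\cu F}\sup_{i\in J}g(i)$: if $J\in\cu F$ then $J\subseteq\{i\colon g(i)\le\sup_{i\in J}g(i)\}\in\cu F$, while if $\{i\colon g(i)\le r\}\in\cu F$ then that very set witnesses $\limsup_{\cu F}g\le r$.

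Next I would unwind both sides. Let $\cu K_i^{\flat}$ be the (not necessarily reduced) general structure with universe $K_i$, the functions and constants of $\cu K_i$, and $P^{\cu K_i^{\flat}}(\vec x)=\inf\{s\colon\cu K_i\models P_{\le s}(\vec x)\}$, so that $\cu K_{i\uparrow}$ is the reduction of $\cu K_i^{\flat}$. Then, by Remark \ref{r-atomic-reduced-product}, for an atomic $V$-formula $P(\vec x)$ and a tuple $\vec a$ from $C:=\prod_{i\in I}K_i$, the value of $P$ at the class of $\vec a$ in $\prod_{\cu F}(\cu K_{i\uparrow})$ equals $\limsup_{\cu F}\big\langle\inf\{s\colon\cu K_i\models P_{\le s}(\vec a(i))\}\big\rangle$. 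On the other side, $\prod_{\cu F}\cu K_i$ is the first-order reduced product (again increasing, directly from its definition), and by the definitions of $(\cdot)_\uparrow$ and of the first-order reduced product the value of $P$ at the corresponding element of $(\prod_{\cu F}\cu K_i)_\uparrow$ equals $\inf\{s\colon\{i\in I\colon\cu K_i\models P_{\le s}(\vec a(i))\}\in\cu F\}$. Writing $g(i)=\inf\{s\colon\cu K_i\models P_{\le s}(\vec a(i))\}$ and using that each $\cu K_i$ is increasing, we get $\{i\colon g(i)<s\}\subseteq\{i\colon\cu K_i\models P_{\le s}(\vec a(i))\}\subseteq\{i\colon g(i)\le s\}$ for every rational $s$, so the two threshold sets have the same infimum; the identity of the previous paragraph applied to this $g$ then shows the two values of $P$ coincide. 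The functions and constants of both structures are, likewise, computed coordinatewise from those of the $\cu K_i$ and then passed to the quotient, so they agree as well.

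What remains, and what I expect to be the only genuine difficulty (purely organizational), is to check that the two reduced products have the same universe. Both are canonically the quotient of $C$ by the equivalence relation $\equiv$ given by declaring $a\equiv b$ when $\limsup_{\cu F}\langle\alpha^{\cu K_i^{\flat}}(a(i),\vec c(i))\rangle=\limsup_{\cu F}\langle\alpha^{\cu K_i^{\flat}}(b(i),\vec c(i))\rangle$ for all atomic $\alpha$ and all tuples $\vec c$ from $C$. On the one hand, $\prod_{\cu F}(\cu K_{i\uparrow})$ is the Leibniz quotient of $\prod_{i\in I}(K_i/\doteq^{\cu K_i^{\flat}})$, and composing the coordinatewise quotients with this Leibniz quotient produces exactly $\equiv$ (again by Remark \ref{r-atomic-reduced-product}). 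On the other hand, $(\prod_{\cu F}\cu K_i)_\uparrow$ is the Leibniz quotient of $(\prod_{i\in I}K_i)/\!\sim_{\cu F}$, where $\sim_{\cu F}$ refines $\equiv$ and, by the computation of atomic values above, the induced Leibniz equality is precisely $\equiv/\!\sim_{\cu F}$; so this composite is again $\equiv$. With the universes identified in this way, and with all functions, constants, and atomic predicate values matching, the two general structures coincide, which completes the proof.
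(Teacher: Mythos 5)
Your proof is correct, and its core computation --- identifying $\limsup_{\cu F}$ of the $[0,1]$-valued predicate with the infimum of the rational thresholds $s$ for which $\{i\colon \cu K_i\models P_{\le s}(\vec a(i))\}\in\cu F$ --- is exactly the chain of equivalences the paper writes out for the statements $P(\vec x_{\cu F})\dotle r$. Where you genuinely diverge is in handling the universes and reductions. The paper sidesteps this entirely with a trick: it first proves the lemma when $V$ contains a predicate $d$ for the discrete metric, in which case the Leibniz equalities on both sides are controlled by $d$ and the atomic computation alone settles everything; the general case then follows by deleting $d$ and reducing both sides (invoking Remark \ref{r-reduction-part}). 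You instead confront the reductions directly, exhibiting both structures as the quotient of the full cartesian product $\prod_i K_i$ by the single equivalence relation $\equiv$ determined by $\limsup_{\cu F}$ of atomic values, and checking that the two composites of coordinatewise/filter quotients with Leibniz quotients both collapse to $\equiv$. Your route is longer but more self-contained and makes explicit the sense in which the stated equality holds (equality after canonical identifications), which the paper leaves implicit; the paper's route is shorter at the cost of an auxiliary expansion of the vocabulary. One small point worth writing out if you keep your version: the step ``the two threshold sets have the same infimum'' uses that a rational $s'$ strictly above a threshold $s$ for $\{i\colon g(i)\le s\}$ is a threshold for $\{i\colon \cu K_i\models P_{\le s'}(\vec a(i))\}$ via the inclusion $\{i\colon g(i)<s'\}\subseteq\{i\colon \cu K_i\models P_{\le s'}(\vec a(i))\}$; as stated it is asserted rather than proved, though the argument is routine.
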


\begin{proof}  It is enough to prove the result in the case that $V$ contains a predicate symbol $d$ for the discrete metric,
because then the general case follows by removing $d$ from the vocabulary and taking the reduction of both sides.
In that case, for each $n$-ary $P\in V$, rational $r\in[0,1)$, and $\vec x\in (\prod K_i)^n$, the following are equivalent.

\begin{itemize}
\item $\prod_{\cu F} (\cu K_{i\uparrow})\models P(\vec x_\cu F)\dotle r$.
\item $\inf_{J\in\cu F}(\sup_{i\in J}P^{\cu K_{i\uparrow}}(\vec x_i))\dotle r$.
\item $(\forall s\in(r,1))(\exists J\in\cu F)(\sup_{i\in J}P^{\cu K_{i\uparrow}}(\vec x_i))\dotle s$.
\item $(\forall s\in(r,1))(\exists J\in\cu F)(\forall i\in J)P^{\cu K_{i\uparrow}}(\vec x_i)\dotle s$.
\item $(\forall s\in(r,1))(\exists J\in\cu F)(\forall i\in J)\cu K_i\models P_{\le s}(\vec x_i)$.
\item $(\forall s\in(r,1))\{i\colon \cu K_i\models P_{\le s}(\vec x_i)\}\in\cu F$.
\item $(\forall s\in(r,1))\prod_{\cu F}(\cu K_i)\models P_{\le s}(\vec x_{\cu F})$.
\item $(\prod_{\cu F}(\cu K_i))_\uparrow\models P(\vec x_{\cu F})\dotle r$.

\end{itemize}
The second statement follows from the first statement and Lemma \ref{l-downup}.
\end{proof}

\begin{lemma} \label{l-uparrow-equiv}

(i) If $\cu K\cong\cu K'$ then $\cu K_{\uparrow}\cong\cu K'_{\uparrow}$.

(ii) If $\cu K\equiv\cu K'$ then $\cu K_{\uparrow}\equiv\cu K'_{\uparrow}$.
\end{lemma}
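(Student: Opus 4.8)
The plan is to prove (i) by transport of structure along an isomorphism, and to prove (ii) by reducing it to the classical (ZFC) Keisler--Shelah theorem, using Lemma~\ref{p-down-reduced-product} to commute $\uparrow$ with ultrapowers and Fact~\ref{f-Los} to move between a general structure and its ultrapowers.

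For (i), I would take an isomorphism $h$ of the first-order $V_\downarrow$-structures $\cu{K}$ and $\cu{K}'$ and write $\cu{N}$, $\cu{N}'$ for the general $V$-structures whose reductions are $\cu{K}_\uparrow$, $\cu{K}'_\uparrow$. Since $V_\downarrow$ has the same function and constant symbols as $V$, the map $h$ carries the functions and constants of $\cu{N}$ to those of $\cu{N}'$; and since $\cu{K}\models P_{\le s}(\vec x)$ holds exactly when $\cu{K}'\models P_{\le s}(h(\vec x))$ does, the defining infima agree, so $P^{\cu{N}}(\vec x)=P^{\cu{N}'}(h(\vec x))$ for every predicate symbol $P$ of $V$ and every $\vec x$. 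Thus $h$ is an embedding of $\cu{N}$ \emph{onto} $\cu{N}'$, whence $\cu{N}\cong\cu{N}'$ by Remark~\ref{r-reduction}; since every general structure is isomorphic to its reduction, this gives $\cu{K}_\uparrow\cong\cu{K}'_\uparrow$.

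For (ii), assuming $\cu{K}\equiv\cu{K}'$, I would invoke the Keisler--Shelah theorem (in its ZFC form) to obtain a set $I$ and an ultrafilter $\cu{D}$ over $I$ with $\prod_{\cu{D}}\cu{K}\cong\prod_{\cu{D}}\cu{K}'$ as first-order structures; for first-order structures this ultrapower is the same object as the general-structure ultraproduct of the constant family, exactly as used in the proof of Corollary~\ref{c-FO-vs-cont}. Being increasing is a first-order property, hence preserved under ultrapowers, so $\prod_{\cu{D}}\cu{K}$ and $\prod_{\cu{D}}\cu{K}'$ are increasing and part~(i) yields $(\prod_{\cu{D}}\cu{K})_\uparrow\cong(\prod_{\cu{D}}\cu{K}')_\uparrow$. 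Lemma~\ref{p-down-reduced-product}, applied to the two constant families, identifies $(\prod_{\cu{D}}\cu{K})_\uparrow$ with $\prod_{\cu{D}}(\cu{K}_\uparrow)$ and $(\prod_{\cu{D}}\cu{K}')_\uparrow$ with $\prod_{\cu{D}}(\cu{K}'_\uparrow)$, which are ultrapowers of $\cu{K}_\uparrow$ and $\cu{K}'_\uparrow$. Since an ultrapower of a general structure is elementarily equivalent to it by Fact~\ref{f-Los}, we get
$$\cu{K}_\uparrow\equiv\prod_{\cu{D}}(\cu{K}_\uparrow)=\Bigl(\prod_{\cu{D}}\cu{K}\Bigr)_\uparrow\cong\Bigl(\prod_{\cu{D}}\cu{K}'\Bigr)_\uparrow=\prod_{\cu{D}}(\cu{K}'_\uparrow)\equiv\cu{K}'_\uparrow,$$
and since $\cong$ implies $\equiv$ (Remark~\ref{r-reduction}) and $\equiv$ is an equivalence relation, $\cu{K}_\uparrow\equiv\cu{K}'_\uparrow$.

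The checks in (i) and the observation that ``increasing'' is first order are routine, so there is no serious obstacle; the one point to watch is that one must use the \emph{classical} Keisler--Shelah theorem here rather than a general-structure version — the latter is proved in Appendix~C from precisely the present circle of ideas, so invoking it would be circular — together with the identification, for first-order structures, of the classical ultrapower with the general-structure one, so that Lemma~\ref{p-down-reduced-product} and Fact~\ref{f-Los} genuinely apply to it. (One could instead replace Keisler--Shelah by a passage to isomorphic saturated first-order elementary extensions, but that seems to require additionally analysing how $\uparrow$ interacts with elementary extensions, so the ultrapower route is cleaner.)
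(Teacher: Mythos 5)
Your proposal is correct and follows essentially the same route as the paper: the paper also dismisses (i) as clear (your transport-of-structure verification is exactly what is meant) and proves (ii) by applying the classical isomorphism theorem for ultrapowers to $\cu K$ and $\cu K'$, then using part (i) and Lemma~\ref{p-down-reduced-product} to move $\uparrow$ inside the ultrapowers and conclude elementary equivalence. The only cosmetic difference is that the paper allows two different ultrafilters (one for each structure) where you use the single-ultrafilter ZFC form; your extra remarks (that the ultrapowers remain increasing, and that the first-order and general-structure ultrapowers coincide) are correct points the paper leaves implicit.
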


\begin{proof}   (i) is clear.  We prove (ii).  By the Isomorphism Theorem for ultrapowers (Theorem 6.1.15 in [CK2012]),
there are ultrafilters $\cu D$ over $H$ and $\cu E$ over $I$ such that $\cu K^{H}/{\cu D}\cong\cu K'^{J}/{\cu E}$.
Then by (i) we have $(\cu K^{H}/{\cu D})_\uparrow\cong(\cu K'^{J}/{\cu E})_\uparrow$.
By Lemma \ref{p-down-reduced-product}, $(\cu K_{\uparrow})^{H}/{\cu D}\cong(\cu K'_{\uparrow})^{J}/{\cu E}$.
It follows that $\cu K_{\uparrow}\equiv\cu K'_{\uparrow}$.
\end{proof}

\begin{lemma}  \label{p-down-equiv}  For each continuous theory $ T$ and each $\cu K$, $\cu K\models T_\downarrow$ if and only if $\cu K_\uparrow\models T$.
\end{lemma}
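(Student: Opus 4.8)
The forward implication, $\cu K_\uparrow\models T\Rightarrow\cu K\models T_\downarrow$, is immediate from the definition of $T_\downarrow$ as the set of $V_\downarrow$-sentences true in every increasing $\cu K'$ with $\cu K'_\uparrow\models T$: if $\cu K_\uparrow\models T$ then $\cu K$ is one of those structures, so it satisfies every sentence of $T_\downarrow$. For the converse the plan is to pass through the class
\[\mathcal{C} := \{\cu K : \cu K \text{ is an increasing } V_\downarrow\text{-structure with } \cu K_\uparrow\models T\}.\]
By the very definition of $T_\downarrow$ we have $T_\downarrow=\Th(\mathcal{C})$, so what must be shown is that every model of $\Th(\mathcal{C})$ lies in $\mathcal{C}$, i.e.\ that $\mathcal{C}$ is an axiomatizable class. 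The classical route is to verify that $\mathcal{C}$ is closed under ultraproducts and under elementary equivalence; a class with these two closure properties coincides with the class of all models of its own first-order theory (a standard consequence of the compactness theorem, see [CK2012]). Note that the naive shortcut — trying to deduce the statement from an identity $\cu K=\cu K_{\uparrow\downarrow}$ — is unavailable, since $\uparrow$ involves both a reduction and an infimum that may or may not be attained, so in general $\cu K_{\uparrow\downarrow}\neq\cu K$; this is exactly why the ultraproduct detour, and Lemmas \ref{p-down-reduced-product} and \ref{l-uparrow-equiv}, are needed.

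To see that $\mathcal{C}$ is closed under ultraproducts, let $\cu K_i\in\mathcal{C}$ for $i\in I$ and let $\cu D$ be an ultrafilter over $I$. Being increasing is expressed by the first-order sentences $\forall\vec x\,(P_{\le r}(\vec x)\to P_{\le s}(\vec x))$ for rationals $r\le s$, so by the classical {\L}o\'s theorem $\prod_{\cu D}\cu K_i$ is again increasing. By Lemma \ref{p-down-reduced-product}, applied with $\cu F=\cu D$, we have $(\prod_{\cu D}\cu K_i)_\uparrow=\prod_{\cu D}(\cu K_{i\uparrow})$, and since each $\cu K_{i\uparrow}\models T$, Fact \ref{f-Los} gives $\varphi^{\prod_{\cu D}(\cu K_{i\uparrow})}=\lim_{\cu D}\varphi^{\cu K_{i\uparrow}}=0$ for every $\varphi\in T$. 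Hence $(\prod_{\cu D}\cu K_i)_\uparrow\models T$ and $\prod_{\cu D}\cu K_i\in\mathcal{C}$.

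For closure under elementary equivalence, suppose $\cu K\equiv\cu K'$ with $\cu K'\in\mathcal{C}$. Then $\cu K$ is increasing (the increasing axioms are first-order and hold in $\cu K'$), and Lemma \ref{l-uparrow-equiv}(ii) gives $\cu K_\uparrow\equiv\cu K'_\uparrow$; since $\cu K'_\uparrow\models T$ and satisfaction of a set of sentences in a general structure is invariant under $\equiv$, we get $\cu K_\uparrow\models T$, so $\cu K\in\mathcal{C}$. With both closure properties in hand, the standard argument finishes: if $\cu K\models\Th(\mathcal{C})$ and $\mathcal{C}\neq\emptyset$ (the case $\mathcal{C}=\emptyset$ being trivial, as then $T_\downarrow$ is inconsistent), then every finite subset of the complete theory $\Th(\cu K)$ has a model in $\mathcal{C}$ — otherwise the negation of its conjunction would lie in $\Th(\mathcal{C})\subseteq\Th(\cu K)$, a contradiction — so an appropriate ultraproduct of such models is elementarily equivalent to $\cu K$ and lies in $\mathcal{C}$, whence $\cu K\in\mathcal{C}$. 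Applying this to the given $\cu K$ with $\cu K\models T_\downarrow=\Th(\mathcal{C})$ yields $\cu K\in\mathcal{C}$, i.e.\ $\cu K_\uparrow\models T$, as desired. The only genuinely non-formal ingredient is the verification of closure under ultraproducts, where Lemma \ref{p-down-reduced-product} and Fact \ref{f-Los} carry the load; everything else is bookkeeping.
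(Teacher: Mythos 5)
Your proof is correct and follows essentially the same route as the paper: the paper's argument also reduces the converse direction to the standard fact that a model of $\Th(\mathcal{C})$ is elementarily equivalent to an ultraproduct of members of $\mathcal{C}$, and then applies Lemma \ref{l-uparrow-equiv} and Lemma \ref{p-down-reduced-product} exactly as you do; you have merely unpacked that standard fact via closure under ultraproducts and elementary equivalence. Your side remark that $\cu K_{\uparrow\downarrow}\neq\cu K$ in general (because the infimum defining $P^{\cu K_\uparrow}$ need not be attained) correctly identifies why the ultraproduct detour is necessary.
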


\begin{proof}
It follows immediately from the definition of $T_\downarrow$ that if $\cu K_\uparrow\models T$, then $\cu K\models T_\downarrow$.  For the other direction, suppose that $\cu K\models T_\downarrow$.  Then there is a set $I$, an ultrafilter $\cu D$ on $I$, and increasing structures $\cu K_i$, $i\in I$, such that $\cu K_{i\uparrow} \models T$ for each $i\in I$, and $\cu K\equiv \prod_{\cu D}\cu K_i$.  By Lemmas \ref{l-uparrow-equiv} and \ref{p-down-reduced-product}, we have $$\cu K_\uparrow \equiv \left(\prod_{\cu D}\cu K_i\right)_\uparrow=\prod_{\cu D}(\cu K_{i\uparrow}),$$
from which it follows that $\cu K_\uparrow \models T$.
\end{proof}

\begin{lemma}  \label{p-preserved-iff}
For all continuous theories $S, T$, the following are equivalent.

\noindent\begin{itemize}
\item[(i)] Each reduced product of general models of $S$ is a general model of $ T$.
\item[(ii)]  Each reduced product of  models of $S_\downarrow$ is a  model of $ T_\downarrow$.
\end{itemize}
\end{lemma}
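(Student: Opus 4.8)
The plan is to prove both implications by threading together the facts already established about the translations $\downarrow$ and $\uparrow$; once those are in hand the argument is entirely formal. Two bookkeeping points need attention: statement~(i) quantifies over arbitrary general models, whereas the section's standing convention takes $\cu M_i$ to be a \emph{reduced} general structure; and Lemma~\ref{p-down-equiv} is stated only for increasing first-order structures, so each reduced product to which we apply it must first be seen to be increasing.

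For $(i)\Rightarrow(ii)$, I would start with a proper filter $\cu F$ over $I$ and increasing structures $\cu K_i$ ($i\in I$) with $\cu K_i\models S_\downarrow$. By Lemma~\ref{p-down-equiv}, each $\cu K_{i\uparrow}$ is a general model of $S$, so by~(i) the reduced product $\prod_{\cu F}(\cu K_{i\uparrow})$ is a general model of $T$. Lemma~\ref{p-down-reduced-product} identifies $\prod_{\cu F}(\cu K_{i\uparrow})$ with $\left(\prod_{\cu F}\cu K_i\right)_\uparrow$, so $\left(\prod_{\cu F}\cu K_i\right)_\uparrow\models T$. The reduced product $\prod_{\cu F}\cu K_i$ is again increasing, since the increasing axioms $\forall\vec x\,[P_{\le r}(\vec x)\rightarrow P_{\le s}(\vec x)]$ form a set of Horn sentences and hence are preserved under reduced products by Fact~\ref{f-Horn}(ii) (this can also be checked directly from the reduced-product interpretation of atomic formulas). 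Lemma~\ref{p-down-equiv} then gives $\prod_{\cu F}\cu K_i\models T_\downarrow$, as required.

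For $(ii)\Rightarrow(i)$, I would take a proper filter $\cu F$ over $I$ and general models $\cu M_i$ ($i\in I$) of $S$. Replacing each $\cu M_i$ by its reduction affects neither which sentences are true in it (every structure is isomorphic to its reduction, and isomorphic structures are elementarily equivalent, Remark~\ref{r-reduction}) nor, up to isomorphism and hence up to elementary equivalence, the reduced product (Remark~\ref{r-atomic-reduced-product}); so we may assume every $\cu M_i$ is reduced. Then $\cu M_{i\downarrow}$ is defined and is a model of $S_\downarrow$, so by~(ii) we get $\prod_{\cu F}(\cu M_{i\downarrow})\models T_\downarrow$, and in particular this structure is increasing. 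By Lemma~\ref{p-down-equiv}, $\left(\prod_{\cu F}(\cu M_{i\downarrow})\right)_\uparrow\models T$, and by the ``in particular'' clause of Lemma~\ref{p-down-reduced-product} this structure is precisely $\prod_{\cu F}\cu M_i$. Hence $\prod_{\cu F}\cu M_i\models T$.

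There is no genuine obstacle here: the content is carried by the preceding structural lemmas, chiefly Lemma~\ref{p-down-reduced-product} and Lemma~\ref{p-down-equiv}, and the proof merely chains them. The only care required is the reduction to reduced factor structures in $(ii)\Rightarrow(i)$, so that $\downarrow$ and the ``in particular'' clause of Lemma~\ref{p-down-reduced-product} may be applied, and the observation in $(i)\Rightarrow(ii)$ that being increasing is preserved under reduced products so that Lemma~\ref{p-down-equiv} is applicable to $\prod_{\cu F}\cu K_i$.
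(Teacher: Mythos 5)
Your proof is correct and follows essentially the same route as the paper's: both directions are the same chain of Lemma \ref{p-down-equiv} and Lemma \ref{p-down-reduced-product} (with Lemma \ref{l-downup} for the downarrow-then-uparrow step). The only difference is that you spell out two bookkeeping points the paper handles by its standing conventions (that reduced products of increasing structures are increasing, and that one may pass to reductions of the factors), which is fine but not a different argument.
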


\begin{proof}  Let $\cu F$ be a proper filter over a set $I$.

(i) $\Rightarrow$ (ii): Assume (i).  For each $i\in I$, let $\cu K_i\models S_\downarrow$.
By Lemma \ref{p-down-equiv}, $\cu K_{i\uparrow}\models S$, so $\prod_{\cu F}(\cu K_{i\uparrow})\models T$.
But $\prod_{\cu K}(\cu K_{i\uparrow})=(\prod_{\cu F}(\cu K_i))_\uparrow$ by Lemma \ref{p-down-reduced-product}.
Therefore  $(\prod_{\cu F}(\cu K_i))_\uparrow\models T$, so $\prod_{\cu F}(\cu K_i)\models T_\downarrow$ by Lemma \ref{p-down-equiv}.
This proves (ii).

(ii) $\Rightarrow$ (i): Assume (ii). Let $\cu M_i\models S$ for each $i\in I$.
 We have $\cu M_{i\downarrow\uparrow}=\cu M_i$ by Lemma \ref{l-downup}, so $\cu M_{i\downarrow}\models S_\downarrow$.
Then $\prod_{\cu F}(\cu M_{i\downarrow})\models T_\downarrow$.  By Lemma \ref{p-down-equiv}, $(\prod_{\cu F}(\cu M_{i\downarrow}))_\uparrow\models T$.
By the second statement of Lemma \ref{p-down-reduced-product}, we have $ \prod_{\cu F}(\cu M_i)\models T$, and (i) holds.
\end{proof}

\begin{cor}  \label{c-preserved-iff}
For every continuous theory $T$, $T$ is preserved under reduced products if and only if $T_\downarrow$ is
preserved under reduced products.
\end{cor}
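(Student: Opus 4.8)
The plan is to specialize Lemma \ref{p-preserved-iff} to the diagonal case $S=T$. By definition, a continuous theory $T$ is preserved under reduced products precisely when every reduced product of general models of $T$ is again a general model of $T$; taking $S=T$, this is verbatim condition (i) of Lemma \ref{p-preserved-iff}. Similarly, unwinding the definition, ``$T_\downarrow$ is preserved under reduced products'' says exactly that every reduced product of models of $T_\downarrow$ is a model of $T_\downarrow$, and since $S_\downarrow=T_\downarrow$ when $S=T$, this is verbatim condition (ii) of Lemma \ref{p-preserved-iff}. So the asserted biconditional is literally the equivalence (i) $\Leftrightarrow$ (ii) of that lemma, and nothing further is required.

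I would therefore keep the proof to a single sentence: ``This is the case $S=T$ of Lemma \ref{p-preserved-iff}.'' The only thing worth verifying, and it is purely a matter of matching definitions, is that the two sides of the corollary's biconditional genuinely line up with conditions (i) and (ii) once $S$ is set equal to $T$; there is no real content beyond that bookkeeping.

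I do not anticipate any obstacle. All the substance lives in Lemma \ref{p-preserved-iff}, whose proof relies on the translation machinery already in place --- Lemma \ref{l-downup} ($\cu M=\cu M_{\downarrow\uparrow}$), Lemma \ref{p-down-reduced-product} (the $\uparrow$ operation commutes with reduced products), and Lemma \ref{p-down-equiv} ($\cu K\models T_\downarrow$ iff $\cu K_\uparrow\models T$). With this corollary in hand, the route to Theorem \ref{t-main-simpler} is then clear: combine it with Fact \ref{f-Horn}(ii) on the first-order side and a characterization of $T_\downarrow$ ``being equivalent to Horn sentences'' in terms of $T$ ``being equivalent to conditional sentences,'' but that synthesis is carried out elsewhere and is not needed here.
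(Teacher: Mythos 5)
Your proposal is correct and coincides with the paper's own proof, which simply reads ``Take $S=T$ in Lemma \ref{p-preserved-iff}.'' The definitional bookkeeping you mention is exactly what the paper leaves implicit, so nothing further is needed.
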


\begin{proof}  Take $S=T$ in Lemma \ref{p-preserved-iff}.
\end{proof}

In order to eliminate the continuum hypothesis from Theorem \ref{t-main}, we will need to use the Shoenfield Absoluteness Theorem.  For background we refer to [J].  The set of \emph{hereditarily finite
sets}, which is countable, is denoted by $\HF$.  Note that tuples of hereditarily finite sets are also hereditarily finite sets.
 A $\Pi^1_2$ \emph{sentence (over $\HF$)} is a sentence of set theory of the form
$$(\forall X\subseteq \HF)(\exists Y\subseteq \HF)\theta(X,Y)$$
where $\theta(X,Y)$ is a formula of set theory with quantifiers restricted to $\HF$.

\begin{fact}  \label{f-absorb}
Every sentence of set theory of the form $Q_1 Q_2\theta$, where $Q_1$ is a sequence of second order universal quantifiers over subsets of $\HF$ and first order
quantifiers restricted to $\HF$,  $Q_2$ is a sequence of second order existential quantifiers over subsets of $\HF$ and first order
quantifiers restricted to $\HF$, and $\theta$ is quantifier-free, is equivalent to a $\Pi^1_2$ sentence.
\end{fact}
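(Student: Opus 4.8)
The plan is to transform $Q_1Q_2\theta$ by pushing all of its first-order quantifiers restricted to $\HF$ inside, past the second-order quantifiers, and then merging each maximal block of like-typed second-order quantifiers into a single one; the only ingredient needed is a fixed pairing function on $\HF$.

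First I would preprocess $\theta$. Using identities such as $X=X'\leftrightarrow\forall u\in\HF\,(u\in X\leftrightarrow u\in X')$ and $X\in X'\leftrightarrow\exists u\in\HF\,(\forall v\in\HF(v\in u\leftrightarrow v\in X)\wedge u\in X')$, together with their variants in which one of the arguments is a first-order variable (all valid because, by extensionality, a subset of $\HF$ is determined by its members and each such member lies in $\HF$), one rewrites $\theta$ --- by a finite case analysis over the shapes of its atomic subformulas --- as an equivalent formula $\theta^{*}$ whose quantifiers are all restricted to $\HF$ and in which every atomic subformula mentioning a second-order variable $X$ has the form $u\in X$ with $u$ a first-order variable; so $Q_1Q_2\theta$ and $Q_1Q_2\theta^{*}$ are equivalent. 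Next, recall the standard $\Delta_0$ pairing: there is a bijection $\langle\cdot,\cdot\rangle\colon\HF\times\HF\to\HF$ whose graph and inverses are definable with all quantifiers restricted to $\HF$, so that for $W\subseteq\HF$ and $a\in\HF$ the section $(W)_a=\{b\colon\langle a,b\rangle\in W\}$ is uniformly so definable from $W$ and $a$, and so that finitely many subsets of $\HF$ code into a single one with such a decoding. The point of the preprocessing is that substituting a section $(W)_x$ for a second-order variable $X$ turns each occurrence $u\in X$ into $\langle x,u\rangle\in W$; since $\langle x,u\rangle\in\HF$ whenever $x,u\in\HF$, this substitution carries any formula with all quantifiers restricted to $\HF$ to another such formula, and carries a block of second-order quantifiers of one type to a block of the same type and length.

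Now I would carry out the rearrangement. A first-order and a second-order quantifier of matching polarity can be interchanged freely, while for the mismatched cases one chooses, for each $x\in\HF$, a witnessing subset of $\HF$ and codes these witnesses into a single $W$, obtaining
$$\forall x\,\exists X\,\varphi(x,X)\ \Longleftrightarrow\ \exists W\,\forall x\,\varphi(x,(W)_x),\qquad \exists x\,\forall X\,\varphi(x,X)\ \Longleftrightarrow\ \forall W\,\exists x\,\varphi(x,(W)_x).$$
Hence any first-order quantifier restricted to $\HF$ lying immediately to the left of a second-order quantifier can be transposed to its right, replacing that second-order quantifier by one of the same type and leaving the relative order of all other second-order quantifiers untouched. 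Applying this repeatedly to an adjacent such pair strictly decreases the number of (first-order, second-order) inversions in the prefix, so after finitely many steps the sentence has the form $(\forall X_1)\cdots(\forall X_m)(\exists Y_1)\cdots(\exists Y_n)\,\rho$ with $\rho$ having only quantifiers restricted to $\HF$, and with the originally-universal $Q_1$-quantifiers preceding the originally-existential $Q_2$-quantifiers because no two second-order quantifiers were ever transposed past each other. Finally, coding $X_1,\dots,X_m$ into a single variable $X$ and $Y_1,\dots,Y_n$ into a single variable $Y$ --- absorbing the $\Delta_0$ decoding into $\rho$ --- and padding with a vacuous $\forall X$ or $\exists Y$ if $m=0$ or $n=0$, one obtains an equivalent sentence $(\forall X\subseteq\HF)(\exists Y\subseteq\HF)\,\rho'$ with $\rho'$ having all quantifiers restricted to $\HF$, that is, a $\Pi^1_2$ sentence.

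The only step requiring real care rather than a one-line appeal is the preprocessing: one must go through every shape of atomic formula $v\in w$ and $v=w$ that can arise among first-order variables (ranging over $\HF$) and second-order variables (ranging over subsets of $\HF$) and check that the rewriting stays inside the fragment whose quantifiers are all restricted to $\HF$. The transposition count and the pairing-function bookkeeping are then routine.
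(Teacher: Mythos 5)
Your proposal is correct. The paper does not prove Fact \ref{f-absorb} at all: it is stated as standard background for the Shoenfield absoluteness argument (with [J] cited for background), so there is no in-paper proof to compare against. What you give is a complete and correct rendering of the standard quantifier-absorption argument: the preprocessing of atomic formulas so that second-order variables occur only in contexts $u\in X$ with $u$ first order (legitimate because $\HF$ is transitive and subsets of $\HF$ have all their elements in $\HF$, so extensionality can be relativized to $\HF$); the definable pairing on $\HF$ with sections $(W)_a$; the two mismatched-polarity exchanges $\forall x\exists X\,\varphi\Leftrightarrow\exists W\forall x\,\varphi(x,(W)_x)$ and $\exists x\forall X\,\varphi\Leftrightarrow\forall W\exists x\,\varphi(x,(W)_x)$, whose nontrivial directions use choice over the countable index set $\HF$ and are therefore available in ZFC, where the paper works; the inversion-count termination, which also guarantees that second-order quantifiers are never permuted with each other, so the universal block stays ahead of the existential block; and the merging of like blocks plus padding. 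Two cosmetic remarks: a pairing \emph{bijection} is not needed — any injection with graph definable by a formula whose quantifiers are restricted to $\HF$ (e.g.\ Kuratowski pairs) suffices, since all that is used is that sections recover an arbitrarily coded family; and since the paper's definition of a $\Pi^1_2$ sentence allows the matrix to contain quantifiers restricted to $\HF$, the extra restricted quantifiers introduced by your preprocessing, and by unfolding the term $\langle x,u\rangle\in W$ in the function-symbol-free language of set theory, cause no harm and preserve the invariant needed to iterate the exchanges.
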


We will use the following consequence of the Shoenfield Absoluteness Theorem.

\begin{fact}  \label{f-Shoenfield}  Every $\Pi^1_2$ sentence over $\HF$ that is provable from ZFC + CH is provable from ZFC.
\end{fact}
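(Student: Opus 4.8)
The plan is to deduce this directly from the Shoenfield Absoluteness Theorem (as developed in [J, Chapter 25]), together with G\"odel's theorem that $L$ satisfies the GCH. The first step is to make the hypothesis ``$\Pi^1_2$ sentence over $\HF$'' literal: fixing a recursive bijection between $\HF$ and $\omega$ (Ackermann coding), a subset $X\subseteq\HF$ is coded by a real, a first-order quantifier restricted to $\HF$ becomes a number quantifier, and a formula $\theta(X,Y)$ with all quantifiers restricted to $\HF$ becomes an arithmetic matrix; hence a sentence $(\forall X\subseteq\HF)(\exists Y\subseteq\HF)\theta(X,Y)$ translates to a genuine parameter-free $\Pi^1_2$ sentence $\phi^*$ of second-order arithmetic, and the truth of $\phi$ in any transitive model of a weak fragment of ZF agrees with the truth of $\phi^*$ computed there. (Fact \ref{f-absorb} is precisely what guarantees that the set-theoretic sentences actually arising in Section \ref{s-elim-CH} can first be put into this prenex $\Pi^1_2$-over-$\HF$ shape; no use of CH is made there.)

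Now suppose $\phi$ is provable from $\mathrm{ZFC}+\mathrm{CH}$; equivalently $\phi^*$ is. I would argue inside ZFC. By G\"odel's theorems, $L\models\mathrm{ZFC}$ and $L\models\mathrm{GCH}$, so $L$ is a model of $\mathrm{ZFC}+\mathrm{CH}$; by soundness applied to a fixed proof, $L\models\phi$, i.e.\ $\phi^*$ holds in $L$. By the Shoenfield Absoluteness Theorem, every $\Pi^1_2$ sentence (parameter-free, or with parameters in $L$) has the same truth value in $L$ as in $V$; therefore $\phi^*$ holds in $V$, i.e.\ $\phi$ holds. Since this whole argument is carried out in ZFC, it constitutes a ZFC-proof of $\phi$, which is what we wanted.

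The only non-bookkeeping ingredient is Shoenfield absoluteness itself, whose proof I would merely sketch. At the first level, a $\Pi^1_1$ statement asserts well-foundedness of a recursively given tree on $\omega^{<\omega}$, and well-foundedness is absolute for transitive models (Mostowski absoluteness). At the second level one uses the Shoenfield tree: a $\Sigma^1_2$ relation $\exists x\,\forall y\,R(x,y)$, with $R$ arithmetic, can be written as $p[T]$ for a tree $T$ on $\omega\times\omega_1$ that is definable from the defining formula in an absolute way and that involves only countable ordinals; the $\Sigma^1_2$ sentence is true iff $T$ is ill-founded, and ill-foundedness of a tree on ordinals is absolute between $V$ and any inner model containing the relevant countable ordinals. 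Since $L$ contains every ordinal, $\Sigma^1_2$ sentences are absolute between $L$ and $V$, and by taking complements so are $\Pi^1_2$ sentences. The genuine technical obstacle thus lies entirely in this classical tree analysis --- constructing $T$ and verifying its absolute definability --- so for the purposes of this paper I would simply cite [J], since everything specific to our situation (the $\HF$-coding, the appeal to Fact \ref{f-absorb}, and the use of $L\models\mathrm{GCH}$) is routine.
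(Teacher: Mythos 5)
Your argument is correct and is exactly the standard derivation the paper has in mind: the paper states this result as a Fact without proof (citing [J] for the Shoenfield Absoluteness Theorem), and your route --- code $\HF$ by $\omega$ so the sentence becomes a genuine parameter-free $\Pi^1_2$ statement, observe that a fixed ZFC+CH-proof yields its truth in $L$ (where GCH holds), and pull truth back to $V$ by Shoenfield absoluteness, the whole argument being formalizable in ZFC --- is the usual proof of this transfer principle. The only step to phrase with care is the appeal to ``soundness'' for the proper class $L$: since $L$ is not a set model one should instead relativize to $L$ the finitely many axioms occurring in the fixed proof, using that ZFC proves $\sigma^L$ for each axiom $\sigma$ of ZFC+CH (including CH, via $L\models$ GCH as a theorem schema), and note that relativization commutes with formal deduction; this is what your appeal to a ``fixed proof'' implicitly does, so the argument stands.
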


If the vocabulary $V$ is a subset of $\HF$, then by coding formulas in the usual way, we can also take the set of restricted continuous
formulas with vocabulary $V$, and the set of first order formulas in the vocabulary $V_\downarrow$, to be subsets of $\HF$.

\begin{lemma}  \label{l-Horn-conditional-iff}  (In ZFC)
Suppose $V\subseteq\HF$, $T$ is a set of restricted continuous sentences in vocabulary $V$, and $T_\downarrow$ is equivalent to a  set of Horn sentences.  Then
 $T$ is equivalent to a  set of restricted conditional sentences.
\end{lemma}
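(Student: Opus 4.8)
The plan is to transfer the characterization of Horn theories (Fact \ref{f-Horn}(ii)) back across the $\downarrow/\uparrow$ correspondence. Suppose $T_\downarrow$ is equivalent to a set $H$ of Horn sentences in the vocabulary $V_\downarrow$. First I would pass from $H$ to an explicitly ``increasing-friendly'' set: since every model of $T_\downarrow$ is increasing (by part (iii) of the lemma on $T_\downarrow$), we may replace $H$ by $H$ together with the Horn sentences $(\forall\vec x)[P_{\le r}(\vec x)\Rightarrow P_{\le s}(\vec x)]$ for rationals $r\le s$ in $[0,1)$ (these are Horn, being basic Horn formulas quantified universally), and so assume without loss of generality that every model of $H$ is increasing. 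Now let
$$U=\{\varphi\colon \varphi\text{ is a restricted conditional }V\text{-sentence and }T\models\{\varphi\}\}.$$
Clearly $T\models U$, so it suffices to show $U\models T$; that is, every general model $\cu N$ of $U$ is a general model of $T$. By Lemma \ref{l-restricted-full} (applied to $\{$the theory $U\}$, or directly) we may take $\cu N$ reduced, replacing it by its reduction without changing which restricted continuous sentences hold in it, using Remark \ref{r-reduction}.

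The key step is to show $\cu N_\downarrow\models H$, for then $\cu N_\downarrow\models T_\downarrow$ (as $H$ is equivalent to $T_\downarrow$), so $\cu N=\cu N_{\downarrow\uparrow}\models T$ by Lemma \ref{p-down-equiv} and Lemma \ref{l-downup}, and we are done. To prove $\cu N_\downarrow\models H$ I would argue by contradiction: suppose some $h\in H$ fails in $\cu N_\downarrow$. A Horn sentence is built from basic Horn formulas by $\wedge,\forall,\exists$; the intended mechanism is that the $\downarrow$-translation of each restricted conditional $V$-sentence is (equivalent to) a Horn $V_\downarrow$-sentence, and moreover a cofinal-enough family of such translations can ``capture'' any given Horn sentence in the presence of the increasing axioms. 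Concretely, a primitive conditional formula $\min(C_0(\alpha_0),C_1(1-\alpha_1),\dots,C_n(1-\alpha_n))$, when we ask for its value to be $\le r$, unwinds (using that the $C_k$ are increasing and continuous, as in the proof of Fact \ref{f-basic}) into a condition of the shape ``$\alpha_0\le z_0$ or $\alpha_1>z_1$ or $\dots$ or $\alpha_n>z_n$'', i.e. into a disjunction of one atomic $V_\downarrow$-statement $P_{\le z_0}(\cdot)$ and finitely many negated atomic $V_\downarrow$-statements $\neg P_{\le z_k}(\cdot)$ — precisely a basic Horn formula of $V_\downarrow$ (modulo approximating the real thresholds $z_k$ by rationals, which is where ``restricted'' and the density of dyadic rationals are used). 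Quantifying, $\sup$ becomes $\forall$ and $\inf$ becomes $\exists$ on the continuous side; and $\max$ of conditionals corresponds, after translating ``value $\le r$'', to $\wedge$ of the pieces (since $\max(u,v)\le r$ iff $u\le r$ and $v\le r$). Thus the set $U_\downarrow' := \{(\varphi\dotle r)_\downarrow : \varphi \text{ restricted conditional}, r\text{ dyadic}, T\models \varphi\dotle r\}$ is a set of Horn $V_\downarrow$-sentences all of which follow from $T_\downarrow$, hence are provable from $H$ plus the increasing axioms; conversely I would check that $H$ is a logical consequence of $U_\downarrow'$ together with the increasing axioms — this is the substantive direction, and it follows by noting that for any increasing structure $\cu K$, $\cu K\models U_\downarrow'$ forces $\cu K_\uparrow\models U$, hence (if $U\models T$, which is what we are proving — so one must be careful to phrase this without circularity) $\cu K_\uparrow\models T$, hence $\cu K\models T_\downarrow$, hence $\cu K\models H$.

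To avoid the circularity just flagged, here is the clean order of steps. (1) Establish the translation claim as a standalone syntactic fact: for every restricted conditional $V$-sentence $\varphi$ and dyadic rational $r$, the $V_\downarrow$-sentence ``$\varphi^{\cu K_\uparrow}\le r$'', regarded as a property of increasing $\cu K$, is expressed by a Horn $V_\downarrow$-sentence $\tau_{\varphi,r}$; and conversely every basic Horn $V_\downarrow$-formula, hence every Horn $V_\downarrow$-sentence, is (over the increasing axioms) equivalent to a conjunction of such $\tau_{\varphi,r}$. (2) Using (1) and the hypothesis that $T_\downarrow$ is equivalent to a Horn set, deduce that $T_\downarrow$ is equivalent, over the increasing axioms, to $\{\tau_{\varphi,r} : T\models\varphi\dotle r\}$ — because a Horn sentence provable from $T_\downarrow$ is, by the converse half of (1), a consequence of the $\tau_{\varphi,r}$'s that $T_\downarrow$ proves, and $\tau_{\varphi,r}$ is provable from $T_\downarrow$ exactly when $(\varphi\dotle r)_\downarrow$ is, which by Lemma \ref{p-down-equiv} happens exactly when $T\models\varphi\dotle r$. (3) Now take a reduced general model $\cu N\models U$ where $U=\{\varphi : \varphi\text{ restricted conditional}, T\models\{\varphi\}\}$; since $T\models\{\varphi\}$ iff $T\models\varphi\dotle r$ for all dyadic $r>0$ (Corollary \ref{c-compactness-approx}), $\cu N\models U$ gives $\varphi^{\cu N}\le r$, i.e. $\cu N_\downarrow\models\tau_{\varphi,r}$, for all the relevant $\varphi,r$; together with $\cu N_\downarrow$ increasing (Lemma \ref{l-downup}) this yields $\cu N_\downarrow\models T_\downarrow$ by (2), hence $\cu N=\cu N_{\downarrow\uparrow}\models T$ by Lemma \ref{p-down-equiv}. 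Therefore $U\models T$, so $T$ is equivalent to the set $U$ of restricted conditional sentences.

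The main obstacle is step (1), specifically the converse half: showing that an arbitrary basic Horn $V_\downarrow$-formula — a disjunction of one atomic $P_{\le r}(\vec t)$ and several negated atoms $\neg Q_{\le s}(\vec u)$ — is, modulo the increasing axioms, equivalent to (a conjunction of) translations of restricted conditional formulas. One has to match the discrete two-valued atoms $P_{\le r}$ against the connectives $C_k$ available in restricted conditional formulas; the point is that $1\dotminus(P(\vec x)\dotplus(1-r))$ has value $0$ exactly when $P^{\cu M}(\vec x)\le r$ (on a reduced structure, equivalently $\cu M_\downarrow\models P_{\le r}$), and $1\dotminus((P(\vec x)\dotminus r)/2^{\text{large}})$-type expressions let one realize the negated atoms, with $\min$ assembling the disjunction — one must verify these are genuinely restricted conditional after unwinding $\dotplus,\dotminus,\cdot/2,0,1$, and that the dyadic-rational thresholds suffice by density. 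Handling the function and constant symbols (which are shared by $V$ and $V_\downarrow$ and cause no trouble) and the bookkeeping of which variables are $\sup$- versus $\inf$-quantified is routine once the atomic case is pinned down.
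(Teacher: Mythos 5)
Your route is genuinely different from the paper's (which proves the lemma under CH via Theorem \ref{t-main} and then removes CH by checking that the whole statement is $\Pi^1_2$ over $\HF$ and invoking Shoenfield absoluteness, Fact \ref{f-Shoenfield}), but it has a fatal gap at your step (1), and that gap re-opens the circularity you tried to defuse. The converse half of (1) is false: a single atom such as $P_{\le 1/2}$ is a Horn $V_\downarrow$-sentence, yet it is not determined by $\cu K_\uparrow$. Take $V$ with one $0$-ary predicate $P$ and the two increasing one-point structures $\cu K_1,\cu K_2$ in which $P_{\le r}$ holds iff $r\ge 1/2$, respectively iff $r>1/2$; then $\cu K_{1\uparrow}=\cu K_{2\uparrow}$ (both give $P$ the value $1/2$), so no conjunction (or set) of conditions of the form ``$\varphi^{\cu K_\uparrow}\le r$'' can be equivalent to $P_{\le 1/2}$ over the increasing axioms. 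The forward half is also not right as stated: already for $\varphi=P$ atomic and $r=1/2$, the property ``$P^{\cu K_\uparrow}\le 1/2$'' is equivalent over increasing structures to the infinite set $\{P_{\le s}\colon s>1/2\}$ and, by compactness, to no single first-order sentence; worse, as soon as an $\inf$ sits below a $\sup$, the natural unwinding has the shape $\exists y\,\forall x\,\bigwedge_i H_i$ with the infinitary rational-approximation conjunction trapped under the $\exists$, so the syntactic computation does not produce Horn sentences, nor even a set of finitary $V_\downarrow$-sentences. (A small symptom of the same trouble: $1\dotminus(P\dotplus(1-r))$ equals $r\dotminus P$, which vanishes exactly when $P\ge r$, not when $P\le r$.)

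With step (1) gone, the bookkeeping in step (2) fails and step (3) collapses into the statement being proved: via Lemma \ref{p-down-equiv}, saying that the $\tau$'s attached to the restricted conditional consequences of $T$ axiomatize $T_\downarrow$ over the increasing axioms is the same as saying those consequences axiomatize $T$ --- precisely the circularity you flagged. One can show abstractly (using Lemma \ref{p-down-reduced-product}, Lemma \ref{l-uparrow-equiv}, Theorem \ref{t-conditional-preserve} and Fact \ref{f-Horn}) that each class $\{\cu K \text{ increasing}\colon \varphi^{\cu K_\uparrow}\le r\}$ is axiomatized by \emph{some} set of Horn sentences, but this existence statement gives no syntactic control relating those Horn sets to an arbitrary Horn axiomatization of $T_\downarrow$, which is what your step (2) requires. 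This is exactly the difficulty the paper's proof is designed to sidestep: it first obtains the conclusion under CH from Theorem \ref{t-main} (the special-model construction), and then verifies that the implication is $\Pi^1_2$ over $\HF$ so that Fact \ref{f-Shoenfield} eliminates CH. A direct $\downarrow/\uparrow$ syntactic transfer of Horn into conditional of the kind you propose would render that machinery unnecessary, and the exact-threshold counterexample above is the concrete reason no such transfer is available.
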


\begin{proof}
First assume the continuum hypothesis.  By Fact \ref{f-Horn}, $T_\downarrow$ is preserved under reduced products.
By Corollary \ref{c-preserved-iff}, $T$ is preserved under reduced products.  Since $V\subseteq \HF$, $V$ is countable.
By the continuum hypothesis and Theorem \ref{t-main}, $T$ is equivalent to a  set of restricted conditional sentences.

By Fact \ref{f-Shoenfield}, to show that this lemma is provable in ZFC, it suffices to show that
the statement of this lemma is  a $\Pi^1_2$ sentence. To do that,
we will freely use Fact \ref{f-absorb}, which allows us to ignore quantifiers over elements of $\HF$.

It is elementary that the statements ``$\theta$ is a first order $V_\downarrow$-sentence'', ``$\theta$ is a Horn sentence'',  `` $\varphi$ is a strict $V$-sentence'',
and ``$\varphi$ is a strict conditional sentence'', are $\Delta^1_1$.  It follows that the first hypothesis of this lemma, ``$T$ is a set of strict continuous sentences in
a vocabulary $V\subseteq \HF$'', is $\Delta^1_1$.

Every countable $V_\downarrow$-structure is isomorphic to a $V_\downarrow$-structure that is a subset of $\HF$ and similarly for general structures.
In the following we let $\cu K$ be a variable that ranges over $V_\downarrow$-structures that are subsets of $\HF$ with universe $K$,
and let $\cu M, \cu N$ be variables ranging over $V$-structures that are subsets of $\HF$.
By unravelling the definition of satisfaction in [CK2012] and [BBHU], one can show that the statements
``$\cu K\models\theta$'', ``$\cu K$ is increasing'', ``$\cu K_\uparrow\models\varphi$'', ``$\cu K_\uparrow\models T$'', ``$\cu N\models\varphi$'', and ``$\cu N\models T$'', are $\Delta^1_1$.

To illustrate, we show that the statement ``$\cu K_\uparrow\models \varphi$'' is $\Delta^1_1$.   The statement ``$\psi$ is a strict $V$-sentence with parameters in $\cu K$ and $r$ is a dyadic rational in $[0,1)$'' is a $\Delta^1_1$ formula $\delta(\cu K,\psi,r)$.  Let us say that $D$ is an $\uparrow$-\emph{diagram} of $\cu K$ if $D$ is the set of all pairs $(\psi,r)$ such that $\delta(\cu K,\psi,r)$ and $\cu K_\uparrow\models \psi\dotle r$. Then $D$ is an $\uparrow$-diagram of $\cu K$ if and only if each of the following $\Delta^1_1$ sentences with the parameters $\cu K, D$ hold.
\begin{itemize}
\item $(\forall \psi)(\forall r)[(\psi,r)\in D\Rightarrow \delta(\cu K,\psi,r)] .$
\item $(\forall \mbox{ atomic }P(\vec\tau))(\forall r)[(P(\vec \tau),r)\in D\Leftrightarrow\cu K \models P_{\le r}( \vec \tau)]$.
\item $(\forall \psi_1)(\forall \psi_2)(\forall r)[(\max(\psi_1,\psi_2),r)\in D\Leftrightarrow [(\psi_1,r)\in D\wedge(\psi_2,r)\in D]].$
\item Similar rules for connectives $\min, \dotminus,\dotplus, \cdot/2$.
\item $(0,r)\in D\wedge (1,r)\notin D$.
\item $(\forall \psi(x))(\forall r)[(\sup_x\psi(x),r)\in D\Leftrightarrow (\forall a\in K)(\psi(a),r)\in D]$.
\item A similar rule for $\inf_x$.
\end{itemize}

Then $\cu K_\uparrow\models\varphi$ if and only if $(\varphi,0)\in D$ for every $\uparrow$-diagram $D$ of $\cu K$, and also if and only if
$(\varphi,0)\in D$ for some $\uparrow$-diagram $D$ of $\cu K$.  This shows that the statement ``$\cu K_\uparrow\models\varphi$'' is $\Delta^1_1$.

By Lemma \ref{p-down-equiv}, the statement ``$\cu K\models T_\downarrow$'' is also $\Delta^1_1$.
It follows that the second hypothesis of this lemma, that ``$T_\downarrow$ is equivalent to a set of Horn sentences'',
is equivalent in ZFC to the following $\Sigma^1_2$ sentence with the parameter $T$:
$$(\exists U\subseteq\HF)(\forall\cu K)[(\forall \theta)(\theta\in U\Rightarrow \theta \mbox{ is Horn}) \wedge (\cu K\models T_\downarrow \Leftrightarrow \cu K\models U)].$$
The statement ``$T\models\varphi$'' is $\Pi^1_1$, because it is equivalent in ZFC to
$$ (\forall \cu N)[\cu N\models T\Rightarrow \cu N\models\varphi].$$
The conclusion of this lemma says that
for every  $\cu M$, if every strict conditional consequence of $T$ holds in $\cu M$, then $\cu M\models T$, that is,
$$(\forall\cu M)[(\forall\varphi)(\varphi \mbox{ strict conditional and } T \models\varphi)\Rightarrow\cu M\models T].$$
This is  a $\Pi^1_2$ sentence with a parameter for $T$.

So the whole statement of this lemma has the form
$$(\forall T)[\alpha(T)\Rightarrow \beta(T)]$$
where $\alpha(T)$ is  a $\Sigma^1_2$ sentence, and $\beta(T)$ is  a $\Pi^1_2$ sentence.  It follows that the  statement of this lemma is a $\Pi^1_2$ sentence.

\end{proof}

We are now ready to prove Theorem \ref{t-main-simpler} in ZFC.  For convenience we restate the result here.

\setcounter{thm}{0}

\begin{thm}  (Restated)
  The following are equivalent.

\begin{itemize}
\item[(i)]  $T$ is equivalent to a set of conditional sentences.
\item[(ii)]  $T$ is equivalent to a set of restricted conditional sentences.
\item[(iii)] $T$ is preserved under reduced products.
\end{itemize}
\end{thm}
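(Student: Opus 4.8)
The plan is to run the cycle (ii) $\Rightarrow$ (i) $\Rightarrow$ (iii) $\Rightarrow$ (ii). The implication (ii) $\Rightarrow$ (i) is immediate, since every restricted conditional sentence is in particular conditional, and (i) $\Rightarrow$ (iii) is exactly Theorem \ref{t-conditional-preserve}. So the real content is (iii) $\Rightarrow$ (ii). Since both ``preserved under reduced products'' and ``equivalent to a set of restricted conditional sentences'' depend only on the class of general models of $T$, I would first invoke Lemma \ref{l-restricted-full} to replace $T$ by an equivalent set of \emph{restricted continuous} sentences; this leaves the hypothesis of (iii) intact and does not change what the conclusion (ii) asserts.

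Next I would reduce (iii) $\Rightarrow$ (ii) to the case of a \emph{countable} vocabulary. Assume that implication known for countable vocabularies, and let $T$ be a set of restricted continuous $V$-sentences ($V$ arbitrary) preserved under reduced products. Let $\Sigma$ be the set of all restricted conditional sentences $\varphi$ with $T\models\{\varphi\}$ (each of finite vocabulary); then $T\models\Sigma$, and it suffices to prove $\Sigma\models T$. Fix $\sigma\in T$ and a general model $\cu N$ of $\Sigma$; I claim $\sigma^{\cu N}=0$. Choose a countable $V_1\subseteq V$ containing the (finite) vocabulary of $\sigma$, and let $T^{V_1}$ be the set of restricted continuous $V_1$-sentences that are consequences of $T$, so that $\sigma\in T^{V_1}$. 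The crucial point is that $T^{V_1}$ is again preserved under reduced products: a general model $\cu M$ of $T^{V_1}$ satisfies every $V_1$-consequence of $T$, so a routine compactness argument produces a general model $\cu N^{*}\models T$ whose $V_1$-part elementarily extends $\cu M$; given a family $\cu M_i\models T^{V_1}$ with corresponding $\cu N_i^{*}\models T$, the structure $\prod_{\cu F}\cu N_i^{*}$ is a model of $T$, Remark \ref{r-part-reduced-product} identifies its $V_1$-part with $\prod_{\cu F}(\text{$V_1$-part of }\cu N_i^{*})$, and Proposition \ref{p-Gh-general}---available precisely because $V_1$ is countable---gives $\prod_{\cu F}\cu M_i\equiv\prod_{\cu F}(\text{$V_1$-part of }\cu N_i^{*})$, whence $\prod_{\cu F}\cu M_i\models T^{V_1}$. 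By the countable case, $T^{V_1}$ is equivalent to the set $\Sigma_1$ of its restricted conditional consequences; since $T\models T^{V_1}$ we have $\Sigma_1\subseteq\Sigma$, so $\cu N$ and hence its $V_1$-part model $\Sigma_1$, hence its $V_1$-part models $T^{V_1}\ni\sigma$, giving $\sigma^{\cu N}=0$. As $\sigma\in T$ was arbitrary, $\cu N\models T$.

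It then remains to treat (iii) $\Rightarrow$ (ii) when $V$ is countable, and here essentially all the work has already been packaged in the earlier results. After renaming symbols one may assume $V\subseteq\HF$. Since $T$ is preserved under reduced products, Corollary \ref{c-preserved-iff} shows that the first-order theory $T_\downarrow$ is preserved under reduced products, so by Galvin's theorem (Fact \ref{f-Horn}(ii)) $T_\downarrow$ is equivalent to a set of Horn sentences. Lemma \ref{l-Horn-conditional-iff} now applies directly and yields that $T$ is equivalent to a set of restricted conditional sentences, which completes the proof.

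As for where the difficulty lies: there is no genuinely new obstacle at this stage, because the hard steps are imported---the Feferman--Vaught theorem for general structures (Proposition \ref{p-Gh-general}), Galvin's classical characterization (Fact \ref{f-Horn}), and above all the Shoenfield-absoluteness argument of Lemma \ref{l-Horn-conditional-iff}. The one point that genuinely requires care is the reduction to a countable vocabulary: the auxiliary sub-vocabulary $V_1$ must be taken \emph{countable} so that Proposition \ref{p-Gh-general} can be applied, since it is exactly the unavailability of Feferman--Vaught for uncountable vocabularies that would otherwise block passing preservation of $T$ down to preservation of $T^{V_1}$.
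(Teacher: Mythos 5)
Your proposal is correct and follows essentially the same route as the paper: after normalizing to restricted sentences via Lemma \ref{l-restricted-full}, you reduce to countable subvocabularies by the same compactness-plus-Proposition \ref{p-Gh-general} argument (the paper's Claim 1), and then handle the countable case exactly as in the paper's Claim 2, via $T_\downarrow$, Galvin's theorem (Fact \ref{f-Horn}), and the absoluteness Lemma \ref{l-Horn-conditional-iff}. The only differences are presentational (you package the countable case as a separate step and ask for an elementary extension where elementary equivalence suffices), so there is nothing to correct.
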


\begin{proof}
  (ii) $\Rightarrow$ (i) is trivial, and (i) $\Rightarrow$ (iii) follows from Theorem  \ref{t-conditional-preserve}

(iii) $\Rightarrow$ (ii):  Assume (iii).  By Lemma \ref{l-restricted-full}, we may assume without generality that $T$ is a set of
strict continuous sentences, and  that every consequence of $T$ that is a strict continuous sentence in vocabulary $V$ belongs to $T$.
For each vocabulary $V'\subseteq V$, let $[V']$ be the set of strict continuous sentences in vocabulary $V'$, and let $T'=T\cap[V']$.

\medskip

\textbf{Claim 1.}  For every countable vocabulary $V'\subseteq V$, $T'$ is preserved under reduced products.
\medskip

\emph{Proof of Claim 1}.
 Let $\{ \cu M'_i\colon i\in I\}$ be a family of general models of $T\cap[V']$,  let $\cu F$ be a proper filter over $I$,
and let $\cu M'=\prod_{\cu F}(\cu M'_i)$.
For each $i$, let $U_i$ be the set of strict continuous sentences
$$U_i=T\cup\{\varphi\dotle r\colon \varphi\in Th(\cu M'_i), r>0\}.$$
Suppose that $U_i$ is not finitely satisfiable. Then there is a strict sentence $\varphi\in Th(\cu M'_i)$ and a dyadic rational $r>0$ such that $T\models r\dotle\varphi$.
The sentence $r\dotle\varphi$ belongs to $[V']$, and also belongs to $T$ because it is a consequence of $T$.
But then $\cu M'_i\models\varphi = 0$ and $\cu M'_i\models r\dotle \varphi$, which is a contradiction.  Therefore $U_i$ is finitely satisfiable.
By the Compactness Theorem, $U_i$ has a general model $\cu N_i$.  Then $\cu N_i\models T$ for each $i$, so by (iii) we have $\cu N:=\prod_{\cu F}(\cu N_i)\models T$.
Let $\cu N'_i$ be the $V'$-part of $\cu N_i$.  By Remark \ref{r-part-reduced-product}, $\cu N':=\prod_{\cu F}(\cu  N'_i)$ is the $V'$-part of $\cu N$, so $\cu N'\models T'$.
Since $\cu N_i\models U_i$, we have $\cu N'_i\equiv \cu M'_i$.  Then
 by Proposition \ref{p-Gh-general}, $\cu N'\equiv\cu M'$.  Therefore $\cu M'\models T'$, and Claim 1 is proved.
\medskip

\textbf{Claim 2.}  For every countable $V'\subseteq V$, $T'$ is equivalent to a set of strict conditional sentences.
\medskip

\emph{Proof of Claim 2.} By Claim 1, $T'$ is preserved under reduced products.
We may take $V'$, $T'$, $V'_\downarrow$, and $T'_\downarrow$ to be subsets of $\HF$.
 By Lemma \ref{p-preserved-iff}, $T'_\downarrow$ is preserved under reduced products.  By Fact \ref{f-Horn} (ii),
 $T'_\downarrow$ is equivalent to a set of Horn sentences with vocabulary $V_\downarrow$.  By Lemma \ref{l-Horn-conditional-iff},
 $T'$ is equivalent to a set of strict conditional sentences in the vocabulary $V'$, so Claim 2 is proved.

It follows at once from Claim 2 that Condition (i) holds.
\end{proof}

\setcounter{thm}{16}

Here is a version of Theorem \ref{t-main-simpler} for single sentences.

\begin{cor} \label{c-main-single}
For each continuous sentence $\varphi$, the following are equivalent.

\begin{itemize}

\item[(i)]  For each positive integer $n$ there is a conditional sentence $\psi_n$ such that $\varphi\models\psi_n$ and $\psi_n\models\varphi\dotle 2^{-n}$.
\item[(ii)]  For each positive integer $n$ there is a restricted conditional sentence $\psi_n$ such that $\varphi\models\psi_n$ and $\psi_n\models\varphi\dotle 2^{-n}$.
\item[(iii)]  $\varphi$ is preserved under reduced products.
\end{itemize}
\end{cor}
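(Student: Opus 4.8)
The plan is to apply Theorem \ref{t-main-simpler} to the single-sentence theory $T=\{\varphi\}$ and to repackage its conclusion using the approximate compactness of Corollary \ref{c-compactness-approx}. I would establish the cycle $(ii)\Rightarrow(i)\Rightarrow(iii)\Rightarrow(ii)$. The implication $(ii)\Rightarrow(i)$ is immediate, since every restricted conditional sentence is in particular a conditional sentence.

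For $(i)\Rightarrow(iii)$, let $\langle\cu M_i\colon i\in I\rangle$ be general models of $\varphi$, let $\cu F$ be a proper filter over $I$, and put $\cu N=\prod_{\cu F}\cu M_i$. Fix a positive integer $n$ and let $\psi_n$ be as in $(i)$. Since $\varphi^{\cu M_i}=0$ and $\varphi\models\psi_n$, we have $\psi_n^{\cu M_i}=0$ for each $i$. The singleton $\{\psi_n\}$ is trivially equivalent to a set of conditional sentences, so by Theorem \ref{t-conditional-preserve} it is preserved under reduced products, whence $\psi_n^{\cu N}=0$. As $\psi_n\models\varphi\dotle 2^{-n}$, this gives $\varphi^{\cu N}\le 2^{-n}$. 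Since $n$ was arbitrary, $\varphi^{\cu N}=0$, i.e. $\cu N\models\varphi$, so $\varphi$ is preserved under reduced products.

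For $(iii)\Rightarrow(ii)$, assume $\varphi$ is preserved under reduced products; then the theory $\{\varphi\}$ is preserved under reduced products, so by Theorem \ref{t-main-simpler} there is a set $U$ of restricted conditional sentences equivalent to $\{\varphi\}$, i.e. with the same general models. Fix a positive integer $n$. Since $U\models\{\varphi\}$, Corollary \ref{c-compactness-approx}(i) applied with $r=2^{-n}$ yields a finite $U_0\subseteq U$ with $U_0\models\varphi\dotle 2^{-n}$. Let $\psi_n$ be the $\max$ of the finitely many sentences in $U_0$; because $\max$ is a restricted connective and the class of conditional formulas is closed under $\max$, $\psi_n$ is again a restricted conditional sentence, and a general structure satisfies $\psi_n$ exactly when it satisfies every member of $U_0$. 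Hence $\psi_n\models\varphi\dotle 2^{-n}$, and since every model of $\varphi$ models $U\supseteq U_0$, also $\varphi\models\psi_n$; this is $(ii)$. The corollary is thus essentially a repackaging of Theorem \ref{t-main-simpler}, and I anticipate no real obstacle: the only genuinely new step is this last descent from an equivalent set of restricted conditional sentences to a single finitary approximant per $n$, which is exactly what Corollary \ref{c-compactness-approx}(i) and closure under finite $\max$ provide.
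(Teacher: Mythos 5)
Your proof is correct and follows essentially the same route as the paper: (iii)$\Rightarrow$(ii) via Theorem \ref{t-main-simpler}, Corollary \ref{c-compactness-approx}(i), and taking $\psi_n=\max(U_0)$ is exactly the paper's argument. The only cosmetic difference is in (i)$\Rightarrow$(iii), where the paper notes that $\{\varphi\}$ is equivalent to the countable set $\{\psi_n\colon n>0\}$ and cites Theorem \ref{t-main-simpler}, while you unwind that appeal directly through Theorem \ref{t-conditional-preserve}; the content is the same.
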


\begin{proof}  It is clear that (ii) implies (i). 

 Assume (i).  Then $\{\varphi\}$ is equivalent to the countable set $\{\psi_n\colon n>0\}$ of conditional sentences.
Then by Theorem \ref{t-main-simpler}, (iii) holds.

Now assume (iii).  By Theorem \ref{t-main-simpler}, $\{\varphi\}$ is equivalent to a set $U$  of restricted conditional sentences.  By Corollary \ref{c-compactness-approx},
for each $n>0$ there is a finite $U_0\subseteq U$ such that $U_0\models \varphi\dotle 2^{-n}$.  Let $\psi_n=\max(U_0)$.  Then $\psi_n$ is
a restricted conditional sentence,  $\varphi\models\psi_n$ and $\psi_n\models \varphi\dotle 2^{-n}$, so (ii) holds.
\end{proof}

The following corollary characterizes metric theories that are preserved under reduced products of metric structures.

\begin{cor}  \label{c-sentence-main-metric}
 Let $L$ be a metric signature over $V$, and let $T$ be a metric theory with signature $L$.  The following are equivalent:

\begin{itemize}
\item[(i)]  $T$ is equivalent to a set  of conditional sentences.
\item[(ii)]  Every  reduced product of pre-metric models of $T$is a pre-metric model of $T$ (with signature $L$).
\item[(iii)]  Every  reduced product of metric models of $T$ is a metric model of $T$ (with signature $L$).
\end{itemize}
\end{cor}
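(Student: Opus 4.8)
The plan is to establish the cycle (i) $\Rightarrow$ (ii) $\Rightarrow$ (iii) $\Rightarrow$ (i); the first two implications are routine translations between the three notions of model, while the last carries all the content.

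For (i) $\Rightarrow$ (ii) I would start from a set of conditional sentences equivalent to $T$ and apply Theorem \ref{t-conditional-preserve}: given pre-metric models $(\cu{M}_i,L)$ of $T$ and a proper filter $\cu{F}$, each $\cu{M}_i$ is a general model of $T$, hence so is $\prod_{\cu{F}}\cu{M}_i$, and Corollary \ref{c-metric-product}(i) shows that $(\prod_{\cu{F}}\cu{M}_i,L)$ is a pre-metric structure, so it is a pre-metric model of $T$. For (ii) $\Rightarrow$ (iii) I would observe that every metric model of $T$ is a pre-metric model of $T$, apply (ii) to obtain that $(\prod_{\cu{F}}\cu{M}_i,L)$ is a pre-metric model of $T$, and then invoke Corollary \ref{c-metric-product}(iii) with $V^0=V$ (recalling that metric structures, and all reduced products, are reduced) to conclude that this pre-metric model is complete, hence a metric model of $T$.

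The substantive step is (iii) $\Rightarrow$ (i). Here I plan to deduce from (iii) that $T$ is preserved under reduced products of \emph{arbitrary} general structures, and then simply invoke Theorem \ref{t-main-simpler} to conclude that $T$ is equivalent to a set of conditional sentences. To prove the preservation, let $\cu{N}_i$ be general models of $T$ and $\cu{F}$ a proper filter; since $T$ is a metric theory we have $T\models\met(L)$, so by Fact \ref{f-metric-theory} each $(\cu{N}_i,L)$ is a pre-metric model of $T$. Let $(\cu{M}_i,L)$ be the completion of $(\cu{N}_i,L)$, a metric structure; since a pre-metric structure embeds elementarily into its completion we get $\cu{M}_i\equiv\cu{N}_i$, so $(\cu{M}_i,L)$ is a metric model of $T$. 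Applying (iii) to $\langle(\cu{M}_i,L)\colon i\in I\rangle$ gives that $(\prod_{\cu{F}}\cu{M}_i,L)$ is a metric model of $T$, and Corollary \ref{c-product-completion} identifies it as the completion of $(\prod_{\cu{F}}\cu{N}_i,L)$ --- which is itself a pre-metric structure by Corollary \ref{c-metric-product}(i). Hence $\prod_{\cu{F}}\cu{N}_i$ embeds elementarily into $\prod_{\cu{F}}\cu{M}_i$, so $\prod_{\cu{F}}\cu{N}_i\equiv\prod_{\cu{F}}\cu{M}_i\models T$, giving $\prod_{\cu{F}}\cu{N}_i\models T$.

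The step I expect to be the main obstacle is exactly this reduction of the general-structure case to the metric case: it hinges on Corollary \ref{c-product-completion}, that completion commutes with reduced products, together with the fact that a pre-metric structure is elementarily equivalent to its completion. Granting these, the remainder is bookkeeping with the correspondence among general, pre-metric, and metric models furnished by Fact \ref{f-metric-theory} and Corollary \ref{c-metric-product}, plus the already-established ZFC result for general structures, Theorem \ref{t-main-simpler}.
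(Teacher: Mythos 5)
Your proposal is correct and follows essentially the same route as the paper: the same ingredients (Theorem \ref{t-conditional-preserve}, Corollary \ref{c-metric-product}, Fact \ref{f-metric-theory}, Corollary \ref{c-product-completion}, elementary embeddability of a pre-metric structure in its completion, and Theorem \ref{t-main-simpler}) in the same roles. The only difference is organizational: you prove (iii) $\Rightarrow$ (i) in one step, which is just the composition of the paper's (iii) $\Rightarrow$ (ii) and (ii) $\Rightarrow$ (i).
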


\begin{proof}  (i) $\Rightarrow$ (ii) follows from Theorem \ref{t-conditional-preserve} and Corollary \ref{c-metric-product}.  (ii) $\Rightarrow$ (iii) follows from
Corollary \ref{c-metric-product}.

(iii) $\Rightarrow$ (ii):  Assume (iii), and let $(\prod_{\cu F}\cu N_i,L)$ be a reduced product of pre-metric models of $T$.
For each $i\in I$, let $(\cu M_i,L)$ be the completion of $(\cu N_i,L)$, which is a metric model of $T$.  By  (iii) above,
$(\prod_{\cu F} \cu M_i,L)$ is a metric model of $T$.  By Corollary \ref{c-product-completion}, $(\prod_{\cu F} \cu M_i,L)$ is the completion of $(\prod_{\cu F} \cu N_i,L)$.
Therefore $(\prod_{\cu F} \cu N_i,L)$ is a pre-metric model of $T$, and (ii) holds.

(ii) $\Rightarrow$ (i):  Assume (ii).  We first show that $T$ is preserved under reduced products of general structures.  Suppose $\prod_{\cu F} \cu N_i$ is a reduced product of
general models of $T$.  Since $T\models\met(L)$, $(\cu N_i,L)$ is a pre-metric model of $T$ for each $i\in I$.  Therefore, by (ii), $(\prod_{\cu F}\cu N_i,L)$
is a pre-metric model of $T$, and hence $\prod_{\cu F}\cu N_i$ is a general model of $T$.  Thus $T$ is preserved under reduced products of general structures.
So by Theorem \ref{t-main-simpler}, (i) holds.
\end{proof}

In [Ga], Galvin also proved the following ``interpolation'' statement:  For all first order theories $S_0, T_0$, if every reduced product of models of $S_0$ is a model of $T_0$, then there is a set $U_0$ of Horn sentences
such that $S_0\models U_0$ and $U_0\models T_0$.  It is thus natural to ask if the continuous analogue of this fact is true:

\begin{question}  \label{q-conditional-interpolaate-ZFC}
Suppose $S, T$ are  continuous theories, and every reduced product of general models of $ S$  is a general model of $T$.
Is there a set $U$ of conditional sentences such that $S\models U$ and $U\models T$?
\end{question}

We end this section with one further characterization of reduced product sentences.  First, we need a lemma.  For our purposes, by a \emph{definable predicate}, we mean an expression of the form $\psi(\vec x):=\sum_n 2^{-n}\psi_n(\vec x)$, where each $\psi_n(\vec x)$ is a formula.

\begin{lemma}\label{l-implication}
For definable predicates $\psi(\vec x)$ and $\chi(\vec x)$, the following are equivalent:
\begin{enumerate}
\item For all general structures $\cu{M}$ and all $\vec a$ from $M$, if $\cu{M}\models \psi(\vec a)$, then $\cu{M}\models \chi(\vec a)$.
\item There is an increasing continuous function $\alpha:[0,1]\to[0,1]$ such that $\alpha(0)=0$ and for which, given any general structure $\cu{M}$ and $\vec a$ from $M$, we have $\chi(\vec a)^{\cu{M}}\leq \alpha(\psi(\vec a)^{\cu{M}})$.
\end{enumerate}
\end{lemma}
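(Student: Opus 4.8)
The plan is to prove (2) $\Rightarrow$ (1) directly, and (1) $\Rightarrow$ (2) in two steps: first a uniformization of (1) obtained by a standard ultraproduct (equivalently, compactness) argument, and then an elementary construction of the modulus $\alpha$. The direction (2) $\Rightarrow$ (1) is immediate: if $\psi(\vec a)^{\cu M}=0$ then $\chi(\vec a)^{\cu M}\le\alpha(0)=0$, so $\cu M\models\chi(\vec a)$; only the equality $\alpha(0)=0$ is used here.

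The crux of (1) $\Rightarrow$ (2) is to upgrade (1) to the uniform statement $(\star)$: for every $\varepsilon>0$ there is $\delta>0$ such that $\psi(\vec a)^{\cu M}\le\delta$ implies $\chi(\vec a)^{\cu M}\le\varepsilon$, for all general structures $\cu M$ and tuples $\vec a$ from $M$. To prove $(\star)$, fix formulas $\psi^{(m)},\chi^{(m)}$ (the partial sums of the defining series) with $|\psi-\psi^{(m)}|\le 2^{-m}$ and $|\chi-\chi^{(m)}|\le 2^{-m}$ in every general structure. If $(\star)$ fails for some $\varepsilon$, then for each $j\ge 1$ there are a general structure $\cu M_j$ and a tuple $\vec a_j$ from $M_j$ with $\psi(\vec a_j)^{\cu M_j}\le 1/j$ and $\chi(\vec a_j)^{\cu M_j}>\varepsilon$. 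Let $\cu D$ be a nonprincipal ultrafilter on $\BN$, let $\cu M=\prod_{\cu D}\cu M_j$, and let $\vec a_{\cu D}$ be the corresponding tuple. Applying Fact \ref{f-Los} to the formulas $\psi^{(m)}$ and $\chi^{(m)}$ gives $\psi^{(m)}(\vec a_{\cu D})^{\cu M}\le 2^{-m}$ and $\chi^{(m)}(\vec a_{\cu D})^{\cu M}\ge\varepsilon-2^{-m}$ for every $m$, hence $\psi(\vec a_{\cu D})^{\cu M}=0$ while $\chi(\vec a_{\cu D})^{\cu M}\ge\varepsilon$, contradicting (1). (Alternatively, one can feed the theory $\{\psi^{(m)}(\vec c)\dotle 2^{-m}\colon m\in\BN\}\cup\{(\varepsilon\dotminus 2^{-m})\dotle\chi^{(m)}(\vec c)\colon m\in\BN\}$, in the vocabulary $V$ with a tuple $\vec c$ of fresh constants, into the Compactness Theorem, Fact \ref{f-compactness}.)

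Granting $(\star)$, I would finish as follows. For each $k\ge 1$, apply $(\star)$ with $\varepsilon=1/k$ to obtain $\delta_k\in(0,1)$ such that $\psi(\vec a)^{\cu M}\le\delta_k$ implies $\chi(\vec a)^{\cu M}\le 1/k$; arrange $\delta_1>\delta_2>\cdots$ with $\delta_k\to 0$. Let $\alpha\colon[0,1]\to[0,1]$ be the continuous increasing function that equals $1$ on $[\delta_1,1]$, is linear on each interval $[\delta_{k+1},\delta_k]$ with $\alpha(\delta_1)=1$ and $\alpha(\delta_{k+1})=1/k$, and satisfies $\alpha(0)=0$; it is continuous at $0$ because $\alpha(r)\to 0$ as $r\to 0^+$ (since $\delta_k\to 0$). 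Then $\alpha(0)=0$, and for all $\cu M$ and $\vec a$, writing $r=\psi(\vec a)^{\cu M}$: if $r\ge\delta_1$ then $\chi(\vec a)^{\cu M}\le 1=\alpha(r)$; if $r\in[\delta_{k+1},\delta_k]$ for some $k\ge 1$ then $r\le\delta_k$, so $\chi(\vec a)^{\cu M}\le 1/k=\alpha(\delta_{k+1})\le\alpha(r)$; and if $r=0$ then $\chi(\vec a)^{\cu M}\le 1/k$ for all $k$, so $\chi(\vec a)^{\cu M}=0=\alpha(0)$. This is precisely condition (2).

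I expect the only genuine obstacle to be the uniformization step $(\star)$, and within it the single delicate point is that $\psi$ and $\chi$ are definable predicates rather than formulas, which is why the Łoś (or compactness) argument must be routed through the approximating formulas $\psi^{(m)},\chi^{(m)}$ together with the uniform bounds $|\psi-\psi^{(m)}|\le 2^{-m}$ and $|\chi-\chi^{(m)}|\le 2^{-m}$. Everything after $(\star)$ is elementary real analysis.
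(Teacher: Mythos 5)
Your proof is correct, and it is essentially the argument the paper relies on: the paper simply cites Proposition 7.15 of [BBHU] (noting its proof goes through for general structures), and that proof is exactly your compactness/ultraproduct uniformization of (1) followed by an elementary construction of the modulus $\alpha$. Your handling of the one delicate point---that $\psi$ and $\chi$ are definable predicates rather than formulas, so \L o\'s must be applied to the partial-sum approximations $\psi^{(m)},\chi^{(m)}$ with the uniform $2^{-m}$ error bounds---is exactly right.
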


\begin{proof}
In the case of metric structures, this is Proposition 7.15 of [BBHU].  However, the proof given there also works in the case of general structures.
\end{proof}

\begin{cor}
Given a sentence $\varphi$, the following are equivalent:
\begin{enumerate}
\item $\varphi$ is a reduced product sentence.
\item There is an increasing continuous function $\gamma:[0,1]\to [0,1]$ such that $\gamma(0)=0$ and for which, given any set $I$, general structures $\cu{M}_i$ for each $i\in I$, and a filter $\cu{F}$ on $I$, setting $\cu{M}:=\prod_{\cu{F}} \cu{M}_i$, we have
$$\varphi^{\cu{M}}\leq \gamma\left(\limsup_{\cu{F}}\varphi^{\cu{M}_i}\right).$$
\end{enumerate}
\end{cor}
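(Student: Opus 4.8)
The direction (2) $\Rightarrow$ (1) is immediate: if every $\cu{M}_i$ is a model of $\varphi$, then $\limsup_{\cu{F}}\varphi^{\cu{M}_i}=0$, so $\varphi^{\cu{M}}\le\gamma(0)=0$ and $\cu{M}$ is a model of $\varphi$. The content is (1) $\Rightarrow$ (2), and the plan is to combine the single-sentence form of the main theorem with the implication-to-modulus lemma. Assuming $\varphi$ is a reduced product sentence, Corollary \ref{c-main-single} provides, for each $n\ge 1$, a restricted conditional sentence $\psi_n$ with $\varphi\models\psi_n$ and $\psi_n\models\varphi\dotle 2^{-n}$. Since formulas are in particular definable predicates, Lemma \ref{l-implication} applies to each of these implications (between the corresponding zero-sets) and converts it into a quantitative bound valid in \emph{all} general structures $\cu{M}$: there are increasing continuous $\alpha_n,\beta_n\colon[0,1]\to[0,1]$ with $\alpha_n(0)=\beta_n(0)=0$ such that $\psi_n^{\cu{M}}\le\alpha_n(\varphi^{\cu{M}})$ and $\varphi^{\cu{M}}\dotminus 2^{-n}\le\beta_n(\psi_n^{\cu{M}})$.

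Next I would extract the quantitative content of Theorem \ref{t-conditional-preserve}: equation (\ref{eq4}) in its proof, read off for a conditional \emph{sentence} $\psi$, says that $\{i\in I\colon\psi^{\cu{M}_i}\le\varepsilon\}\in\cu{F}$ implies $\psi^{\cu{M}}\le\varepsilon$ for every $\varepsilon$; letting $\varepsilon$ decrease to $\limsup_{\cu{F}}\psi^{\cu{M}_i}$ this yields the semicontinuity statement $\psi^{\cu{M}}\le\limsup_{\cu{F}}\psi^{\cu{M}_i}$ for every conditional sentence $\psi$. Given an arbitrary reduced product $\cu{M}=\prod_{\cu{F}}\cu{M}_i$ and writing $e=\limsup_{\cu{F}}\varphi^{\cu{M}_i}$, I would then chain the pieces together: since $\alpha_n$ is increasing and continuous it commutes with $\limsup_{\cu{F}}$, so from $\psi_n^{\cu{M}_i}\le\alpha_n(\varphi^{\cu{M}_i})$ we get $\limsup_{\cu{F}}\psi_n^{\cu{M}_i}\le\alpha_n(e)$; the semicontinuity statement applied to $\psi_n$ then gives $\psi_n^{\cu{M}}\le\alpha_n(e)$; and the $\beta_n$-bound gives $\varphi^{\cu{M}}\le 2^{-n}+\beta_n(\alpha_n(e))$. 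As $n$ was arbitrary, $\gamma_0(\varepsilon):=\inf_{n\ge 1}\bigl(2^{-n}+\beta_n(\alpha_n(\varepsilon))\bigr)$ is a non-decreasing function with $\varphi^{\cu{M}}\le\gamma_0(\limsup_{\cu{F}}\varphi^{\cu{M}_i})$ for every reduced product, and, because $\beta_n\circ\alpha_n$ is continuous and vanishes at $0$, it satisfies $\gamma_0(0)=0$ and $\lim_{\varepsilon\to 0^+}\gamma_0(\varepsilon)=\inf_n 2^{-n}=0$.

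The last step is to replace $\gamma_0$ by a genuinely continuous majorant, which is routine: any non-decreasing $\gamma_0\colon[0,1]\to[0,1]$ with $\gamma_0(0)=0$ and $\lim_{\varepsilon\to 0^+}\gamma_0(\varepsilon)=0$ is dominated by a continuous increasing $\gamma\colon[0,1]\to[0,1]$ with $\gamma(0)=0$ — for instance, choose $1=s_0>s_1>s_2>\cdots\to 0$ with $\gamma_0(s_m)<2^{-m}$ for all $m\ge 1$ and take $\gamma$ to be the piecewise-linear function with $\gamma(0)=0$, $\gamma(1)=1$, and $\gamma(s_m)=2^{-(m-1)}$, which dominates $\gamma_0$ on each interval $[s_m,s_{m-1}]$ by monotonicity of $\gamma_0$. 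This $\gamma$ witnesses (2). I do not expect a serious obstacle here; the one point requiring care is the bookkeeping in the middle paragraph, namely checking that the $n$-dependent estimates assemble into a single modulus vanishing at $0$ — this works precisely because the error term $2^{-n}$ shrinks to $0$ while $\beta_n\circ\alpha_n$ is continuous and vanishes at $0$ — whereas the supporting facts (monotonicity of $\limsup_{\cu{F}}$, and its commutation with an increasing continuous function) are elementary.
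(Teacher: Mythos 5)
Your proof is correct, and it rests on exactly the same three ingredients as the paper's own argument: Corollary \ref{c-main-single}, Lemma \ref{l-implication}, and the quantitative statement (4) from the proof of Theorem \ref{t-conditional-preserve}. The only structural difference is in the bookkeeping. The paper bundles the sentences $\psi_n$ into the single definable predicate $\psi:=\sum_n 2^{-n}\psi_n$, applies Lemma \ref{l-implication} once to the implication $\psi\models\varphi$ to obtain one modulus $\beta$, and sets $\gamma:=\beta\circ\alpha$ with $\alpha:=\sum_n 2^{-n}\alpha_n$; this $\gamma$ is already continuous, increasing, and vanishing at $0$, so no regularization is needed. You instead apply the lemma to each implication $\psi_n\models\varphi\dotle 2^{-n}$ separately, obtaining moduli $\beta_n$, and must then smooth $\gamma_0=\inf_n\bigl(2^{-n}+\beta_n\circ\alpha_n\bigr)$ into a continuous increasing majorant. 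Your route has the minor advantage that Lemma \ref{l-implication} is only ever invoked for honest formulas (such as $\varphi\dotminus 2^{-n}$) rather than for the infinite-sum definable predicate, at the cost of the (routine) smoothing step that the paper's aggregation trick avoids. The auxiliary facts you rely on --- the semicontinuity $\psi_n^{\cu{M}}\le\limsup_{\cu{F}}\psi_n^{\cu{M}_i}$ for conditional sentences extracted from (4), the commutation of $\limsup_{\cu{F}}$ with increasing continuous functions, and the piecewise-linear majorant construction --- all check out.
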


\begin{proof}
It is clear that (2) implies (1).  Conversely, suppose that (1) holds.  Then by Corollary \ref{c-main-single}, there are conditional sentences $\psi_n$ such that $\sigma\models \psi_n$ and $\psi_n\models \sigma\dotle 2^{-n}$.  Set $\psi:=\sum_n 2^{-n}\psi_n$, a definable predicate.  Note that $\varphi\models \psi$ and $\psi\models \varphi$.  By Lemma \ref{l-implication}, there are increasing continuous functions $\alpha_n:[0,1]\to [0,1]$ and $\beta:[0,1]\to[0,1]$ with $\alpha_n(0)=\beta(0)=0$ and such that $\psi_n^{\cu{M}}\leq \alpha_n(\varphi^{\cu{M}})$ and $\varphi^{\cu{M}}\leq \beta(\psi^{\cu{M}})$ for all general structures $\cu{M}$.  Set $\alpha:=\sum_n 2^{-n}\alpha_n$, so that $\alpha:[0,1]\to [0,1]$ is an increasing continuous function with $\alpha(0)=0$.  Finally, set $\gamma:=\beta\circ \alpha$.

Suppose now that we have a set $I$, general structures $\cu{M}_i$ for each $i\in I$, and a filter $\cu{F}$ on $I$, and set $\cu{M}:=\prod_{\cu{F}} \cu{M}_i$.  Let $r:=\limsup_{\cu{F}} \varphi^{\cu{M}_i}$ and take $s>r$.  Take $J\in \cu{F}$ such that $\sup_{i\in J}\varphi^{\cu{M}_i}\leq s$.  It follows that $\psi_n^{\cu{M}_i}\leq \alpha_n(s)$ for all $i\in J$.  By (4) in the proof of Theorem \ref{t-conditional-preserve}, it follows that $\psi_n^{\cu{M}}\leq \alpha_n(s)$ and thus $\psi^{\cu{M}}\leq \alpha(s)$ and $\varphi^{\cu{M}}\leq \beta(\psi^{\cu{M}})\leq\beta(\alpha(s))=\gamma(s)$.  Since $s>r$ was arbitrary, we have that $\varphi^{\cu{M}}\leq \gamma(r)$, as desired.
\end{proof}

\appendix

\section{The Feferman-Vaught Theorem for General Structures}

In [Gh], Ghasemi proved an analogue of the Feferman-Vaught Theorem for reduced products of metric structures.  He used that result to prove that reduced products of metric structures preserve elementarily equivalence (Fact \ref{f-Gh} above), which we in turn used  to prove that reduced products of general structures preserve elementary equivalence.  In this appendix, we will extend Ghasemi's  analogue of the Feferman-Vaught Theorem to general structures.

In the following definition, we will slightly strengthen the notion from [Gh] of a formula being determined up to $2^{-n}$, by adding the additional requirement (c).
Let $L$ be a metric signature.

\begin{df} \label{d-determined} For a restricted continuous formula $\varphi(\vec x)$ with signature $L$, we say that $\varphi(\vec x)$ is \emph{determined up to} $2^{-n}$ by
$$(\sigma_0,\ldots, \sigma_{2^n};\psi_0,\ldots,\psi_{m-1})$$
if
\begin{itemize}
\item[(a)] Each $\sigma_i$ is a formula in the first order language of Boolean algebras with at most $s=m2^n$ variables which is monotonic in each variable.
\item[(b)] Each $\psi_j(\vec x)$ is a restricted continuous formula.
\item[(c)] For each $j$, every predicate or function symbol that occurs in $\psi_j$ occurs in $\varphi$.
\item[(d)] For any set $\Omega$, ideal $\cu{I}$ and corresponding filter $\cu F$ on $\Omega$, indexed family $\langle\cu M_\gamma\rangle_{\gamma\in\Omega}$
of metric structures with signature $L$, $|\vec x|$-tuple $\vec a$ in $\prod_\Omega \cu M_\gamma$, and $\ell\in\{0,\ldots,2^n\}$, we have
$$\cu{P}(\Omega)/\cu{I}\models\sigma_\ell([X^0_0]_{\cu I},\ldots, [X^0_{2^n}]_{\cu I},\ldots,[X^{m-1}_{2_n}]_{\cu I})
\Rightarrow \varphi(\vec a_{\cu F})^{\prod_{\cu F}\cu M_\gamma} > \ell/2^n,$$
and
$$ \varphi(\vec a_{\cu F})^{\prod_{\cu F}\cu M_\gamma} > \ell/2^n\Rightarrow
\cu{P}(\Omega)/\cu{I}\models\sigma_\ell([Y^0_0]_{\cu I},\ldots, [Y^0_{2^n}]_{\cu I},\ldots,[Y^{m-1}_{2^n}]_{\cu I}),$$
where for each $i$ and $j$,
$$X^j_i=\{\gamma\in\Omega\colon \psi_j(\vec a(\gamma))^{\cu M_\gamma} > i/2^n\}, Y^j_i=\{\gamma\in\Omega\colon \psi_j(\vec a(\gamma))^{\cu M_\gamma} \ge i/2^n\}.$$
\end{itemize}
\end{df}

\begin{df}  We say that a formula $\varphi$ with signature $L$ is \emph{determined up to} $2^{-n}$ if there is a restricted formula $\theta$ such that every predicate or
function symbol that occurs in $\theta$ occurs in $\varphi$, $\theta$ is uniformly within $2^{-(n+1)}$ of $\varphi$ in all metric structures, and $\theta$ is determined
up to $2^{-n}$ by some $(\sigma_0,\ldots,\sigma_{2^n},\psi_0,\ldots,\psi_{m-1})$.
\end{df}

\begin{fact}  \label{f-FV} (Theorem 3.3 of [Gh]) For every $n\in\BN$, every continuous formula is determined up to $2^{-n}$.
\end{fact}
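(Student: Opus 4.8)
The plan is to reprove Ghasemi's Feferman--Vaught theorem (Theorem 3.3 of [Gh]) while keeping track of which predicate and function symbols appear. Ghasemi's original result already supplies everything in Definition \ref{d-determined} apart from clause (c), together with the symbol restriction in the definition of ``determined up to $2^{-n}$'' (namely that the restricted approximant $\theta$ involves only symbols of $\varphi$). So the only additional work is to run his induction on the complexity of $\varphi$ and to observe at each step that the auxiliary formulas $\psi_0,\dots,\psi_{m-1}$ (and the restricted approximant $\theta$) can be chosen using only symbols occurring in $\varphi$.

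First I would reduce to restricted formulas. Given an arbitrary continuous $\varphi(\vec x)$, Theorem 6.3 of [BBHU] yields a restricted $\theta$ with $|\theta^{\cu M}-\varphi^{\cu M}|\le 2^{-(n+1)}$ in all metric structures $\cu M$; inspecting its proof, $\theta$ is obtained from $\varphi$ by approximating each continuous connective by a restricted one while leaving the atomic subformulas and the quantifier prefix intact, so every symbol of $\theta$ already occurs in $\varphi$. Thus it suffices to produce, for each restricted $\varphi$, data $(\sigma_0,\dots,\sigma_{2^n};\psi_0,\dots,\psi_{m-1})$ as in Definition \ref{d-determined} in which every $\psi_j$ uses only symbols of $\varphi$; this is what the induction establishes.

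For the base case, $\varphi$ atomic, take $m=1$ and $\psi_0=\varphi$. By Remark \ref{r-atomic-reduced-product} the value of $\varphi$ at $\vec a_{\cu F}$ in $\prod_{\cu F}\cu M_\gamma$ is $\limsup_{\cu F}\varphi^{\cu M_\gamma}(\vec a(\gamma))$, and Fact \ref{f-F-limit} lets one rewrite each event $\varphi^{\prod_{\cu F}\cu M_\gamma}(\vec a_{\cu F})>\ell/2^n$ as an (in)equality in $\cu P(\Omega)/\cu I$ among the classes of the sets $X^0_i,Y^0_i$, giving the monotone Boolean formulas $\sigma_\ell$; trivially $\psi_0=\varphi$ involves only symbols of $\varphi$. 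For a connective $\varphi=C(\varphi_1,\dots,\varphi_k)$ with $C$ one of $0,1,\min,\max,\dotminus,\dotplus,\cdot/2$, the value of $\varphi$ in the reduced product is $C$ applied to the reduced-product values of the $\varphi_t$; applying the inductive hypothesis to each $\varphi_t$, concatenating the lists of auxiliary formulas, and building new $\sigma_\ell$ that express $C(\cdots)>\ell/2^n$ in terms of the thresholds of the $\varphi_t$ (exactly as in the classical argument) gives the required data, and the combined list involves only symbols of $\varphi_1,\dots,\varphi_k$, hence of $\varphi$. For the quantifier step, say $\varphi=\sup_y\theta(\vec x,y)$ (the $\inf$ case being dual), Ghasemi's construction obtains the $\sigma_\ell$ for $\varphi$ from those for $\theta$ by quantifying inside $\cu P(\Omega)/\cu I$ over the possible ``profiles'' of a witness $b=\langle b_\gamma\rangle$ — using that a profile is realizable exactly on the union of the cells $\gamma$ at which it is realizable in $\cu M_\gamma$ — and it re-uses the very same $\psi_0,\dots,\psi_{m-1}$ already attached to $\theta$; since those involve only symbols of $\theta$, and $\theta$ and $\varphi$ have the same symbols, clause (c) is preserved.

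The main obstacle is the quantifier step: one must confirm that Ghasemi's handling of $\sup_y$ and $\inf_y$ — the partition lemma for the index set and the ensuing manipulations in $\cu P(\Omega)/\cu I$ — genuinely introduces no auxiliary formula beyond those already associated with the matrix $\theta$. In the classical Feferman--Vaught theorem the ``acceptable sequences'' machinery does re-use the same basic formulas at the quantifier step, and the same should hold for Ghasemi's metric version, but verifying it requires a careful reading of his proof rather than a black-box appeal. A secondary point is the sharpened reading of Theorem 6.3 of [BBHU] used above, namely that the restricted approximant can be taken with atomic subformulas among those of $\varphi$; this is immediate from the proof but is not how the statement is phrased.
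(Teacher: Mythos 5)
Your proposal matches the paper's treatment: Fact \ref{f-FV} is quoted from [Gh], and the paper's only justification for the strengthening is the remark that condition (c) (the symbol restriction on the auxiliary formulas $\psi_j$ and the restricted approximant $\theta$) is visible upon inspecting Ghasemi's proof, which is exactly the symbol-tracking rerun of his induction that you outline. So your approach is essentially the same as the paper's, with your sketch simply supplying more detail (and the honest caveat about the quantifier step) than the paper itself records.
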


The above statement is slightly stronger than the statement of Theorem 3.3 in [Gh] because of condition (c) in our definition of being determined, but the proof in [Gh]
shows that the result holds as stated here.

Now consider a vocabulary $V$ with countably many predicate and function symbols.

\begin{df}  For a restricted continuous formula $\varphi(\vec x)$ with vocabulary $V$, we say that $\varphi(\vec x)$ is \emph{generally determined up to} $2^{-n}$ by
$$(\sigma_0,\ldots, \sigma_{2^n};\psi_0,\ldots,\psi_{m-1})$$
if conditions (a)--(d) of Definition \ref{d-determined} hold with the phrase ``metric structures with signature $L$''
replaced by ``general structures with vocabulary $V$'' in (d).

We say that a continuous formula $\varphi$ with vocabulary $V$ is \emph{generally determined up to} $2^{-n}$ if there is a restricted formula $\theta$ such that every predicate or
function symbol that occurs in $\theta$ occurs in $\varphi$, $\theta$ is uniformly within $2^{-(n+1)}$ of $\varphi$ in all general structures, and $\theta$ is generally determined
up to $2^{-n}$ by some $(\sigma_0,\ldots,\sigma_{2^n},\psi_0,\ldots,\psi_{m-1})$.
\end{df}

\begin{thm}  \label{t-determined}  Every continuous formula $\varphi$ with vocabulary $V$ is generally determined.
\end{thm}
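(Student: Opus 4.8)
The plan is to reduce Theorem~\ref{t-determined} to Fact~\ref{f-FV} by using the conversion from general structures to pre-metric structures supplied by a pre-metric expansion. Since $V$ has countably many predicate and function symbols, Fact~\ref{f-premetric-expansion}~(i) gives a pre-metric expansion $T_e$ of the empty $V$-theory, with signature $L_e$ over $V_D = V\cup\{D\}$ and with a Cauchy sequence $\langle d_m\rangle$ of $V$-formulas such that the general models of $T_e$ are exactly the structures $\cu M_e = (\cu M, [\lim d_m]^{\cu M})$ for $\cu M$ a general $V$-structure. First I would fix a continuous $V$-formula $\varphi(\vec x)$ and an $n$, and view $\varphi$ as an $L_e$-formula (it does not mention $D$). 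By Fact~\ref{f-FV}, $\varphi$ is determined up to $2^{-n}$ as an $L_e$-formula: there is a restricted $\theta$ with the same predicate and function symbols as $\varphi$ (so again not mentioning $D$), uniformly within $2^{-(n+1)}$ of $\varphi$ in all metric $L_e$-structures, and determined up to $2^{-n}$ by some $(\sigma_0,\ldots,\sigma_{2^n};\psi_0,\ldots,\psi_{m-1})$, where condition~(c) guarantees that each $\psi_j$ also avoids $D$. I claim the same data witnesses that $\varphi$ is \emph{generally} determined up to $2^{-n}$ over $V$.

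The two things to check are: (1) $\theta$ is uniformly within $2^{-(n+1)}$ of $\varphi$ in all general $V$-structures, and (2) condition~(d) of Definition~\ref{d-determined} holds with ``general structures with vocabulary $V$'' in place of ``metric structures with signature $L$''. For~(1), given a general $V$-structure $\cu M$, pass to its reduction $\cu M^{\mathrm{red}}$ (which does not change truth values of sentences or, suitably interpreted, of formulas at tuples, by Remark~\ref{r-reduction}), form $\cu M^{\mathrm{red}}_e = (\cu M^{\mathrm{red}}, [\lim d_m]^{\cu M^{\mathrm{red}}})$, which is a reduced pre-metric structure for $L_e$, and then take its completion $\widehat{\cu M^{\mathrm{red}}_e}$, a metric structure for $L_e$ into which $\cu M^{\mathrm{red}}_e$ elementarily embeds. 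Since $\theta$ and $\varphi$ avoid $D$, their values at a tuple $\vec a$ from $M$ are the same in $\cu M$, in $\cu M^{\mathrm{red}}_e$, and in $\widehat{\cu M^{\mathrm{red}}_e}$; the metric inequality $|\theta^{\widehat{\cu M^{\mathrm{red}}_e}} - \varphi^{\widehat{\cu M^{\mathrm{red}}_e}}| \le 2^{-(n+1)}$ then transfers back to $\cu M$.

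For~(2), suppose we are given a set $\Omega$, an ideal $\cu I$ with corresponding filter $\cu F$, an indexed family $\langle \cu M_\gamma\rangle_{\gamma\in\Omega}$ of \emph{general} $V$-structures, a tuple $\vec a$ in $\prod_\Omega \cu M_\gamma$, and $\ell \le 2^n$. Replace each $\cu M_\gamma$ by the completion of the reduction of $(\cu M_\gamma)_e$, a metric $L_e$-structure $\cu M_\gamma^\ast$; as above, $\psi_j^{\cu M_\gamma^\ast}(\vec a(\gamma)) = \psi_j^{\cu M_\gamma}(\vec a(\gamma))$ for all $j$, so the sets $X^j_i, Y^j_i$ are unchanged. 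By Corollary~\ref{c-product-completion} (applied after removing $D$, then reinstating it) together with Corollary~\ref{c-metric-product}, the reduced product $\prod_{\cu F} \cu M_\gamma^\ast$ has the same $V$-part, up to isomorphism, as the completion of the reduction of $(\prod_{\cu F} \cu M_\gamma)_e$, and in particular $\varphi^{\prod_{\cu F}\cu M_\gamma^\ast}(\vec a_{\cu F}) = \varphi^{\prod_{\cu F}\cu M_\gamma}(\vec a_{\cu F})$, using again that $\varphi$ avoids $D$ and the compatibility of reductions, completions, and reduced products with the $\limsup_{\cu F}$ clause (Remark~\ref{r-atomic-reduced-product} and Remark~\ref{r-part-reduced-product}). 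Now Fact~\ref{f-FV} applied to the metric family $\langle \cu M_\gamma^\ast\rangle$ yields exactly the two implications required in condition~(d) for the original general family.

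\textbf{Main obstacle.} The delicate point is the bookkeeping around the extra predicate $D$ and the successive passages reduction $\to$ pre-metric expansion $\to$ completion, making sure that (i) the values of the $D$-free formulas $\varphi, \theta, \psi_j$ are genuinely preserved at each step and at tuples (not just at sentences), and (ii) reduced products commute with these operations on the $V$-part, so that the Boolean-algebra side $\cu P(\Omega)/\cu I$ and the sets $X^j_i, Y^j_i$ are literally the same before and after upgrading to metric structures. Everything else is a direct citation of Fact~\ref{f-FV}, Fact~\ref{f-premetric-expansion}, Corollary~\ref{c-metric-product}, and Corollary~\ref{c-product-completion}.
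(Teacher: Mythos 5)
Your proposal is correct and follows essentially the same route as the paper: apply Fact \ref{f-FV} to $\varphi$ viewed in the signature $L_e$ of a pre-metric expansion of the empty $V$-theory, use condition (c) to see that $\theta$ and the $\psi_j$ avoid $D$, and transfer condition (d) from the metric family of completions back to the general family via Corollary \ref{c-product-completion} (and Remark \ref{r-part-reduced-product}). The only cosmetic difference is your parenthetical about ``removing $D$ and reinstating it''---Corollary \ref{c-product-completion} applies directly to the $L_e$-structures $(\cu N_\gamma)_e$, so no such manoeuvre is needed.
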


\begin{proof}  By the Expansion Theorem, the empty theory $T$ with vocabulary $V$ has a pre-metric expansion $(T_e, L_e)$.  Fix an $n\in\BN$.
By Fact \ref{f-FV}, $\varphi$ is determined up to $2^{-n}$ with respect to the metric signature $L_e$.  Then there is a restricted formula
$\theta$ such that every predicate or
function symbol that occurs in $\theta$ occurs in $\varphi$, $\theta$ is uniformly within $2^{-(n+1)}$ of $\varphi$ in all metric structures, and $\theta$ is determined
up to $2^{-n}$ by some $(\sigma_0,\ldots,\sigma_{2^n},\psi_0,\ldots,\psi_{m-1})$.  Then $\theta$  and each $\psi_i$ are $V$-formulas (rather than just $V_D$-formulas).
Then $\theta$ is uniformly within $2^{-(n+1)}$ of $\varphi$ in all general $V$-structures.  Let $\langle\cu N_\gamma\rangle_{\gamma\in\Omega}$ be an indexed family of general $V$-structures.
For each $\gamma\in\Omega$, $\cu N_{\gamma e}$ is a pre-metric structure with signature $L_e$.  Then the completion $\cu M_{\gamma e}$ of $\cu N_{\gamma e}$
is a metric structure with signature $L_e$.  By Corollary \ref{c-product-completion}, for each filter $\cu F$ over $\Omega$, $\prod_{\cu F}\cu M_{\gamma e}$
is the completion of $\prod_{\cu F}\cu N_{\gamma e}$.  Since $\theta$ is determined
up to $2^{-n}$ by some $(\sigma_0,\ldots,\sigma_{2^n},\psi_0,\ldots,\psi_{m-1})$, Condition (d) of Definition \ref{d-determined} holds for $\theta$ and
$\langle\cu M_\gamma\rangle_{\gamma\in\Omega}$.  It follows that Condition (d) of Definition \ref{d-determined} also holds for $\theta$ and
$\langle\cu N_\gamma\rangle_{\gamma\in\Omega}$. Therefore $\theta$ is generally determined up to $2^{-n}$ by some $(\sigma_0,\ldots,\sigma_{2^n},\psi_0,\ldots,\psi_{m-1})$,
so $\varphi$ is generally determined.
\end{proof}

\section{Embeddings, Unions of Chains, and Homomorphisms}

In this appendix, we prove some relatively easy preservation theorems for general structures.  Each of these results is a special case of
a  result  in the much earlier monograph [CK1966], but the results in [CK1966] were stated with the unnecessary hypothesis
that the general structures have  a predicate symbol for equality.
Using Fact \ref{f-metric-theory}, it will follow as a corollary that these results also hold for metric structures.
These consequences for metric structures were also proved in the paper [F], by adapting the classical proofs of the corresponding first order results.

The following lemma is the analogue for continuous model theory of Lemma 3.2.1 in [CK2012].

\begin{lemma}  \label{l-preserved}  Let  $\Gamma$ be a set of sentences that is closed under the application of the $\min$ connective and unary increasing connectives.  The following are equivalent.

\begin{itemize}
\item[(i)]  $T$ is $S$-equivalent to a set of sentences $U\subseteq\Gamma$.
\item[(ii)]  If $\cu{M}, \cu{N}$ are general models of $S$, $\cu{M}$ is a general model of $T$, and every sentence $\gamma\in\Gamma$ that is true in $\cu{M}$ is true in $\cu{N}$, then $\cu{N}$ is a general model of $T$.
\end{itemize}
\end{lemma}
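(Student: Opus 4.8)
The plan is to follow the classical template (Lemma 3.2.1 of [CK2012], and its continuous analogue Lemma \ref{l-restricted-full}): the implication (i) $\Rightarrow$ (ii) is immediate, and for (ii) $\Rightarrow$ (i) one takes $U$ to be the set of \emph{all} sentences in $\Gamma$ that are $S$-consequences of $T$, and shows that any general model $\cu N$ of $S\cup U$ is a general model of $T$. First I would record that $S\cup T\models U$ trivially, so it suffices to prove $S\cup U\models T$. Fix a general model $\cu N\models S\cup U$; the goal is to produce a general model $\cu M$ of $S\cup T$ such that every $\gamma\in\Gamma$ true in $\cu M$ is true in $\cu N$ — then hypothesis (ii) finishes the job.

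The heart of the argument is a compactness construction producing this $\cu M$. Let $V$ be the vocabulary and let $\Gamma^{\cu N} = \{\gamma\in\Gamma : \gamma^{\cu N} = 0\}$ be the $\Gamma$-sentences true in $\cu N$. The natural candidate theory is
$$ S \cup T \cup \{\, r \dotle \gamma \;:\; \gamma\in\Gamma,\ \gamma^{\cu N} > 0,\ r \in (0,\gamma^{\cu N}) \text{ dyadic} \,\}, $$
i.e. we ask for a model of $S\cup T$ in which every $\Gamma$-sentence \emph{false} in $\cu N$ is also (approximately) false. If this theory has a general model $\cu M$, then by construction every $\gamma\in\Gamma$ with $\gamma^{\cu M}=0$ must satisfy $\gamma^{\cu N}=0$ (else some $r\dotle\gamma$ forces $\gamma^{\cu M}\geq r>0$), which is exactly what (ii) needs. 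So the task reduces to showing this theory is finitely satisfiable, and here I would invoke the closure assumptions on $\Gamma$: a finite subset involves $S\cup T$ together with finitely many constraints $r_k \dotle \gamma_k$ with $\gamma_k\in\Gamma$ and $\gamma_k^{\cu N}>r_k$. Suppose no model of $S\cup T$ satisfies all of them; using that $\Gamma$ is closed under $\min$ and under the increasing connectives $s\dotminus\varepsilon$ (and $\dotplus$), one combines the $\gamma_k$ into a single sentence $\gamma^* \in \Gamma$ — roughly $\gamma^* = \min_k (C_k\gamma_k)$ for suitable increasing unary $C_k$ scaling so that the failure of the finite constraints translates into a uniform bound $T \models_S (\gamma^* \dotle \delta)$ for some dyadic $\delta$ with $\gamma^{*\,\cu N} > \delta$. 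But then $(\gamma^* \dotle \delta)$, being in $\Gamma$ (again by closure under increasing connectives, since $s\mapsto s\dotminus\delta$ composed appropriately is increasing) and an $S$-consequence of $T$, lies in $U$, so it is true in $\cu N$, contradicting $\gamma^{*\,\cu N}>\delta$. Compactness (Fact \ref{f-compactness}) then yields $\cu M$.

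The main obstacle I anticipate is the bookkeeping in the previous paragraph: packaging a finite family of approximate inequalities $r_k\dotle\gamma_k$ into one sentence of $\Gamma$ whose $S$-consequence status can be read off, while keeping track of the dyadic thresholds. The subtlety is that $\Gamma$ is only assumed closed under $\min$ and \emph{increasing} unary connectives (not under $\dotplus$ or arbitrary connectives), so the combination must be carried out entirely with $\min$ and increasing unary maps; fortunately $s\mapsto C(s)$ for $C$ increasing is enough to rescale each $\gamma_k$ so that its value being $\geq r_k$ becomes its value being $>0$, and $\min$ assembles them, with a final increasing unary connective extracting a dyadic-threshold version that sits in $U$. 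Once that combinatorial lemma is in hand — it is the direct analogue of the manipulation used in the proof of Lemma \ref{l-restricted-full} and Corollary \ref{c-compactness-approx} — the rest is routine: apply (ii) to $\cu M$ and $\cu N$ to conclude $\cu N\models T$, hence $S\cup U\models T$, hence $T$ is $S$-equivalent to $U\subseteq\Gamma$.
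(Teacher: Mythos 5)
Your proposal is correct and takes essentially the same route as the paper's own proof: the same choice of $U$ (all $\Gamma$-sentences true in every general model of $S\cup T$), the same compactness argument producing a general model $\cu{M}$ of $S\cup T$ with $\gamma^{\cu{M}}\ge\gamma^{\cu{N}}$ for all $\gamma\in\Gamma$, and the same use of closure under $\min$ and increasing unary connectives to package finitely many constraints into one $\Gamma$-sentence, followed by an application of (ii). The "bookkeeping" you flag resolves exactly as in the paper: the sentence $\gamma^*=\min_k(\gamma_k\dotminus r_k)$ lies in $\Gamma$, is true in every model of $S\cup T$ if the finite constraint set were unsatisfiable, yet has value $\min_k(\gamma_k^{\cu{N}}\dotminus r_k)>0$ in $\cu{N}$, giving the desired contradiction.
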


\begin{proof}  It is trivial that (i) implies (ii).  Assume (ii), and let $U$ be the set of all sentences $\gamma\in\Gamma$ such that every general model of $S\cup T$ is a general model of $\gamma$.  Then every general model of $S\cup T$ is a general model of $S\cup U$.  Let $\cu{N}$ be a general model of $S\cup U$.  Consider sentences $\psi_0,\ldots,\psi_n\in \Gamma$ and numbers $r_0,\ldots,r_n\in[0,1]$.  Using the fact that $\Gamma$ is closed under $\min$ and increasing unary connectives, one can show that $\Gamma$ contains a sentence $\gamma$ saying that $\psi_i\le r_i$ for some $i\le n$.  Now let $\varepsilon>0$ and put $r_i=\psi_i^\cu{N}\dotminus\varepsilon$ for each $i\le n$.    Then $\gamma$ is not true in $\cu{N}$.  Since $\cu{N}$ is a general model of $S\cup U$, $\gamma\notin U$, so $\gamma$ cannot be true in every general model of $S\cup T$.  Hence there is a general model $\cu{M}$ of $S\cup T$ such that $\psi_i^\cu{M}\ge \psi_i^\cu{N}\dotminus\varepsilon$ for all $i\le n$.  By the compactness theorem, there is a general model $\cu{M}$ of $S\cup T$ such that  $\psi^\cu{M}\ge\psi^\cu{N}$ for every  sentence $\psi\in\Gamma$.  Then by (ii), $\cu{N}$ is a general model of $T$, so (i) holds.
\end{proof}

Here is an analogue of Lemma \ref{l-preserved} for metric theories.

\begin{cor}  Suppose $L$ is a metric signature over $V$, and $S, T$ are metric theories with signature $L$.
Let  $\Gamma$ be a set of sentences that is closed under the application of the $\min$ connective and unary increasing connectives.  The following are equivalent.

\begin{itemize}
\item[(i)]  $T$ is $S$-equivalent to a set of sentences $U\subseteq\Gamma$.
\item[(ii)]  If $\cu{M}_+, \cu{N}_+$ are pre-metric models of $S$, $\cu{M}_+$ is a pre-metric model of $T$, and every sentence $\gamma\in\Gamma$ that is true in $\cu{M}_+$ is true in $\cu{N}_+$, then $\cu{N}_+$ is a pre-metric model of $T$.
\end{itemize}

\end{cor}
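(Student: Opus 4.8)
The plan is to deduce this from Lemma \ref{l-preserved} by passing back and forth between pre-metric structures and their downgrades via Fact \ref{f-metric-theory}. As in the proof of that lemma, the implication (i) $\Rightarrow$ (ii) is immediate: any sentence $\gamma\in U\subseteq\Gamma$ is true in every pre-metric model of $S\cup T$, in particular in $\cu{M}_+$, and hence by hypothesis in $\cu{N}_+$; so $\cu{N}_+$ is a pre-metric model of $S\cup U$, which is $S$-equivalent to $T$.

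For (ii) $\Rightarrow$ (i) I would first note that since $S$ and $T$ are metric theories with signature $L$, both $S\models\met(L)$ and $T\models\met(L)$; consequently $S\cup U$ is again a metric theory for any $U$, and by Fact \ref{f-metric-theory} a general structure $\cu M$ is a general model of $S$, of $T$, or of $S\cup T$ if and only if $(\cu M,L)$ is a pre-metric model of the corresponding theory. This reduces the claim to checking condition (ii) of Lemma \ref{l-preserved} for $S$, $T$, $\Gamma$ in the setting of general structures. So suppose $\cu M,\cu N$ are general models of $S$ with $\cu M\models T$, and every $\gamma\in\Gamma$ with $\gamma^{\cu M}=0$ also satisfies $\gamma^{\cu N}=0$. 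Upgrading to $L$, the pre-metric structures $(\cu M,L)$ and $(\cu N,L)$ are pre-metric models of $S$, and $(\cu M,L)$ is a pre-metric model of $T$; since the truth value of a sentence in a pre-metric structure is computed in its downgrade, every $\gamma\in\Gamma$ true in $(\cu M,L)$ is true in $(\cu N,L)$. By hypothesis (ii) of the present corollary, $(\cu N,L)$ is then a pre-metric model of $T$, so $\cu N\models T$ by Fact \ref{f-metric-theory}. This verifies condition (ii) of Lemma \ref{l-preserved}, which produces a set $U\subseteq\Gamma$ such that $S\cup T$ and $S\cup U$ have the same general models, and hence, again by Fact \ref{f-metric-theory}, the same pre-metric models. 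Thus (i) holds.

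I expect no genuine obstacle here beyond the bookkeeping: the whole content is that Fact \ref{f-metric-theory} makes ``pre-metric model of a metric theory'' and ``general model of that theory'' interchangeable, so that Lemma \ref{l-preserved} applies essentially verbatim. The only point requiring a moment's care is to observe that $\Gamma$, viewed as a set of sentences in the vocabulary $V$ of $L$, is the same object in both frameworks, and that the hypothesis that $\Gamma$ be closed under the $\min$ connective and unary increasing connectives---the hypothesis consumed by Lemma \ref{l-preserved}---is exactly the one assumed here.
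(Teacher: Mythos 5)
Your proposal is correct and follows essentially the same route as the paper, which simply observes that the corollary is a restatement of Lemma \ref{l-preserved} in the special case $S,T\models\met(L)$; your write-up just makes explicit the back-and-forth between general models and pre-metric models supplied by Fact \ref{f-metric-theory}.
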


\begin{proof}  This is just a restatement of Lemma \ref{l-preserved} in the special case that $S, T\models\met(L)$.
\end{proof}
Throughout this appendix, $S$ and $T$ denote continuous theories.  The set of \emph{existential formulas} is defined as the least set of formulas that contains all quantifier-free formulas and is closed under the application of increasing connectives and the $\inf$ quantifier.

\begin{lemma} \label{l-special-embedding}  (Theorem 7.2.8 in [CK1966].) Suppose that $\cu{N}$ is special, $|M|\le|N|$, and every existential sentence that is true in $\cu{M}$ is true in $\cu{N}$.  Then $\cu{M}$ is embeddable in $\cu{N}$.
\end{lemma}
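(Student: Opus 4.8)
The plan is to run the continuous analogue of the classical proof that special structures are universal for embeddings (the argument of Theorem 7.2.8 in [CK1966], with the distinguished equality predicate used there eliminated). Since the class of existential formulas is closed under the increasing connectives $s\dotminus r$, the hypothesis is equivalent, by density of the dyadic rationals in $[0,1]$, to the assertion that $\sigma^{\cu N}\le\sigma^{\cu M}$ for every existential sentence $\sigma$. More generally, I will call a partial map $g$ with domain contained in $M$ and range contained in $N$ \emph{existential} if $\theta^{\cu N}(g(\vec a))\le\theta^{\cu M}(\vec a)$ for every existential formula $\theta(\vec x)$ and every tuple $\vec a$ from the domain of $g$; the empty map is existential exactly by the reformulated hypothesis.

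Using that $\cu N$ is special, write $\cu N=\bigcup_{\lambda<|N|}\cu N_\lambda$ as the union of an elementary chain with each $\cu N_\lambda$ being $\lambda^+$-saturated, and fix an enumeration $M=\langle a_\xi\colon\xi<\mu\rangle$ with $\mu=|M|\le|N|$. I would construct, by recursion on $\xi\le\mu$, an increasing chain of existential maps $h_\xi$ with domain $\{a_\eta\colon\eta<\xi\}$, taking unions at limits and starting from $h_0=\varnothing$. For the successor step from $h_\eta$, consider the set $p(x)$ of conditions ``$\theta(x,h_\eta(\vec b))\dotminus\theta^{\cu M}(a_\eta,\vec b)=0$'' as $\theta(x,\vec z)$ ranges over existential formulas and $\vec b$ over tuples from the domain of $h_\eta$ (here $\theta^{\cu M}(a_\eta,\vec b)\in[0,1]$ is allowed as the argument of the connective $s\mapsto s\dotminus\theta^{\cu M}(a_\eta,\vec b)$). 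This $p(x)$ is a type over the range of $h_\eta$, and it is finitely satisfiable in $\cu N$: for finitely many of its conditions, with data $\theta_1,\dots,\theta_k$ and values $r_l:=\theta_l^{\cu M}(a_\eta,\vec b_l)$, the formula
$$\Theta(\vec z):=\inf_x\max_{l\le k}(\theta_l(x,\vec z_l)\dotminus r_l)$$
is again existential --- this is the one point with no classical counterpart, and it uses precisely that $\max$ and the truncations $s\dotminus r_l$ are \emph{increasing} connectives, so that $\Theta$ does not leave the class --- while $\Theta^{\cu M}=0$ is witnessed by $a_\eta$, so existentiality of $h_\eta$ yields $\Theta^{\cu N}(h_\eta(\vec b))=0$, giving the desired approximate witnesses. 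Choosing, as in the classical special-model argument, a link $\cu N_\lambda$ with $|\eta|+\aleph_0\le\lambda<|N|$ large enough to contain the range of $h_\eta$ (arranged by keeping $h(a_\zeta)\in\cu N_{|\zeta|+\aleph_0}$ as one goes), the $\lambda^+$-saturation of $\cu N_\lambda\prec\cu N$ provides an element $c$ realizing $p$; put $h_{\eta+1}=h_\eta\cup\{(a_\eta,c)\}$. Because every existential condition on the value at $a_\eta$ over the domain of $h_\eta$ was included in $p$, the extended map $h_{\eta+1}$ is again existential.

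Let $h:=\bigcup_{\xi<\mu}h_\xi\colon M\to N$. Being existential, $h$ preserves atomic formulas exactly: applying the defining inequality to the quantifier-free formulas $|\alpha(\vec x)-r|$ for atomic $\alpha$ and dyadic $r$, and letting $r\to\alpha^{\cu M}(\vec a)$, gives $\alpha^{\cu N}(h(\vec a))=\alpha^{\cu M}(\vec a)$. There remains the compatibility of $h$ with the function and constant symbols, handled exactly as in [CK1966]: with a predicate for the discrete metric present, ``$F(\vec x)\doteq y$'' and ``$c\doteq x$'' are atomic, and preservation of these together with $\cu N$ being reduced forces $h(F^{\cu M}(\vec a))=F^{\cu N}(h(\vec a))$ and $h(c^{\cu M})=c^{\cu N}$; since we may pass to the reduction of $\cu M$ and, if we wish, temporarily adjoin such a predicate to both structures, this extra predicate is not actually needed. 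Hence $h$ is an embedding of $\cu M$ into $\cu N$.

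The main work, and the place where the continuous setting genuinely intervenes, is the successor step: one must phrase the inductive invariant in terms of \emph{all} existential formulas so that it is both implied by the hypothesis and preserved, and then check that the witness formula $\Theta$ remains existential, which hinges on $\max$ and truncation being order-preserving. The cardinality bookkeeping for which saturated link to use at each stage, and the final passage from ``preserves atomic formulas'' to ``is an embedding'', are routine adaptations of the classical proofs.
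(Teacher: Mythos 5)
Your construction is essentially the one the paper sketches for this lemma: enumerate $M$, build the images by transfinite recursion so that values of existential formulas do not increase, realize at each successor step an existential type in a sufficiently saturated link $\cu N_\lambda$ of the special chain, and observe that the witness formula $\Theta$ stays existential because $\max$, the truncations $s\dotminus r$, and $\inf$ are all permitted in that class. One small repair: your type $p(x)$ consists of conditions $\theta(x,h_\eta(\vec b))\dotminus r=0$ with no slack, and from $\Theta^{\cu N}(h_\eta(\vec b))=0$ you only obtain \emph{approximate} witnesses, while the paper's definition of $\kappa$-saturation asks for exact finite satisfiability; the standard fix, which the paper itself uses in the proof of Lemma \ref{l-conditional-most}, is to build the slack into the conditions, i.e.\ to work with $\theta(x,h_\eta(\vec b))\dotle r+\varepsilon$ for all positive dyadic $\varepsilon$, a type whose realization still realizes $p$.

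The genuine gap is the final step, compatibility of $h$ with function and constant symbols. Your proposed justification does not work as written. Adjoining a discrete-metric predicate to both structures is not licensed: the hypothesis ``every existential sentence true in $\cu M$ is true in $\cu N$'' is vocabulary-sensitive, and existential sentences in the expanded vocabulary (for instance ones asserting the existence of \emph{distinct} witnesses with prescribed properties) can hold in the expansion of $\cu M$ and fail in that of $\cu N$, so the [CK1966] argument with $\doteq$ cannot simply be imported. Moreover $\cu N$ is only assumed special, and special structures need not be reduced (only $\kappa$-special ones are); and even if one assumes $\cu N$ reduced, your invariant only gives that $h(F^{\cu M}(\vec a))$ and $F^{\cu N}(h(\vec a))$ agree on atomic formulas with parameters from the \emph{range of} $h$, which is strictly weaker than Leibniz equality in $\cu N$, where agreement at arbitrary parameters from $N$ is required. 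So ``preserves atomic formulas'' does not yet yield ``is an embedding''; the exactness of $h(F^{\cu M}(\vec a))=F^{\cu N}(h(\vec a))$ has to be secured inside the recursion itself (for example by assigning to every element lying in the substructure generated by already-processed elements the value of the corresponding term at the already-chosen images, and invoking saturation only for genuinely new elements, with the coherence of these assignments argued separately), rather than recovered afterwards from atomic preservation. This is exactly the portion that the paper's own proof compresses into ``then show that the mapping is an embedding,'' and it is the step your write-up leaves without a valid argument.
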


\begin{proof}  Arrange the elements of $M$ in a sequence $\langle a_\alpha\colon \alpha<|N|\rangle$.  By transfinite induction, build a sequence
$\langle b_\alpha\colon \alpha<|N|\rangle$ of elements of $N$ such that every existential formula that is true for a tuple of $a_\alpha$'s is true for the corresponding tuple of $b_\alpha$'s.  Then show that the mapping $a_\alpha\mapsto b_\alpha$ is an embedding of $\cu{M}$ into $\cu{N}$.
\end{proof}

\begin{thm} \label{t-existential}  (Theorem 7.2.11 in [CK1966].)   The following are equivalent.

\begin{itemize}
\item[(i)] $T$ is $S$-equivalent to a set of existential sentences.
\item[(ii)] For all general models $\cu{M}, \cu{N}$ of $S$, if $\cu{M}$ is embeddable in $\cu{N}$, and $\cu{M}$ is a general model of $T$, then $\cu{N}$ is a general model of $T$.
\end{itemize}
\end{thm}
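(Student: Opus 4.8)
The plan is to follow the classical template for the preservation theorem for existential ($\Sigma_1$) sentences, with Lemma~\ref{l-special-embedding} as the main technical tool for the nontrivial direction. The implication (i) $\Rightarrow$ (ii) should be the easy half. First I would check that existential formulas go up along embeddings: if $h\colon\cu M\to\cu N$ is an embedding and $\varphi$ is existential, then $\varphi^{\cu N}(h(\vec a))\le\varphi^{\cu M}(\vec a)$ for every $\vec a$ in $M$. This is an induction on the construction of existential formulas --- embeddings preserve the values of atomic, hence quantifier-free, formulas exactly; increasing connectives preserve the inequality $\le$; and passing from $\inf_{y\in N}$ to the smaller range $\inf_{y\in h(M)}$ can only decrease the value. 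Granting this, if $T$ is $S$-equivalent to a set $U$ of existential sentences, $\cu M,\cu N\models S$, $\cu M$ embeds in $\cu N$, and $\cu M\models T$, then $\cu M\models U$, so each $\sigma\in U$ satisfies $\sigma^{\cu N}\le\sigma^{\cu M}=0$, whence $\cu N\models U$, so $\cu N\models S\cup U$, and therefore $\cu N\models T$.

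For (ii) $\Rightarrow$ (i), let $U$ be the set of all existential sentences $\sigma$ with $S\cup T\models\{\sigma\}$; then $S\cup T\models S\cup U$ trivially, and it remains to show that every general model $\cu N$ of $S\cup U$ is a general model of $T$. Using Fact~\ref{f-special-exist} together with Downward Lowenheim--Skolem, I would first replace $\cu N$ by a special elementary extension that is large enough; this is legitimate because a special elementary extension of $\cu N$ still models $S\cup U$ and models $T$ exactly when $\cu N$ does. Next I would build a general model $\cu M$ of $S\cup T$ in which every true existential sentence is true in $\cu N$, by showing that
$$\Sigma:=S\cup T\cup\{\varepsilon\dotle\sigma\colon \sigma\mbox{ an existential sentence, }\varepsilon\mbox{ a positive rational, }\varepsilon<\sigma^{\cu N}\}$$
is satisfiable. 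By Fact~\ref{f-compactness} it is enough that $\Sigma$ be finitely satisfiable, so suppose some $S\cup T\cup\{\varepsilon_1\dotle\sigma_1,\dots,\varepsilon_m\dotle\sigma_m\}$ is not. A standard compactness argument then produces a positive rational $\eta<\min_k\varepsilon_k$ such that every general model of $S\cup T$ satisfies $\sigma_k\dotle(\varepsilon_k-\eta)$ for some $k\le m$; equivalently $S\cup T\models\{\psi\}$ for $\psi:=\min_k(\sigma_k\dotminus(\varepsilon_k-\eta))$. Since subtraction of a constant and $\min$ are increasing connectives, $\psi$ is an existential sentence, so $\psi\in U$ and hence $\psi^{\cu N}=0$; but then $\sigma_k^{\cu N}\le\varepsilon_k-\eta<\varepsilon_k$ for some $k$, contradicting $\varepsilon_k<\sigma_k^{\cu N}$. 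A model $\cu M$ of $\Sigma$ then satisfies $S\cup T$ and has $\sigma^{\cu M}\ge\sigma^{\cu N}$ for every existential sentence $\sigma$ (take the supremum over admissible $\varepsilon$), so in particular every existential sentence true in $\cu M$ is true in $\cu N$. After shrinking $\cu M$ by Downward Lowenheim--Skolem to cardinality at most $\aleph_0+|V|$, which we may assume is $\le|N|$, Lemma~\ref{l-special-embedding} embeds $\cu M$ into $\cu N$; since $\cu M\models S\cup T$ and $\cu N\models S$, condition (ii) finally gives $\cu N\models T$.

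The step I expect to be delicate is the compactness argument extracting $\psi$: one has to see that the unsatisfiability of a finite set of \emph{closed} conditions over the models of $S\cup T$ yields a \emph{uniform} positive tolerance $\eta$, so that $\psi$ is genuinely an existential sentence and is genuinely a consequence of $S\cup T$. By comparison, the cardinality adjustments needed to apply Lemma~\ref{l-special-embedding} --- and the degenerate case where the reductions involved are finite --- are routine and follow the pattern already used in the proof of Theorem~\ref{t-main}.
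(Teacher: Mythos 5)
Your proof is correct and follows essentially the same route as the paper: the easy direction by induction on existential formulas along embeddings, and the converse by taking the set $U$ of existential consequences of $S\cup T$, using compactness to produce a model of $S\cup T$ whose existential sentence values dominate those of $\cu N$, and then invoking special models together with Lemma \ref{l-special-embedding} to embed it into $\cu N$ and apply (ii). The only difference is organizational: the paper factors your inlined compactness step (the uniform-$\eta$ extraction) through the general preservation Lemma \ref{l-preserved}, applied with $\Gamma$ the set of existential sentences, whose proof is exactly the argument you give.
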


\begin{proof}  Assume (i). Suppose $h$ is an embedding from $\cu{M}$ into $\cu{N}$, and $\cu{M}$ is a model of $T$.  One can show by induction on complexity that for every existential formula $\varphi(\vec{x})$ and tuple $\vec{a}$ in $M$, $\varphi^\cu{M}(\vec{a})\ge \varphi^\cu{N}(h(\vec{a}))$.  It follows that $\cu{N}$ is a model of $T$, so (ii) holds.

Assume (ii).  We apply Lemma \ref{l-preserved} where $\Gamma$ is the set of existential sentences.  Note that $\Gamma$ is closed under the application of $\min$ and of increasing unary connectives.  Suppose $\cu{M}, \cu{N}$ satisfy the hypotheses of Lemma \ref{l-preserved} (ii), that is,
$\cu{M}, \cu{N}$ are general models of $S$, $\cu{M}$ is a general model of $T$, and every sentence $\gamma\in\Gamma$ that is true in $\cu{M}$ is true in $\cu{N}$.   By Fact \ref{f-special-exist}, we may assume that $\cu{N}$ is a special structure and $|M|\le|N|$.  Then by Lemma \ref{l-special-embedding}, $\cu{M}$ is embeddable in $\cu{N}$.  Therefore by Condition (ii) above, $\cu{N}$ is a general model of $T$.  Hence by Lemma \ref{l-preserved}, Condition (i) above holds.
\end{proof}

The set of \emph{universal formulas} is defined as the least set of formulas that contains all quantifier-free formulas and is closed under the application of increasing connectives and the $\sup$ quantifier.

\begin{exercise} \label{e-universal}    The following are equivalent.

\begin{itemize}
\item[(i)] $T$ is $S$-equivalent to a set of universal sentences.
\item[(ii)] For all general models $\cu{M}, \cu{N}$ of $S$, if $\cu{M}$ is embeddable in $\cu{N}$, and $\cu{N}$ is a general model of $T$, then $\cu{M}$ is a general model of $T$.
\end{itemize}
\end{exercise}

The set of \emph{universal-existential formulas} is defined as the least set of formulas that contains all existential formulas and is closed under the application of increasing connectives and the $\sup$ quantifier.

\begin{thm} \label{t-chains} (Exercise 7F in [CK1966].)  The following are equivalent:

\begin{itemize}
\item[(i)]  $T$ is $S$-equivalent to a set of universal-existential sentences.
\item[(ii)]  For every increasing chain $\cu{M}_0\subseteq \cu{M}_1\subseteq\cdots$ of general models of $S\cup T$, if $\bigcup_n \cu{M}_n$ is a general model of $S$ then it is a general model of $T$.
\item[(iii)] If  $\cu{M}, \cu{M}', \cu{N}$ are general models of $S$ such that $\cu{M}\subseteq \cu{N}\subseteq \cu{M}'$, $\cu{M}\prec \cu{M}'$, and $\cu{N}$ is a general model of $T$, then $\cu{M}$ is a general model of $T$.

\end{itemize}
\end{thm}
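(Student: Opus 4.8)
The plan is to prove the cycle $(i)\Rightarrow(ii)\Rightarrow(iii)\Rightarrow(i)$, in the spirit of the proof of Theorem~\ref{t-existential}. A convenient preliminary reduction is a normal form: since every increasing connective $C$ satisfies $C(\sup_{\vec x_1}\psi_1,\ldots,\sup_{\vec x_k}\psi_k)=\sup_{\vec x_1\cdots\vec x_k}C(\psi_1,\ldots,\psi_k)$, an easy induction shows that every universal-existential formula is equivalent to one of the form $\sup_{\vec x}\psi(\vec x)$ with $\psi$ existential. I will also use that, by the inductive computation already appearing in the proof of Theorem~\ref{t-existential}, if $\psi$ is existential and $\cu M\subseteq\cu N$ then $\psi^{\cu N}(\vec a)\le\psi^{\cu M}(\vec a)$ for $\vec a$ from $M$; that an elementary extension preserves the value of every formula; and that, by Fact~\ref{f-elementary-chain}, the union of an elementary chain is an elementary extension of each of its members, so in particular any such union of models of $S$ is again a model of $S$.

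The implications $(i)\Rightarrow(ii)$ and $(i)\Rightarrow(iii)$ are then immediate. Say $T$ is $S$-equivalent to a set $U$ of universal-existential sentences. For $(ii)$, let $\langle\cu M_n\rangle$ be an increasing chain of models of $S\cup T$ with $\cu M_\omega:=\bigcup_n\cu M_n\models S$; each $\cu M_n$ models $U$, and if $\sigma=\sup_{\vec x}\psi\in U$ and $\vec a$ is from $\cu M_\omega$, pick $n$ with $\vec a$ from $M_n$ and note $\psi^{\cu M_\omega}(\vec a)\le\psi^{\cu M_n}(\vec a)=0$; hence $\cu M_\omega\models S\cup U$, which is $S$-equivalent to $T$, so $\cu M_\omega\models T$. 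For $(iii)$, with $\cu M\prec\cu M'$, $\cu N\subseteq\cu M'$ and $\cu N\models S\cup T$, we have $\cu N\models U$, and for $\sigma=\sup_{\vec x}\psi\in U$ and $\vec a$ from $M$ we get $\psi^{\cu M}(\vec a)=\psi^{\cu M'}(\vec a)\le\psi^{\cu N}(\vec a)=0$, so $\cu M\models U$ and hence $\cu M\models T$.

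For $(ii)\Rightarrow(iii)$ I will use a back-and-forth chain construction alternating between elementary extensions of $\cu M$ and models of $S\cup T$. Given $\cu M\subseteq\cu N\subseteq\cu M'$ with $\cu M\prec\cu M'$, $\cu N\models S\cup T$ and $\cu M,\cu M'\models S$, the computation in $(i)\Rightarrow(iii)$ (applied to the universal-existential consequences of $S\cup T$, which hold in $\cu N$) shows that $\cu M$, and hence every elementary extension of $\cu M$, satisfies every universal-existential consequence of $S\cup T$. I then build an elementary chain $\cu M=\cu M_0\prec\cu M_1\prec\cdots$ (taking $\cu M_1=\cu M'$) and a chain $\cu N_0\subseteq\cu N_1\subseteq\cdots$ of models of $S\cup T$ (taking $\cu N_0=\cu N$) with $\cu M_i\subseteq\cu N_i\subseteq\cu M_{i+1}$ for all $i$. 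Given $\cu M_i$, a compactness argument — using that $\cu M_i$ models the universal-existential consequences of $S\cup T$, for otherwise one extracts such a consequence failing in $\cu M_i$ — produces a model $\cu N_i\models S\cup T$ with $\cu M_i\subseteq\cu N_i$ in which the existential type of $\cu M_i$ (with parameters from $M_i$) is not enlarged; and then a second compactness argument, using exactly this non-enlargement, produces $\cu M_{i+1}\succ\cu M_i$ with $\cu N_i\subseteq\cu M_{i+1}$. The common union $\cu C$ of the two chains satisfies $\cu M=\cu M_0\prec\cu C$ by Fact~\ref{f-elementary-chain}, so $\cu C\models S$ and $\cu C\equiv\cu M$; and $\cu C$ is the union of the chain $\langle\cu N_i\rangle$ of models of $S\cup T$, so by $(ii)$ we get $\cu C\models T$, and therefore $\cu M\models T$.

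Finally, $(iii)\Rightarrow(i)$ via Lemma~\ref{l-preserved} applied with $\Gamma$ the set of universal-existential sentences, which by the normal form above is closed under $\min$ and increasing unary connectives. Thus it is enough to verify: whenever $\cu M,\cu N\models S$, $\cu M\models T$, and every universal-existential sentence true in $\cu M$ is true in $\cu N$, then $\cu N\models T$. For this I sandwich $\cu N$: a compactness argument (if it failed, a universal-existential consequence of $S\cup T$ would be true in $\cu M$ but false in $\cu N$) gives $\cu Q\models S\cup T$ with $\cu N\subseteq\cu Q$ not enlarging the existential type over $N$, and a second compactness argument embeds $\cu Q$ over $\cu N$ into an elementary extension $\cu N^*$ of $\cu N$; applying $(iii)$ to $\cu N\subseteq\cu Q\subseteq\cu N^*$ (all models of $S$, with $\cu N\prec\cu N^*$ and $\cu Q\models T$) yields $\cu N\models T$, and Lemma~\ref{l-preserved} gives $(i)$. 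I expect the main obstacle to be the ``glueing'' compactness steps appearing in $(ii)\Rightarrow(iii)$ and in $(iii)\Rightarrow(i)$: one must set up the $[0,1]$-valued elementary and quantifier-free diagrams so that ``$\cu N_i$ does not enlarge the existential type of $\cu M_i$'' (equivalently, universal formulas do not increase in value) is precisely what makes the next extension exist, keeping careful track of which connectives are increasing and of the interplay of $\sup$, $\inf$ and $\dotminus$ when translating between universal-existential sentences and conditions of the form $\varphi(\vec a)\dotle r$.
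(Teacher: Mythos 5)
Your argument is correct, but for the two nontrivial implications it takes a genuinely different route from the paper's. The paper also handles (i)$\Rightarrow$(ii) by monotonicity of universal-existential formulas along chains and (iii)$\Rightarrow$(i) through Lemma \ref{l-preserved} with $\Gamma$ the universal-existential sentences, but it realizes the needed chain and sandwich configurations with the special-model machinery: for (ii)$\Rightarrow$(iii) it replaces $\cu{M},\cu{N},\cu{M}'$ by special models of a common strong limit cardinality and uses the Uniqueness Theorem (Fact \ref{f-special-unique}) to continue $\cu{M}_0\subseteq\cu{N}_0\subseteq\cu{M}_1$ into an alternating chain $\cu{M}_0\subseteq\cu{N}_0\subseteq\cu{M}_1\subseteq\cu{N}_1\subseteq\cdots$, and for (iii)$\Rightarrow$(i) it builds the sandwich $\cu{M}_0\subseteq\cu{N}\subseteq\cu{M}'_0$ by embeddings into special structures as in Lemma \ref{l-special-embedding}. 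You instead build these configurations by compactness and diagram arguments keyed to transfer of universal-existential consequences and non-enlargement of existential types, i.e.\ the classical Chang--\L o\'s--Suszko-style proof. Your route avoids special models (no back-and-forth enumerations or cardinal bookkeeping), at the price of the two glueing steps you flag at the end; these do go through. For the first: since quantifier-free formulas are closed under arbitrary connectives, if $S\cup T$ together with finitely many conditions $|\theta_j(\vec a)-c_j|\dotle\varepsilon_j$ (with $c_j=\theta_j^{\cu{M}_i}(\vec a)$, $\theta_j$ quantifier-free) and $r_k\dotle\psi_k(\vec a)$ (with $r_k<\psi_k^{\cu{M}_i}(\vec a)$, $\psi_k$ existential) had no model, then $\sup_{\vec y}\min\bigl(\min_k(\psi_k(\vec y)\dotminus r_k),\ \min_j(\varepsilon_j\dotminus|\theta_j(\vec y)-c_j|)\bigr)$ would be a universal-existential sentence with value $0$ in every model of $S\cup T$ but positive value in $\cu{M}_i$, contradicting your consequence-transfer observation; for the second, the forced inequalities $\psi^{\cu{M}_i}(\vec a)\le\psi^{\cu{N}_i}(\vec a)$ for existential $\psi$ make the elementary diagram of $\cu{M}_i$ finitely consistent with the quantifier-free diagram of $\cu{N}_i$, yielding $\cu{M}_{i+1}$, and the same two steps produce the sandwich in your (iii)$\Rightarrow$(i). (As in the paper's own proof, one should pass to isomorphic copies so the inclusions are literal; this is harmless since isomorphism preserves elementary equivalence and modelhood.) In short, the paper's approach lets saturation of special models absorb all the diagram computations, while yours gives a more elementary argument that parallels the first-order proof directly but must track the $[0,1]$-valued approximations explicitly.
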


\begin{proof}   Assume (i) and let $\cu{M}_0\subseteq \cu{M}_1\subseteq\cdots$ satisfy the hypotheses of (ii).  By induction on the complexity of formulas, for every assignment of the free variables, every universal-existential formula that is true in all but finitely many $\cu{M}_n$ is true in $\bigcup_n \cu{M}_n$.  Hence (ii) holds.

Assume (ii), and let $\cu{M}, \cu{M}', \cu{N}$ satisfy the hypotheses of (iii).  Let $\kappa$ be a strong limit cardinal greater than $|V|$.  By Fact \ref{f-special-exist}, there are
special models $\cu{M}_0\equiv \cu{M}, \cu{M}_1\equiv \cu{M}, \cu{N}_0\equiv \cu{N}$ of cardinality $\kappa$ that satisfy the hypotheses of (iii).  By Fact \ref{f-special-unique} there is an increasing chain $\cu{M}_0\subseteq \cu{N}_0\subseteq \cu{M}_1\subseteq \cu{N}_1\subseteq\cdots$ of special structures of cardinality  $\kappa$ such that for each $n$, $\cu{M}_n\equiv \cu{M}, \cu{M}_n\prec \cu{M}_{n+1}$, and $\cu{N}_n\equiv \cu{N}$.
Then $\cu{M}_0\prec\bigcup_n \cu{M}_n=\bigcup_n \cu{N}_n$.  Then for each $n$, $\cu{N}_n$ is a model of $S\cup T$ and $\bigcup_n \cu{N}_n$ is a model of $T$, so by (ii), $\bigcup_n \cu{N}_n$ is a model of $T$.  Finally, since $\cu{M}\equiv \bigcup_n \cu{N}_n$, $\cu{M}$ is a general model of $T$ and (iii) holds.

Assume (iii).  To prove (i), we apply Lemma \ref{l-preserved} where $\Gamma$ is the set of universal-existential sentences.  As before, $\Gamma$ is closed under the application of $\min$ and of increasing unary connectives.  This time, we suppose $\cu{N}, \cu{M}$ satisfy the hypotheses of Lemma \ref{l-preserved}, and show that $\cu{M}$ is a general model of $T$.  We may assume that $\cu{N}$ is a special structure, $\cu{M}$ is reduced, and $|M|<|N|$.  As in the proof of Lemma \ref{l-special-embedding}, one can show that there is an embedding $h\colon \cu{M}\to \cu{N}$ such that for any universal-existential formula $\varphi(\vec{x})$ and tuple $\vec{a}$ in $M$, $\varphi^\cu{N}(h(\vec{a}))\ge \varphi^\cu{M}(\vec{a})$.  Then every existential sentence that is true in the expanded structure $(\cu{N},h(a)\colon a\in M)$ is true in $(\cu{M},a\colon a\in M)$.  By Fact \ref{f-special-exist}, $\cu{M}$ has a special elementary extension $\cu{M}'$ of cardinality $|M'|\ge |N|$, and by Lemma \ref{l-special-embedding}, there is an embedding
$$k\colon (\cu{N},h(a)\colon a\in M)\to(\cu{M}',a\colon a\in M).$$
Then there are isomorphic copies $\cu{M}_0,\cu{M}'_0$ of $\cu{M},\cu{M}'$ such that $\cu{M}_0\subseteq \cu{N}\subseteq \cu{M}'_0$ and $\cu{M}_0\prec \cu{M}'_0$.  By (iii), $\cu{M}_0$ is a general model of $T$, so $\cu{M}$ is a general model of $T$, as required.
\end{proof}

By Fact \ref{f-metric-theory}, $\met(L)$ is a set of universal sentences.  It follows that if $\cu{M}$ is embeddable in a pre-metric structure then $\cu{M}$ is a pre-metric structure, and also that the union of any chain of pre-metric structures is a pre-metric structure.

The set of \emph{positive formulas} is the least set of formulas that contains the set of atomic formulas and is closed under the application of increasing connectives and the quantifiers $\sup$ and $\inf$.  Given two general structures $\cu{M},\cu{N}$, a \emph{homomorphism} from $\cu{M}$ into $\cu{N}$ is a function $h$ from $M$ onto $N$ such that $h(c^\cu{M})=c^\cu{N}$ for each constant symbol $c\in V$, and for every $n$ and $\vec{a}\in M^n$, $h(F^\cu{M}(\vec{a}))=F^\cu{N}(h(\vec{a}))$ for every function symbol $F\in V$ of arity $n$, and $P^\cu{M}(\vec{a})\ge P^\cu{N}(h(\vec{a}))$ for every predicate symbol $P\in V$ of arity $n$.  We say that $\cu{N}$ is a \emph{homomorphic image} of $\cu{M}$ if there is a homomorphism from $\cu{M}$ onto $\cu{N}$.  Note that every homomorphic image of a pre-metric structure is a pre-metric structure.

\begin{lemma}  \label{l-special-homomorphic}  (Theorem 7.3.7 in [CK1966].)  Suppose that $\cu{M}, \cu{N}$ are special, that either $N$ is finite or $|M|=|N|$, and that every positive sentence that is true in $\cu{M}$ is true in $\cu{N}$. Then $\cu{N}$ is a homomorphic image of $\cu{M}$.
\end{lemma}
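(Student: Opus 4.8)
The plan is to carry the proof of Theorem 7.3.7 of [CK1966] over to continuous logic, building the homomorphism by a transfinite back-and-forth. Since $\cu M$ and $\cu N$ are special they are reduced, and $|M|\ge|N|$. Fix enumerations $\langle a_\xi\colon\xi<|M|\rangle$ of $M$ and $\langle b_\xi\colon\xi<|N|\rangle$ of $N$ (the latter of order type $\omega$, or finite, when $N$ is finite), and recursively construct an increasing chain $\langle h_\xi\rangle$ of partial maps whose union will be $h$. The invariant maintained at each stage is that $\operatorname{dom}(h_\xi)$ is a substructure of $\cu M$ closed under the function and constant symbols, $\operatorname{range}(h_\xi)$ is a substructure of $\cu N$, $h_\xi$ commutes with the function and constant symbols and satisfies $P^{\cu M}(\vec e)\ge P^{\cu N}(h_\xi(\vec e))$ for each predicate symbol $P$ — so $h_\xi$ is a homomorphism between these substructures — and, moreover, that positive formulas decrease along $h_\xi$: $\varphi^{\cu M}(\vec e)\ge\varphi^{\cu N}(h_\xi(\vec e))$ for every positive formula $\varphi(\vec x)$ and every tuple $\vec e$ from $\operatorname{dom}(h_\xi)$, the values being computed in all of $\cu M$ and all of $\cu N$. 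For the empty map this last clause says exactly that every positive sentence true in $\cu M$ is true in $\cu N$ — equivalently, since $\sigma\dotminus r$ is positive whenever $\sigma$ is, that $\sigma^{\cu M}\ge\sigma^{\cu N}$ for every positive sentence $\sigma$ — which is the hypothesis; and at a successor stage the invariant will be preserved precisely because we shall realize a full positive type.

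At a forth stage we must bring a new element $a=a_\xi$ into the domain. Writing $\vec c$ for an enumeration of $\operatorname{dom}(h_\xi)$ and $\vec d=h_\xi(\vec c)$, we look for $b\in N$ with $\varphi^{\cu N}(\vec d,b)\le\varphi^{\cu M}(\vec c,a)$ for every positive formula $\varphi(\vec x,y)$; once found we set $h(a)=b$ and close the domain off under the function symbols, letting $h$ send $t^{\cu M}(\vec c,a)$ to $t^{\cu N}(\vec d,b)$ for each term $t$ — and, since substituting terms into a positive formula yields a positive formula, the invariant persists. Finding $b$ amounts to realizing in $\cu N$ the type $\{(\varphi(\vec d,y)\dotminus r)=0\colon\varphi(\vec x,y)\text{ positive},\ r\in\mathbb Q\cap[0,1],\ r>\varphi^{\cu M}(\vec c,a)\}$, and, for positive $\varphi_1,\dots,\varphi_k$ and rationals $r_i>\varphi_i^{\cu M}(\vec c,a)$, finite satisfiability follows from
$$\Bigl(\inf_y\max_i(\varphi_i(\vec x,y)\dotminus r_i)\Bigr)^{\cu N}(\vec d)\ \le\ \Bigl(\inf_y\max_i(\varphi_i(\vec x,y)\dotminus r_i)\Bigr)^{\cu M}(\vec c)\ =\ 0,$$
the inequality being an instance of the invariant because $\inf_y\max_i(\varphi_i(\vec x,y)\dotminus r_i)$ is a positive formula. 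The realization itself uses that $\cu N$ is special: at a stage $\xi<|M|$ fewer than $|M|$ parameters are present, so, as $\cu N$ is special of cardinality $|N|=|M|$ (or $N$ is finite), one realizes the type inside a suitably saturated member of an elementary chain witnessing specialness of $\cu N$ (trivially when $N$ is finite).

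A back stage is dual but must force a designated element $b'=b_\xi$ into the range: we look for $a\in M$ with $\varphi^{\cu M}(\vec c,a)\ge\varphi^{\cu N}(\vec d,b')$ for every positive $\varphi(\vec x,y)$, i.e.\ we realize in $\cu M$ the type $\{(r\dotminus\varphi(\vec c,x))=0\colon\varphi(\vec x,y)\text{ positive},\ r\in\mathbb Q\cap[0,1],\ r<\varphi^{\cu N}(\vec d,b')\}$. Here finite satisfiability is obtained from the positive formula $\sup_y\min_i\bigl(1\dotminus(r_i\dotminus\varphi_i(\vec x,y))\bigr)$: since $b'$ witnesses it, its value at $\vec d$ in $\cu N$ is $1$, so by the invariant its value at $\vec c$ in $\cu M$ is also $1$, and unwinding this yields, for each $\delta>0$, an $a\in M$ with $\varphi_i^{\cu M}(\vec c,a)>r_i-\delta$ for all $i$; the realization again uses that $\cu M$ is special. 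Taking the union of the $h_\xi$ over the $|M|$-many stages, the forth steps give $\operatorname{dom}(h)=M$, the back steps give $\operatorname{range}(h)=N$, and the invariant makes $h$ a homomorphism, so $\cu N$ is a homomorphic image of $\cu M$.

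I anticipate the main obstacle to be the bookkeeping that keeps $h$ a genuine, single-valued homomorphism throughout — in particular, that the $b$ chosen at a forth stage (and dually the $a$ at a back stage) allows $h$ to be defined consistently on the substructure generated by $\operatorname{dom}(h_\xi)$ together with the new element, even when that substructure introduces new coincidences among term values. This is where one exploits that $\cu N$ (respectively $\cu M$) is reduced: one augments the type being realized by the appropriate Leibniz-equality conditions and checks that they remain compatible with the positive conditions, exactly as in the proof of Theorem 7.3.7 of [CK1966]. A lesser point is simply keeping the polarities straight so that the auxiliary formulas displayed above are genuinely positive; the displays record the correct combinations of the increasing connectives $\max$, $\min$, $\dotminus$ and $1\dotminus(r\dotminus\cdot)$.
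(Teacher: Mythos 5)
Your main construction is exactly the back-and-forth the paper intends (its own proof is only the remark ``similar to Lemma \ref{l-special-embedding} but using a back-and-forth construction''): the invariant that positive formulas decrease along the partial map, the types realized at forth and back stages, the check that the auxiliary formulas $\inf_y\max_i(\varphi_i\dotminus r_i)$ and $\sup_y\min_i\bigl(1\dotminus(r_i\dotminus\varphi_i)\bigr)$ are positive, and the appeal to the specializing chains are all correct; the passage from ``the infimum is $0$'' to exact satisfaction of a finite subset of the type is routine, using the strictness of the rationals $r_i$.

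The genuine gap is precisely at the point you flag, and your proposed repair does not work in this paper's setting. First, ``special'' does not imply ``reduced'' here (only $\kappa$-special structures are reduced; saturation passes to and from reductions), and reducedness of $\cu N$ is genuinely needed: if $\cu N$ is obtained from $\cu M$ by doubling every point and letting a function symbol $F$ swap the two copies of each point, then every sentence true in $\cu M$ is true in $\cu N$, yet no map commuting with $F$ can be onto $N$. Second, and more seriously, ``augment the type with Leibniz-equality conditions, exactly as in Theorem 7.3.7 of [CK1966]'' leans on the feature of [CK1966] that this appendix is explicitly dispensing with: there the vocabulary contains the predicate $\circeq$, so the identifications needed for single-valuedness (two terms collapsing in $\cu M$ must collapse in $\cu N$; a back-step witness that coincides with an element already in the domain must be sent to the prescribed target) are \emph{positive atomic} conditions and are transferred automatically by the invariant. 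Without $\circeq$, equality of the two candidate images $t_1^{\cu N}(\vec d,b)$ and $t_2^{\cu N}(\vec d,b)$ in a reduced $\cu N$ means agreement of all atomic formulas with \emph{arbitrary} parameters from $N$; most of those parameters enter the range only at later back steps, where the element of $N$ to be absorbed is prescribed and you have no freedom, so these conditions can neither be folded into the type over the current parameters nor realized by saturation (a type over all $|N|$ parameters is beyond what the specializing chain provides). The invariant itself only yields the one-sided inequalities $\alpha^{\cu N}(\cdot)\le\alpha^{\cu M}(\cdot)$ for both candidates, which does not force them to coincide. So the construction as described can fail exactly when terms collapse in $\cu M$ (or when a back-step preimage lands in the current domain), and closing this requires a genuinely different device from the one in [CK1966]; that is the real content hidden behind the paper's one-line proof.
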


\begin{proof}  Similar to the proof of Lemma \ref{l-special-embedding} but using a back-and-forth construction.
\end{proof}

\begin{thm}  \label{t-homomorphic}
(Theorem 7.3.9 in [CK1966].)    The following are equivalent.

\begin{itemize}
\item[(i)] $T$ is $S$-equivalent to a set of positive sentences.
\item[(ii)] For all general models $\cu{M}, \cu{N}$ of $S$, if $\cu{N}$ is a homomorphic image of $\cu{M}$, and $\cu{M}$ is a general model of $T$, then $\cu{N}$ is a general model of $T$.
\end{itemize}
\end{thm}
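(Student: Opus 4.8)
The plan is to follow the template already used for Theorems~\ref{t-existential} and~\ref{t-chains}: a direct syntactic computation for (i)$\Rightarrow$(ii), and a combination of Lemma~\ref{l-preserved} with the special-model Lemma~\ref{l-special-homomorphic} for (ii)$\Rightarrow$(i).

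For (i)$\Rightarrow$(ii), assume $T$ is $S$-equivalent to a set $U$ of positive sentences, let $h$ be a homomorphism from $\cu{M}$ onto $\cu{N}$, and suppose $\cu{M}$ is a general model of $T$. First I would check, by induction on terms, that $h(t^{\cu{M}}(\vec a))=t^{\cu{N}}(h(\vec a))$. Then I would prove, by induction on the complexity of a positive formula $\varphi(\vec x)$, that $\varphi^{\cu{M}}(\vec a)\ge\varphi^{\cu{N}}(h(\vec a))$ for every tuple $\vec a$ in $M$: the atomic case is the defining inequality of a homomorphism combined with the term identity; an increasing connective preserves the inequality by monotonicity; and in the quantifier cases one replaces a $\sup$ (resp.\ $\inf$) over $N$ by the $\sup$ (resp.\ $\inf$) over the image $h[M]$ before applying the inductive hypothesis, using the surjectivity of $h$ (which is what is needed in the $\sup$ case). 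Applying this to the sentences of $U$ shows that $\cu{N}$ is a general model of $U$, hence of $T$.

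For (ii)$\Rightarrow$(i), I would invoke Lemma~\ref{l-preserved} with $\Gamma$ the set of positive sentences, which is closed under $\min$ and under increasing unary connectives. So suppose $\cu{M},\cu{N}$ are general models of $S$, $\cu{M}$ is a general model of $T$, and every positive sentence true in $\cu{M}$ is true in $\cu{N}$; the task is to show $\cu{N}$ is a general model of $T$. By Fact~\ref{f-Skolem} and passing to reductions (using Remark~\ref{r-reduction}), I may assume $\cu{M}$ and $\cu{N}$ are reduced of cardinality at most $\aleph_0+|V|$; then, fixing a strong limit cardinal $\kappa>\aleph_0+|V|$, Fact~\ref{f-special-exist} provides special elementary extensions $\cu{M}^\ast\succ\cu{M}$ and $\cu{N}^\ast\succ\cu{N}$, both of cardinality $\kappa$. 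Elementary equivalence preserves being a model of $S$ or $T$ and preserves the transfer of positive sentences, so Lemma~\ref{l-special-homomorphic} applies and yields a homomorphism from $\cu{M}^\ast$ onto $\cu{N}^\ast$; hypothesis (ii) then gives $\cu{N}^\ast\models T$, and hence $\cu{N}\models T$ since $\cu{N}\equiv\cu{N}^\ast$. Lemma~\ref{l-preserved} then delivers (i). As with the earlier preservation theorems, the analogue for metric structures follows via Fact~\ref{f-metric-theory}, using that homomorphic images of pre-metric structures are pre-metric.

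Once Lemmas~\ref{l-preserved} and~\ref{l-special-homomorphic} are available the argument is short, so the main obstacle is really bookkeeping: arranging that the cardinality hypothesis of Lemma~\ref{l-special-homomorphic} ($N$ finite or $|M|=|N|$) genuinely holds after passing to special models --- handled by taking both elementary extensions of the same cardinality $\kappa$ --- and checking that the reduction and Fact~\ref{f-Skolem} steps preserve exactly what Lemma~\ref{l-preserved} needs, namely being a general model of $S$ or of $T$ and the transfer of positive sentences.
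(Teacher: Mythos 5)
Your proposal is correct and is essentially the paper's own argument: the paper proves Theorem \ref{t-homomorphic} by saying it is ``similar to the proof of Theorem \ref{t-existential}'', i.e.\ the monotone induction on positive formulas for (i)$\Rightarrow$(ii) and, for (ii)$\Rightarrow$(i), Lemma \ref{l-preserved} with $\Gamma$ the positive sentences together with passage to special models and Lemma \ref{l-special-homomorphic}, exactly as you do. Your bookkeeping (taking both special elementary extensions with the same special cardinal $\kappa$ so that the cardinality hypothesis of Lemma \ref{l-special-homomorphic} is met) is the same device the paper uses in the proof of Theorem \ref{t-chains}.
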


\begin{proof}  Similar to the proof of Theorem \ref{t-existential}.
\end{proof}

The set of \emph{positive existential formulas} is the least set of formulas that contains the set of atomic formulas and is closed under the application of increasing connectives and the $\inf$ quantifier.

\begin{exercise} \label{e-positive-existential}  (Theorem 7.2.11 in [CK1966].)   The following are equivalent.

\begin{itemize}
\item[(i)] $T$ is $S$-equivalent to a set of positive existential sentences.
\item[(ii)] For all general models $\cu{M}, \cu{N}$ of $S$, if there is a homomorphic embedding of $\cu{M}$ in $\cu{N}$, and $\cu{M}$ is a general model of $T$, then $\cu{N}$ is a general model of $T$.
\end{itemize}
\end{exercise}

\begin{cor}  Let $L$ be a metric signature over $V$. If $S$ contains $\met(L)$, then Theorems \ref{t-existential}, \ref{t-chains}, and \ref{t-homomorphic}, and Exercises \ref{e-universal} and \ref{e-positive-existential},  still hold when all structures mentioned are taken to be metric structures with signature $L$.
\end{cor}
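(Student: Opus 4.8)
The plan is to deduce each metric statement from the general-structure result it mirrors, all of which are established above, by showing that the two versions of its structural condition are equivalent once $\met(L)\subseteq S$.  One direction is immediate: since $S\models\met(L)$, Fact~\ref{f-metric-theory} makes every metric model of $S$ with signature~$L$ a general model of $S$, while embeddings, homomorphisms, substructures and elementary substructures between metric structures are precisely the corresponding notions between their underlying general structures; thus the general-structure condition specializes verbatim, so in each of these results condition~(i) already implies its metric form.

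For the converse I would assume the metric form of condition~(ii) (respectively~(iii), for Theorem~\ref{t-chains}) and verify the general-structure condition~(ii) (respectively~(iii)), whence the general theorem yields~(i).  So, given general models of $S$ witnessing the hypothesis of that condition, the idea is to replace them by elementarily equivalent \emph{metric} structures related in the same way, apply the metric hypothesis, and transfer the conclusion back, using that the truth values of sentences are invariant under elementary equivalence (and under isomorphism and reduction).  For Theorem~\ref{t-existential} and Exercise~\ref{e-universal} the relation is an embedding $h\colon\cu{M}\to\cu{N}$: since $S\models\met(L)$, the pre-metric structures $(\cu{M},L)$ and $(\cu{N},L)$ have completions $\cu{M}^{*},\cu{N}^{*}$, which are metric structures and elementary extensions of $\cu{M},\cu{N}$; as $h$ preserves $d$ it is isometric, hence uniformly continuous, so it extends to an embedding $\cu{M}^{*}\to\cu{N}^{*}$ (the defining identities for function and predicate symbols pass from the dense subset $\cu{M}$ to $\cu{M}^{*}$ because every symbol is uniformly continuous in a pre-metric structure for $L$).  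For Theorem~\ref{t-chains} I would argue through condition~(iii): given $\cu{M}\subseteq\cu{N}\subseteq\cu{M}_{1}$ with $\cu{M}\prec\cu{M}_{1}$ and $\cu{N}\models T$, realize the completion of $\cu{M}$ as the closure of $\cu{M}$ inside the completion $\cu{M}_{1}^{*}$ of $\cu{M}_{1}$, so that $\cu{M}^{*}\subseteq\cu{N}^{*}\subseteq\cu{M}_{1}^{*}$, and note that $\cu{M}^{*}\prec\cu{M}_{1}^{*}$ by a Cauchy-limit argument: approximating a tuple $\vec{a}$ from $\cu{M}^{*}$ by tuples $\vec{a}_{n}$ from $\cu{M}$, the uniform continuity of every formula in a pre-metric structure for $L$ together with $\cu{M}\prec\cu{M}_{1}\prec\cu{M}_{1}^{*}$ gives $\varphi^{\cu{M}^{*}}(\vec{a})=\lim_{n}\varphi^{\cu{M}}(\vec{a}_{n})=\varphi^{\cu{M}_{1}^{*}}(\vec{a})$.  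In each of these cases the metric hypothesis applied to the completions returns a general model of $T$ where it is needed; the remaining equivalences among the metric forms of~(i),~(ii),~(iii) for Theorem~\ref{t-chains} follow by the same completion bookkeeping.

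The main obstacle is Theorem~\ref{t-homomorphic} (and, similarly, Exercise~\ref{e-positive-existential}), where completions do not suffice: a homomorphism is only non-expansive with respect to $d$, so although it extends to the completions the extension need not remain surjective, and a homomorphic embedding need not remain injective.  Here I would instead pass to $\kappa$-special elementary extensions, taking $\kappa$ to be a special cardinal of \emph{uncountable} cofinality with $\kappa>|M|+|N|+|V|+\aleph_{0}$ (for instance a large enough $\beth_{\mu}$ with $\mu$ regular uncountable).  The point of uncountable cofinality is that a countable set of ordinals below $\kappa$ is bounded below $\kappa$, so any countable subset of a $\kappa$-special structure lies in a single $\lambda^{+}$-saturated link of its defining elementary chain; hence $\kappa$-special structures are $\aleph_{1}$-saturated, and, being also reduced, a $\kappa$-special model of $S$ becomes, upon upgrading by $L$, a reduced $\aleph_{1}$-saturated pre-metric structure and hence a metric structure by Fact~\ref{f-reduced-premetric} and Remark~\ref{r-saturated-metric} (one can also see directly that $\aleph_{1}$-saturation realizes the limit of any Cauchy sequence drawn from a single chain link, so the metric is complete).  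Now if $\cu{N}$ is a homomorphic image of $\cu{M}$ then every positive sentence true in $\cu{M}$ is true in $\cu{N}$; choosing $\kappa$-special elementary extensions $\cu{M}^{\sharp}\equiv\cu{M}$ and $\cu{N}^{\sharp}\equiv\cu{N}$ of the same cardinality (or with $\cu{N}^{\sharp}$ finite) by Fact~\ref{f-special-exist}, every positive sentence true in $\cu{M}^{\sharp}$ is true in $\cu{N}^{\sharp}$, so Lemma~\ref{l-special-homomorphic} produces a homomorphism of $\cu{M}^{\sharp}$ onto $\cu{N}^{\sharp}$; both of these are metric models of $S$ with $\cu{M}^{\sharp}\models T$, so the metric form of~(ii) gives $\cu{N}^{\sharp}\models T$, hence $\cu{N}\models T$.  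Exercise~\ref{e-positive-existential} is treated the same way, with the back-and-forth underlying Lemmas~\ref{l-special-embedding} and~\ref{l-special-homomorphic} adapted to homomorphic embeddings.  Finally, once the general-structure condition has been verified and the relevant general theorem invoked, the resulting set $U$ of sentences of the prescribed syntactic form also witnesses the metric form of~(i), since for theories containing $\met(L)$ having the same metric models is equivalent to having the same general models, again because completions are elementary extensions.
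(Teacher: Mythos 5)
Your proposal is essentially correct, but it is organized quite differently from the paper's proof, which is a single line: since the auxiliary structures produced in the proofs of Theorems \ref{t-existential}, \ref{t-chains}, and \ref{t-homomorphic} (and the exercises) are special, hence reduced and suitably saturated, Remark \ref{r-saturated-metric} shows they are already metric structures once $S\supseteq\met(L)$, so the general-structure proofs are simply repeated inside the class of metric structures. You instead keep the general theorems as black boxes and transfer the metric hypotheses to the general ones: completions (elementary extensions, which preserve embeddings, chains of substructures, and elementarity by uniform continuity) handle Theorem \ref{t-existential}, Theorem \ref{t-chains} and Exercise \ref{e-universal}, while $\kappa$-special elementary extensions with $\kappa$ of uncountable cofinality -- which are $\aleph_1$-saturated, hence metric by Remark \ref{r-saturated-metric} -- handle Theorem \ref{t-homomorphic} and Exercise \ref{e-positive-existential} via Lemma \ref{l-special-homomorphic}. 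Your route is longer but makes explicit two points the paper glosses over: that $S$-equivalence over metric models coincides with $S$-equivalence over general models when $S\models\met(L)$, and that one should take $\kappa$ of uncountable cofinality to secure $\aleph_1$-saturation of the special models (the paper's one-liner tacitly needs this as well). One caveat in the homomorphism case: Fact \ref{f-special-exist} does not let you ``choose'' $\cu{M}^{\sharp}$ and $\cu{N}^{\sharp}$ of the same cardinality, since a $\kappa$-special structure is only the reduction of a special structure of size $\kappa$, and the reduction can be infinite of size $<\kappa$ (for instance when the completions of models of $\Th(\cu N)$ are compact); so the cardinality hypothesis of Lemma \ref{l-special-homomorphic} requires an extra word (e.g., note that a non-compact $\kappa$-special pre-metric structure contains, for some $\varepsilon>0$, $\kappa$ many pairwise $\varepsilon$-separated points and so has size $\kappa$, and treat the compact case separately). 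This is a repairable gap, and it sits at roughly the level of detail the paper itself omits in this appendix.
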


\begin{proof} In view of Remark \ref{r-saturated-metric}, the proofs go through using metric structures instead of general structures.
\end{proof}

\section{The Keisler-Shelah Theorem for General Structures}

In this section, we use the ideas developed in Section \ref{s-elim-CH} to prove the Keisler-Shelah Theorem ([Sh]) for general structures, an idea suggested to us by James Hanson:

\begin{thm}\label{genKS}
Suppose that $V$ is a vocabulary and $\cu{M}$ and $\cu{N}$ are elementarily equivalent $V$-structures.  Then there is an ultrafilter $\cu{D}$ over a set $I$ such that $\cu{M}^I/{\cu{D}}$ and $\cu{N}^I/{\cu{D}}$ are isomorphic.
\end{thm}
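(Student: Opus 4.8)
The plan is to reduce the theorem to the classical Keisler--Shelah theorem [Sh] by means of the conversion $\cu{M}\mapsto\cu{M}_\downarrow$, $\cu{K}\mapsto\cu{K}_\uparrow$ of Section \ref{s-elim-CH}. Since ultrapowers and isomorphism of general structures are defined via reductions (Remarks \ref{r-reduction} and \ref{r-atomic-reduced-product}), we may assume throughout that $\cu{M}$ and $\cu{N}$ are reduced. The key thing to realize is that one cannot simply apply [Sh] to the pair $\cu{M}_\downarrow,\cu{N}_\downarrow$: these increasing $V_\downarrow$-structures need \emph{not} be first-order elementarily equivalent even when $\cu{M}\equiv\cu{N}$ (for instance $\cu{M}_\downarrow$ can satisfy $\exists x\, P_{\le 0}(x)$ while $\cu{N}_\downarrow$ does not, since the continuous theory of $\cu{M}$ does not record whether the infimum of a predicate is attained). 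Instead we use the \emph{machinery} behind [Sh].

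Fix an infinite cardinal $\lambda\ge|M|+|N|+|V|$ chosen to be suitable for the cardinal bookkeeping in Shelah's argument, and let $\cu{D}$ be a $\lambda^+$-good ultrafilter on $\lambda$. Then, classically, $(\cu{M}_\downarrow)^\lambda/\cu{D}$ and $(\cu{N}_\downarrow)^\lambda/\cu{D}$ are $\lambda^+$-saturated first-order structures, and (this is the heart of [Sh]) for an appropriate choice of $\lambda$ they are special of cardinality $2^\lambda$. Applying Lemma \ref{p-down-reduced-product} with all factors equal, together with Lemma \ref{l-downup}, gives
$$\bigl((\cu{M}_\downarrow)^\lambda/\cu{D}\bigr)_\uparrow=(\cu{M}_{\downarrow\uparrow})^\lambda/\cu{D}=\cu{M}^\lambda/\cu{D},$$
and similarly for $\cu{N}$. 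The crux is then a transfer lemma: if $\cu{K}$ is a $\kappa$-saturated (resp.\ special of cardinality $\mu$) increasing $V_\downarrow$-structure, then $\cu{K}_\uparrow$ is a $\kappa$-saturated (resp.\ $\mu$-special) general structure. Granting this, $\cu{M}^\lambda/\cu{D}$ and $\cu{N}^\lambda/\cu{D}$ are $2^\lambda$-special general structures, and by \L os' theorem (Fact \ref{f-Los}) they are models of the same complete continuous theory $\Th(\cu{M})$; hence they are isomorphic by the Uniqueness Theorem for Special Models (Fact \ref{f-special-unique}). Taking $I=\lambda$ and this $\cu{D}$ completes the proof.

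The main obstacle is the transfer lemma. Since $\cu{K}_\uparrow$ arises from $\cu{K}$ by ``rounding'' predicate values downward and then passing to a reduction, a continuous type over a small parameter set of $\cu{K}_\uparrow$ does not correspond to a first-order $V_\downarrow$-type over $\cu{K}$ in any purely syntactic manner, the obstruction being exactly that strict conditions like ``$P(\vec{a},x)>r$'' are not single continuous formulas. The argument must show that each continuous type over $\cu{K}_\uparrow$ is equivalent to one generated by closed conditions on atomic formulas, pull this back to a finitely satisfiable first-order type over $\cu{K}$, realize it using the saturation of $\cu{K}$, and verify that the realization maps to a realization of the original type in $\cu{K}_\uparrow$; the specialness half additionally needs that $\uparrow$ carries a first-order elementary chain of increasing structures to a continuous elementary chain, which is again not automatic and must exploit increasingness and the passage to reductions. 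A secondary subtlety, peculiar to general structures, is cardinality bookkeeping: the reduction built into an ultrapower can decrease cardinality, so one should first set aside the trivial case in which every ultrapower of $\cu{M}$ is small (then $\cu{M}\cong\cu{N}$ already and any $\cu{D}$ works) and, in the remaining case, check that $\cu{M}^\lambda/\cu{D}$ has cardinality exactly $2^\lambda$.
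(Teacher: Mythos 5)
You correctly spotted the central obstruction -- continuous elementary equivalence of $\cu{M}$ and $\cu{N}$ does not give first-order elementary equivalence of $\cu{M}_\downarrow$ and $\cu{N}_\downarrow$ (your example about whether the infimum of a predicate is attained is exactly the right kind of counterexample) -- but your repair has a genuine gap. The step ``(this is the heart of [Sh]) for an appropriate choice of $\lambda$ they are special of cardinality $2^\lambda$'' is not what [Sh] provides and is not a ZFC fact: an ultrapower modulo a $\lambda^+$-good ultrafilter is $\lambda^+$-saturated of cardinality at most $2^\lambda$, and this yields a saturated (hence special) model only when $2^\lambda=\lambda^+$, i.e.\ this is Keisler's GCH argument, precisely the hypothesis the theorem is supposed to avoid. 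In ZFC one cannot even guarantee a $\lambda$ for which $2^\lambda$ is a special \emph{cardinal} (in an Easton-style model with $2^{\aleph_\alpha}=\aleph_{\alpha+\omega+1}$ for all $\alpha$, no cardinal of the form $2^\lambda$ satisfies $2^\mu\le 2^\lambda$ for all $\mu<2^\lambda$). Shelah's actual proof does not make the two ultrapowers special; it builds the ultrafilter by hand, using a back-and-forth that runs on the assumption that the two factors are elementarily equivalent -- so it cannot be invoked ``from the inside'' for your pair $\cu{M}_\downarrow,\cu{N}_\downarrow$, which is exactly the pair you observed need not be equivalent. On top of this, your transfer lemma (that $\uparrow$ carries $\kappa$-saturation and specialness of increasing $V_\downarrow$-structures to the corresponding continuous properties), which your argument needs even in the GCH setting, is left unproved and you yourself flag it as the main obstacle.

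The paper's proof sidesteps all of this and uses classical [Sh] strictly as a black box: first replace $\cu{M}$ and $\cu{N}$ by their ultrapowers with respect to a nonprincipal ultrafilter on $\mathbb{N}$, so that both are $\aleph_1$-saturated; then prove a lemma (via the Ehrenfeucht--Fra\"isse game argument of Lemma 2.4 of [GH]) that for $\aleph_1$-saturated, elementarily equivalent general structures one \emph{does} get $\cu{M}_\downarrow\equiv\cu{N}_\downarrow$; then apply the classical Keisler--Shelah theorem to this pair and pull the isomorphism back through $\uparrow$ using Lemmas \ref{l-downup} and \ref{p-down-reduced-product}, the iterated ultrapower being an ultrapower modulo a product ultrafilter. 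The point is that only $\aleph_1$-saturation (cheaply available by one ultrapower) is needed to make the downgrade functor respect elementary equivalence; no specialness, good ultrafilters, or saturation transfer along $\uparrow$ is required.
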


We remark that no proof of the Keisler-Shelah theorem for continuous logic (in its current incarnation) has appeared in the literature thus far.  (A proof in the context of \emph{positive bounded logic}, a predecessor of continuous logic, can be found in [HI].)

Before proving Theorem \ref{genKS}, we need a lemma:

\begin{lemma}
Suppose that $\cu{M}$ and $\cu{N}$ are $\aleph_1$-saturated elementarily equivalent $V$-structures.  Then $\cu{M}_\downarrow$ and $\cu{N}_\downarrow$ are elementarily equivalent.
\end{lemma}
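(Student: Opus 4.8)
The plan is to construct a back-and-forth system between $\cu M_\downarrow$ and $\cu N_\downarrow$ out of the \emph{continuous} types realized in $\cu M$ and $\cu N$. (Recall that $\cu M_\downarrow$ and $\cu N_\downarrow$ are defined only for reduced structures; since a general structure is isomorphic to its reduction and, by Remark~\ref{r-reduction-sat}, is $\aleph_1$-saturated iff its reduction is, we may and do assume $\cu M$ and $\cu N$ are reduced.) Let $\mathcal I$ be the set of pairs $(\vec a;\vec b)$, where $\vec a$ is a finite tuple from $M$ and $\vec b$ is a finite tuple from $N$ of the same length, such that $\varphi^{\cu M}(\vec a)=\varphi^{\cu N}(\vec b)$ for every continuous formula $\varphi$ (that is, $\vec a$ and $\vec b$ realize the same complete continuous type). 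Since $\cu M\equiv\cu N$ we have $(\emptyset;\emptyset)\in\mathcal I$.

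First I would prove the extension (``forth'') property for $\mathcal I$. Given $(\vec a;\vec b)\in\mathcal I$ and $a'\in M$, put $s_\varphi:=\varphi^{\cu M}(\vec a,a')$ for each continuous formula $\varphi(\vec x,y)$, and let $\Sigma(y)$ be the set of formulas $\{\,(|\varphi(\vec b,y)-s_\varphi|)\dotminus 2^{-n}\;:\;\varphi(\vec x,y)\text{ a formula, }n\in\BN\,\}$. A finite subset of $\Sigma(y)$ mentions formulas $\varphi_1,\dots,\varphi_k$ and bounds $2^{-n_1},\dots,2^{-n_k}$; the continuous formula $\inf_y\max_{j\le k}|\varphi_j(\vec x,y)-s_{\varphi_j}|$ has value $0$ at $\vec a$ (witnessed by $a'$), hence, since $(\vec a;\vec b)\in\mathcal I$, also value $0$ at $\vec b$, so for every $\varepsilon>0$ there is $b'\in N$ with $|\varphi_j^{\cu N}(\vec b,b')-s_{\varphi_j}|<\varepsilon$ for all $j\le k$; choosing $\varepsilon<\min_j 2^{-n_j}$ shows that finite subset is satisfied in $\cu N_{\vec b}$. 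Thus $\Sigma(y)$ is finitely satisfiable in $\cu N_{\vec b}$, so by $\aleph_1$-saturation it is realized by some $b'\in N$, and for this $b'$ we get $\varphi^{\cu N}(\vec b,b')=s_\varphi=\varphi^{\cu M}(\vec a,a')$ for every $\varphi$, i.e.\ $(\vec a a';\vec b b')\in\mathcal I$. The ``back'' direction is symmetric, using $\aleph_1$-saturation of $\cu M$.

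Next I would check that each $(\vec a;\vec b)\in\mathcal I$ induces a partial isomorphism $a_i\mapsto b_i$ from $\cu M_\downarrow$ to $\cu N_\downarrow$. Since $\cu M_\downarrow$ and $\cu N_\downarrow$ have the same function and constant symbols as $\cu M$ and $\cu N$, a $V_\downarrow$-term has the same value as the corresponding $V$-term, and agreement of the atomic continuous formulas $P(\vec t(\vec x))$ gives $\cu M_\downarrow\models P_{\le r}(\vec t(\vec a))\iff\cu N_\downarrow\models P_{\le r}(\vec t(\vec b))$ for every rational $r\in[0,1)$. For equality one uses that in a \emph{reduced} structure $\cu M$ one has $u=v$ iff $\bigl(\sup_{\vec z}|\alpha(u,\vec z)-\alpha(v,\vec z)|\bigr)^{\cu M}=0$ for every atomic formula $\alpha(x,\vec z)$ (this is precisely Leibniz equality, which in a reduced structure is $=$); each expression $\sup_{\vec z}|\alpha(t_1(\vec x),\vec z)-\alpha(t_2(\vec x),\vec z)|$ is a continuous formula, so membership in $\mathcal I$ forces $t_1(\vec a)=t_2(\vec a)\iff t_1(\vec b)=t_2(\vec b)$. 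Hence $\mathcal I$ is a nonempty back-and-forth system of partial isomorphisms between $\cu M_\downarrow$ and $\cu N_\downarrow$, so these first-order structures are $\equiv_{\infty\omega}$ and in particular elementarily equivalent.

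The step I expect to be the main obstacle is the extension property: continuous elementary equivalence of $\cu M$ and $\cu N$ supplies only agreement of the \emph{values} of sentences, not a ready-made family of type-preserving maps, so the witness $b'$ must be produced by hand, and one must verify that the relevant partial continuous type is \emph{exactly} (not merely approximately) finitely satisfiable in $\cu N_{\vec b}$ — this is exactly why the $2^{-n}$-fattening in the definition of $\Sigma(y)$ is built in before $\aleph_1$-saturation is invoked. Once that is in place, the passage to $\cu M_\downarrow$ and $\cu N_\downarrow$ is routine, the only point of note being that equality in a reduced general structure is captured by the continuous formulas $\sup_{\vec z}|\alpha(x,\vec z)-\alpha(y,\vec z)|$.
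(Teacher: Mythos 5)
Your proposal is correct and is essentially the paper's argument carried out by hand: the paper imports from [GH, Lemma 2.4] exactly the back-and-forth property you derive directly from $\aleph_1$-saturation by realizing complete continuous types (with the $2^{-n}$-fattening to get exact finite satisfiability), and then, like the paper, wins the classical Ehrenfeucht--Fra\"isse game between $\cu{M}_\downarrow$ and $\cu{N}_\downarrow$. Your explicit treatment of terms and of equality via Leibniz equality in reduced structures is the same content as the cited lemma's clause that the played tuples generate isomorphic substructures, so the two proofs differ only in that yours is self-contained.
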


\begin{proof}
As shown in Lemma 2.4 of [GH], the assumption of the current lemma implies that, for any $k\in \mathbb N$, player II has a strategy for winning the strengthening of the usual Ehrenfeucht-Fra\"isse game between $\cu{M}$ and $\cu{N}$ of length $k$, where winning means that, denoting by $a_1,\ldots,a_k$ and $b_1,\ldots,b_k$ the elements of $M$ and $N$ played during the game, the map $a_i\mapsto b_i$ induces an isomorphism between the substructures that the tuples generate respectively.  By playing according to this winning strategy, player II can win any ordinary Ehrenfeuch-Fra\"isse game between $\cu{M}_\downarrow$ and $\cu{N}_\downarrow$, whence $\cu{M}_\downarrow$ and $\cu{N}_\downarrow$ are elementarily equivalent.
\end{proof}

We invite the reader to verify that the preceding lemma fails when the saturation assumption is removed.  We are now ready to prove the main theorem of this section:

\begin{proof}[Proof of Theorem \ref{genKS}]
Suppose that $\cu{M}$ and $\cu{N}$ are elementarily equivalent $V$-structures.  By replacing $\cu{M}$ and $\cu{N}$ with ultrapowers respect to a nonprincipal ultrafilter on $\mathbb N$, we may assume that they are $\aleph_1$-saturated.  By the previous lemma, we have that $\cu{M}_\downarrow$ and $\cu{N}_\downarrow$ are elementarily equivalent.  By the classical Keisler-Shelah Theorem in [Sh], we have an ultrafilter $\cu{D}$ over a set $I$ such that $(\cu{M}_\downarrow)^I/{\cu{D}}$ and $(\cu{N}_\downarrow)^I/{\cu{D}}$ are isomorphic, whence so are $((\cu{M}_\downarrow)^I/{\cu{D}})_\uparrow$ and $((\cu{N}_\downarrow)^I/{\cu{D}})_\uparrow$.  By Lemmas \ref{l-downup} and \ref{p-down-reduced-product}, these aforementioned structures are simply $\cu{M}^I/{\cu{D}}$ and $\cu{N}^I/{\cu{D}}$, as desired.
\end{proof}

\section*{References}

[Be1]  Itai Ben Yaacov.  Positive Model Theory and Compact Abstract Theories.  Journal of Mathematical Logic 3 (2003), 85-118.

[Be2]  Ita\"i{} Ben Yaacov.  On Theories of Random Variables.  Israel J. Math 194 (2013), 957-1012.

[BBHU]  Ita\"i{} Ben Yaacov, Alexander Berenstein,
C. Ward Henson and Alexander Usvyatsov. Model Theory for Metric Structures.  In Model Theory with Applications to Algebra and Analysis, vol. 2,
London Math. Society Lecture Note Series, vol. 350 (2008), 315-427.

[Ca] Xavier Caicedo.  Maximality of Continuous Logic.  Pp. 105-130 in Beyond First Order Model Theory, edited by Jose Iovino, CRC Press (2017),

[CK1966]  C.C.Chang and H. Jerome Keisler.  Continuous Model Theory.  Annals of Mathematics Studies, Princeton 1966.

[CK2012]  C.C.Chang and H. Jerome Keisler.  Model Theory.  Dover 2012.

[Ga]  Fred Galvin.  Horn Sentences.  Annals of Mathematical Logic 1 (1970), pp. 389-422.

[Gh]  Saeed Ghasemi.  Reduced products of Metric Structures: A Metric Feferman-Vaught Theorem. Journal of Symbolic Logic 81 (2016), pp. 856-875.

[GH] Isaac Goldbring and Bradd Hart.  On the theories of McDuffs II$_1$ factors.  International Mathematics Research Notices 27 (2017), pp. 5609-5628.

[He]  C. Ward Henson.  Nonstandard Hulls in Banach Spaces, Israel J. Math. 25 (1976), 108-144.

[HI]  C. Ward Henson and Jose Iovino.  Ultraproducts in Analysis, in Analysis and Logic, London Math. Soc. Lecture Note Series 262 (2002), 1-113.

[FS]  Ilijas Farah and Saharon Shelah.  Rigidity of Continuous Quotients.  J. Math. Inst. Jussieu, 15 (2016), 1-28.

[Ke]  H. Jerome Keisler.  Model Theory for Real-valued Structures.  Available online at www.math.wisc.edu/$~$keisler.

[Ke65]  H. Jerome Keisler.  Reduced products and Horn classes Trans. Am. Math. Soc. 117 (1965), 307-328.

[Lo] Vinicius C. Lopes.  Reduced Products and Sheaves of Metric Structures, Math. Log. Quart. 59 (2013), 219-229.

[Sh]  Saharon Shelah.  Every Two Elementarily Equivalent Models Have Isom9orphic Ultrapowers.  Israel Journal of Mathematics 10 (1972), 224-233.

\end{document}